\definecolor{note_fontcolor}{rgb}{0.80078125, 0.80078125, 0.80078125}
\numberwithin{equation}{section}
\numberwithin{figure}{section}
\theoremstyle{plain}
\newtheorem{thm}{\protect\theoremname}
  \theoremstyle{plain}
  \newtheorem{lem}[thm]{\protect\lemmaname}
  \theoremstyle{remark}
  \newtheorem*{rem*}{\protect\remarkname}
  \theoremstyle{remark}
  \newtheorem{rem}[thm]{\protect\remarkname}
  \theoremstyle{plain}
  \newtheorem{cor}[thm]{\protect\corollaryname}
  \theoremstyle{definition}
  \newtheorem{defn}[thm]{\protect\definitionname}
  \theoremstyle{definition}
  \newtheorem*{example*}{\protect\examplename}
\newcommand{\with}{\:\middle|\:}
\let\emptyset\varnothing
\DeclareMathOperator*{\esssup}{ess\,sup}
\providecommand{\corollaryname}{Corollary}
  \providecommand{\definitionname}{Definition}
  \providecommand{\examplename}{Example}
  \providecommand{\lemmaname}{Lemma}
  \providecommand{\remarkname}{Remark}
\providecommand{\theoremname}{Theorem}
  \providecommand{\corollaryname}{Corollary}
  \providecommand{\definitionname}{Definition}
  \providecommand{\examplename}{Example}
  \providecommand{\lemmaname}{Lemma}
  \providecommand{\remarkname}{Remark}
\providecommand{\theoremname}{Theorem}
\begin{document}

\title{Wavelet Coorbit Spaces viewed as Decomposition Spaces}

\author{Hartmut Führ, Felix Voigtlaender}
\begin{abstract}
In this paper we show that the Fourier transform induces an isomorphism
between the coorbit spaces defined by Feichtinger and Gröchenig 
of the mixed, weighted Lebesgue spaces $L_{v}^{p,q}$ with respect
to the quasi-regular representation of a semi-direct product $\mathbb{R}^{d}\rtimes H$
with suitably chosen dilation group $H$, and certain decomposition
spaces $\mathcal{D}\left(\mathcal{Q},L^{p},\ell_{u}^{q}\right)$ (essentially
as introduced by Feichtinger and Gröbner) 
where the localized ,,parts`` of a function are measured in the $\mathcal{F}L^{p}$-norm.

This equivalence is useful in several ways: It provides access
to a Fourier-analytic understanding of wavelet coorbit spaces, and it allows
to discuss coorbit spaces associated to different dilation groups
in a common framework. As an illustration of these points, we include
a short discussion of dilation invariance properties of coorbit spaces
associated to different types of dilation groups. 
\end{abstract}
\maketitle

\section{Introduction}

\global\long\def\vertiii#1{{\left\vert \kern-0.25ex  \left\vert \kern-0.25ex  \left\vert #1\right\vert \kern-0.25ex  \right\vert \kern-0.25ex  \right\vert }}

There exist several methods in the literature for the construction
of higher-dimensional wavelet systems. A rather general class of constructions
follows the initial inception of the continuous wavelet transform
in \cite{GrossmannMorletPaul} and uses the language of group representations \cite{Murenzi,BernierTaylor,FuehrWaveletFramesAndAdmissibility,Laugesen_etal}:
Picking a suitable matrix group $H\leq{\rm GL}\left(\mathbb{R}^{d}\right)$,
the so-called \textbf{dilation group}, one defines the associated
semidirect product $G=\mathbb{R}^{d}\rtimes H$. This group acts on
${\rm L}^{2}(\mathbb{R}^{d})$ via the (unitary) \textbf{quasi-regular
representation} $\pi$ defined by 
\[
\left(\pi(x,h)f\right)(y)=|{\rm det}(h)|^{-1/2}f(h^{-1}(y-x))\quad,(x,h)\in\mathbb{R}^{d}\times H.
\]
The associated continuous wavelet transform of a signal $f\in{\rm L}^{2}(\mathbb{R}^{d})$
is then obtained by picking a suitable mother wavelet $\psi\in{\rm L}^{2}(\mathbb{R}^{d})$,
and letting 
\begin{equation}
W_{\psi}f:G\to\mathbb{C}~,~(x,h)\mapsto\langle f,\pi(x,h)\psi\rangle~.\label{eqn:wavelet_inv}
\end{equation}
A wavelet $\psi$ is called \textbf{admissible} if the operator $W_{\psi}$
is (a multiple of) an isometry as a map into ${\rm L}^{2}(G,\mu_{G})$,
where $\mu_{G}$ denotes a left Haar measure on $G$. By definition
we thus have for admissible vectors $\psi$ that 
\[
\forall f\in{\rm L}^{2}(\mathbb{R}^{d})~:~\|f\|_{2}^{2}=\frac{1}{C_{\psi}}\cdot\int_{H}\int_{\mathbb{R}^{d}}\left|W_{\psi}f(x,h)\right|^{2}{\rm d}x\,\frac{{\rm d}h}{|{\rm det}(h)|}~,
\]
alternatively expressed in the weak-sense inversion formula 
\[
f=\frac{1}{C_{\psi}}\cdot\int_{H}\int_{\mathbb{R}^{d}}W_{\psi}f(x,h)\cdot\pi(x,h)\psi~{\rm d}x~\frac{{\rm d}h}{\left|\det\left(h\right)\right|}~.
\]

An alternative, with somewhat less structure but higher design flexibility,
is the semi-discrete approach described as follows: Pick a discretely
labelled quadratic partition of unity $(\widehat{\psi}_{i})_{i\in I}$ in frequency
domain, i.e. a family of functions satisfying 
\begin{equation}
\forall_{a.e.}\xi\in\mathbb{R}^{d}~:~\sum_{i\in I}\left|\widehat{\psi_{i}}(\xi)\right|^{2}=1\label{eqn:part_unity}
\end{equation}
and consider the system of all translates of the inverse Fourier transforms
$\psi_{i}=\mathcal{F}^{-1}(\widehat{\psi}_{i})$. This system is a
(continuously labelled) tight frame, i.e. 
\[
\forall f\in{\rm L}^{2}(\mathbb{R}^{d})~:~\|f\|_{2}^{2}=\sum_{i\in I}\int_{\mathbb{R}^{d}}\left|\langle f,L_{x}\psi_{i}\rangle\right|^{2}{\rm d}x,
\]
where $L_{x}$ denotes translation by $x\in\mathbb{R}^{d}$. This
norm equality can also be expressed in the weak-sense inversion formula
\[
f=\sum_{i\in I}f\ast\psi_{i}^{*}\ast\psi_{i}~,
\]
with $\psi_{i}^{*}(x)=\overline{\psi_{i}(-x)}$. For compactly supported
$\widehat{\psi}_{i}$, the translation variable can be discretized
as well, yielding a tight frame, and an associated unconditionally
converging frame expansion for all $f\in{\rm L}^{2}(\mathbb{R}^{d})$.

First and second generation curvelets \cite{StCaDo,CaDo1} are special
examples of this type of generalized wavelets, as well as discrete shearlet
systems (see \cite[Chapter 1]{shearlet_book} for an overview). In
all these constructions, the desired degree of isotropy, directional
selectivity, etc. in the generalized wavelet system is achieved by
suitably prescribing the supports of the functions $\widehat{\psi}_{i}$.

The similarity between the two approaches is best realized by noticing
that the admissible functions in the sense of the group-theoretic
wavelet transforms are characterized by the condition 
\[
    \int_{H}|\widehat{\psi}(h^{T}\xi)|^{2}\,{\rm d}h=C_{\psi}
\]
for almost all $\xi\in\mathbb{R}^{d}$, showing that the wavelet inversion
formula associated to the continuous wavelet transform is also closely
related to a quadratic partition of unity on the Fourier transform side, this
time indexed by the dilation group $H$.

For applications of these transforms, mathematical or otherwise, it
is important to realize that each class of generalized wavelet transforms
comes with a natural scale of related smoothness spaces, which are
defined by norms measuring wavelet coefficient decay. In the group-related
case, these are the so-called {\em coorbit spaces} introduced by
Feichtinger and Gröchenig \cite{FeichtingerCoorbit0,FeichtingerCoorbit1,FeichtingerCoorbit2}.
In the semi-discrete case, it has been realized recently that the
{\em decomposition spaces} and their associated norms, as introduced
by Feichtinger and Gröbner \cite{DecompositionSpaces1,DecompositionSpaces2},
provide a similarly convenient framework for the treatment of approximation-theoretic
properties of anisotropic (mostly shearlet-like) wavelet systems,
see e.g. \cite{BorupNielsenDecomposition,Labate_et_al_Shearlet}.

For a long time, the prime examples of coorbit theory were provided
by the modulation spaces, arising as coorbit spaces associated to
the Schrödinger representation of the Heisenberg group, and the Besov
spaces, which are coorbit spaces associated to the quasi-regular representation
of the $ax+b$ group (and their isotropic counterparts in higher dimensions).
More recently, the introduction of shearlets (at least the group-theoretic
version) triggered the systematic study of the associated coorbit
spaces \cite{Dahlke_etal_sh_coorbit1,Dahlke_etal_sh_coorbit2}; coorbit
spaces over the Blaschke group and their connection to complex analysis
are discussed in \cite{FePa}. The recent papers \cite{FuehrCoorbit1,FuehrCoorbit2}
pointed out that the study of wavelet coorbit spaces could be considerably
extended to cover a multitude of group-theoretically defined wavelet
systems in a unified approach that allows to prove the existence
of easily constructed, nice wavelet systems and atomic decompositions
in a large variety of settings.

However, with the introduction of ever larger classes of function
spaces comes the necessity of developing conceptual tools helping
to understand these spaces and the relationships between them. It
is the chief aim of this paper to provide a bridge between the two
types of generalized wavelet systems, by clarifying how wavelet coorbit
spaces arising from a group action can be understood as decomposition
spaces. There are several motives for this question. The first one
is provided by pre-existing results in the literature pointing in
this direction: In \cite[Section 7.2]{FeichtingerCoorbit0} it was
shown that (homogenous) Besov spaces arise as certain coorbit spaces
of weighted, (mixed) Lebesgue spaces with respect to the quasi-regular
representation of the $ax+b$ group. On the other hand, these spaces
can be defined by localizing the Fourier transform of $f$ using a
dyadic partition of the frequency space $\mathbb{R}^{d}\setminus\left\{ 0\right\} $
and summing the $L^{p}$-norms of the localized ``pieces'' in a
certain weighted $\ell^{q}$-space (cf. \cite[Definition 6.5.1]{GrafakosClassicalAndModern}).

In this paper we will show that this phenomenon is no coincidence,
but merely a manifestation of the general principle that every coorbit
space of a (suitably) weighted mixed Lebesgue space with respect to
the quasi-regular representation of the semidirect product $\mathbb{R}^{d}\rtimes H$
(with a closed subgroup $H\leq\text{GL}\left(\mathbb{R}^{d}\right)$)
arises as (the inverse image under the Fourier transform of) a certain
decomposition space. This means that membership of $f$ in the coorbit
space can be decided by localizing the Fourier transform $\widehat{f}$
with respect to a certain covering (called the covering induced by
$H$) of the \textbf{dual orbit} $\mathcal{O}=H^{T}\xi_{0}$ and summing
the $L^{p}$-norms of the individual pieces in a suitable weighted
$\ell^{q}$-space.

Thus, wavelet coorbit theory becomes a branch of decomposition space
theory. To some extent this was to be expected, because the
structures underlying decomposition spaces -- i.e., certain coverings
of (subsets of) $\mathbb{R}^{d}$ and subordinate partitions of unity
-- are much more flexible than the group structure of the
dilation group associated to coorbit spaces. In some sense, passing from the dilation group and
its associated scale of coorbit spaces to a suitable covering and
its associated scale of decomposition spaces amounts to a loss of
structure, as the group is replaced by a suitably chosen index set
of a discrete covering. This passage is important from a technical
point of view, because by (largely) discarding the dilation group,
we become free to discuss coorbit spaces associated to {\em different}
dilation groups in a common framework. This observation provides a
second reason for studying the connection to decomposition spaces.

Possibly the most fundamental motivation for studying this connection
is that it allows to discuss the approximation-theoretic properties
of a wavelet system in terms of the frequency content of the different
wavelets. To elaborate on this point, let us recall the well-understood
case of wavelet ONB's in dimension one: The typical vanishing moment
and smoothness conditions on the wavelets can be understood as a measure
of frequency concentration. Conceptually speaking, different scales
of the wavelet system correspond to different frequency bands, and
increasing the degrees of smoothness and vanishing moments amounts
to improving the separation between the different frequency bands,
which in turn allows larger classes of homogeneous Besov spaces to
be characterized in terms of the wavelet coefficients with respect to a single wavelet ONB. The papers
\cite{FuehrCoorbit1,FuehrCoorbit2} extend this type of reasoning
to (possibly anisotropic) higher dimensional wavelet systems and their
associated coorbit spaces; here the key concept was provided by the
dual action and in particular the so-called ``blind spot'' of the
wavelet transform.

However, in the study of wavelet systems in higher dimensions, the
description of frequency content poses an increasingly difficult challenge: Different
wavelet systems can be understood as prescribing different ways of
partitioning the frequency space into (possibly oriented) ``frequency bands''; we
argue that their approximation-theoretic properties are describable
in terms of this behaviour. It is important to note that precisely
this intuition was also used in the inception of curvelets by Candés
and Donoho \cite{CaDo1}, and the results of that paper provide further
evidence that the frequency partition determines the approximation-theoretic properties. 
However, what is needed to systematically turn this intuition into provable
theorems is a suitable language describing these partitions, 
and allowing to assess which properties of a partition
are relevant for the approximation-theoretic properties of the corresponding wavelet systems.
Our paper makes a strong case that
this language is provided by the decomposition spaces introduced by
Feichtinger and Gröbner in \cite{DecompositionSpaces1}, and studied
more recently in \cite{BorupNielsenDecomposition,Labate_et_al_Shearlet}.

To illustrate these points, we have included a discussion of dilation
invariance properties of certain coorbit spaces in Section \ref{sec:KonjugationsInvarianz}.
Given a suitable coorbit space ${\rm Co}Y$ associated to a dilation
group $H$, we would like to identify those invertible matrices $g$
such that ${\rm Co}Y$ is invariant under dilation by $g$. It is
clear that the set of these matrices contains $H$; this follows
from the fact that the wavelet transform intertwines (a suitably normalized) dilation by $g\in H$
with left translation by $(0,g)\in G$, and from left invariance of
the Banach function space $Y$ entering the definition of the coorbit
space. It is much less clear whether there are further invertible
matrices $g\not\in H$ which leave ${\rm Co}Y$ invariant. It will
be seen in Section \ref{sec:KonjugationsInvarianz} that this property
depends on $H$: If $H$ is the similitude group in dimension
two and the associated coorbit spaces are the isotropic Besov spaces,
they are in fact invariant under arbitrary dilations. By contrast,
there are shearlet coorbit spaces that are not invariant under dilation
by a ninety degree rotation.

%

While these observations are of some independent interest (for example,
the lack of rotation invariance for shearlet coorbit spaces seems
to be a new observation), we have included this discussion mostly
because of the way it highlights the role of the decomposition space
viewpoint in understanding the different coorbit spaces. It also illustrates
the importance of being able to compare coorbit spaces associated
to different dilation groups: One quickly realizes that dilation by
$g$ is an isomorphism ${\rm Co}Y\to{\rm Co}Y'$, where $Y'$ is a
suitably chosen Banach function space over the group $G'=\mathbb{R}^{d}\rtimes g^{-1}Hg$.
Thus invariance of ${\rm Co}Y$ under dilation by $g$ is equivalent
to an embedding statement ${\rm Co}Y'\hookrightarrow{\rm Co}Y$.

\section{Notation and Preliminaries}

In this paper we will always be working in the following setting:
We assume that $H\leq\text{GL}\left(\mathbb{R}^{d}\right)$ is a closed
subgroup for some $d\in\mathbb{N}$ and we consider the semidirect
product $G=\mathbb{R}^{d}\rtimes H$ with multiplication $\left(x,h\right)\left(y,g\right)=\left(x+hy,hg\right)$.
For the convenience of the reader we recall that a (left) \textbf{Haar
integral} on the locally compact group $G$ is then given by 
\begin{equation}
f\mapsto\int_{G}f\left(x,h\right)\,\text{d}\left(x,h\right):=\int_{H}\int_{\mathbb{R}^{d}}f\left(x,h\right)\,\text{d}x\, \frac{\text{d}h}{\left|\det\left(h\right)\right|},\label{eq:SemiDirektHaarMass}
\end{equation}
where $\text{d}h$ denotes integration against left Haar measure on
$H$. The \textbf{modular function} on $G$ is given by 
\begin{equation}
\Delta_{G}\left(x,h\right)=\Delta_{H}\left(h\right)\cdot\left|\det\left(h\right)\right|^{-1}.\label{eq:SemiDirektModularFunktion}
\end{equation}
We then consider the so-called \textbf{quasi-regular representation}
$\pi$ of $G$ acting unitarily on $L^{2}\left(\mathbb{R}^{d}\right)$
by 
\begin{equation}
\pi\left(x,h\right)f=L_{x}\Delta_{h}f=\left|\det\left(h\right)\right|^{-1/2}\cdot L_{x}D_{h^{-T}}f,\label{eq:QuasiRegulaereDarstellung}
\end{equation}
where we use the operators $L_{x},\Delta_{h},D_{h}$ (and later on
also) $M_{\omega}$ defined by
\begin{align*}
\left(L_{x}f\right)\left(y\right) & =f\left(y-x\right),\\
\left(\Delta_{h}f\right)\left(y\right) & =\left|\det\left(h\right)\right|^{-1/2}\cdot f\left(h^{-1}y\right),\\
\left(D_{h}f\right)\left(y\right) & =f\left(h^{T}y\right),\\
\left(M_{\omega}f\right)\left(y\right) & =e^{2\pi i\left\langle y,\omega\right\rangle }\cdot f\left(y\right)
\end{align*}
for $h\in\text{GL}\left(\mathbb{R}^{d}\right)$, $x,y,\omega\in\mathbb{R}^{d}$
and $f:\mathbb{R}^{d}\rightarrow\mathbb{C}$.

We use the following version of the Fourier transform: 
\[
\left(\mathcal{F}f\right)\left(\xi\right)=\widehat{f}\left(\xi\right)=\int_{\mathbb{R}^{d}}f\left(x\right)\cdot e^{-2\pi i\left\langle x,\xi\right\rangle }\,\text{d}x\qquad\text{for }\xi\in\mathbb{R}^{d}~,
\]
and consequently 
\[
\left(\mathcal{F}^{-1}f\right)\left(x\right)=\int_{\mathbb{R}^{d}}f\left(\xi\right)\cdot e^{2\pi i\left\langle x,\xi\right\rangle }\,\text{d}\xi\qquad\text{for }x\in\mathbb{R}^{d}
\]
for the (inverse) Fourier transform of $f\in L^{1}\left(\mathbb{R}^{d}\right)$.

Using this convention, we note that on the Fourier side the quasi-regular
representation is given by 
\begin{eqnarray}
\mathcal{F}\left(\pi\left(x,h\right)f\right) & = & \left|\det\left(h\right)\right|^{-1/2}\cdot\mathcal{F}\left(L_{x}\left(f\circ h^{-1}\right)\right)\nonumber \\
 & = & \left|\det\left(h\right)\right|^{-1/2}\cdot M_{-x}\left(\mathcal{F}\left(f\circ h^{-1}\right)\right)\nonumber \\
 & = & \left|\det\left(h\right)\right|^{1/2}\cdot M_{-x}D_{h}\widehat{f}.\label{eq:QausiRegulaereDarstellungAufFourierSeite}
\end{eqnarray}

The results in \cite{FuehrGeneralizedCalderonConditions,FuehrWaveletFramesAndAdmissibility}
show that the quasi-regular representation is \textbf{irreducible}
and \textbf{square-integrable} (in short: \textbf{admissible}), if
and only if the following conditions hold: 
\begin{enumerate}
\item There is a $\xi_{0}\in\mathbb{R}^{d}$ such that the \textbf{dual
    orbit} $\mathcal{O}:= H^{T} \xi_{0} = \left\{ h^{T}\xi_{0}\with h\in H\right\} \subset\mathbb{R}^{d}$
is an open set of full measure (i.e. $\lambda\left(\mathcal{O}^{c}\right)=0$,
where $\lambda$ denotes Lebesgue measure on $\mathbb{R}^{d}$) and 
\item the isotropy group $H_{\xi_{0}}:=\left\{ h\in H\with h^{T}\xi_{0}=\xi_{0}\right\} $
of $\xi_{0}$ with respect to the dual action of $H$ is compact.
In this case, the isotropy group $H_{h^{T}\xi_{0}}=h^{-1}H_{\xi_{0}}h$
is a compact subgroup of $H$ for every $h^{T}\xi_{0}\in\mathcal{O}$. 
\end{enumerate}
In the following, we will always assume that these conditions are
met. We will then see below (cf. Theorem \ref{thm:ZulaessigkeitVonbandbeschraenktenFunktionen})
that $\pi$ is indeed an integrable representation, i.e. there exists
$\psi\in L^{2}\left(\mathbb{R}^{d}\right)\setminus\left\{ 0\right\} $
with $W_{\psi}\psi\in L^{1}\left(G\right)$, where the \textbf{Wavelet
transform} $W_{g}f$ of $f$ with respect to $g$ is defined by 
\[
W_{g}f:G\rightarrow\mathbb{C},\left(x,h\right)\mapsto\left\langle f,\pi\left(x,h\right)g\right\rangle _{L^{2}\left(\mathbb{R}^{d}\right)}
\]
for $f,g\in L^{2}\left(\mathbb{R}^{d}\right)$. It should be noted
that this definition of the Wavelet transform coincides with the voice
transform $V_{g}f$ as defined in \cite{FeichtingerCoorbit1}, because
\[
\left(V_{g}f\right)\left(x,h\right)=\left\langle \pi\left(x,h\right)g,f\right\rangle _{{\rm anti}}=\left\langle f,\pi\left(x,h\right)g\right\rangle _{L^{2}\left(\mathbb{R}^{d}\right)},
\]
as Feichtinger uses a scalar-product that is antilinear in the first
component, i.e. 
\[
\left\langle f,g\right\rangle _{{\rm anti}}=\int_{\mathbb{R}^{d}}\overline{f\left(x\right)}\cdot g\left(x\right)\,\text{d}x,
\]
whereas we adopt the convention that the scalar-product $\left\langle \cdot,\cdot\right\rangle _{L^{2}\left(\mathbb{R}^{d}\right)}$
is antilinear in the \emph{second} component.

The paper is organized as follows: In section \ref{sec:ApplicabilityOfCoorbitTheory}
we clarify the exact definitions of the mixed Lebesgue spaces $L_{v}^{p,q}\left(G\right)$
and the requirements on the weight $v:G\rightarrow\left(0,\infty\right)$
for which we will later prove the isomorphism of $\text{Co}\left(L_{v}^{p,q}\right)$
to a suitable decomposition space. Furthermore, we show that the coorbit
theory is indeed applicable in this setting. The only point for
the applicability of coorbit theory that we do not cover in section
\ref{sec:ApplicabilityOfCoorbitTheory} is the existence of analyzing
vectors.

This gap is closed in the ensuing section \ref{sec:AdmissibleVectors}
in which we recall the most important definitions from coorbit theory
and show that any Schwartz function $\psi$ whose Fourier transform
is compactly supported in the dual orbit $\mathcal{O}$ is admissible
as a so-called \textbf{analyzing vector} (even as a ``\textbf{better
vector}''). Furthermore, we show that the ``reservoir'' $\left(\mathcal{H}_{w}^{1}\right)^{\neg}$
from which the elements of the coorbit space are taken can naturally
be identified with a subspace of the space of distributions $\mathcal{D}'\left(\mathcal{O}\right)$
on the dual orbit $\mathcal{O}=H^{T}\xi_{0}$ determined by $H$, as well as with a subspace of $\left( \mathcal{F} \left( \mathcal{D}\left( \mathcal{O} \right) \right) \right)'$,
where we use the notation $\mathcal{D}(U) = C_{c}^{\infty}(U)$ for the space of smooth functions with compact support in the open set $U \subset \mathbb{R}^{d}$.
This notation will also be used in the remainder of the paper.

In section \ref{sec:InducedCovering} we define the concept of an
``\textbf{induced covering}'' $\mathcal{Q}$ of the dual orbit $\mathcal{O}=H^{T}\xi_{0}$
and we give a precise definition of the decomposition spaces $\mathcal{D}\left(\mathcal{Q},L^{p},\ell_{u}^{q}\right)$
for which we will later show that the Fourier transform induces an
isomorphism $\mathcal{F}:\text{Co}\left(L_{v}^{p,q}\right)\rightarrow\mathcal{D}\left(\mathcal{Q},L^{p},\ell_{u}^{q}\right)$.
Furthermore, we recall the essential definitions for the theory of
decomposition spaces (i.e. the concepts of \textbf{admissible coverings},
\textbf{BAPU}s, etc.) and show that the induced covering $\mathcal{Q}$
is a structured admissible covering of $\mathcal{O}$ (cf. Definition
\ref{def:BorupUndNielsenCovering}).

In section \ref{sec:BAPU} we construct a specific partition of unity
subordinate to the induced covering $\mathcal{Q}$. In principle,
one could use any partition of unity $\left(\varphi_{i}\right)_{i\in I}$ subordinate to $\mathcal{Q}$
for which $\left\Vert \mathcal{F}^{-1}\varphi_{i}\right\Vert _{L^{1}\left(\mathbb{R}^{d}\right)}$
is uniformly bounded, but our construction has the advantage that
the localizations $\mathcal{F}^{-1}\left(\smash{\varphi_{i}\cdot\widehat{f}}\right)$
can be explicitly expressed in terms of the Wavelet transform $W_{\psi}f$
of $f$.

This explicit formula will be prominently exploited in section \ref{sec:CoorbitAsDecomposition}
where we prove that the Fourier transform extends to a bounded linear
map $\mathcal{F}:\text{Co}\left(L_{v}^{p,q}\right)\rightarrow\mathcal{D}\left(\mathcal{Q},L^{p},\ell_{u}^{q}\right)$
for a suitable weight $u:\mathcal{O}\rightarrow\left(0,\infty\right)$.

In section \ref{sec:InverseFourierTrafoStetig} we show that the inverse
Fourier transform $\mathcal{F}^{-1}:\mathcal{D}\left(\mathcal{Q},L^{p},\ell_{u}^{q}\right)\rightarrow\text{Co}\left(L_{v}^{p,q}\right)$
is continuous. In this section we also show that instead of the reservoir
$\left(\mathcal{H}_{w}^{1}\right)^{\neg}$ for the elements of the
coorbit space, one can use the more invariant reservoir $\left(\mathcal{F}\left(\mathcal{D}\left(\mathcal{O}\right)\right)\right)'$.

In section \ref{sec:KonjugationsInvarianz} we apply the established
isomorphism between the coorbit space $\text{Co}\left(L_{v}^{p,q}\right)$
and the decomposition space $\mathcal{D}\left(\mathcal{Q},L^{p},\ell_{u}^{q}\right)$
to investigate the invariance of certain specific coorbit spaces under
conjugation of the group. Using the decomposition space view, we show
that all coorbit spaces $\text{Co}\left(L_{v}^{p,q}\right)$ with
respect to the similitude group are invariant under conjugation, whereas
the same is in general not true for the coorbit spaces with respect
to the shearlet group. %

We close the technical preliminaries by noting that while the most
important results of the present paper are Theorems \ref{thm:FourierStetigVonCoorbitNachDecomposition}
and \ref{thm:InverseFourierTransformationStetigkeit}, which establish
the continuity of the (inverse) Fourier transform as a map from the
coorbit space into the associated decomposition space (and vice versa),
the most important parts of the paper in terms of ideas for the proof
are the definition of the specific partition of unity (cf. equation
(\ref{eq:BAPUIdee})) and the calculation of the localization $\mathcal{F}^{-1}\left(\smash{\varphi_{V}\widehat{f}}\right)$
in terms of the wavelet transform $W_{\psi}f$ (cf. Lemma \ref{lem:LokalisierungenAusgedruecktDurchWaveletTransformation}),
as well as Lemma \ref{lem:SpezialWaveletTrafoLiegtInPassendemRaum},
where we show $\left\Vert W_{\psi}\left(f \circ \mathcal{F}^{-1}\right)\right\Vert _{L_{v}^{p,q}}\leq C\cdot\left\Vert f\right\Vert _{\mathcal{D}\left(\mathcal{Q},L^{p},\ell_{u}^{q}\right)}$
for $f\in\mathcal{D}\left(\mathcal{Q},L^{p},\ell_{u}^{q}\right)$.

\section{Applicability of Coorbit-theory for the spaces $L_{v}^{p,q}$}

\label{sec:ApplicabilityOfCoorbitTheory}As the quasi-regular representation
is irreducible and square-integrable by our standing assumptions,
the main requirement for the coorbit-theory as developed by Feichtinger
and Gröchenig in \cite{FeichtingerCoorbit1,FeichtingerCoorbit2} is
fulfilled. In the remainder of this paper, we will routinely abuse
notation by identifying weights $v:H\to(0,\infty)$ with their trivial
extension $G\to(0,\infty)$, $(x,h)\mapsto v(h)$. We will exclusively
consider the coorbit spaces $\text{Co}\left(L_{v}^{p,q}\right)$ where
$v$ only depends on the second factor.

Nevertheless, we will sometimes have occasion to consider the Banach
function space $L_{w}^{p,q}\left(G\right)$, where $w:G\rightarrow\left(0,\infty\right)$
is arbitrary measurable (but we will not consider the coorbit space ${\rm Co}\left(L_{w}^{p,q}\right)$
in this case). This space is defined by 
\[
L_{w}^{p,q}\left(G\right):=\left\{ f:G\rightarrow\mathbb{C}\with f\text{ measurable and }\left\Vert f\right\Vert _{L_{w}^{p,q}}<\infty\right\} ,
\]
with 
\[
\left\Vert f\right\Vert _{L_{w}^{p,q}}:=\left(\int_{H}\left(\left\Vert w\left(\cdot,h\right)\cdot f\left(\cdot,h\right)\right\Vert _{L^{p}\left(\mathbb{R}^{d}\right)}\right)^{q}\,\frac{\text{d}h}{\left|\det\left(h\right)\right|}\right)^{1/q}
\]
for $p\in\left[1,\infty\right]$ and $q\in\left[1,\infty\right)$
and with 
\[
\left\Vert f\right\Vert _{L_{w}^{p,\infty}}:=\esssup_{h\in H}\left[\left\Vert w\left(\cdot,h\right)\cdot f\left(\cdot,h\right)\right\Vert _{L^{p}\left(\mathbb{R}^{d}\right)}\right].
\]
The weight $v:H\rightarrow\left(0,\infty\right)$ need not be submultiplicative
itself, but we will assume that $v$ is \textbf{$v_{0}$-moderate}
for a (measurable, locally bounded) {\em submultiplicative} weight
$v_{0}:H\rightarrow\left(0,\infty\right)$, i.e. we assume 
\[
v\left(ghk\right)\leq v_{0}\left(g\right)v\left(h\right)v_{0}\left(k\right)\qquad\forall g,h,k\in H.
\]
In this case, $L_{v}^{p,q}\left(G\right)$ is invariant under left-
and right translations. More precisely, we have the following:
\begin{lem}
\label{lem:GemischterLebesgueRaumLinksRechtsTranslation}$L_{v}^{p,q}\left(G\right)$
is invariant under left- and right translations and we have the estimates
\[
\left\Vert L_{\left(x,h\right)}\right\Vert _{L_{v}^{p,q}\rightarrow L_{v}^{p,q}}\leq v_{0}\left(1_{H}\right)\cdot v_{0}\left(h\right)\cdot\left|\det\left(h\right)\right|^{\frac{1}{p}-\frac{1}{q}}
\]
and 
\[
\left\Vert R_{\left(x,h\right)}\right\Vert _{L_{v}^{p,q}\rightarrow L_{v}^{p,q}}\leq v_{0}\left(1_{H}\right)\cdot v_{0}\left(h^{-1}\right)\cdot\left|\det\left(h\right)\right|^{\frac{1}{q}}\cdot\left(\Delta_{H}\left(h\right)\right)^{-1/q}
\]
for all $\left(x,h\right)\in G$.\end{lem}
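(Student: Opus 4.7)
The plan is to reduce the proof to two change-of-variables calculations on $H$, after observing that the $\mathbb{R}^{d}$-component of the translation always leaves the inner $L^{p}$-norm invariant. Writing out the semidirect product multiplication, one has
\[
\left(L_{\left(x,h\right)}f\right)\left(y,g\right)=f\bigl(h^{-1}\left(y-x\right),h^{-1}g\bigr)\quad\text{and}\quad\left(R_{\left(x,h\right)}f\right)\left(y,g\right)=f\left(y+gx,gh\right),
\]
so that the substitution $z=h^{-1}\left(y-x\right)$ (respectively the translation $y\mapsto y+gx$) in the inner $L^{p}$-integral yields $\bigl\Vert\left(L_{\left(x,h\right)}f\right)\left(\cdot,g\right)\bigr\Vert_{L^{p}}=\left|\det h\right|^{1/p}\bigl\Vert f\bigl(\cdot,h^{-1}g\bigr)\bigr\Vert_{L^{p}}$ and $\bigl\Vert\left(R_{\left(x,h\right)}f\right)\left(\cdot,g\right)\bigr\Vert_{L^{p}}=\left\Vert f\left(\cdot,gh\right)\right\Vert_{L^{p}}$. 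All of the work is therefore in the outer $L^{q}$-integral over $H$.

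For the left-translation bound I raise the definition of $\left\Vert L_{\left(x,h\right)}f\right\Vert_{L_{v}^{p,q}}$ to the $q$-th power, substitute $g'=h^{-1}g$ using the left-invariance of $dg$ on $H$ together with $\left|\det g\right|=\left|\det h\right|\cdot\left|\det g'\right|$, and apply $v_{0}$-moderateness in the form $v\left(hg'\right)\leq v_{0}\left(h\right)v\left(g'\right)v_{0}\left(1_{H}\right)$. Collecting the $\left|\det h\right|$-factors gives a total prefactor of $\left|\det h\right|^{q/p-1}v_{0}\left(h\right)^{q}v_{0}\left(1_{H}\right)^{q}$ times $\left\Vert f\right\Vert_{L_{v}^{p,q}}^{q}$, and extracting the $q$-th root recovers the advertised bound $v_{0}\left(1_{H}\right)v_{0}\left(h\right)\left|\det h\right|^{1/p-1/q}$.

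The right-translation bound follows by the same recipe, except that the substitution $g'=gh$ is a right translation on $H$; for the left Haar measure $dg$ this produces the Jacobian $\Delta_{H}\left(h\right)^{-1}$, via the identity $\int F\left(gh\right)dg=\Delta_{H}\left(h\right)^{-1}\int F\left(g\right)dg$. One then uses $v\left(g'h^{-1}\right)\leq v_{0}\left(1_{H}\right)v\left(g'\right)v_{0}\left(h^{-1}\right)$ together with $\left|\det g\right|^{-1}=\left|\det g'\right|^{-1}\left|\det h\right|$; after extracting the $q$-th root this yields exactly $\Delta_{H}\left(h\right)^{-1/q}\left|\det h\right|^{1/q}v_{0}\left(1_{H}\right)v_{0}\left(h^{-1}\right)$. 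The case $q=\infty$ is analogous, replacing the outer integral by an essential supremum, in which case the $\left|\det h\right|$- and $\Delta_{H}\left(h\right)$-factors disappear consistently with the formulas at $q=\infty$. The only real subtlety is keeping track of the modular function in the right-translation change of variables; everything else is routine bookkeeping using $v_{0}$-moderateness and the explicit description \eqref{eq:SemiDirektHaarMass} of left Haar measure.
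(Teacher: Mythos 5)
Your proof is correct and follows essentially the same route as the paper: compute the inner $L^{p}$-norm to pick up a $\left|\det h\right|^{\pm 1/p}$ factor, then perform the change of variables in the outer $L^{q}$-integral on $H$ (using left-invariance for the left translation, and the identity $\int_{H}F\left(gh\right)\mathrm{d}g=\Delta_{H}\left(h\right)^{-1}\int_{H}F\left(g\right)\mathrm{d}g$ for the right translation), combine with $v_{0}$-moderateness, and track the $\left|\det h\right|$-factors from the measure $\frac{\mathrm{d}h}{\left|\det h\right|}$. The only cosmetic difference is that you compute $L_{\left(x,h\right)}$ directly, whereas the paper first estimates $L_{\left(x,h\right)^{-1}}$ and then substitutes $\left(x,h\right)\mapsto\left(x,h\right)^{-1}$; both yield identical constants.
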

\begin{proof}
Let $\left(x,h\right)\in G$ and $f\in L_{v}^{p,q}\left(G\right)$.
For $\left(y,g\right)\in G$ we have
\begin{align*}
\left(L_{\left(x,h\right)^{-1}}f\right)\left(y,g\right) & =f\left(\left(x,h\right)\left(y,g\right)\right)=f\left(x+hy,hg\right),\\
\left(R_{\left(x,h\right)}f\right)\left(y,g\right) & =f\left(\left(y,g\right)\left(x,h\right)\right)=f\left(y+gx,gh\right).
\end{align*}
We first consider the left translation. For $g\in H$ we get, using
the above formula 
\[
\left\Vert \left(L_{\left(x,h\right)^{-1}}f\right)\left(\cdot,g\right)\right\Vert _{L^{p}\left(\mathbb{R}^{d}\right)}=\left\Vert f\left(x+h\cdot,hg\right)\right\Vert _{L^{p}\left(\mathbb{R}^{d}\right)}=\left|\det\left(h\right)\right|^{-1/p}\cdot\left\Vert f\left(\cdot,hg\right)\right\Vert _{L^{p}\left(\mathbb{R}^{d}\right)},
\]
as can be seen using the change-of-variables formula for $p\in\left[1,\infty\right)$
and for $p=\infty$ using the fact that $\mathbb{R}^{d}\rightarrow\mathbb{R}^{d},z\mapsto x+hz$
and its inverse map both map null-sets to null-sets.

Thus, we arrive at 
\begin{eqnarray*}
 &  & \frac{v\left(g\right)}{\left|\det\left(g\right)\right|^{1/q}}\cdot\left\Vert \left(L_{\left(x,h\right)^{-1}}f\right)\left(\cdot,g\right)\right\Vert _{L^{p}\left(\mathbb{R}^{d}\right)}\\
 & = & \frac{v\left(h^{-1}hg1_{H}\right)}{\left|\det\left(h^{-1}hg\right)\right|^{1/q}}\cdot\left|\det\left(h\right)\right|^{-1/p}\cdot\left\Vert f\left(\cdot,hg\right)\right\Vert _{L^{p}\left(\mathbb{R}^{d}\right)}\\
 & \leq & v_{0}\left(h^{-1}\right)v_{0}\left(1_{H}\right)\left|\det\left(h\right)\right|^{\frac{1}{q}-\frac{1}{p}}\cdot\frac{v\left(hg\right)}{\left|\det\left(hg\right)\right|^{1/q}}\cdot\left\Vert f\left(\cdot,hg\right)\right\Vert _{L^{p}\left(\mathbb{R}^{d}\right)}.
\end{eqnarray*}
Using the (isometric) invariance of $\left\Vert \cdot\right\Vert _{L^{q}\left(H\right)}$
under left-translations, we obtain
\begin{align*}
\left\Vert L_{\left(x,h\right)^{-1}}f\right\Vert _{L_{v}^{p,q}} & =\left\Vert g\mapsto\frac{v\left(g\right)}{\left|\det\left(g\right)\right|^{1/q}}\cdot\left\Vert \left(L_{\left(x,h\right)^{-1}}f\right)\left(\cdot,g\right)\right\Vert _{L^{p}\left(\mathbb{R}^{d}\right)}\right\Vert _{L^{q}\left(H\right)}\\
 & \leq v_{0}\left(h^{-1}\right)v_{0}\left(1_{H}\right)\left|\det\left(h\right)\right|^{\frac{1}{q}-\frac{1}{p}}\cdot\left\Vert g\mapsto\frac{v\left(hg\right)}{\left|\det\left(hg\right)\right|^{1/q}}\cdot\left\Vert f\left(\cdot,hg\right)\right\Vert _{L^{p}\left(\mathbb{R}^{d}\right)}\right\Vert _{L^{q}\left(H\right)}\\
 & =v_{0}\left(h^{-1}\right)v_{0}\left(1_{H}\right)\left|\det\left(h\right)\right|^{\frac{1}{q}-\frac{1}{p}}\cdot\left\Vert f\right\Vert _{L_{v}^{p,q}\left(G\right)}<\infty
\end{align*}
Applying this to $\left(x,h\right)^{-1}=\left(-h^{-1}x,h^{-1}\right)$
instead of $\left(x,h\right)$, we finally see 
\[
\left\Vert L_{\left(x,h\right)}f\right\Vert _{L_{v}^{p,q}}=\left\Vert L_{\left(-h^{-1}x,h^{-1}\right)^{-1}}f\right\Vert _{L_{v}^{p,q}}\leq v_{0}\left(h\right)\cdot v_{0}\left(1_{H}\right)\cdot\left|\det\left(h\right)\right|^{\frac{1}{p}-\frac{1}{q}}\cdot\left\Vert f\right\Vert _{L_{v}^{p,q}}.
\]
We now turn to the right translations. Using the translation-invariance
of $\left\Vert \cdot\right\Vert _{L^{p}\left(\mathbb{R}^{d}\right)}$,
we derive 
\[
\left\Vert \left(R_{\left(x,h\right)}f\right)\left(\cdot,g\right)\right\Vert _{L^{p}\left(\mathbb{R}^{d}\right)}=\left\Vert f\left(\cdot+gx,gh\right)\right\Vert _{L^{p}\left(\mathbb{R}^{d}\right)}=\left\Vert f\left(\cdot,gh\right)\right\Vert _{L^{p}\left(\mathbb{R}^{d}\right)}.
\]
This implies 
\begin{eqnarray*}
 &  & \frac{v\left(g\right)}{\left|\det\left(g\right)\right|^{1/q}}\cdot\left\Vert \left(R_{\left(x,h\right)}f\right)\left(\cdot,g\right)\right\Vert _{L^{p}\left(\mathbb{R}^{d}\right)}\\
 & = & \frac{v\left(1_{H}ghh^{-1}\right)}{\left|\det\left(ghh^{-1}\right)\right|^{1/q}}\cdot\left\Vert f\left(\cdot,gh\right)\right\Vert _{L^{p}\left(\mathbb{R}^{d}\right)}\\
 & \leq & v_{0}\left(1_{H}\right)\cdot v_{0}\left(h^{-1}\right)\cdot\left|\det\left(h\right)\right|^{\frac{1}{q}}\cdot\frac{v\left(gh\right)}{\left|\det\left(gh\right)\right|^{1/q}}\cdot\left\Vert f\left(\cdot,gh\right)\right\Vert _{L^{p}\left(\mathbb{R}^{d}\right)}.
\end{eqnarray*}
Now, for $q\in\left[1,\infty\right)$ and (measurable) $\psi:H\rightarrow\mathbb{C}$,
formula (2.26) of \cite{FollandAHA} implies
\[
\left\Vert \psi\left(\cdot h\right)\right\Vert _{L^{q}\left(H\right)}=\left(\Delta_{H}\left(h\right)\right)^{-1/q}\cdot\left\Vert \psi\right\Vert _{L^{q}\left(H\right)}.
\]
The same is true for $q=\infty$, as right-translations map (left)
null-sets to (left) null-sets.

Therefore, we arrive at
\begin{align*}
\quad\left\Vert R_{\left(x,h\right)}f\right\Vert _{L_{v}^{p,q}} & =\left\Vert g\mapsto\frac{v\left(g\right)}{\left|\det\left(g\right)\right|^{1/q}}\cdot\left\Vert \left(R_{\left(x,h\right)}f\right)\left(\cdot,g\right)\right\Vert _{L^{p}\left(\mathbb{R}^{d}\right)}\right\Vert _{L^{q}\left(H\right)}\\
 & \leq v_{0}\left(1_{H}\right)v_{0}\left(h^{-1}\right)\cdot\left|\det\left(h\right)\right|^{\frac{1}{q}}\cdot\left\Vert g\mapsto\frac{v\left(gh\right)}{\left|\det\left(gh\right)\right|^{1/q}}\left\Vert f\left(\cdot,gh\right)\right\Vert _{L^{p}\left(\mathbb{R}^{d}\right)}\right\Vert _{L^{q}\left(H\right)}\\
 & =v_{0}\left(1_{H}\right)v_{0}\left(h^{-1}\right)\cdot\left|\det\left(h\right)\right|^{\frac{1}{q}}\cdot\left(\Delta_{H}\left(h\right)\right)^{-1/q}\cdot\left\Vert f\right\Vert _{L_{v}^{p,q}}\qquad\qquad\qquad\quad\;\qedhere
\end{align*}

\end{proof}
Using the $v_{0}$-moderateness of $v$ and boundedness of $v_{0}$
on compact sets (from below and above) one can easily establish the
same properties for $v$. These properties (as stated in the next
lemma) will be frequently used in the rest of the paper.
\begin{lem}
\label{lem:PseudoGewichtVerhaeltSichGut}Let $K\subset H$ be compact.
There is a constant $C=C\left(K,v_{0}\right)>0$ such that 
\[
\frac{v\left(g\right)}{v\left(h\right)}\leq C\qquad\text{holds for all }g,h\in K.
\]
Furthermore, there are $\alpha,\beta\in\left(0,\infty\right)$ (only
dependent on $v_{0}$ and $K$) with 
\[
\alpha\leq v\left(h\right)\leq\beta\qquad\text{for all }h\in K.
\]

\end{lem}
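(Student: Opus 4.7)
The plan is to derive both conclusions directly from the $v_{0}$-moderateness inequality $v(ghk)\leq v_{0}(g)\,v(h)\,v_{0}(k)$, combined with the assumption that the submultiplicative weight $v_{0}:H\to(0,\infty)$ is locally bounded (so, in particular, bounded above on every compact subset of $H$). The first observation I would make is that $v_{0}$ is automatically bounded \emph{away from zero} on compacta as well: from $v_{0}(1_{H})=v_{0}(h\cdot h^{-1})\leq v_{0}(h)\,v_{0}(h^{-1})$ we obtain $v_{0}(h)\geq v_{0}(1_{H})/v_{0}(h^{-1})$, and since $K^{-1}$ is compact whenever $K$ is, local boundedness of $v_{0}$ on $K^{-1}$ yields a uniform lower bound for $v_{0}$ on $K$. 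This reciprocal trick will be used twice in what follows.

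For the ratio estimate, given $g,h\in K$ I would write $g=(gh^{-1})\cdot h\cdot 1_{H}$ and apply $v_{0}$-moderateness to obtain
\[
v(g)\;\leq\;v_{0}(gh^{-1})\cdot v(h)\cdot v_{0}(1_{H}),
\]
so that $v(g)/v(h)\leq v_{0}(1_{H})\cdot v_{0}(gh^{-1})$. Since $gh^{-1}\in K\cdot K^{-1}$, and the latter set is compact, local boundedness of $v_{0}$ supplies a uniform bound $v_{0}(gh^{-1})\leq M$, and the claim follows with $C:=v_{0}(1_{H})\cdot M$.

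For the uniform two-sided bound on $v$, I would use the same identity with one of the factors fixed. The upper bound comes from $v(h)=v(h\cdot 1_{H}\cdot 1_{H})\leq v_{0}(h)\,v(1_{H})\,v_{0}(1_{H})$, which is bounded above by $\beta:=v(1_{H})\cdot v_{0}(1_{H})\cdot \sup_{h\in K}v_{0}(h)$, a finite quantity by local boundedness of $v_{0}$. The lower bound follows from applying moderateness in the reverse direction: $v(1_{H})=v(h^{-1}\cdot h\cdot 1_{H})\leq v_{0}(h^{-1})\,v(h)\,v_{0}(1_{H})$, hence
\[
v(h)\;\geq\;\frac{v(1_{H})}{v_{0}(1_{H})\cdot v_{0}(h^{-1})},
\]
and local boundedness of $v_{0}$ on the compact set $K^{-1}$ gives a uniform positive lower bound $\alpha$.

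There is no real obstacle here; the only subtle point is the need to control $v_{0}$ on $K^{-1}$ (for the lower bound on $v$) and on $K\cdot K^{-1}$ (for the ratio estimate), but both follow immediately from compactness of these sets and the local boundedness assumption on $v_{0}$. Throughout, the constants $\alpha$, $\beta$, $C$ depend only on $K$ and on the values of $v_{0}$ (together with the single number $v(1_{H})$), as required.
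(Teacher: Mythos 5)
Your argument is correct and is exactly the straightforward derivation the paper has in mind when it says these properties are "easily established" from $v_{0}$-moderateness together with local boundedness of $v_{0}$ on compacta; the paper itself gives no proof, so there is nothing to contrast. One small remark worth keeping: you correctly observe that $\alpha$ and $\beta$ must also depend on the value $v(1_{H})$ (they cannot depend only on $v_{0}$ and $K$, since replacing $v$ by $\lambda v$ rescales them), a minor imprecision in the lemma as stated, whereas the ratio constant $C$ genuinely depends only on $v_{0}$ and $K$; also, your opening observation that $v_{0}$ is bounded away from zero on compacta is true but never actually used in the subsequent steps, since all three bounds rely only on \emph{upper} bounds for $v_{0}$ on the compact sets $K$, $K^{-1}$, and $KK^{-1}$.
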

Additionally, we note some easy closure-properties of submultiplicative
functions that will be used below:
\begin{lem}
\label{lem:SubmultiplikativAbschlussEigenschaften}Let $w_{1},w_{2}:G\rightarrow\left(0,\infty\right)$
be submultiplicative. Then the same holds for $w_{1}\cdot w_{2}$
and $\max\left\{ w_{1},w_{2}\right\} $ as well as for $w_{1}^{\vee}:G\rightarrow\left(0,\infty\right),x\mapsto w_{1}\left(x^{-1}\right)$. 
\end{lem}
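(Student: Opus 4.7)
The lemma asserts three closure properties of submultiplicative weights under pointwise product, pointwise maximum, and the involution $w \mapsto w^{\vee}$. Each of these reduces to a direct chain of inequalities starting from the defining property $w(xy) \le w(x) w(y)$, so I would prove all three in a single short paragraph without needing auxiliary constructions.

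For the product, I would simply write
\[
(w_1 \cdot w_2)(xy) = w_1(xy)\, w_2(xy) \le w_1(x) w_1(y) w_2(x) w_2(y) = (w_1 w_2)(x) \cdot (w_1 w_2)(y),
\]
using only submultiplicativity of each factor and commutativity of multiplication in $(0,\infty)$. For the maximum, the key observation is that for each fixed $i \in \{1,2\}$ one has $w_i(x) \le \max\{w_1(x), w_2(x)\}$ and likewise for $y$, so
\[
\max\{w_1, w_2\}(xy) \le \max\bigl\{ w_1(x) w_1(y),\, w_2(x) w_2(y) \bigr\} \le \max\{w_1, w_2\}(x) \cdot \max\{w_1, w_2\}(y).
\]

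For the involution, the computation uses submultiplicativity after inverting the product: since $(xy)^{-1} = y^{-1} x^{-1}$, we get
\[
w_1^{\vee}(xy) = w_1(y^{-1} x^{-1}) \le w_1(y^{-1}) \, w_1(x^{-1}) = w_1^{\vee}(x) \cdot w_1^{\vee}(y),
\]
where the final equality is again just commutativity of multiplication of positive reals. There is no genuine obstacle here; the only point worth flagging is that the order-reversal in $(xy)^{-1} = y^{-1} x^{-1}$ is harmless precisely because the codomain of $w_1$ is the abelian group $(0,\infty)$, so no noncommutativity in $G$ can spoil the estimate.
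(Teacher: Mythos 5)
Your proof is correct and is exactly the routine verification the paper leaves implicit (the lemma is stated without proof there, as ``easy closure-properties''). All three estimates are the straightforward chains one would expect, and your remark about why the order-reversal $(xy)^{-1}=y^{-1}x^{-1}$ is harmless is accurate.
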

With these preparations, we can now show that the space $L_{v}^{p,q}\left(G\right)$
satisfies all requirements of coorbit theory (with the exception of
the requirement $\mathcal{A}_{w}\neq\left\{ 0\right\} \neq\mathcal{B}_{w}$
which we will establish in Theorem \ref{thm:ZulaessigkeitVonbandbeschraenktenFunktionen}
below).
\begin{lem}
\label{lem:CoorbitVoraussetzungen}For $u:H\rightarrow\left(0,\infty\right)$
we set 
\[
u^{+}:H\rightarrow\left(0,\infty\right),h\mapsto\max\left\{ u\left(h\right),u\left(h^{-1}\right)\right\} .
\]
Let 
\[
w:H\rightarrow\left(0,\infty\right),h\mapsto v_{0}\left(1_{H}\right)\cdot v_{0}^{+}\!\left(h\right)\cdot\left|\det\left(\cdot\right)\right|^{+}\!\!\left(h\right)\cdot\Delta_{H}^{+}\left(h\right).
\]
Then $L_{v}^{p,q}\left(G\right)$ is a solid Banach function space,
$w$ is a (locally bounded, measurable) submultiplicative weight that
satisfies 
\begin{equation}
\max\left\{ \vertiii{L_{\left(x,h\right)}},\vertiii{\smash{L_{\left(x,h\right)^{-1}}}},\vertiii{\smash{R_{\left(x,h\right)}}},\vertiii{\smash{R_{\left(x,h\right)^{-1}}}}\cdot\Delta_{G}\left(\smash{\left(x,h\right)^{-1}}\right)\right\} \leq w\left(h\right)\label{eq:FelixGewichtDominiertFeichtingerGewicht}
\end{equation}
for all $\left(x,h\right)\in G$, where we have written $\vertiii T:=\left\Vert T\right\Vert _{L_{v}^{p,q}\rightarrow L_{v}^{p,q}}$.

Interpreting $w$ as a submultiplicative weight on $G$, we have 
\begin{equation}
\left\Vert f\ast g\right\Vert _{L_{v}^{p,q}}\leq\left\Vert f\right\Vert _{L_{v}^{p,q}}\cdot\left\Vert g^{\vee}\right\Vert _{L_{w}^{1}}\label{eq:FaltungsRelationGemischterLebesgueRaum}
\end{equation}
for all $f\in L_{v}^{p,q}\left(G\right)$ and $g\in\left(L_{w}^{1}\left(G\right)\right)^{\vee}$,
where we used the notation $g^{\vee}:G\rightarrow\mathbb{C},x\mapsto g\left(x^{-1}\right)$.
Here, the integral defining the convolution converges (absolutely)
for almost every $\left(x,h\right)\in G$.\end{lem}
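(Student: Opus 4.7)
The plan is to dispatch the four assertions in order: solidity and Banach-space structure, submultiplicativity and local boundedness of $w$, the translation-norm estimate \eqref{eq:FelixGewichtDominiertFeichtingerGewicht}, and finally the convolution relation \eqref{eq:FaltungsRelationGemischterLebesgueRaum}. The solidity and Banach-space statement will follow directly from the mixed-norm definition: if $|f|\leq|g|$ pointwise a.e., then $\|f(\cdot,h)\|_{L^p}\leq\|g(\cdot,h)\|_{L^p}$ for a.e.\ $h$, and an application of the monotonicity of the weighted $L^q(H)$-norm gives solidity; completeness is the usual iterated Fischer--Riesz argument (or can be quoted from standard references on mixed-norm spaces).

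For submultiplicativity and local boundedness of $w$, I would invoke Lemma \ref{lem:SubmultiplikativAbschlussEigenschaften} repeatedly. Each of $v_{0}$, $|\det(\cdot)|$ and $\Delta_{H}$ is submultiplicative on $H$ (the last two are group homomorphisms into $(0,\infty)$, hence even multiplicative); by the lemma, replacing any one of them by its $\vee$-version and forming $u^{+}=\max\{u,u^{\vee}\}$ preserves submultiplicativity, and products of submultiplicative functions are again submultiplicative. Local boundedness of $w$ reduces to the local boundedness of $v_{0}$ (assumed) and the continuity of $h\mapsto|\det(h)|$ and $h\mapsto\Delta_{H}(h)$.

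The inequality \eqref{eq:FelixGewichtDominiertFeichtingerGewicht} is exactly where Lemma \ref{lem:GemischterLebesgueRaumLinksRechtsTranslation} is put to use. The bounds on $\vertiii{L_{(x,h)}}$ and $\vertiii{R_{(x,h)}}$ are stated there directly, while $\vertiii{L_{(x,h)^{-1}}}$ and $\vertiii{R_{(x,h)^{-1}}}$ are obtained by replacing $(x,h)$ with $(x,h)^{-1}=(-h^{-1}x,h^{-1})$. It then remains to verify that each of the four resulting factors is dominated by $v_{0}(1_{H})\cdot v_{0}^{+}(h)\cdot|\det(\cdot)|^{+}(h)\cdot\Delta_{H}^{+}(h)$. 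The key elementary observation is that for any $\alpha\in[-1,1]$ and any positive number $t$ one has $t^{\alpha}\leq\max\{t,t^{-1}\}$, because $\max\{t,t^{-1}\}\geq1$ and the function $\alpha\mapsto t^{\alpha}$ is monotone. Since $1/p-1/q$, $\pm 1/q$ and $1-1/q$ all lie in $[-1,1]$, this observation applied to $t=|\det(h)|$ and $t=\Delta_{H}(h)$ immediately yields each of the four bounds; the case of $R_{(x,h)^{-1}}$ additionally absorbs the extra factor $\Delta_{G}((x,h)^{-1})=\Delta_{H}(h)^{-1}\cdot|\det(h)|$ using \eqref{eq:SemiDirektModularFunktion} before applying the same elementary inequality.

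The convolution bound is then a clean consequence of \eqref{eq:FelixGewichtDominiertFeichtingerGewicht}. Substituting $y=xz$ in the defining integral $(f\ast g)(x)=\int_{G}f(y)g(y^{-1}x)\,\mathrm{d}y$ (left Haar measure is invariant under this substitution) rewrites the convolution as
\[
(f\ast g)(x)=\int_{G}g(z^{-1})(R_{z}f)(x)\,\mathrm{d}z.
\]
Applying the generalized Minkowski inequality in the solid Banach function space $L_{v}^{p,q}(G)$ and invoking $\vertiii{R_{z}}\leq w(z)$ from \eqref{eq:FelixGewichtDominiertFeichtingerGewicht} gives
\[
\|f\ast g\|_{L_{v}^{p,q}}\leq\int_{G}|g(z^{-1})|\cdot\|R_{z}f\|_{L_{v}^{p,q}}\,\mathrm{d}z\leq\|f\|_{L_{v}^{p,q}}\cdot\int_{G}|g^{\vee}(z)|w(z)\,\mathrm{d}z,
\]
which is precisely \eqref{eq:FaltungsRelationGemischterLebesgueRaum}. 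The a.e.\ absolute convergence of the defining integral for $(f\ast g)(x)$ drops out of the same chain of estimates applied to $|f|\ast|g|$ together with Tonelli's theorem. The main technical care lies in the bookkeeping of the exponent estimates in step three; everything else is an application of the preceding lemmas.
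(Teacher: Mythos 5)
Your proposal is correct and follows essentially the same route as the paper's proof: closure properties of submultiplicative weights for the structure of $w$, the elementary inequality $t^{\alpha}\leq\max\{t,t^{-1}\}$ for $\alpha\in[-1,1]$ combined with the explicit translation bounds of Lemma~\ref{lem:GemischterLebesgueRaumLinksRechtsTranslation} and the formula~\eqref{eq:SemiDirektModularFunktion} for~\eqref{eq:FelixGewichtDominiertFeichtingerGewicht}, and a Minkowski-type argument together with $\vertiii{R_z}\leq w(z)$ for the convolution estimate. The only place the paper is more explicit is in the Minkowski step, where rather than invoking a blanket ``generalized Minkowski inequality for $L_v^{p,q}$'' it applies Minkowski's integral inequality first in $L^p(\mathbb{R}^d)$ and then in the weighted $L^q(H)$ norm, using solidity to pass from the pointwise majorant to the norm estimate; your version is correct but should spell this iteration out.
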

\begin{rem*}
In summary, this shows that $w$ is a suitable \textbf{control weight}
for $L_{v}^{p,q}\left(G\right)$ in the sense of \cite{FeichtingerCoorbit1}
(cf. \cite[equations (3.1) and (4.10)]{FeichtingerCoorbit1} and note
that any weight dominating a control weight is again an admissible
control weight by \cite[Theorem 4.2(iii)]{FeichtingerCoorbit1}).\end{rem*}
\begin{proof}
We first note that $v_{0}\left(1_{H}\right)=v_{0}\left(1_{H}1_{H}\right)\leq v_{0}\left(1_{H}\right)\cdot v_{0}\left(1_{H}\right)$
implies that the constant map $h\mapsto v_{0}\left(1_{H}\right)$
is submultiplicative. Now Lemma \ref{lem:SubmultiplikativAbschlussEigenschaften}
easily shows (with the multiplicativity of $\left|\det\left(\cdot\right)\right|$
and $\Delta_{H}$) that $w$ is submultiplicative. As $\left|\det\left(\cdot\right)\right|$
and $\Delta_{H}$ are continuous and $v_{0}$ is locally bounded and
measurable, the same is true of $w$.

We first prove inequality (\ref{eq:FelixGewichtDominiertFeichtingerGewicht}).
To this end, we notice that for $\alpha\in\left(0,\infty\right)$
and $r\in\left[-1,1\right]$ the inequality $\max\left\{ \alpha,\alpha^{-1}\right\} \geq1$
yields the estimate 
\begin{equation}
\alpha^{r} 
 \leq \max\left\{ \alpha,\alpha^{-1}\right\} .\label{eq:PotenzAbschaetzung}
\end{equation}
In the following we will apply this for $\alpha=\left|\det\left(h\right)\right|$
or $\alpha=\Delta_{H}\left(h\right)$.

Lemma \ref{lem:GemischterLebesgueRaumLinksRechtsTranslation} together
with $-1\leq-\frac{1}{q}\leq\frac{1}{p}-\frac{1}{q}\leq\frac{1}{p}\leq1$
shows 
\begin{eqnarray*}
\left\Vert L_{\left(x,h\right)}\right\Vert _{L_{v}^{p,q}\rightarrow L_{v}^{p,q}} & \leq & v_{0}\left(h\right)\cdot v_{0}\left(1_{H}\right)\cdot\left|\det\left(h\right)\right|^{\frac{1}{p}-\frac{1}{q}}\\
 & \overset{\text{Eq. }\eqref{eq:PotenzAbschaetzung}}{\leq} & v_{0}\left(1_{H}\right)\cdot\max\left\{ v_{0}\left(h\right),v_{0}\left(h^{-1}\right)\right\} \cdot\max\left\{ \left|\det\left(h\right)\right|,\left|\det\left(h^{-1}\right)\right|\right\} \\
 & \leq & w\left(h\right),
\end{eqnarray*}
where we used $\max\left\{ \Delta_{H}\left(h\right),\Delta_{H}\left(h^{-1}\right)\right\} \geq1$.

In the same way, equation (\ref{eq:PotenzAbschaetzung}) and Lemma
\ref{lem:GemischterLebesgueRaumLinksRechtsTranslation} imply $\left\Vert R_{\left(x,h\right)}\right\Vert _{L_{v}^{p,q}\rightarrow L_{v}^{p,q}}\leq w\left(x,h\right)$.
By symmetry of $w$, we also get $\left\Vert \smash{L_{\left(x,h\right)^{-1}}}\right\Vert _{L_{v}^{p,q}\rightarrow L_{v}^{p,q}}\leq w\left(\smash{\left(x,h\right)^{-1}}\right)=w\left(x,h\right)$.
Finally, by Lemma \ref{lem:GemischterLebesgueRaumLinksRechtsTranslation}
and because of $1-\frac{1}{q}\in\left[-1,1\right]$, we arrive at
\begin{eqnarray*}
    &  & \left\Vert \smash{R_{\left(x,h\right)^{-1}}}\right\Vert _{L_{v}^{p,q}\rightarrow L_{v}^{p,q}}\cdot\Delta_{G}\left(\smash{\left(x,h\right)^{-1}}\right)\\
 & \leq & v_{0}\left(1_{H}\right)\cdot v_{0}\left(h\right)\cdot\left|\det\left(h\right)\right|^{-\frac{1}{q}}\cdot\left(\Delta_{H}\left(h\right)\right)^{1/q}\cdot\left(\Delta_{G}\left(x,h\right)\right)^{-1}\\
 & \overset{\text{Eq. }\eqref{eq:SemiDirektModularFunktion}}{=} & v_{0}\left(1_{H}\right)\cdot v_{0}\left(h\right)\cdot\left|\det\left(h\right)\right|^{-\frac{1}{q}}\cdot\left(\Delta_{H}\left(h\right)\right)^{1/q}\cdot\left(\Delta_{H}\left(h\right)\right)^{-1}\cdot\left|\det\left(h\right)\right|\\
 & = & v_{0}\left(1_{H}\right)\cdot v_{0}\left(h\right)\cdot\left|\det\left(h\right)\right|^{1-\frac{1}{q}}\cdot\left(\Delta_{H}\left(h\right)\right)^{\frac{1}{q}-1}\\
 & \leq & w\left(h\right).
\end{eqnarray*}

The Banach function space properties of $L_{v}^{p,q}$ are routinely
checked. Finally, we establish the convolution relation (\ref{eq:FaltungsRelationGemischterLebesgueRaum}).
Here we first observe the identity 
\begin{eqnarray*}
F\left(x,h\right) & := & \int_{G}\left|f\left(y,k\right)\cdot g\left(\left(y,k\right)^{-1}\left(x,h\right)\right)\right|\,\text{d}\left(y,k\right)\\
 & = & \int_{G}\left|f\left(\left(x,h\right)\left(y,k\right)\right)\cdot g^{\vee}\left(y,k\right)\right|\,\text{d}\left(y,k\right)\in\left[0,\infty\right]
\end{eqnarray*}
which is valid by left invariance. Now Minkowski's inequality for
integrals (cf. \cite[Theorem 6.19]{FollandRA}) with 
\[
\text{d}\nu:=\frac{v\left(h\right)}{\left|\det\left(h\right)\right|^{1/q}}\cdot\text{d}h
\]
yields (together with the solidity of $L^{q}\left(\nu\right)$) the
estimate 
\begin{eqnarray}
\left\Vert F\right\Vert _{L_{v}^{p,q}} & = & \left\Vert h\mapsto\left\Vert x\mapsto\int_{G}\left|f\left(\left(x,h\right)\left(y,k\right)\right)\cdot g^{\vee}\left(y,k\right)\right|\,\text{d}\left(y,k\right)\right\Vert _{L^{p}\left(\mathbb{R}^{d}\right)}\right\Vert _{L^{q}\left(\nu\right)}\nonumber \\
 & \leq & \left\Vert h\mapsto\int_{G}\left\Vert x\mapsto\left|f\left(\left(x,h\right)\left(y,k\right)\right)\cdot g^{\vee}\left(y,k\right)\right|\right\Vert _{L^{p}\left(\mathbb{R}^{d}\right)}\,\text{d}\left(y,k\right)\right\Vert _{L^{q}\left(\nu\right)}\nonumber \\
 & \leq & \int_{G}\left\Vert h\mapsto\left\Vert x\mapsto\left|f\left(\left(x,h\right)\left(y,k\right)\right)\cdot g^{\vee}\left(y,k\right)\right|\right\Vert _{L^{p}\left(\mathbb{R}^{d}\right)}\right\Vert _{L^{q}\left(\nu\right)}\,\text{d}\left(y,k\right)\nonumber \\
 & = & \int_{G}\left|g^{\vee}\left(y,k\right)\right|\cdot\left\Vert R_{\left(y,k\right)}f\right\Vert _{L_{v}^{p,q}}\,\text{d}\left(y,k\right)\nonumber \\
 & \overset{\text{Eq. }\eqref{eq:FelixGewichtDominiertFeichtingerGewicht}}{\leq} & \int_{G}\left|g^{\vee}\left(y,k\right)\right|\cdot w\left(k\right)\cdot\left\Vert f\right\Vert _{L_{v}^{p,q}}\,\text{d}\left(y,k\right)\nonumber \\
 & = & \left\Vert f\right\Vert _{L_{v}^{p,q}\left(G\right)}\cdot\left\Vert g^{\vee}\right\Vert _{L_{w}^{1}\left(G\right)}<\infty.\label{eq:GemischteLebesgueRaeumeFaltung}
\end{eqnarray}
In particular, we conclude $F\left(x,h\right)<\infty$ for almost
every (depending on $h$) $x\in\mathbb{R}^{d}$ for $\nu$-almost
every $h\in H$. Sine we have $\frac{v\left(h\right)}{\left|\det\left(h\right)\right|^{1/q}}>0$
for all $h\in H$ and because $F$ is measurable (which is implied
by Fubini's theorem), we see $F\left(x,h\right)<\infty$ for almost
every $\left(x,h\right)\in G$. Thus, the convolution-defining integral
\[
\left(f\ast g\right)\left(x,h\right)=\int_{G}f\left(y,k\right)\cdot g\left(\left(y,k\right)^{-1}\left(x,h\right)\right)\,\text{d}\left(y,k\right)
\]
converges absolutely for almost every $\left(x,h\right)\in G$ with
$\left|\left(f\ast g\right)\left(x,h\right)\right|\leq F\left(x,h\right)$.
By solidity of $L_{v}^{p,q}$, this implies $f\ast g\in L_{v}^{p,q}\left(G\right)$
and 
\[
\left\Vert f\ast g\right\Vert _{L_{v}^{p,q}}\leq\left\Vert F\right\Vert _{L_{v}^{p,q}}\leq\left\Vert f\right\Vert _{L_{v}^{p,q}}\cdot\left\Vert g^{\vee}\right\Vert _{L_{w}^{1}\left(G\right)}<\infty.\qedhere
\]

\end{proof}

\section{Admissibility of $\mathcal{F}^{-1}\left(\mathcal{D}\left(\mathcal{O}\right)\right)$
as analyzing vectors and identification of $\left(\mathcal{H}_{w}^{1}\right)^{\neg}$
with a subspace of $\mathcal{D}'\left(\mathcal{O}\right)$}

\label{sec:AdmissibleVectors}In this section we show that any Schwartz
function $\psi\in\mathcal{S}\left(\mathbb{R}^{d}\right)$ whose Fourier
transform $\widehat{\psi}$ is compactly supported in the dual orbit
$\mathcal{O}$ is admissible as an analyzing vector. This will also
allow us to identify the ``reservoir'' $\left(\mathcal{H}_{w}^{1}\right)^{\neg}$
that is used in the definition of coorbit spaces with (a subspace
of) the space of distributions $\mathcal{D}'\left(\mathcal{O}\right)$
on the dual orbit $\mathcal{O}$ as well as with a subspace of $\left(\mathcal{F}\left(\mathcal{D}\left(\mathcal{O}\right)\right)\right)'$.

Before we go into the details of the proof, we recall some important
definitions related to coorbit theory. First of all we recall the
definition of the set of \textbf{analyzing vectors} 
\[
\mathcal{A}_{w}:=\left\{ \psi\in L^{2}\left(\mathbb{R}^{d}\right)\with W_{\psi}\psi\in L_{w}^{1}\left(G\right)\right\} 
\]
and of the set of ``\textbf{better vectors}'' 
\[
\mathcal{B}_{w}:=\left\{ \psi\in L^{2}\left(\mathbb{R}^{d}\right)\with W_{\psi}\psi\in W^{R}\left(L^{\infty},L_{w}^{1}\right)\right\} 
\]
from \cite[pages 317 and 321]{FeichtingerCoorbit1}.

Here, we use the notion of the so-called \textbf{Wiener amalgam space}
$W^{R}\left(L^{\infty},Y\right)$ for a solid Banach function space
$Y$ (cf. \cite[pages 312 and 315]{FeichtingerCoorbit1}). For the
definition of this space, let $U\subset G$ be an open, precompact
unit-neighborhood. For $f:G\rightarrow\mathbb{C}$ we define the \textbf{(right
sided) control function} $K_{U}f$ of $f$ with respect to $U$ by
\begin{equation}
K_{U}f:G\rightarrow\left[0,\infty\right],x\mapsto\left\Vert \chi_{Ux}f\right\Vert _{L^{\infty}\left(G\right)}.\label{eq:RechtsseitigeWienerKontrollFunktion}
\end{equation}
The \textbf{(right sided) Wiener amalgam space} with local component
$L^{\infty}$ and global component $Y$ is then defined by 
\[
W^{R}\left(L^{\infty},Y\right):=\left\{ f\in L_{\text{loc}}^{\infty}\left(G\right)\with K_{U}f\in Y\right\} 
\]
with norm $\left\Vert f\right\Vert _{W^{R}\left(L^{\infty},Y\right)}:=\left\Vert K_{U}f\right\Vert _{Y}$.
Here one should note that (as long as $G$ is first countable) $K_{U}f$
is a lower semicontinuous (and hence measurable) function. Then $W^{R}\left(L^{\infty},Y\right)$
is a Banach space that is independent of the actual choice of $U$
and is continuously embedded in $Y$. These properties are shown in
\cite[Lemma 2.1 and Lemma 2.2 together with Theorem 2.3]{RauhutWienerAmalgam}
for the left sided Wiener amalgam space
\[
W\left(L^{\infty},Y\right)=\left\{ f\in L_{\text{loc}}^{\infty}\left(G\right)\with K_{U}'f\in Y\right\} ,
\]
where the (left sided) control function $K_{U}'f$ of $f$ with respect
to $U$ is defined by
\begin{equation}
K_{U}'f:G\rightarrow\left[0,\infty\right],x\mapsto\left\Vert \chi_{xU}f\right\Vert _{L^{\infty}\left(G\right)}.\label{eq:LinksseitigeWienerKontrollFunktion}
\end{equation}
Note that we have
\[
    \left(K_{U}f\right)(x) = \left\Vert \chi_{Ux}f\right\Vert _{L^{\infty}\left(G\right)}=\left\Vert \chi_{x^{-1}U^{-1}}\cdot f^{\vee}\right\Vert _{L^{\infty}\left(G\right)}=\left(K_{U^{-1}}'\left(f^{\vee}\right)\right)\left(x^{-1}\right)=\left(K_{U^{-1}}'\left(f^{\vee}\right)\right)^{\vee}\left(x\right)
\]
and hence
\[
    \left\Vert f\right\Vert _{W^{R}\left(L^{\infty},Y\right)}=\left\Vert \left(K_{U^{-1}}'\left(f^{\vee}\right)\right)^{\vee}\right\Vert _{Y}=\left\Vert K_{U^{-1}}'\left(f^{\vee}\right)\right\Vert _{Y^{\vee}}=\left\Vert f^{\vee}\right\Vert _{W\left(L^{\infty},Y^{\vee}\right)}=\left\Vert f\right\Vert _{\left(W\left(L^{\infty},Y^{\vee}\right)\right)^{\vee}}.
\]
Thus, one can easily derive the analogous properties for the right
sided amalgam spaces.

We mention that in \cite{FeichtingerCoorbit1}, Feichtinger uses continuous
functions $k\in C_{c}\left(G\right)$ as a cut-off for localization
instead of the simple characteristic function $\chi_{U}$ that we
use. As we use $L^{\infty}\left(G\right)$ as the local component,
this makes no difference.

Below, we will show that any Schwartz function $\psi$ with Fourier
transform $\widehat{\psi}\in\mathcal{D}\left(\mathcal{O}\right)$
already satisfies $\psi\in\mathcal{B}_{w}\subset\mathcal{A}_{w}$
for every (locally bounded, submultiplicative) weight $w:G\rightarrow\left(0,\infty\right)$
that only depends on the second component, i.e. which satisfies $w\left(x,h\right)=w\left(h\right)$
for $\left(x,h\right)\in G$. This shows in particular that $\mathcal{B}_{w}$
is nontrivial, which closes the gap for the applicability of coorbit
theory that was left open in section \ref{sec:ApplicabilityOfCoorbitTheory}
(cf. Lemma \ref{lem:CoorbitVoraussetzungen}).

Moreover, we show that the map 
\[
\mathcal{F}^{-1}:\mathcal{D}\left(\mathcal{O}\right)\rightarrow\mathcal{H}_{w}^{1},g\mapsto\mathcal{F}^{-1}g
\]
is well-defined and continuous, where $\mathcal{H}_{w}^{1}$ is defined
by 
\[
\mathcal{H}_{w}^{1}:=\left\{ f\in L^{2}\left(\mathbb{R}^{d}\right)\with W_{\psi}f\in L_{w}^{1}\left(G\right)\right\} 
\]
for some fixed analyzing vector $\psi\in\mathcal{A}_{w}\setminus\left\{ 0\right\} $
with norm $\left\Vert f\right\Vert _{\mathcal{H}_{w}^{1}}:=\left\Vert W_{\psi}f\right\Vert _{L_{w}^{1}\left(G\right)}$,
cf. \cite[page 317]{FeichtingerCoorbit1}. This will allow us to show
that for $f\in\left(\mathcal{H}_{w}^{1}\right)^{\neg}$ (where $\left(\mathcal{H}_{w}^{1}\right)^{\neg}$
denotes the space of bounded, \emph{anti}linear functionals on $\mathcal{H}_{w}^{1}$)
the map 
\begin{equation}
\mathcal{F}f:\mathcal{D}\left(\mathcal{O}\right)\rightarrow\mathbb{C},g\mapsto f\left(\mathcal{F}^{-1}\overline{g}\right)\label{eq:FouriertrafoAufFeichtingerReservoir}
\end{equation}
is well-defined, linear and continuous, i.e. an element of the space
of distributions $\mathcal{D}'\left(\mathcal{O}\right)$. This definition
of $\mathcal{F}f$ may seem akward at first, but it is natural; see
Remark \ref{rem:SpezialFourierTrafoIsFortsetzungVonNormaler} below.

Recall that the coorbit space $\text{Co}\left(L_{v}^{p,q}\right)$
is defined by 
\[
\text{Co}\left(L_{v}^{p,q}\right)=\left\{ f\in\left(\mathcal{H}_{w}^{1}\right)^{\neg}\with W_{\psi}f\in L_{v}^{p,q}\left(G\right)\right\} 
\]
for some fixed $\psi\in\mathcal{A}_{w}\setminus\left\{ 0\right\} $
and a control weight $w:G\rightarrow\left(0,\infty\right)$ for $L_{v}^{p,q}\left(G\right)$.
Thus, the map 
\[
\mathcal{F}:\text{Co}\left(L_{v}^{p,q}\right)\rightarrow\mathcal{D}'\left(\mathcal{O}\right),f\mapsto\mathcal{F}f\text{ with }\mathcal{F}f\text{ as defined in equation }\eqref{eq:FouriertrafoAufFeichtingerReservoir}
\]
is well-defined. In section \ref{sec:CoorbitAsDecomposition} we will show that it is indeed
well-defined and bounded as a map into the decomposition space $\mathcal{D}\left(\mathcal{Q},L^{p},\ell_{u}^{q}\right)$,
where $\mathcal{Q}$ is a suitable covering of $\mathcal{O}$ induced
by $H$ and where $u:\mathcal{O}\rightarrow\left(0,\infty\right)$
is a suitably chosen weight.
\begin{rem}
\label{rem:SpezialFourierTrafoIsFortsetzungVonNormaler}The definition
of $\mathcal{F}f$ in equation (\ref{eq:FouriertrafoAufFeichtingerReservoir})
(denoted as $\mathcal{F}_{\left(\mathcal{H}_{w}^{1}\right)^{\neg}}f$
in this remark to distinguish it from the ``ordinary'' Fourier transform)
is natural, because we have the embedding 
\[
\mathcal{H}_{w}^{1}=\left\{ f\in L^{2}\left(\mathbb{R}^{d}\right)\with W_{\psi}f\in L_{w}^{1}\left(G\right)\right\} \;\overset{f\mapsto f}{\hookrightarrow}\; L^{2}\left(\mathbb{R}^{d}\right)\;\overset{f\mapsto\left\langle \cdot,f\right\rangle _{\text{anti}}}{\hookrightarrow}\;\left(\mathcal{H}_{w}^{1}\right)^{\neg},
\]
where $\left\langle \cdot,\cdot\right\rangle _{\text{anti}}$ is linear
in the second variable and antilinear in the first (cf. \cite[equation (4.1)]{FeichtingerCoorbit1}).

Thus, for $f\in L^{2}\left(\mathbb{R}^{d}\right)\hookrightarrow\left(\mathcal{H}_{w}^{1}\right)^{\neg}$
and $\psi\in\mathcal{D}\left(\mathcal{O}\right)$ we have 
\begin{eqnarray*}
\left(\mathcal{F}_{\left(\mathcal{H}_{w}^{1}\right)^{\neg}}f\right)\left(\psi\right) & \overset{\text{Eq. }\eqref{eq:FouriertrafoAufFeichtingerReservoir}}{=} & f\left(\mathcal{F}^{-1}\overline{\psi}\right)\\
 & = & \left\langle \mathcal{F}^{-1}\overline{\psi},f\right\rangle _{\text{anti}}\\
 & = & \left\langle f,\mathcal{F}^{-1}\overline{\psi}\right\rangle _{L^{2}\left(\mathbb{R}^{d}\right)}\\
 & \overset{\text{Plancherel}}{=} & \left\langle \mathcal{F}f,\overline{\psi}\right\rangle _{L^{2}\left(\mathbb{R}^{d}\right)}\\
 & = & \left\langle \mathcal{F}f,\psi\right\rangle _{\mathcal{S}',\mathcal{S}},
\end{eqnarray*}
where $\left\langle \cdot,\cdot\right\rangle _{\mathcal{S}',\mathcal{S}}$
denotes the \emph{bilinear(!)} pairing between $\mathcal{S}'\left(\mathbb{R}^{d}\right)$
and $\mathcal{S}\left(\mathbb{R}^{d}\right)$.

This shows that $\mathcal{F}_{\left(\mathcal{H}_{w}^{1}\right)^{\neg}}$
and the ``ordinary'' Fourier transform agree on $L^{2}\left(\mathbb{R}^{d}\right)\subset\left(\mathcal{H}_{w}^{1}\right)^{\neg}$.
\end{rem}
In order to show that every Schwartz function $\psi$ whose Fourier
transform is compactly supported in $\mathcal{O}$ is admissible as
an analyzing vector, we will need the (well known) fact that the \textbf{orbit
map} 
\begin{equation}
p_{\xi_{0}}:H\rightarrow\mathcal{O},h\mapsto h^{T}\xi_{0}\label{eq:DefinitionOrbitProjektion}
\end{equation}
is a proper map, i.e. for $K\subset\mathcal{O}$ compact the preimage
$p_{\xi_{0}}^{-1}\left(K\right)\subset H$ is also compact. We will
see that this is a consequence of our admissibility assumptions on
$H$, more precisely of the compactness of the isotropy group $H_{\xi_{0}}\leq H$
and of the fact that the orbit $\mathcal{O}$ is an open subset of
$\mathbb{R}^{d}$. This is the first point (apart from the applicability
of coorbit theory), where we actually use these assumptions.
\begin{lem}
\label{lem:OrbitProjektionIstProper}For a compact set $K\subset\mathcal{O}$
the inverse image $p_{\xi_{0}}^{-1}\left(K\right)\subset H$ is also
compact.\end{lem}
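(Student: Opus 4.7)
The plan is to factor the orbit map through the quotient $H/H_{\xi_{0}}$ and reduce the claim to the general fact that quotient maps by compact subgroups are proper. First, I would observe that $H$, being a closed subgroup of the second-countable locally compact group $\text{GL}\left(\mathbb{R}^{d}\right)$, is itself second-countable and hence $\sigma$-compact, while $\mathcal{O}$ is locally compact Hausdorff by virtue of being open in $\mathbb{R}^{d}$. The orbit map factors as $p_{\xi_{0}} = \tilde{p} \circ \pi$, where $\pi \colon H \to H/H_{\xi_{0}}$ is the canonical quotient map and $\tilde{p} \colon H/H_{\xi_{0}} \to \mathcal{O}$ is a continuous bijection (the latter by our standing admissibility assumption that the action of $H$ on $\mathcal{O}$ is transitive).

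Second, I would invoke the classical theorem (see e.g.\ Folland, \emph{A Course in Abstract Harmonic Analysis}, Theorem 2.46) which asserts that under the above hypotheses on $H$ and $\mathcal{O}$ together with continuity and transitivity of the action, the induced map $\tilde{p}$ is a homeomorphism. Thus for compact $K \subset \mathcal{O}$, the set $\tilde{K} := \tilde{p}^{-1}\left(K\right) \subset H/H_{\xi_{0}}$ is compact, and it remains to show that $\pi^{-1}\!\left(\tilde{K}\right)$ is compact in $H$.

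For this last step, given $y \in \tilde{K}$, I pick $h_{y} \in \pi^{-1}\left(\left\{y\right\}\right)$ together with a compact neighborhood $W_{y}$ of $h_{y}$ in $H$. Since $\pi$ is open (a standard property of quotient maps in topological groups), $\pi\left(W_{y}\right)$ is a neighborhood of $y$. By compactness of $\tilde{K}$ finitely many $\pi\left(W_{y_{1}}\right),\dots,\pi\left(W_{y_{n}}\right)$ cover $\tilde{K}$, so $\pi^{-1}\!\left(\tilde{K}\right) \subset \bigcup_{i} W_{y_{i}} H_{\xi_{0}}$. Each $W_{y_{i}} H_{\xi_{0}}$ is compact as the image of the compact product $W_{y_{i}} \times H_{\xi_{0}}$ under group multiplication --- here compactness of $H_{\xi_{0}}$ is essential. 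Since $\pi^{-1}\!\left(\tilde{K}\right)$ is closed in $H$ (as the preimage of the closed set $\tilde{K}$ in the Hausdorff space $H/H_{\xi_{0}}$ under the continuous map $\pi$), it is a closed subset of a compact set, hence itself compact.

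The main obstacle is the invocation of the Effros--Mackey type theorem in the second step: that is the place where both $\sigma$-compactness of $H$ and openness of $\mathcal{O}$ in $\mathbb{R}^{d}$ (yielding its local compactness) genuinely enter, via the underlying Baire category argument showing that $\tilde{p}$ is open. If one wished to avoid this citation, a direct sequential argument works as follows: given $\left(h_{n}\right) \subset p_{\xi_{0}}^{-1}\!\left(K\right)$, extract a subsequence with $h_{n}^{T}\xi_{0} \to g^{T}\xi_{0} \in K$ for some $g \in H$, replace $h_{n}$ by $h_{n} g^{-1}$ to reduce to the case $h_{n}^{T}\xi_{0} \to \xi_{0}$, and then use the same Baire argument locally at $1_{H}$ together with compactness of $H_{\xi_{0}}$ to extract a convergent subsequence.
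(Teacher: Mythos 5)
Your proof is correct, but it takes a genuinely different route from the paper's. Both arguments hinge on establishing that the orbit map is open, but you get there in a different way. The paper invokes the closed subgroup theorem to give $H$ a Lie group structure, then argues via the constant rank theorem and the global rank theorem that $p_{\xi_{0}}$ is a submersion, hence an open map; it then extracts properness directly from an exhaustion of $H$ by precompact opens $U_{n}$, showing $p_{\xi_{0}}^{-1}(K)\subset H_{\xi_{0}}\overline{U_{n}}$ for suitable $n$. You instead invoke the classical Effros--Mackey type result (Folland, AHA, Thm.\ 2.46), whose proof rests on a Baire category argument using $\sigma$-compactness of $H$ and local compactness of $\mathcal{O}$, to conclude that the induced map $\tilde{p}\colon H/H_{\xi_{0}}\to\mathcal{O}$ is a homeomorphism; you then reduce properness of $p_{\xi_{0}}$ to the (essentially standard, but which you prove inline) fact that the quotient map by a \emph{compact} subgroup is proper. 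Your route is somewhat more general---it works for any second-countable locally compact group, not just Lie groups---at the cost of outsourcing the key openness step to a cited theorem, whereas the paper's approach stays internal to the smooth category it already lives in. One cosmetic remark: your statement that transitivity is a ``standing admissibility assumption'' is slightly off---transitivity of the action of $H$ on $\mathcal{O}$ holds by construction, since $\mathcal{O}$ is \emph{defined} as the dual orbit $H^{T}\xi_{0}$; the standing assumptions are rather that this orbit is open (and co-null) and that the stabilizer is compact. This does not affect the validity of the argument.
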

\begin{proof}
By the closed subgroup theorem, $H$ is a Lie group. As a second countable,
locally compact space it admits an exhaustion by precompact open sets,
i.e. $H=\bigcup_{n\in\mathbb{N}}U_{n}$, where the $U_{n}\subset H$
are open precompact sets satisfying $U_{n}\subset U_{n+1}$ for all
$n\in\mathbb{N}$. This implies that $C\subset H$ is relatively compact
iff $C\subset U_{n}$ holds for some $n\in\mathbb{N}$.

By general properties of the orbit maps of smooth Lie group actions
(cf. \cite[Propostion 7.26]{SmoothManifolds}), $p_{\xi_{0}}$ has
constant rank. As $p_{\xi_{0}}:H\rightarrow\mathcal{O}$ is surjective,
the global rank theorem (cf. \cite[Theorem 4.14]{SmoothManifolds})
shows that $p_{\xi_{0}}$ is a smooth submersion and hence an open
map.

This implies that the sets $V_{n}:=p_{\xi_{0}}(U_{n})\subset\mathcal{O}$
form an increasing cover of $\mathcal{O}$ by open sets. Let $K\subset\mathcal{O}$
be compact. By the same reasoning as before, we get $K\subset V_{n}$ for
some $n\in\mathbb{N}$. But this implies $p_{\xi_{0}}^{-1}(K)\subset H_{\xi_{0}}U_{n}\subset H_{\xi_{0}}\overline{U_{n}}$,
where $H_{\xi_{0}}\leq H$ is the compact(!) stabilizer of $\xi_{0}$.
As $\overline{U_{n}}\subset H$ is compact, this shows that $p_{\xi_{0}}^{-1}\left(K\right)$
is compact as a closed subset of the compact set $H_{\xi_{0}}\overline{U_{n}}$.
\end{proof}
Before we can prove the main result of this section, we need some
additional results on the continuity of the maps $H\rightarrow\mathcal{S}\left(\mathbb{R}^{d}\right),h\mapsto D_{h}\psi$
and $H\rightarrow\mathcal{S}\left(\mathbb{R}^{d}\right),h\mapsto\left(W_{\psi}\left(\mathcal{F}^{-1}f\right)\right)\left(\cdot,h\right)$.
These results will then be used to show the continuity of the map
\[
\mathcal{D}\left(\mathcal{O}\right)\rightarrow W^{R}\left(L^{\infty},L_{w}^{p,q}\left(G\right)\right),g\mapsto W_{\psi}\left(\mathcal{F}^{-1}g\right).
\]

\begin{lem}
\label{lem:GlatteVerkettung}Let $\emptyset\neq U\subset\mathbb{R}^{\ell}$
be open and let $\gamma:U\times\mathbb{R}^{d}\rightarrow\mathbb{R}^{d}$
be smooth with the additional property that for all compact sets $L\subset U$
and $K\subset\mathbb{R}^{d}$ the set 
\[
\bigcup_{p\in L}\left(\gamma\left(p,\cdot\right)\right)^{-1}\left(K\right)\subset\mathbb{R}^{d}
\]
is bounded. Furthermore, let $\varphi\in\mathcal{D}\left(\mathbb{R}^{d}\right)$
be arbitrary.

Then the map 
\[
\Phi:U\rightarrow\mathcal{D}\left(\mathbb{R}^{d}\right),p\mapsto\varphi\left(\gamma\left(p,\cdot\right)\right)
\]
is well-defined and continuous. In particular, $\Phi:U\rightarrow\mathcal{S}\left(\mathbb{R}^{d}\right)$
is continuous.\end{lem}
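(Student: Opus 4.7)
The plan is to reduce continuity into the inductive-limit space $\mathcal{D}(\mathbb{R}^{d})$ to continuity into a fixed Fréchet subspace $\mathcal{D}_{K'}(\mathbb{R}^{d})$ of smooth functions supported in a common compact set $K'$, and then to verify that via the chain rule.

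\emph{Well-definedness and a uniform support.} Set $K:=\operatorname{supp}\varphi$. For fixed $p\in U$, applying the hypothesis with the compact set $L:=\{p\}$ shows that $(\gamma(p,\cdot))^{-1}(K)$ is bounded, and it is closed because $\gamma(p,\cdot)$ is continuous and $K$ is closed; hence it is compact. Thus $\Phi(p)=\varphi\circ\gamma(p,\cdot)$ is smooth (as a composition of smooth maps) with compact support, so $\Phi(p)\in\mathcal{D}(\mathbb{R}^{d})$. To prepare the continuity argument at an arbitrary $p_{0}\in U$, I fix a compact neighborhood $L\subset U$ of $p_{0}$; by hypothesis, $M:=\bigcup_{p\in L}(\gamma(p,\cdot))^{-1}(K)$ is bounded, so $K':=\overline{M}$ is compact. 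By construction $\operatorname{supp}\Phi(p)\subset K'$ for all $p\in L$, so $\Phi$ maps $L$ into the Fréchet space $\mathcal{D}_{K'}(\mathbb{R}^{d})$ whose topology is generated by the seminorms $f\mapsto\sup_{y\in K'}|\partial^{\alpha}f(y)|$.

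\emph{Continuity via uniform convergence of all derivatives.} It now suffices to show that $\Phi\colon L\to\mathcal{D}_{K'}(\mathbb{R}^{d})$ is continuous, i.e., that for every multiindex $\alpha$ the map $p\mapsto\partial_{y}^{\alpha}\Phi(p)$ is continuous from $L$ into $C(K')$ with the sup-norm. By the multivariate Faà di Bruno formula, $\partial_{y}^{\alpha}(\varphi\circ\gamma(p,\cdot))(y)$ is a universal polynomial expression in the values $(\partial^{\beta}\varphi)(\gamma(p,y))$ for $|\beta|\leq|\alpha|$ together with the entries $\partial_{y}^{\delta}\gamma_{i}(p,y)$ for $|\delta|\leq|\alpha|$ and $i=1,\dots,d$. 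Each of these building blocks is jointly continuous on $U\times\mathbb{R}^{d}$ and therefore uniformly continuous on the compact set $L\times K'$; since finite sums and products of jointly uniformly continuous functions have the same property, the desired continuity from $L$ into $C(K')$ is immediate. Composing with the continuous inclusions $\mathcal{D}_{K'}(\mathbb{R}^{d})\hookrightarrow\mathcal{D}(\mathbb{R}^{d})\hookrightarrow\mathcal{S}(\mathbb{R}^{d})$ then yields the claim.

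The only genuine bookkeeping issue is the explicit form of Faà di Bruno, but only the qualitative fact that the derivatives of $\varphi\circ\gamma(p,\cdot)$ are polynomials in finitely many jointly continuous ingredients is actually used. Working inside the fixed Fréchet space $\mathcal{D}_{K'}(\mathbb{R}^{d})$ sidesteps all subtleties of the inductive-limit topology on $\mathcal{D}(\mathbb{R}^{d})$, so no further topological argument is needed.
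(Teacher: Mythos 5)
Your proof is correct and follows essentially the same strategy as the paper: establish well-definedness and a uniform compact support $K'$ on a compact neighborhood $L$ of $p_0$, then use uniform continuity of all partial derivatives on $L\times K'$ together with the topological structure of $\mathcal{D}_{K'}(\mathbb{R}^d)$ to conclude. The only cosmetic difference is that the paper avoids Faà di Bruno entirely by directly observing that $(p,y)\mapsto\partial_{y}^{\alpha}\left(\varphi\circ\gamma\right)(p,y)$ is smooth as a partial derivative of the jointly smooth map $\varphi\circ\gamma$, and it phrases the argument via sequences $p_n\to p_0$ with a citation to Rudin's convergence criterion rather than working directly in the Fréchet subspace $\mathcal{D}_{K'}$.
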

\begin{rem*}
The stated requirements for $\gamma$ are (under the identification
$\mathbb{R}^{d\times d}\cong\mathbb{R}^{d^{2}}$) fulfilled for the
choice 
\[
\gamma:\text{GL}\left(\smash{\mathbb{R}^{d}}\right)\times\mathbb{R}^{d}\rightarrow\mathbb{R}^{d},\left(h,\xi\right)\mapsto h^{T}\xi.
\]
\end{rem*}
\begin{proof}[Proof of the remark]
Let $L\subset\text{GL}\left(\mathbb{R}^{d}\right)$ and $K\subset\mathbb{R}^{d}$
be compact. Then $L^{-T}\subset\text{GL}\left(\mathbb{R}^{d}\right)$
is also compact, which implies $\left\Vert h^{-T}\right\Vert \leq C_{1}$
for some $C_{1}>0$ and all $h\in L$. Furthermore, there is some
$C_{2}>0$ such that $\left|\xi\right|\leq C_{2}$ holds for all $\xi\in K$.

For $y\in\bigcup_{h\in L}\left(\gamma\left(h,\cdot\right)\right)^{-1}\left(K\right)$
we then have $\xi:=h^{T}y=\gamma\left(h,y\right)\in K$ for some $h\in L$.
This implies 
\[
\left|y\right|=\left|h^{-T}\xi\right|\leq\left\Vert h^{-T}\right\Vert \cdot\left|\xi\right|\leq C_{1}C_{2}.\qedhere
\]
\end{proof}

\begin{proof}[Proof of Lemma \ref{lem:GlatteVerkettung}]
As $\gamma$ (and hence also $\gamma\left(p,\cdot\right)$ for every
$p\in U$) is smooth, we see that $\varphi\left(\gamma\left(p,\cdot\right)\right)\in C^{\infty}\left(\mathbb{R}^{d}\right)$
is also smooth. Now let $K:=\text{supp}\left(\varphi\right)$. As
$\gamma\left(p,\cdot\right)$ is continuous, $\left(\gamma\left(p,\cdot\right)\right)^{-1}\left(K\right)\subset\mathbb{R}^{d}$
is closed. It is thus easy to see that 
\begin{equation}
\text{supp}\left(\Phi\left(p\right)\right)=\text{supp}\left(\varphi\left(\gamma\left(p,\cdot\right)\right)\right)\subset\left(\gamma\left(p,\cdot\right)\right)^{-1}\left(K\right)\label{eq:GlatteVerkettungTraeger}
\end{equation}
holds for every $p\in U$. Now the assumption (with $L=\left\{ p\right\} $)
yields that the right-hand side is a bounded subset of $\mathbb{R}^{d}$.
Thus, $\varphi\left(\gamma\left(p,\cdot\right)\right)\in\mathcal{D}\left(\mathbb{R}^{d}\right)$
is compactly supported, so that $\Phi$ is well-defined.

To prove the continuity of $\Phi$, let $\left(p_{n}\right)_{n\in\mathbb{N}}\in U^{\mathbb{N}}$
with $p_{n}\xrightarrow[n\rightarrow\infty]{}p_{0}$ for some $p_{0}\in U$.
Then $L:=\left\{ p_{n}\with n\in\mathbb{N}\right\} \cup\left\{ p_{0}\right\} $
is a compact subset of $U$. The assumption (and Heine-Borel) thus
yield the compactness of 
\[
M:=\overline{\bigcup_{p\in L}\left(\gamma\left(p,\cdot\right)\right)^{-1}\left(K\right)}\subset\mathbb{R}^{d}.
\]
By equation (\ref{eq:GlatteVerkettungTraeger}) we see 
\begin{equation}
\text{supp}\left(\Phi\left(p_{0}\right)\right)\subset M\qquad\text{ and }\qquad\text{supp}\left(\Phi\left(p_{n}\right)\right)\subset M\qquad\text{ for all }n\in\mathbb{N}.\label{eq:GlatteVerkettungGleichmaessigerTraeger}
\end{equation}
Now for every multi-index $\beta\in\mathbb{N}_{0}^{d}$ the map 
\[
\Psi_{\beta}:U\times\mathbb{R}^{d}\rightarrow\mathbb{C},\left(p,x\right)\mapsto\left(\partial^{\beta}\left(\Phi\left(p\right)\right)\right)\left(x\right)=\frac{\partial^{\left|\beta\right|}\left(\left(\varphi\circ\gamma\right)\left(q,y\right)\right)}{\partial y_{1}^{\beta_{1}}\cdots\partial y_{d}^{\beta_{d}}}\bigg|_{\left(q,y\right)=\left(p,x\right)}
\]
is smooth, hence continuous. In particular, $\Psi_{\beta}$ is uniformly
continuous on the compact set $L\times M\subset U\times\mathbb{R}^{d}$.
This yields, for arbitrary $\varepsilon>0$, some $\delta>0$ such
that $\left|\Psi_{\beta}\left(p,x\right)-\Psi_{\beta}\left(q,y\right)\right|<\varepsilon$
holds for all $\left(p,x\right),\left(q,y\right)\in L\times M$ with
$\left|\left(p,x\right)-\left(q,y\right)\right|<\delta$. Let $n_{0}\in\mathbb{N}$
with $\left|p_{n}-p_{0}\right|<\delta$ for $n\geq n_{0}$. For $n\geq n_{0}$
and $x\in\mathbb{R}^{d}$ there are two cases: 
\begin{enumerate}
\item $x\notin M$. By equation \eqref{eq:GlatteVerkettungGleichmaessigerTraeger}
this means $x\notin\text{supp}\left(\Phi\left(p_{n}\right)\right)$
and $x\notin\text{supp}\left(\Phi\left(p_{0}\right)\right)$. Hence,
we conclude $\Phi\left(p_{0}\right)|_{V}\equiv0\equiv\Phi\left(p_{n}\right)|_{V}$
for the neighborhood $V:=M^{c}$ of $x$. In particular 
\[
\left|\left(\partial^{\beta}\left(\Phi\left(p_{n}\right)-\Phi\left(p_{0}\right)\right)\right)\left(x\right)\right|=0<\varepsilon.
\]

\item $x\in M$. Then $\left(p_{n},x\right),\left(p_{0},x\right)\in L\times M$
with $\left|\left(p_{n},x\right)-\left(p_{0},x\right)\right|=\left|p_{n}-p_{0}\right|<\delta$.
By choice of $\delta$ this implies 
\[
\left|\left(\partial^{\beta}\left(\Phi\left(p_{n}\right)-\Phi\left(p_{0}\right)\right)\right)\left(x\right)\right|=\left|\Psi_{\beta}\left(p_{n},x\right)-\Psi_{\beta}\left(p,x\right)\right|<\varepsilon.
\]

\end{enumerate}

Now \cite[Theorem 6.5]{RudinFA} (and the associated remark) show
$\Phi\left(p_{n}\right)\xrightarrow[n\rightarrow\infty]{\mathcal{D}\left(\mathbb{R}^{d}\right)}\Phi\left(p_{0}\right)$
(recall that the supports of $\Phi\left(p_{n}\right)$ are ``uniformly
compact'' by equation (\ref{eq:GlatteVerkettungGleichmaessigerTraeger})).

Since the inclusion $\mathcal{D}\left(\mathbb{R}^{d}\right)\hookrightarrow\mathcal{S}\left(\mathbb{R}^{d}\right)$
is continuous by \cite[Theorem 7.10]{RudinFA}, we are done.\qedhere

\end{proof}
Using this lemma, we can now show that $h\mapsto\left(W_{\psi}f\right)\left(\cdot,h\right)$
is continuous with compact support as a map of $H$ into the space
of Schwartz functions $\mathcal{S}\left(\mathbb{R}^{d}\right)$ as
long as we have $\widehat{\psi},\widehat{f}\in\mathcal{D}\left(\mathcal{O}\right)$: 
\begin{lem}
\label{lem:FunktionenwertigeStetigkeitUndTraegerVonWaveletTrafo}For
$f,\psi\in L^{2}\left(\mathbb{R}^{d}\right)$ we have the identity
\begin{equation}
\left(W_{\psi}f\right)\left(x,h\right)=\left|\det\left(h\right)\right|^{1/2}\cdot\left(\mathcal{F}^{-1}\left(\widehat{f}\cdot D_{h}\overline{\widehat{\psi}}\right)\right)\left(x\right)\qquad\text{ for all }\left(x,h\right)\in G.\label{eq:WavletTransformationDarstellung}
\end{equation}
Now let $f,\psi\in\mathcal{S}\left(\mathbb{R}^{d}\right)$ with $\widehat{f},\widehat{\psi}\in\mathcal{D}\left(\mathcal{O}\right)$.
Then 
\[
\Gamma:H\rightarrow\mathcal{S}\left(\mathbb{R}^{d}\right),h\mapsto\mathcal{F}^{-1}\left(\widehat{f}\cdot D_{h}\overline{\widehat{\psi}}\right)
\]
is well-defined and continuous with compact support 
\begin{equation}
\text{supp}\left(\Gamma\right)\subset\left(p_{\xi_{0}}^{-1}\left(\mbox{supp}\left(\smash{\widehat{f}}\right)\right)\right)^{-1}\cdot H_{\xi_{0}}\cdot p_{\xi_{0}}^{-1}\left(\mbox{supp}\left(\smash{\widehat{\psi}}\right)\right),\label{eq:WaveletTrafoTraeger}
\end{equation}
where we used $p_{\xi_{0}}:H\rightarrow\mathcal{O},h\mapsto h^{T}\xi_{0}$.\end{lem}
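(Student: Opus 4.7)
The identity \eqref{eq:WavletTransformationDarstellung} follows from Plancherel and equation \eqref{eq:QausiRegulaereDarstellungAufFourierSeite}: writing $(W_\psi f)(x,h) = \langle f, \pi(x,h)\psi\rangle_{L^2} = \langle \widehat{f}, |\det h|^{1/2} M_{-x} D_h \widehat{\psi}\rangle_{L^2}$ and using the pointwise identity $\overline{M_{-x} D_h \widehat{\psi}(\xi)} = e^{2\pi i \langle x,\xi\rangle} \cdot (D_h \overline{\widehat{\psi}})(\xi)$, the resulting integral is immediately recognized as $|\det h|^{1/2} \cdot \mathcal{F}^{-1}(\widehat{f}\cdot D_h \overline{\widehat{\psi}})(x)$.

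For the well-definedness and continuity of $\Gamma$, the plan is to decompose $\Gamma$ into three continuous steps. First, since $\overline{\widehat{\psi}} \in \mathcal{D}(\mathcal{O}) \subset \mathcal{D}(\mathbb{R}^d)$, Lemma \ref{lem:GlatteVerkettung} applied to $\gamma(h,\xi) = h^T \xi$ (whose required properness property is precisely what is established in the remark following that lemma) shows that $h \mapsto D_h \overline{\widehat{\psi}}$ is continuous from $H$ into $\mathcal{D}(\mathbb{R}^d)$, hence into $\mathcal{S}(\mathbb{R}^d)$. Second, multiplication by the fixed Schwartz function $\widehat{f}$ is a continuous operation on $\mathcal{S}(\mathbb{R}^d)$. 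Third, $\mathcal{F}^{-1}$ is a homeomorphism of $\mathcal{S}(\mathbb{R}^d)$. Composing these yields that $\Gamma$ takes values in $\mathcal{S}(\mathbb{R}^d)$ and is continuous.

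The support claim is a bookkeeping argument on the orbit. If $\Gamma(h) \ne 0$, then by injectivity of $\mathcal{F}^{-1}$ on $\mathcal{S}(\mathbb{R}^d)$ there exists $\xi \in \mathbb{R}^d$ with $\widehat{f}(\xi) \ne 0$ and $\widehat{\psi}(h^T \xi) \ne 0$; in particular $\xi \in \text{supp}(\widehat{f}) \subset \mathcal{O}$, so we can write $\xi = k^T \xi_0$ with $k \in p_{\xi_0}^{-1}(\text{supp}(\widehat{f}))$, and then $h^T \xi = (kh)^T \xi_0 \in \text{supp}(\widehat{\psi})$ yields $kh \in p_{\xi_0}^{-1}(\text{supp}(\widehat{\psi}))$. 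Hence $h = k^{-1}(kh) \in (p_{\xi_0}^{-1}(\text{supp}(\widehat{f})))^{-1} \cdot p_{\xi_0}^{-1}(\text{supp}(\widehat{\psi}))$, which is contained in the asserted set since $1_H \in H_{\xi_0}$. By Lemma \ref{lem:OrbitProjektionIstProper} both preimages are compact; together with compactness of $H_{\xi_0}$, the asserted support set is compact (hence closed), and therefore also contains the closure of $\{h : \Gamma(h) \ne 0\}$. The main technical obstacle is verifying the hypotheses of Lemma \ref{lem:GlatteVerkettung} in order to obtain continuity into the fine space $\mathcal{D}(\mathbb{R}^d)$ rather than merely $C^\infty(\mathbb{R}^d)$, and then carefully tracking the transpose convention through the orbit-projection to identify the product set in which the support lives.
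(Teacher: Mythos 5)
Your proof is correct and takes essentially the same approach as the paper: Plancherel together with equation \eqref{eq:QausiRegulaereDarstellungAufFourierSeite} for the identity \eqref{eq:WavletTransformationDarstellung}, continuity of $\Gamma$ as the composition $\mathcal{F}^{-1}\circ\mu_{\widehat{f}}\circ\bigl(h\mapsto D_{h}\overline{\widehat{\psi}}\bigr)$ with Lemma \ref{lem:GlatteVerkettung} providing continuity of the dilation map, and dual-orbit bookkeeping for the support. Your choice $g_{2}:=kh$ rather than an arbitrary $p_{\xi_{0}}$-preimage of $h^{T}\xi$ slightly streamlines the support step by bypassing an explicit element of $H_{\xi_{0}}$, but the substance is identical.
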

\begin{proof}
Equation \eqref{eq:WavletTransformationDarstellung} is an easy consequence
of the Plancherel theorem, equation \eqref{eq:QausiRegulaereDarstellungAufFourierSeite}
and the definitions.

We now show that $\Gamma$ is well-defined and continuous under the
assumptions $\widehat{f},\widehat{\psi}\in\mathcal{D}\left(\mathcal{O}\right)$.
To this end we note that the multiplication map 
\[
\mu_{\widehat{f}}:\mathcal{S}\left(\mathbb{R}^{d}\right)\rightarrow\mathcal{S}\left(\mathbb{R}^{d}\right),g\mapsto\widehat{f}\cdot g
\]
is (well-defined and) continuous by \cite[Theorem 7.4(b)]{RudinFA},
and the same holds for the inverse Fourier transform. Finally, Lemma
\ref{lem:GlatteVerkettung} and the corresponding remark show that
the map 
\[
\Phi:H\rightarrow\mathcal{S}\left(\mathbb{R}^{d}\right),h\mapsto D_{h}\overline{\widehat{\psi}}
\]
is well-defined and continuous. Here we used the assumption $\widehat{\psi}\in\mathcal{D}\left(\mathbb{R}^{d}\right)$.
In summary, this shows that $\Gamma=\mathcal{F}^{-1}\circ\mu_{\widehat{f}}\circ\Phi$
is well-defined and continuous.

Now let $h\in H$ with $0\neq\Gamma\left(h\right)=\mathcal{F}^{-1}\left(\widehat{f}\cdot D_{h}\overline{\widehat{\psi}}\right)$.
This yields $\widehat{f}\cdot D_{h}\overline{\widehat{\psi}}\neq0$
and thus there is some 
\[
\xi\in\text{supp}\left(\smash{\widehat{f}}\right)\cap\text{supp}\left(D_{h}\overline{\widehat{\psi}}\right)\subset\text{supp}\left(\smash{\widehat{f}}\right)\cap h^{-T}\left(\text{supp}\left(\smash{\widehat{\psi}}\right)\right).
\]
The inclusions $\text{supp}\left(\smash{\widehat{f}}\right)\subset\mathcal{O}=H^{T}\xi_{0}$
and $\text{supp}\left(\smash{\widehat{\psi}}\right)\subset\mathcal{O}=H^{T}\xi_{0}$
yield $g_{1},g_{2}\in H$ satisfying $g_{1}^{T}\xi_{0}=\xi=h^{-T}g_{2}^{T}\xi_{0}$.

This implies $\left(g_{1}hg_{2}^{-1}\right)^{T}\xi_{0}=\xi_{0}$,
i.e. $g_{1}hg_{2}^{-1}\in H_{\xi_{0}}$ and thus $h\in g_{1}^{-1}H_{\xi_{0}}g_{2}$.
The inclusions $g_{1}\in p_{\xi_{0}}^{-1}\left(\text{supp}\left(\smash{\widehat{f}}\right)\right)$
and $g_{2}\in p_{\xi_{0}}^{-1}\left(\text{supp}\left(\smash{\widehat{\psi}}\right)\right)$
establish equation \eqref{eq:WaveletTrafoTraeger}. Now Lemma \ref{lem:OrbitProjektionIstProper}
shows that $\Gamma$ indeed has compact support.
\end{proof}
Using the lemmata \ref{lem:OrbitProjektionIstProper}, \ref{lem:GlatteVerkettung}
and \ref{lem:FunktionenwertigeStetigkeitUndTraegerVonWaveletTrafo},
we now show the announced admissibility of every $\psi\in\mathcal{S}\left(\mathbb{R}^{d}\right)$
whose Fourier transform is compactly supported in the dual orbit $\mathcal{O}$.
The following result extends \cite[Lemma 2.7]{FuehrCoorbit1}, by
including a continuity statement that will be useful for the following. 
\begin{thm}
\label{thm:ZulaessigkeitVonbandbeschraenktenFunktionen}Let $w_{0}:H\rightarrow\left(0,\infty\right)$
be measurable and locally bounded and let $N\in\mathbb{N}_{0}$. Define
\[
w:G\rightarrow\left(0,\infty\right),\left(x,h\right)\mapsto\left(1+\left|x\right|\right)^{N}\cdot w_{0}\left(h\right).
\]
Fix $\psi\in\mathcal{S}\left(\mathbb{R}^{d}\right)$ with $\widehat{\psi}\in\mathcal{D}\left(\mathcal{O}\right)$.
Then the map 
\[
\varrho:\mathcal{D}\left(\mathcal{O}\right)\rightarrow W^{R}\left(L^{\infty},L_{w}^{p,q}\left(G\right)\right),g\mapsto W_{\psi}\left(\mathcal{F}^{-1}g\right)
\]
is well-defined and continuous.\end{thm}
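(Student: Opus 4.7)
The plan is to leverage the structural description of $W_{\psi}(\mathcal{F}^{-1}g)$ provided by Lemma \ref{lem:FunktionenwertigeStetigkeitUndTraegerVonWaveletTrafo}. Writing $F := W_{\psi}(\mathcal{F}^{-1}g)$ and $\Gamma(k) := \mathcal{F}^{-1}(g \cdot D_{k}\overline{\widehat{\psi}})$, equation (\ref{eq:WavletTransformationDarstellung}) gives $F(x,k) = |\det(k)|^{1/2}\Gamma(k)(x)$. From that lemma I will use two facts repeatedly: the map $\Gamma : H \to \mathcal{S}(\mathbb{R}^{d})$ is continuous, and $\Gamma$ has compact support contained in a compact set $K \subset H$ determined by $\mathrm{supp}(g)$ and $\mathrm{supp}(\widehat{\psi})$ via Lemma \ref{lem:OrbitProjektionIstProper}.

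To bound the Wiener amalgam norm I fix a precompact open unit neighborhood of product form $U = V \times W \subset \mathbb{R}^{d}\times H$. Using $(x,h)(y,k)=(x+hy,hk)$, one computes
\[
(K_{U}F)(y,k) = \sup_{x \in V,\, h \in W} |\det(hk)|^{1/2} \cdot |\Gamma(hk)(x+hy)|,
\]
which vanishes unless $k \in W^{-1}K =: \tilde{K}$, a compact set. By continuity of $\Gamma$, its image on the compact set $W \cdot \tilde{K}$ is compact, hence bounded in every Schwartz seminorm. Thus for every $N' \in \mathbb{N}_{0}$ there is a constant $C_{N'}>0$ with $|\Gamma(k')(z)| \leq C_{N'}(1+|z|)^{-N'}$ uniformly in $k' \in W\tilde{K}$. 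Since $V$ is bounded and $\sup_{h \in W}\|h^{-1}\| < \infty$, a routine calculation gives $(1+|x+hy|)^{-N'} \leq C_{V,W}(1+|y|)^{-N'}$ for all $x\in V$, $h \in W$ and $y \in \mathbb{R}^d$, and therefore
\[
(K_{U}F)(y,k) \leq C \cdot C_{N'} \cdot (1+|y|)^{-N'} \cdot \chi_{\tilde{K}}(k).
\]

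Taking $N' = N + d + 1$ makes $(1+|y|)^{N-N'}$ an element of $L^{p}(\mathbb{R}^{d})$ for every $p \in [1,\infty]$. Since $w_{0}$ and $|\det(\cdot)|$ are locally bounded, the $L^{p}$-norm in $y$ of $w(y,k)(K_{U}F)(y,k)$ is dominated by a constant times $w_{0}(k)\chi_{\tilde{K}}(k)$, which lies in $L^{q}(H,\mathrm{d}k/|\det(k)|)$ by compactness of $\tilde{K}$. This yields $\|\varrho(g)\|_{W^{R}(L^{\infty},L_{w}^{p,q})} \leq C' \cdot C_{N'}$ with $C'$ depending only on a chosen compact $L \subset \mathcal{O}$ containing $\mathrm{supp}(g)$, proving well-definedness. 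For continuity I invoke linearity and reduce to continuity at $0$: if $g_{n} \to 0$ in $\mathcal{D}(\mathcal{O})$, the $g_{n}$ share a common compact support, so the associated $\Gamma_{n}$ have a common compact support in $H$; moreover, since multiplication by a fixed Schwartz function and $\mathcal{F}^{-1}$ are continuous on $\mathcal{S}(\mathbb{R}^{d})$ and $\{D_{k}\overline{\widehat{\psi}} : k \in W\tilde{K}\}$ is bounded in $\mathcal{S}(\mathbb{R}^{d})$, the constants $C_{N'}^{(n)}$ associated with $\Gamma_{n}$ tend to zero, and hence $\|\varrho(g_{n})\|_{W^{R}(L^{\infty},L_{w}^{p,q})} \to 0$.

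The main obstacle is the interplay between the right-sided Wiener control function and the semidirect-product multiplication: the transformation $(y,k) \mapsto (x,h)(y,k) = (x+hy,hk)$ entangles the $\mathbb{R}^{d}$- and $H$-variables, so one must transfer the Schwartz decay of $\Gamma(hk)$ in its first argument to decay in $y$ uniformly for $h$ ranging in a compact set. A closely related subtlety is that mere pointwise continuity of $\Gamma$ is insufficient; what is actually required, and what is supplied by compactness of $W \cdot \tilde{K}$ together with continuity of $\Gamma$ into the Fr\'echet space $\mathcal{S}(\mathbb{R}^{d})$, are \emph{uniform} Schwartz seminorm bounds over the compact parameter set. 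Once these two points are handled, the remaining steps amount to standard manipulations with mixed Lebesgue norms and the local boundedness of $w_{0}$ and $|\det(\cdot)|$.
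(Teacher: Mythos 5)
Your proof is correct and follows essentially the same route as the paper: it rests on the representation $W_{\psi}(\mathcal{F}^{-1}g)(x,k)=\left|\det(k)\right|^{1/2}\Gamma(k)(x)$, the compact support of $\Gamma$ in $H$ supplied by Lemma \ref{lem:FunktionenwertigeStetigkeitUndTraegerVonWaveletTrafo} and Lemma \ref{lem:OrbitProjektionIstProper}, uniform Schwartz-seminorm bounds over a compact parameter set, the standard transfer of polynomial decay from $x+hy$ to $y$ for $x,h$ confined to a precompact set, and finally integration against the weight. The only cosmetic difference is that the paper exhibits the bound $\left|\Gamma_{g}(h)\right|_{N_{0}}\lesssim\left|g\right|_{N_{1}}$ directly via the Leibniz rule so that continuity on $\mathcal{D}_{K}(\mathcal{O})$ is immediate, whereas you first establish well-definedness via a softer compactness argument and then handle continuity at $0$ separately by a second compactness argument; both are valid and lead to the same conclusion via the usual characterization of continuous linear maps out of $\mathcal{D}(\mathcal{O})$.
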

\begin{rem*}
This implies in particular that the map 
\[
\mathcal{D}\left(\mathcal{O}\right)\rightarrow W^{R}\left(L^{\infty},L_{w}^{1,1}\left(G\right)\right)\hookrightarrow L_{w}^{1,1}\left(G\right)=L_{w}^{1}\left(G\right),g\mapsto W_{\psi}\left(\mathcal{F}^{-1}g\right)
\]
is well-defined and continuous. Furthermore it shows $W_{\psi}\psi=W_{\psi}\left(\mathcal{F}^{-1}\widehat{\psi}\right)\in W^{R}\left(L^{\infty},L_{w}^{1}\left(G\right)\right)$
which means $\psi\in\mathcal{B}_{w}\subset\mathcal{A}_{w}\subset\mathcal{H}_{w}^{1}$
(with the notation of \cite[Page 321]{FeichtingerCoorbit1}). As $\psi\in\mathcal{F}^{-1}\left(\mathcal{D}\left(\mathcal{O}\right)\right)$
was arbitrary, we get $\mathcal{F}^{-1}\left(\mathcal{D}\left(\mathcal{O}\right)\right)\subset\mathcal{B}_{w}$.

Finally, the above theorem implies that the map 
\[
\mathcal{F}^{-1}:\mathcal{D}\left(\mathcal{O}\right)\rightarrow\mathcal{H}_{w}^{1},g\mapsto\mathcal{F}^{-1}g
\]
is well-defined and continuous.\end{rem*}
\begin{proof}
For $\kappa\in\mathbb{N}_{0}$ and $g\in\mathcal{S}\left(\mathbb{R}^{d}\right)$,
let 
\[
\left|g\right|_{\kappa}:=\max_{\substack{\alpha\in\mathbb{N}_{0}^{d}\\
\left|\alpha\right|\leq\kappa
}
}\sup_{x\in\mathbb{R}^{d}}\left(1+\left|x\right|\right)^{\kappa}\left|\left(\partial^{\alpha}g\right)\left(x\right)\right|.
\]
Then the topology on $\mathcal{S}\left(\mathbb{R}^{d}\right)$ is
induced by the family of norms $\left(\left|\cdot\right|_{\kappa}\right)_{\kappa\in\mathbb{N}_{0}}$.
Choose $N_{0}\in\mathbb{N}$ satisfying $N_{0}>\frac{d}{p}+N$. By
continuity of the (inverse) Fourier transform $\mathcal{F}^{-1}:\mathcal{S}\left(\mathbb{R}^{d}\right)\rightarrow\mathcal{S}\left(\mathbb{R}^{d}\right)$,
there is some $N_{1}\in\mathbb{N}$ and a constant $C_{1}>0$ such
that 
\begin{equation}
\left|\mathcal{F}^{-1}g\right|_{N_{0}}\leq C_{1}\cdot\left|g\right|_{N_{1}}\label{eq:FourierTrafoBeschraenktAufSchwartz}
\end{equation}
holds for all $g\in\mathcal{S}\left(\mathbb{R}^{d}\right)$. Here
we used that the norms $\left|\cdot\right|_{\ell}$ are ordered, i.e.
we have $\left|\cdot\right|_{\ell}\leq\left|\cdot\right|_{m}$ for
$\ell\leq m$.

Let $K\subset\mathcal{O}$ be an arbitrary compact subset. For $K_{2}:=\text{supp}\left(\smash{\widehat{\psi}}\right)$
we define 
\[
L:=\left(p_{\xi_{0}}^{-1}\left(K\right)\right)^{-1}\cdot H_{\xi_{0}}\cdot p_{\xi_{0}}^{-1}\left(K_{2}\right)\subset H.
\]
By Lemma \ref{lem:OrbitProjektionIstProper}, $L$ is a compact subset
of $H$. For 
\[
g\in\mathcal{D}_{K}\left(\mathcal{O}\right):=\left\{ f\in\mathcal{D}\left(\mathcal{O}\right)\with\text{supp}\left(f\right)\subset K\right\} ,
\]
Lemma \ref{lem:FunktionenwertigeStetigkeitUndTraegerVonWaveletTrafo}
shows that 
\[
\Gamma_{g}:H\rightarrow\mathcal{S}\left(\mathbb{R}^{d}\right),h\mapsto\mathcal{F}^{-1}\left(g\cdot D_{h}\overline{\widehat{\psi}}\right)=\mathcal{F}^{-1}\left(\widehat{\mathcal{F}^{-1}g}\cdot D_{h}\overline{\widehat{\psi}}\right)
\]
is well-defined and continuous with compact support $\text{supp}\left(\Gamma_{g}\right)\subset L$.

By Lemma \ref{lem:GlatteVerkettung} the map 
\[
\Phi:H\rightarrow\mathcal{S}\left(\mathbb{R}^{d}\right),h\mapsto\overline{D_{h}\widehat{\psi}}=D_{h}\overline{\widehat{\psi}}
\]
is continuous, so that the continuous function 
\[
H\rightarrow\mathbb{R}_{+},h\mapsto\max_{\substack{\alpha\in\mathbb{N}_{0}^{d}\\
\left|\alpha\right|\leq N_{1}
}
}\sum_{\beta\leq\alpha}\binom{\alpha}{\beta}\cdot\left|D_{h}\overline{\widehat{\psi}}\right|_{N_{1}}
\]
attains its maximum $C_{2}\geq0$ on the compact set $L\subset H$.
Now the Leibniz rule shows, for $h\in L$, $x\in\mathbb{R}^{d}$ and
$\alpha\in\mathbb{N}_{0}^{d}$ with $\left|\alpha\right|\leq N_{1}$:
\begin{eqnarray*}
 &  & \left(1+\left|x\right|\right)^{N_{1}}\cdot\left|\left(\partial^{\alpha}\left(g\cdot D_{h}\overline{\widehat{\psi}}\right)\right)\left(x\right)\right|\\
 & \leq & \sum_{\beta\leq\alpha}{\alpha \choose \beta}\cdot\left|\left(\partial^{\beta}g\right)\left(x\right)\right|\cdot\left(1+\left|x\right|\right)^{N_{1}}\cdot\left|\left(\partial^{\alpha-\beta}D_{h}\overline{\widehat{\psi}}\right)\left(x\right)\right|\\
 & \leq & \left|g\right|_{N_{1}}\cdot\sum_{\beta\leq\alpha}{\alpha \choose \beta}\cdot\left|D_{h}\overline{\widehat{\psi}}\right|_{N_{1}}\\
 & \leq & C_{2}\cdot\left|g\right|_{N_{1}},
\end{eqnarray*}
which implies the estimate $\left|g\cdot D_{h}\overline{\widehat{\psi}}\right|_{N_{1}}\leq C_{2}\cdot\left|g\right|_{N_{1}}$.
Thus, for $h\in L$ we derive 
\[
\left|\Gamma_{g}\left(h\right)\right|_{N_{0}}=\left|\mathcal{F}^{-1}\left(g\cdot D_{h}\overline{\widehat{\psi}}\right)\right|_{N_{0}}\overset{\text{Eq. }\eqref{eq:FourierTrafoBeschraenktAufSchwartz}}{\leq}C_{1}\cdot\left|g\cdot D_{h}\overline{\widehat{\psi}}\right|_{N_{1}}\leq C_{1}C_{2}\cdot\left|g\right|_{N_{1}}.
\]
For $g\in\mathcal{D}_{K}$ and $h\in H\setminus L\subset H\setminus\text{supp}\left(\Gamma_{g}\right)$
we have $\left|\Gamma_{g}\left(h\right)\right|_{N_{0}}=0$. Together,
this shows our first intermediate estimate 
\begin{equation}
\left|\Gamma_{g}\left(h\right)\right|_{N_{0}}\leq C_{1}C_{2}\cdot\left|g\right|_{N_{1}}\cdot\chi_{L}\left(h\right)\qquad\text{ for }h\in H\text{ and }g\in\mathcal{D}_{K}.\label{eq:AdmissibilityFundamental}
\end{equation}

Let $V\subset H$ be an arbitrary open, precompact unit neighborhood.
In the following, we will use $U:=B_{1}\left(0\right)\times V\subset G$
for the control function $K_{U}$ of the Wiener amalgam space. Let
$C_{3}:=\max_{k\in\overline{V}}\left\Vert k^{-1}\right\Vert >0$.
For $\left(y,v\right)\in U$ and $\left(x,h\right)\in G$ we then
have 
\[
\left|x\right|=\left|v^{-1}vx\right|\leq\left\Vert v^{-1}\right\Vert \cdot\left|vx\right|\leq C_{3}\cdot\left|vx\right|
\]
and thus 
\[
\left|y+vx\right|\geq\left|vx\right|-\left|y\right|\geq\frac{\left|x\right|}{C_{3}}-1.
\]
This implies 
\[
1+\left|x\right|=1+C_{3}\frac{\left|x\right|}{C_{3}}\leq1+C_{3}\left(1+\left|y+vx\right|\right)\leq\left(1+C_{3}\right)\cdot\left(1+\left|y+vx\right|\right)
\]
and hence 
\begin{eqnarray*}
 &  & \left|\left(W_{\psi}\left(\mathcal{F}^{-1}g\right)\right)\left(\left(y,v\right)\left(x,h\right)\right)\right|\\
 & = & \left|\left(W_{\psi}\left(\mathcal{F}^{-1}g\right)\right)\left(\left(y+vx,vh\right)\right)\right|\\
 & \overset{\text{Eq. }\eqref{eq:WavletTransformationDarstellung}}{=} & \left|\det\left(vh\right)\right|^{1/2}\cdot\left|\left(\mathcal{F}^{-1}\left(g\cdot D_{vh}\overline{\widehat{\psi}}\right)\right)\left(y+vx\right)\right|\\
 & \leq & \left|\det\left(vh\right)\right|^{1/2}\cdot\left(1+\left|y+vx\right|\right)^{-N_{0}}\cdot\left|\mathcal{F}^{-1}\left(g\cdot D_{vh}\overline{\widehat{\psi}}\right)\right|_{N_{0}}\\
 & \leq & \left(1+C_{3}\right)^{N_{0}}\cdot\left|\det\left(vh\right)\right|^{1/2}\cdot\left(1+\left|x\right|\right)^{-N_{0}}\cdot\left|\Gamma_{g}\left(vh\right)\right|_{N_{0}}\\
 & \overset{\text{Eq. }\eqref{eq:AdmissibilityFundamental}}{\leq} & C_{1}C_{2}\left(1+C_{3}\right)^{N_{0}}\cdot\left|g\right|_{N_{1}}\cdot\chi_{L}\left(vh\right)\cdot\left|\det\left(vh\right)\right|^{1/2}\cdot\left(1+\left|x\right|\right)^{-N_{0}}.
\end{eqnarray*}
Note that $\chi_{L}\left(vh\right)\neq0$ implies $vh\in L$ and thus
$h\in v^{-1}L\subset\overline{V}^{-1}L$. With $C_{4}:=\max_{k\in L}\left|\det\left(k\right)\right|^{1/2}$
and $C_{5}:=C_{1}C_{2}\left(1+C_{3}\right)^{N_{0}}C_{4}$, we thus
derive 
\begin{eqnarray*}
\left(K_{U}\left(W_{\psi}\left(\mathcal{F}^{-1}g\right)\right)\right)\left(x,h\right) & = & \left\Vert \chi_{U\left(x,h\right)}\cdot W_{\psi}\left(\mathcal{F}^{-1}g\right)\right\Vert _{L^{\infty}\left(G\right)}\\
 & \leq & \sup_{\left(y,v\right)\in U}\left|\left(W_{\psi}\left(\mathcal{F}^{-1}g\right)\right)\left(\left(y,v\right)\left(x,h\right)\right)\right|\\
 & \leq & C_{5}\cdot\left|g\right|_{N_{1}}\cdot\chi_{\overline{V}^{-1}L}\left(h\right)\cdot\left(1+\left|x\right|\right)^{-N_{0}}.
\end{eqnarray*}
Because of $N_{0}>\frac{d}{p}+N$, the constant $C_{6}:=\left\Vert \left(1+\left|x\right|\right)^{N-N_{0}}\right\Vert _{L^{p}\left(\mathbb{R}^{d}\right)}$
is finite. This shows (cf. the definition of $w$ in the statement
of the theorem) 
\begin{align*}
\left\Vert \varrho\left(g\right)\right\Vert _{W^{R}\left(L^{\infty},L_{w}^{p,q}\right)} & =\left\Vert K_{U}\left(W_{\psi}\left(\mathcal{F}^{-1}g\right)\right)\right\Vert _{L_{w}^{p,q}\left(G\right)}\\
 & \leq C_{5}\left|g\right|_{N_{1}}\cdot\left\Vert \left|\det\left(h^{-1}\right)\right|^{1/q}w_{0}\left(h\right)\cdot\chi_{\overline{V}^{-1}L}\left(h\right)\cdot\left\Vert \left(1+\left|x\right|\right)^{N-N_{0}}\right\Vert _{L^{p}\left(\mathbb{R}^{d}\right)}\right\Vert _{L^{q}\left(H\right)}\\
 & \leq C_{5}C_{6}\left|g\right|_{N_{1}}\cdot\left\Vert \chi_{\overline{V}^{-1}L}\right\Vert _{L^{q}\left(H\right)}\cdot\sup_{h\in\overline{V}^{-1}L}\left[w_{0}\left(h\right)\cdot\left|\det\left(h^{-1}\right)\right|^{1/q}\right]\\
 & =:C_{7}\left|g\right|_{N_{1}}<\infty
\end{align*}
for all $g\in\mathcal{D}_{K}\left(\mathcal{O}\right)$, where the
constant $C_{7}$ does not depend upon $g$. Here, we used compactness
of $\overline{V}^{-1}L$ and local boundedness of $w_{0}$.

As the norm $\left|\cdot\right|_{N_{1}}$ is easily seen to be continuous
on $\mathcal{D}_{K}\left(\mathcal{O}\right)$, where the topology
on $\mathcal{D}_{K}$ is given by uniform convergence of all derivatives
(cf. \cite[Section 6.2]{RudinFA}), we see that the map $\varrho|_{\mathcal{D}_{K}\left(\mathcal{O}\right)}:\mathcal{D}_{K}\left(\mathcal{O}\right)\rightarrow W^{R}\left(L^{\infty},L_{w}^{p,q}\right)$
is well-defined and continuous. Now \cite[Theorem 6.6]{RudinFA} yields
continuity of $\varrho$.
\end{proof}
Using the above theorem, we now show that the \textbf{reservoir}
$\left(\mathcal{H}_{w}^{1}\right)^{\neg}$ can be identified with
a subspace of the space of all distributions $\mathcal{D}'\left(\mathcal{O}\right)$
on the dual orbit $\mathcal{O}$. This is a more convenient reservoir
than $\left(\mathcal{H}_{w}^{1}\right)^{\neg}$ for two reasons: 
\begin{enumerate}
\item As long as the group $H$ is fixed, the space $\mathcal{D}'\left(\mathcal{O}\right)$
is independent of the parameters $p,q,v$ of the space $L_{v}^{p,q}\left(G\right)$.

The same is not true for $\left(\mathcal{H}_{w}^{1}\right)^{\neg}$,
as different choices of $v$ lead to different control weights $w$
(cf. Lemma \ref{lem:CoorbitVoraussetzungen}) and thus to different
spaces $\mathcal{H}_{w}^{1}$. Note though, that this is not a serious
issue, as \cite[Theorem 4.2]{FeichtingerCoorbit1} shows that the
resulting coorbit space is (with certain restrictions) independent
of the choice of $w$.

\item Even if two different groups $H,H'$ are considered, the spaces $\mathcal{D}'\left(\mathcal{O}\right)$
and $\mathcal{D}'\left(\mathcal{O}'\right)$ (where $\mathcal{O}'$
is the open dual orbit of $H'$) can be compared with each other.

If for example the dual orbits $\mathcal{O},\mathcal{O}'$ of $H$
and $H'$ coincide, it is possible to make sense of the statement
that the coorbit space $\text{Co}\left(Y,H\right)$ embeds into $\text{Co}\left(Y',H'\right)$
if each $f\in\text{Co}\left(Y,H\right)\subset\left(\mathcal{H}_{w}^{1}\right)^{\neg}\subset\mathcal{D}'\left(\mathcal{O}\right)=\mathcal{D}'\left(\mathcal{O}'\right)$
is also an element of $\text{Co}\left(Y',H'\right)\subset\mathcal{D}'\left(\mathcal{O}'\right)$
(and if the map thus defined is bounded). Here we have already used
the identification of $\left(\mathcal{H}_{w}^{1}\right)^{\neg}$ with
a subspace of $\mathcal{D}'\left(\mathcal{O}\right)$.

One can even do this if the orbits do not coincide, but are merely
ordered (i.e. $\mathcal{O}\subset\mathcal{O}'$ or vice versa).

\end{enumerate}
\begin{cor}
\label{cor:FourierTrafoAufFeichtingerReservoir}Let $w:H\rightarrow\left(0,\infty\right)$
be locally bounded. Then the map 
\[
\mathcal{F}:\left(\mathcal{H}_{w}^{1}\right)^{\neg}\rightarrow\mathcal{D}'\left(\mathcal{O}\right),f\mapsto\mathcal{F}f
\]
with
\[\mathcal{F}f : \mathcal{D}(\mathcal{O}) \rightarrow \mathbb{C}, g \mapsto f\left( \mathcal{F}^{-1} \overline{g} \right)\]
as defined in equation (\ref{eq:FouriertrafoAufFeichtingerReservoir})
is well-defined, injective and continuous with respect to the weak-$\ast$-topology
on $\left(\mathcal{H}_{w}^{1}\right)^{\neg}$.\end{cor}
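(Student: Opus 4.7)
My plan is to verify well-definedness, weak-$*$-continuity, and injectivity in turn, building on the continuity of $\mathcal{F}^{-1}\colon\mathcal{D}(\mathcal{O})\to\mathcal{H}_{w}^{1}$ stated in the remark following Theorem~\ref{thm:ZulaessigkeitVonbandbeschraenktenFunktionen}. Well-definedness and linearity reduce to a formal composition argument: complex conjugation is a continuous antilinear bijection of $\mathcal{D}(\mathcal{O})$, so $g\mapsto\mathcal{F}^{-1}\overline{g}$ is a continuous antilinear map $\mathcal{D}(\mathcal{O})\to\mathcal{H}_{w}^{1}$, and post-composing with the continuous antilinear functional $f\in(\mathcal{H}_{w}^{1})^{\neg}$ yields a continuous linear functional on $\mathcal{D}(\mathcal{O})$, i.e.\ an element of $\mathcal{D}'(\mathcal{O})$. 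Linearity of $f\mapsto\mathcal{F}f$ in $f$ is immediate.

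Weak-$*$-continuity is equally formal. For each fixed $g\in\mathcal{D}(\mathcal{O})$, the pullback of the evaluation seminorm $T\mapsto|T(g)|$ on $\mathcal{D}'(\mathcal{O})$ under $\mathcal{F}$ is the map $f\mapsto|f(\mathcal{F}^{-1}\overline{g})|$, which is precisely the evaluation seminorm on $(\mathcal{H}_{w}^{1})^{\neg}$ at the fixed element $\mathcal{F}^{-1}\overline{g}\in\mathcal{H}_{w}^{1}$. The latter is by definition one of the generators of the weak-$*$-topology on $(\mathcal{H}_{w}^{1})^{\neg}$ and is therefore continuous, which establishes the required continuity of $\mathcal{F}$.

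Injectivity is the only genuinely delicate step. If $\mathcal{F}f=0$, then using that $g\mapsto\overline{g}$ is a bijection of $\mathcal{D}(\mathcal{O})$, we see that $f$ annihilates $\mathcal{F}^{-1}(\mathcal{D}(\mathcal{O}))\subset\mathcal{H}_{w}^{1}$; by continuity and antilinearity of $f$ it then annihilates the closed linear span of $\mathcal{F}^{-1}(\mathcal{D}(\mathcal{O}))$, so it suffices to show this span is norm-dense in $\mathcal{H}_{w}^{1}$. To this end, fix $\psi\in\mathcal{S}(\mathbb{R}^{d})\setminus\{0\}$ with $\widehat{\psi}\in\mathcal{D}(\mathcal{O})$; by the remark following Theorem~\ref{thm:ZulaessigkeitVonbandbeschraenktenFunktionen} this $\psi$ lies in $\mathcal{B}_{w}\subset\mathcal{A}_{w}\setminus\{0\}$ and may be used as the reference vector in the definition of $\mathcal{H}_{w}^{1}$. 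Equation~\eqref{eq:QausiRegulaereDarstellungAufFourierSeite} exhibits $\mathcal{F}(\pi(x,h)\psi)$ as a smooth compactly supported function with support inside $h^{-T}\,\text{supp}(\widehat{\psi})\subset h^{-T}\mathcal{O}=\mathcal{O}$, so every translate $\pi(x,h)\psi$ again belongs to $\mathcal{F}^{-1}(\mathcal{D}(\mathcal{O}))$. Invoking the Feichtinger--Gröchenig reproducing formula (valid for $\psi\in\mathcal{B}_{w}$, cf.\ \cite{FeichtingerCoorbit1}), any $u\in\mathcal{H}_{w}^{1}$ can be represented as a Bochner integral $C_{\psi}^{-1}\int_{G}W_{\psi}u(g)\,\pi(g)\psi\,dg$ converging in the $\mathcal{H}_{w}^{1}$-norm; since $W_{\psi}u\in L_{w}^{1}(G)$ may be approximated in $L_{w}^{1}$-norm by compactly supported simple functions, $u$ itself is approximable in $\mathcal{H}_{w}^{1}$ by finite linear combinations of elements $\pi(g)\psi\in\mathcal{F}^{-1}(\mathcal{D}(\mathcal{O}))$. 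This density claim is the sole non-formal step; once it is in hand, the rest of the corollary is immediate.
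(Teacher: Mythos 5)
Your treatment of well-definedness and weak-$\ast$-continuity matches the paper's approach essentially: write $\mathcal{F}f = f\circ\mathcal{F}^{-1}\circ c$ (with $c$ complex conjugation) and observe that evaluation seminorms on $\mathcal{D}'(\mathcal{O})$ pull back to evaluation seminorms on $(\mathcal{H}_w^1)^{\neg}$. For injectivity, however, you take a genuinely different route. The paper observes that $\overline{\mathcal{F}(\pi(x,h)\psi)}\in\mathcal{D}(\mathcal{O})$, which forces $W_\psi f\equiv 0$ directly when $\mathcal{F}f=0$, and then invokes the injectivity of the extended wavelet transform $W_\psi\colon(\mathcal{H}_w^1)^{\neg}\to L_{1/w}^{\infty}(G)$ from \cite[Theorem 4.1]{FeichtingerCoorbit1}. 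You instead reduce injectivity to norm-density of $\mathcal{F}^{-1}(\mathcal{D}(\mathcal{O}))$ in $\mathcal{H}_w^1$. This is a valid and morally related strategy — it essentially unpacks the proof of that injectivity theorem — and both arguments start from the same observation that $\pi(x,h)\psi\in\mathcal{F}^{-1}(\mathcal{D}(\mathcal{O}))$; the paper's route is simply more economical because it packages the density reasoning into a black-box theorem.

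There is one gap in the density step as you phrase it. Approximating $W_\psi u\in L_w^1(G)$ by compactly supported simple functions $F=\sum_i c_i\chi_{A_i}$ and substituting into $C_\psi^{-1}\int_G F(g)\pi(g)\psi\,\mathrm{d}g$ yields $C_\psi^{-1}\sum_i c_i\int_{A_i}\pi(g)\psi\,\mathrm{d}g$, which consists of vector-valued integrals over finite-measure sets, \emph{not} finite linear combinations of translates $\pi(g)\psi$. To close this you need one additional ingredient: the map $g\mapsto\pi(g)\psi$ is strongly continuous from $G$ into $\mathcal{H}_w^1$ (a standard fact in Feichtinger--Gröchenig theory), which allows you either to approximate each $\int_{A_i}\pi(g)\psi\,\mathrm{d}g$ by Riemann sums, or — more cleanly — to replace simple functions by $F\in C_c(G)$ (also dense in $L_w^1(G)$) and approximate $\int_G F(g)\pi(g)\psi\,\mathrm{d}g$ by Riemann sums directly. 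Alternatively, you could cite the minimality property of $\mathcal{H}_w^1$, which states precisely that the linear span of the orbit $\pi(G)\psi$ is norm-dense in $\mathcal{H}_w^1$. With this patched, your injectivity argument is sound.
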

\begin{proof}
Let $f\in\left(\mathcal{H}_{w}^{1}\right)^{\neg}$ and $g\in\mathcal{D}\left(\mathcal{O}\right)$.
Then we also have $\overline{g}\in\mathcal{D}\left(\mathcal{O}\right)$
and the conjugation map 
\[
c:\mathcal{D}\left(\mathcal{O}\right)\rightarrow\mathcal{D}\left(\mathcal{O}\right),g\mapsto\overline{g}
\]
is easily seen to be antilinear and continuous. Theorem \ref{thm:ZulaessigkeitVonbandbeschraenktenFunktionen}
(and the ensuing remark) show $\mathcal{F}^{-1}\overline{g}\in\mathcal{H}_{w}^{1}$
as well as the continuity of $\mathcal{F}^{-1}:\mathcal{D}\left(\mathcal{O}\right)\rightarrow\mathcal{H}_{w}^{1}$.

The expression 
\[
\left(\mathcal{F}f\right)\left(g\right)\overset{\text{Eq. }\eqref{eq:FouriertrafoAufFeichtingerReservoir}}{=}f\left(\mathcal{F}^{-1}\overline{g}\right)\in\mathbb{C}
\]
is well-defined because of $\mathcal{F}^{-1}\overline{g}\in\mathcal{H}_{w}^{1}$.
The \emph{anti}linearity of $f$ and $c$ show that the map 
\[
\mathcal{F}f=f\circ\mathcal{F}^{-1}\circ c:\mathcal{D}\left(\mathcal{O}\right)\rightarrow\mathbb{C}
\]
is linear and continuous as a composition of continuous maps, i.e.
$\mathcal{F}f\in\mathcal{D}'\left(\mathcal{O}\right)$. This shows
that $\mathcal{F}$ is well-defined.

In order to show continuity of $\mathcal{F}$, let $\iota_{g}:\mathcal{D}'\left(\mathcal{O}\right)\rightarrow\mathbb{C},\varphi\mapsto\varphi\left(g\right)$
be the evaluation map (for some $g\in\mathcal{D}\left(\mathcal{O}\right)$).
Then we have 
\[
\left(\iota_{g}\circ\mathcal{F}\right)\left(f\right)=\iota_{g}\left(\mathcal{F}f\right)=\left(\mathcal{F}f\right)\left(g\right)=f\left(\mathcal{F}^{-1}\overline{g}\right)=\iota_{\mathcal{F}^{-1}\overline{g}}\left(f\right)\qquad\text{ for all }f\in\left(\mathcal{H}_{w}^{1}\right)^{\neg},
\]
where $\iota_{\mathcal{F}^{-1}\overline{g}}$ denotes the evaluation
map on $\left(\mathcal{H}_{w}^{1}\right)^{\neg}$. This map is continuous
on $\left(\mathcal{H}_{w}^{1}\right)^{\neg}$ by the definition of
the weak-$\ast$-topology. Hence we see that $\iota_{g}\circ\mathcal{F}=\iota_{\mathcal{F}^{-1}\overline{g}}$
is continuous. As the topology on $\mathcal{D}'\left(\mathcal{O}\right)$
is induced by the family of evaluation maps, this shows the continuity
of $\mathcal{F}:\left(\mathcal{H}_{w}^{1}\right)^{\neg}\rightarrow\mathcal{D}'\left(\mathcal{O}\right)$
with respect to the weak-$\ast$-topology on $\left(\mathcal{H}_{w}^{1}\right)^{\neg}$.

In order to show the injectivity of $\mathcal{F}$, let $\psi\in\mathcal{S}\left(\mathbb{R}^{d}\right)\setminus\left\{ 0\right\} $
with $\widehat{\psi}\in\mathcal{D}\left(\mathcal{O}\right)$ be arbitrary
and let $f\in\left(\mathcal{H}_{w}^{1}\right)^{\neg}$ with $\mathcal{F}f=0$.
Note that $\pi\left(x,h\right)\psi\in\mathcal{S}\left(\mathbb{R}^{d}\right)$
is a Schwartz function whose Fourier transform has compact support
\[
\text{supp}\left(\mathcal{F}\left(\pi\left(x,h\right)\psi\right)\right)\overset{\text{Eq. }\eqref{eq:QausiRegulaereDarstellungAufFourierSeite}}{=}\text{supp}\left(M_{-x}D_{h}\widehat{\psi}\right)\subset h^{-T}\left(\text{supp}\left(\smash{\widehat{\psi}}\right)\right)\subset\mathcal{O}.
\]
This shows $\overline{\mathcal{F}\left(\pi\left(x,h\right)\psi\right)}\in\mathcal{D}\left(\mathcal{O}\right)$
and thus 
\begin{eqnarray*}
\left(W_{\psi}f\right)\left(x,h\right) & = & \left\langle \pi\left(x,h\right)\psi,f\right\rangle _{\text{anti}}=f\left(\pi\left(x,h\right)\psi\right)\\
 & = & f\left(\mathcal{F}^{-1}\overline{\overline{\mathcal{F}\left(\pi\left(x,h\right)\psi\right)}}\right)\\
 & \overset{\text{Eq. }\eqref{eq:FouriertrafoAufFeichtingerReservoir}}{=} & \left(\mathcal{F}f\right)\left(\overline{\mathcal{F}\left(\pi\left(x,h\right)\psi\right)}\right)\\
 & \overset{\mathcal{F}f=0}{=} & 0,
\end{eqnarray*}
i.e. $W_{\psi}f\equiv0$. But \cite[Theorem 4.1]{FeichtingerCoorbit1}
shows that $W_{\psi}:\left(\mathcal{H}_{w}^{1}\right)^{\neg}\rightarrow L_{1/w}^{\infty}\left(G\right)$
is injective (note that we have $\psi\in\mathcal{B}_{w}\setminus\left\{ 0\right\} \subset\mathcal{A}_{w}\setminus\left\{ 0\right\} $
by Theorem \ref{thm:ZulaessigkeitVonbandbeschraenktenFunktionen}),
which implies $f=0$.
\end{proof}
Instead of applying the Fourier transform to $f\in\left(\mathcal{H}_{w}^{1}\right)^{\neg}$
in order to yield $\mathcal{F}f\in\mathcal{D}'\left(\mathcal{O}\right)$,
we can also ``pass on'' the application of the Fourier transform
to the space on which $f$ is defined. This is described in the next
corollary. We will see in section \ref{sec:InverseFourierTrafoStetig}
that the reservoir $\left(\mathcal{F}\left(\mathcal{D}\left(\mathcal{O}\right)\right)\right)'$
that is used in this corollary is a very natural alternative ``reservoir''
for the definition of coorbit spaces.
\begin{cor}
\label{cor:FourierTrafoAufRaumAbgewaelzt}Let $w:H\rightarrow\left(0,\infty\right)$
be locally bounded. Then the map
\[
\Theta:\left(\mathcal{H}_{w}^{1}\right)^{\neg}\rightarrow\left(\mathcal{F}\left(\mathcal{D}\left(\mathcal{O}\right)\right)\right)',f\mapsto\left(\varphi\mapsto f\left(\overline{\varphi}\right)\right)
\]
is a well-defined, injective linear map that is continuous with
respect to the weak-$\ast$-topology on $\left(\mathcal{H}_{w}^{1}\right)^{\neg}$.

Here, the space $\mathcal{F}\left(\mathcal{D}\left(\mathcal{O}\right)\right)$
is endowed with the unique topology that makes the Fourier transform
$\mathcal{F}:\mathcal{D}\left(\mathcal{O}\right)\rightarrow\mathcal{F}\left(\mathcal{D}\left(\mathcal{O}\right)\right)\leq\mathcal{S}\left(\mathbb{R}^{d}\right)$
a homeomorphism and the dual space $\left(\mathcal{F}\left(\mathcal{D}\left(\mathcal{O}\right)\right)\right)'$
is equipped with the weak-$\ast$-topology.

With the definition of the Fourier transform on $\left(\mathcal{H}_{w}^{1}\right)^{\neg}$
of corollary \ref{cor:FourierTrafoAufFeichtingerReservoir}, we have
\begin{equation}
\mathcal{F}f=\left(\Theta f\right)\circ\mathcal{F}\qquad\text{ for all }f\in\left(\mathcal{H}_{w}^{1}\right)^{\neg}.\label{eq:FourierTrafoVertauschtMitTheta}
\end{equation}
\end{cor}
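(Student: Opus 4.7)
The plan is to reduce every assertion to the already-proved Corollary~\ref{cor:FourierTrafoAufFeichtingerReservoir} and Theorem~\ref{thm:ZulaessigkeitVonbandbeschraenktenFunktionen}, the link being the pointwise identity $\overline{\mathcal{F}g}=\mathcal{F}^{-1}\overline{g}$, which follows directly from the integral formula for $\mathcal{F}$ and the fact that conjugation turns $e^{-2\pi i\langle x,\xi\rangle}$ into $e^{2\pi i\langle x,\xi\rangle}$.

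First I would verify well-definedness pointwise: for $\varphi\in\mathcal{F}(\mathcal{D}(\mathcal{O}))$, writing $\varphi=\mathcal{F}g$ with $g\in\mathcal{D}(\mathcal{O})$, the identity above yields $\overline{\varphi}=\mathcal{F}^{-1}\overline{g}$, and since $\overline{g}\in\mathcal{D}(\mathcal{O})$, Theorem~\ref{thm:ZulaessigkeitVonbandbeschraenktenFunktionen} gives $\overline{\varphi}\in\mathcal{H}_{w}^{1}$, so $(\Theta f)(\varphi)=f(\overline{\varphi})$ makes sense. Linearity of $\Theta f$ on $\mathcal{F}(\mathcal{D}(\mathcal{O}))$ comes from the fact that $f$ is \emph{anti}linear and $\varphi\mapsto\overline{\varphi}$ is \emph{anti}linear, so their composition is linear; linearity of $\Theta$ in $f$ is immediate.

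Next, I would establish the identity~(\ref{eq:FourierTrafoVertauschtMitTheta}) together with the continuity of $\Theta f$ in a single step. Unwinding definitions gives, for every $g\in\mathcal{D}(\mathcal{O})$,
\[
\bigl((\Theta f)\circ\mathcal{F}\bigr)(g)
=f\bigl(\overline{\mathcal{F}g}\bigr)
=f\bigl(\mathcal{F}^{-1}\overline{g}\bigr)
\overset{\eqref{eq:FouriertrafoAufFeichtingerReservoir}}{=}(\mathcal{F}f)(g),
\]
which is exactly~(\ref{eq:FourierTrafoVertauschtMitTheta}). By Corollary~\ref{cor:FourierTrafoAufFeichtingerReservoir}, the right-hand side lies in $\mathcal{D}'(\mathcal{O})$, so $(\Theta f)\circ\mathcal{F}$ is continuous on $\mathcal{D}(\mathcal{O})$. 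Since the topology on $\mathcal{F}(\mathcal{D}(\mathcal{O}))$ is the one transported from $\mathcal{D}(\mathcal{O})$ via the bijection $\mathcal{F}$, this forces $\Theta f$ itself to be continuous, i.e.\ $\Theta f\in(\mathcal{F}(\mathcal{D}(\mathcal{O})))'$.

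Finally, injectivity and weak-$\ast$-continuity are one-liners. If $\Theta f=0$, then the identity just proved forces $\mathcal{F}f=0$, and injectivity of $\mathcal{F}$ from Corollary~\ref{cor:FourierTrafoAufFeichtingerReservoir} gives $f=0$. For weak-$\ast$-continuity I would argue exactly as in that corollary: for fixed $\varphi\in\mathcal{F}(\mathcal{D}(\mathcal{O}))$ the composition $\iota_\varphi\circ\Theta$ sends $f$ to $f(\overline{\varphi})$, i.e.\ is the evaluation functional at the fixed vector $\overline{\varphi}\in\mathcal{H}_w^1$, which is tautologically continuous in the weak-$\ast$-topology on $(\mathcal{H}_w^1)^{\neg}$; since the topology on $(\mathcal{F}(\mathcal{D}(\mathcal{O})))'$ is generated by such evaluations $\iota_\varphi$, continuity of $\Theta$ follows. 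There is no real analytic obstacle here -- all the heavy lifting was done in Theorem~\ref{thm:ZulaessigkeitVonbandbeschraenktenFunktionen} -- and the only thing that requires genuine care is tracking the antilinearity so that $\Theta f$ really lands in the (linear) topological dual rather than the antidual.
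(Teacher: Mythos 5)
Your proposal is correct and follows essentially the same route as the paper: the key identity $\overline{\mathcal{F}g}=\mathcal{F}^{-1}\overline{g}$ reduces well-definedness to Theorem~\ref{thm:ZulaessigkeitVonbandbeschraenktenFunktionen}, equation~\eqref{eq:FourierTrafoVertauschtMitTheta} and continuity of $\Theta f$ come from the same unwinding of definitions via Corollary~\ref{cor:FourierTrafoAufFeichtingerReservoir}, and injectivity plus weak-$\ast$-continuity are argued identically through evaluation functionals. Your closing remark about tracking antilinearity so that $\Theta f$ lands in the linear (rather than anti-) dual is a useful explicit note on a point the paper leaves implicit.
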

\begin{proof}
First note that we have $\overline{\mathcal{F}\varphi}=\mathcal{F}^{-1}\overline{\varphi}$
for $\varphi\in\mathcal{D}\left(\mathcal{O}\right)$. For $\psi=\mathcal{F}\varphi\in\mathcal{F}\left(\mathcal{D}\left(\mathcal{O}\right)\right)$,
this shows $\overline{\psi}=\mathcal{F}^{-1}\overline{\varphi}\in\mathcal{H}_{w}^{1}$
by Theorem \ref{thm:ZulaessigkeitVonbandbeschraenktenFunktionen}.
Here, we used that $\overline{\varphi}\in\mathcal{D}\left(\mathcal{O}\right)$
holds as well, i.e. that $\mathcal{D}\left(\mathcal{O}\right)$ is
invariant under conjugation. In summary, this entails that 
\[
\Theta f:\mathcal{F}\left(\mathcal{D}\left(\mathcal{O}\right)\right)\rightarrow\mathbb{C}
\]
is a well-defined linear map.

For $\varphi\in\mathcal{D}\left(\mathcal{O}\right)$ we have
\[
\left(\left(\Theta f\right)\circ\mathcal{F}\right)\left(\varphi\right)=\left(\Theta f\right)\left(\widehat{\varphi}\right)=f\left(\overline{\widehat{\varphi}}\right)=f\left(\mathcal{F}^{-1}\overline{\varphi}\right)\overset{\text{Eq. }\eqref{eq:FouriertrafoAufFeichtingerReservoir}}{=}\left(\mathcal{F}f\right)\left(\varphi\right),
\]
which proves equation \eqref{eq:FourierTrafoVertauschtMitTheta}.
Furthermore, corollary \ref{cor:FourierTrafoAufFeichtingerReservoir}
implies that the right hand side is a continuous linear function of
$\varphi\in\mathcal{D}\left(\mathcal{O}\right)$. The definition of
the topology on $\mathcal{F}\left(\mathcal{D}\left(\mathcal{O}\right)\right)$
thus implies $\Theta f\in\left(\mathcal{F}\left(\mathcal{D}\left(\mathcal{O}\right)\right)\right)'$.

In order to show the injectivity of $\Theta$, assume $\Theta f=0$
for some $f\in\left(\mathcal{H}_{w}^{1}\right)^{\neg}$. Equation
\eqref{eq:FourierTrafoVertauschtMitTheta} then yields $\mathcal{F}f=0$,
which implies $f\equiv0$ by Corollary \ref{cor:FourierTrafoAufFeichtingerReservoir}.

Finally, let $\psi=\mathcal{F}\varphi\in\mathcal{F}\left(\mathcal{D}\left(\mathcal{O}\right)\right)$
be arbitrary. As in the proof of corollary \ref{cor:FourierTrafoAufFeichtingerReservoir},
we see that the evaluation map $\iota_{\psi}:\left(\mathcal{F}\left(\mathcal{D}\left(\mathcal{O}\right)\right)\right)'\rightarrow\mathbb{C},f\mapsto f\left(\psi\right)$
satisfies
\[
\iota_{\psi}\left(\Theta f\right)=\left(\Theta f\right)\left(\mathcal{F}\varphi\right)\overset{\text{Eq. }\eqref{eq:FourierTrafoVertauschtMitTheta}}{=}\left(\mathcal{F}f\right)\left(\varphi\right)=f\left(\mathcal{F}^{-1}\overline{\varphi}\right)=\iota_{\mathcal{F}^{-1}\overline{\varphi}}\left(f\right),
\]
where the right hand side is a continuous function of $f\in\left(\mathcal{H}_{w}^{1}\right)^{\neg}$
with respect to the weak-$\ast$-topology. This proves the claimed continuity
of $\Theta$.
\end{proof}

\section{Construction of an induced covering and definition of the corresponding
decomposition space}

\label{sec:InducedCovering}In this section we will show how to obtain
the induced covering $\mathcal{Q}$ mentioned in the introduction
and we will prove that our construction indeed yields an admissible
covering (cf. Definition \ref{def:AdmissibleCoveringUndCluster} below).
The idea for the construction of $\mathcal{Q}$ is the following:
Choose a (necessarily countable) well-spread family $\left(h_{i}\right)_{i\in I}$
in $H$. For precompact $Q\subset\mathcal{O}$ with $\overline{Q}\subset\mathcal{O}$
and $\mathcal{O}\mathcal{=}\bigcup_{i\in I}h_{i}^{-T}Q$ we then define
$\mathcal{Q}:=\left(Q_{i}\right)_{i\in I}:=\left(h_{i}^{-T}Q\right)_{i\in I}$.
It is worth noting that this induced covering is of a very simple
form in which every set $Q_{i}$ is a linear image of a fixed set
$Q$. We will also see that the covering is well behaved in the sense
that there is a constant $C>0$ such that $\left\Vert h_{i}^{-1}h_{j}\right\Vert \leq C$
holds for all $i,j\in I$ with $Q_{i}\cap Q_{j}\neq\emptyset$.

Finally, we will state the exact definition of the space $\mathcal{D}\left(\mathcal{Q},L^{p},\ell_{u}^{q}\right)$
as considered in this paper. Our definition is slightly different
than the one in \cite[Definition 3]{BorupNielsenDecomposition}.

Before we show that our construction of the induced covering indeed
yields an admissible covering, we recall the following fundamental
definitions from \cite[Definition 2.1 and Definition 2.3]{DecompositionSpaces1}:
\begin{defn}
\label{def:AdmissibleCoveringUndCluster}(cf. \cite[Definition 2.1 and Definition 2.3]{DecompositionSpaces1})

Let $X\neq\emptyset$ be a set and assume that $\mathcal{Q}=\left(Q_{i}\right)_{i\in I}$
is a family of subsets of $X$. For a subset $J\subset I$ we then
define the \textbf{(index)-cluster} \textbf{of $J$} as 
\[
J^{\ast}:=\left\{ i\in I\with\exists j\in J:\: Q_{i}\cap Q_{j}\neq\emptyset\right\} .
\]
Inductively, we define $J^{0\ast}:=J$ and $J^{\left(n+1\right)\ast}:=\left(J^{n\ast}\right)^{\ast}$
for $n\in\mathbb{N}_{0}$. For convenience, we also set $i^{n\ast}:=\left\{ i\right\} ^{n\ast}$
and $i^{\ast}:=\left\{ i\right\} ^{\ast}$ for $i\in I$. Furthermore,
for any subset $J\subset I$ we define $Q_{J}:=\bigcup_{i\in J}Q_{i}$.
With this notation we introduce the convenient shortcuts $Q_{i}^{k\ast}:=Q_{i^{k\ast}}$
and $Q_{i}^{\ast}:=Q_{i^{\ast}}$ for $i\in I$ and $k\in\mathbb{N}_{0}$.

We say that $\mathcal{Q}$ is an \textbf{admissible covering} of $X$,
if the following holds
\begin{enumerate}
\item $X=\bigcup_{i\in I}Q_{i}$ (i.e. $\mathcal{Q}$ is a covering of $X$)
and 
\item There exists $n_{0}\in\mathbb{N}$ with $\left|i^{\ast}\right|\leq n_{0}$
for all $i\in I$. 
\end{enumerate}
\end{defn}
In \cite[Definiton 7]{BorupNielsenDecomposition}, Borup and Nielsen
specialized this notion to the concept of a so-called \textbf{structured
admissible covering}. They only considered coverings of the whole
euclidean space $\mathbb{R}^{d}$. In the next definition we generalize
this to coverings of arbitrary open subsets $\emptyset\neq U\subset\mathbb{R}^{d}$.
\begin{defn}
\label{def:BorupUndNielsenCovering}(based upon \cite[Definition 7]{BorupNielsenDecomposition})

Let $\emptyset\neq U\subset\mathbb{R}^{d}$ be open and let $I\neq\emptyset$
be a countable index-set. Furthermore assume that $\left(T_{i}\right)_{i\in I}$
and $\left(b_{i}\right)_{i\in I}$ are families of invertible linear
transformations $T_{i}\in\text{GL}\left(\mathbb{R}^{d}\right)$ and
of translations $b_{i}\in\mathbb{R}^{d}$, respectively.

Let $P,Q\subset\mathbb{R}^{d}$ be precompact open subsets with $\overline{P}\subset Q$.
We then say that the family $\mathcal{Q}:=\left(Q_{i}\right)_{i\in I}:=\left(T_{i}Q+b_{i}\right)_{i\in I}$
is a \textbf{structured admissible covering (of $U$)}, if 
\begin{enumerate}
\item $\mathcal{Q}$ and $\mathcal{P}:=\left(T_{i}P+b_{i}\right)_{i\in I}$
are admissible coverings%
\footnote{This implies in particular that we have $Q_{i}\subset U$ for all
$i\in I$.%
} of $U$ and 
\item there is a constant $C>0$ such that $\left\Vert T_{i}^{-1}T_{j}\right\Vert \leq C$
holds for all $i,j\in I$ satisfying $Q_{i}\cap Q_{j}\neq\emptyset$. 
\end{enumerate}
\end{defn}
Borup and Nielsen then showed (cf. \cite[Proposition 1]{BorupNielsenDecomposition})
that every structured admissible covering admits a so-called \textbf{bounded
admissible partition of unity} \textbf{(BAPU)} which can then be used
in order to define the decomposition spaces $\mathcal{D}\left(\mathcal{Q},L^{p},\ell_{u}^{q}\right)$.
More precisely, the conditions for a BAPU as used in this paper are
as follows: 
\begin{defn}
\label{def:BAPUDefinition}(cf. \cite[Definition 2.2]{DecompositionSpaces1}
and \cite[Definition 2]{BorupNielsenDecomposition})

Let $\emptyset\neq U\subset\mathbb{R}^{d}$ be open and let $\mathcal{Q}=\left(Q_{i}\right)_{i\in I}$
be an admissible covering of $U$. A family $\left(\varphi_{i}\right)_{i\in I}$
of functions is called a \textbf{bounded admissible partition of unity}
(BAPU) subordinate to $\mathcal{Q}$, if 
\begin{enumerate}
\item $\varphi_{i}\in\mathcal{D}\left(U\right)$ for all $i\in I$, 
\item $\sum_{i=1}^{\infty}\varphi_{i}\left(x\right)=1$ for all $x\in U$
(i.e. $\left(\varphi_{i}\right)_{i\in I}$ is a partition of unity
on $U$), 
\item $\varphi_{i}\left(x\right)=0$ for all $x\in U\setminus Q_{i}$ for
all $i\in I$, 
\item $\sup_{i\in I}\left\Vert \mathcal{F}^{-1}\varphi_{i}\right\Vert _{L^{1}\left(\mathbb{R}^{d}\right)}<\infty$. 
\end{enumerate}
\end{defn}
Note that Borup and Nielsen even require $\sup_{i\in I}\left|\det\left(T_{i}\right)\right|^{\frac{1}{p}-1}\left\Vert \mathcal{F}^{-1}\varphi_{i}\right\Vert _{L^{p}\left(\mathbb{R}^{d}\right)}<\infty$
for all $p\in\left(0,1\right]$ where each $Q_{i}$ is given by $Q_{i}=T_{i}Q+b_{i}$.
This stronger condition is necessary to ensure well-definedness of
the decomposition space $\mathcal{D}\left(\mathcal{Q},L^{p},\ell_{u}^{q}\right)$
in the Quasi-Banach regime $p\in\left(0,1\right)$. In this paper
we will only consider the range $p\in\left[1,\infty\right]$.

Mainly as a simplification of notation, we introduce the term of a
\textbf{decomposition covering}.
\begin{defn}
Let $\emptyset\neq U\subset\mathbb{R}^{d}$ be an open set. A family
$\mathcal{Q}=\left(Q_{i}\right)_{i\in I}$ of subsets of $U$ is called
a \textbf{decomposition covering}, if 
\begin{enumerate}
\item $\mathcal{Q}$ is an admissible covering of $U$, 
\item $Q_{i}\neq\emptyset$ for all $i\in I$ and 
\item there exists a BAPU $\left(\varphi_{i}\right)_{i\in I}$ subordinate
to $\mathcal{Q}$. 
\end{enumerate}
\end{defn}
An easy adaptation of the proof of \cite[Proposition 1]{BorupNielsenDecomposition}
(we allow any open set $\emptyset\neq U\subset\mathbb{R}^{d}$, whereas
Borup and Nielsen only consider coverings of the whole euclidean space
$\mathbb{R}^{d}$) then yields the following:
\begin{thm}
\label{thm:StructuredCoveringsSindDekompositionsCovering}Let $\emptyset\neq U\subset\mathbb{R}^{d}$
be an open set. Then every structured admissible covering $\mathcal{Q}=\left(Q_{i}\right)_{i\in I}$
of $U$ is a decomposition covering of $U$.
\end{thm}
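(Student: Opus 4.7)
The plan is to construct a BAPU explicitly by starting from a single smooth bump and transporting it via the maps $x \mapsto T_{i} x + b_{i}$. Using a smooth Urysohn function (applicable since $\overline{P}$ is compact and contained in the open set $Q$), fix $\Phi \in \mathcal{D}(\mathbb{R}^{d})$ with $0 \leq \Phi \leq 1$, $\Phi \equiv 1$ on $\overline{P}$ and $\mathrm{supp}(\Phi) \subset Q$, and set $\Phi_{i}(x) := \Phi\bigl(T_{i}^{-1}(x - b_{i})\bigr)$. Then $\Phi_{i} \in \mathcal{D}(U)$ with $\mathrm{supp}(\Phi_{i}) \subset Q_{i} \subset U$ and $\Phi_{i} \equiv 1$ on $T_{i}\overline{P} + b_{i}$. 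Since $\mathcal{P} = (T_{i} P + b_{i})_{i \in I}$ covers $U$, each $x \in U$ lies in some $T_{i} P + b_{i}$ and hence $\Phi_{i}(x) = 1$, so the (locally finite, by admissibility of $\mathcal{Q}$) sum $\Sigma := \sum_{i \in I} \Phi_{i}$ satisfies $1 \leq \Sigma \leq n_{0} \|\Phi\|_{\infty}$ on $U$ and is smooth on $U$. Defining $\varphi_{i} := \Phi_{i}/\Sigma$ gives functions in $\mathcal{D}(U)$ supported in $\mathrm{supp}(\Phi_{i}) \subset Q_{i}$ and summing to $1$ on $U$, which verifies items (1)--(3) of Definition \ref{def:BAPUDefinition}.

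The only nontrivial task is item (4), the uniform bound $\sup_{i} \|\mathcal{F}^{-1}\varphi_{i}\|_{L^{1}} < \infty$. The key observation is that $\|\mathcal{F}^{-1} g\|_{L^{1}}$ is invariant under affine changes of variable $g \mapsto g(A \cdot + b)$: the translation contributes only a unimodular modulation to $\mathcal{F}^{-1} g$, and the linear change by $A$ is absorbed by the combined Jacobian factor $|\det A|^{-1} \cdot |\det A^{-T}|^{-1} = 1$. Consequently, writing $\rho_{i}(y) := \varphi_{i}(T_{i} y + b_{i})$ one has $\|\mathcal{F}^{-1}\varphi_{i}\|_{L^{1}} = \|\mathcal{F}^{-1}\rho_{i}\|_{L^{1}}$, and after the change of variable
\[
\rho_{i}(y) = \frac{\Phi(y)}{\sum_{j \in i^{\ast}} \Phi(A_{ij}\, y + c_{ij})}, \qquad A_{ij} := T_{j}^{-1} T_{i}, \quad c_{ij} := T_{j}^{-1}(b_{i} - b_{j}).
\]
All $\rho_{i}$ are supported in the fixed compact set $\mathrm{supp}(\Phi)$, so it suffices to control $\|\rho_{i}\|_{C^{N}}$ uniformly for some $N > d/2$.

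The structured admissibility hypothesis immediately gives $\|A_{ij}\| \leq C$ for every $i$ and every $j \in i^{\ast}$. For the translations, pick $x \in Q_{i} \cap Q_{j}$ and set $y := T_{i}^{-1}(x - b_{i}) \in Q$; then $A_{ij} y + c_{ij} = T_{j}^{-1}(x - b_{j}) \in Q$, and since $Q$ is bounded (precompact) we obtain $|c_{ij}| \leq (1 + C)\,\mathrm{diam}(Q)$ uniformly. Since $|i^{\ast}| \leq n_{0}$, the denominator $D_{i}(y) := \sum_{j \in i^{\ast}} \Phi(A_{ij} y + c_{ij})$ is a sum of uniformly boundedly many terms whose $C^{k}$-norms on $\mathrm{supp}(\Phi)$ are uniformly controlled (by the chain rule, using boundedness of $A_{ij}$ and of all derivatives of $\Phi$). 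Combined with the pointwise lower bound $D_{i} \geq 1$ on $\mathrm{supp}(\Phi)$ and the Leibniz/Faà~di~Bruno rules for quotients, this yields a uniform-in-$i$ bound on $\|\rho_{i}\|_{C^{N}}$, hence on $\|\rho_{i}\|_{H^{N}}$.

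The proof concludes by invoking the standard Sobolev-type estimate
\[
\|\mathcal{F}^{-1}\rho_{i}\|_{L^{1}} \leq \bigl\|(1 + |\cdot|^{2})^{-N/2}\bigr\|_{L^{2}} \cdot \bigl\|(1 + |\cdot|^{2})^{N/2}\,\mathcal{F}^{-1}\rho_{i}\bigr\|_{L^{2}} \leq C_{N}\,\|\rho_{i}\|_{H^{N}},
\]
which is finite (and uniform in $i$) once $N > d/2$. The main conceptual obstacle in the whole argument is the bookkeeping in the third step: verifying that all the auxiliary parameters $A_{ij}, c_{ij}$ stay uniformly controlled so that derivatives of the quotient $\Phi / D_{i}$ are uniformly bounded. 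Once this is in place, the construction is essentially a transcription of \cite[Proposition 1]{BorupNielsenDecomposition} to the present setting of coverings of an arbitrary open subset $U \subset \mathbb{R}^{d}$; no additional hypothesis on $U$ is required because only local properties of the covering enter.
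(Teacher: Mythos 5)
Your proof is correct and follows essentially the same approach as the paper: the paper simply cites \cite[Proposition 1]{BorupNielsenDecomposition} and notes the adaptation to a general open set $U$ is easy, and your argument is precisely a careful transcription of that construction (bump function $\Phi$ with $\overline{P}\prec\Phi\prec Q$, affinely transported $\Phi_i$, normalization by $\Sigma=\sum_i\Phi_i$, and the uniform $L^1$-Fourier bound via affine invariance of $\|\mathcal{F}^{-1}\cdot\|_{L^1}$, uniform $C^N$-bounds on the transformed quotients, and the Sobolev-type estimate). The only minor cosmetic point is that the bound on $|c_{ij}|$ is not actually needed for the $C^N$-estimate (translations do not affect $C^N$-norms, and the restriction of the denominator sum to $j\in i^{\ast}$ on $\mathrm{supp}(\Phi)$ already takes care of which terms survive), but including it is harmless.
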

We next recall density and discreteness properties for subsets of
topological groups.
\begin{defn}
\label{defn:dense_wellspread} Let $G$ denote an arbitrary locally
compact group, and $(g_{j})_{j\in J}\subset G$ a family of group
elements. Let $U\subset G$ denote a neighborhood of the identity.
The family $(g_{j})_{j\in J}$ is called \textbf{$U$-dense} if we
have $G=\bigcup_{j\in J}g_{j}U$ and \textbf{$U$-discrete} if for
all distinct $j,j'\in J$ the equality $g_{j}U\cap g_{j'}U=\emptyset$
holds. Furthermore, $\left(g_{j}\right)_{j\in J}$ is called \textbf{separated}
if it is $U$-discrete with respect to some neighborhood $U$ of the
identity. The family is called \textbf{well-spread}, if it is separated
and $U$-dense, for a relatively compact neighborhood $U$ of the
identity. Finally, we call $(g_{j})_{j\in J}$ \textbf{relatively
separated} if it is the union of finitely many separated sets.
\end{defn}
Using the fact that $p_{\xi_{0}}$ is a proper map (cf. Lemma \ref{lem:OrbitProjektionIstProper}),
we can prove the following fundamental result which shows that elements
of $H$ that are projected ``close to each other'' by $p_{\xi_{0}}$
are already close in $H$. This property will ensure that the induced
covering is admissible.
\begin{lem}
\label{lem:AdmissibilityVorbereitung}Let $\emptyset\neq K_{1},K_{2}\subset\mathcal{O}$
be compact. Then there is a compact set $L\subset H$ such that $g\in hL$
holds for all $g,h\in H$ satisfying $h^{-T}K_{1}\cap g^{-T}K_{2}\neq\emptyset$.

Now choose a relatively compact unit-neighborhood $V\subset H$ and
let $(h_{i})_{i\in I}\subset H$ denote a $V$-separated family. Given
$h\in H$, let 
\[
I_{h}:=\left\{ i\in I\with h^{-T}K_{1}\cap h_{i}^{-T}K_{2}\neq\emptyset\right\} .
\]
Then there exists a constant $C=C\left(K_{1},K_{2},V\right)$ such
that 
\[
|I_{h}|\le C\qquad\text{ holds for all }h\in H.
\]
\end{lem}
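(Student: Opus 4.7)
The plan is to treat the two assertions separately and to reduce both to properness of the orbit map $p_{\xi_{0}}:H\to\mathcal{O}$, $h\mapsto h^{T}\xi_{0}$, which we have from Lemma \ref{lem:OrbitProjektionIstProper}.

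For the first statement, I would start from the hypothesis $h^{-T}K_{1}\cap g^{-T}K_{2}\neq\emptyset$ by picking $\xi$ in this intersection and writing $\xi=h^{-T}k_{1}=g^{-T}k_{2}$ with $k_{j}\in K_{j}$. Since $K_{j}\subset\mathcal{O}=H^{T}\xi_{0}$, there exist $a_{1}\in p_{\xi_{0}}^{-1}(K_{1})$ and $a_{2}\in p_{\xi_{0}}^{-1}(K_{2})$ with $k_{j}=a_{j}^{T}\xi_{0}$. Substituting yields $(a_{1}h^{-1})^{T}\xi_{0}=(a_{2}g^{-1})^{T}\xi_{0}$, hence $a_{1}h^{-1}g\,a_{2}^{-1}\in H_{\xi_{0}}$, say $a_{1}h^{-1}g\,a_{2}^{-1}=s$. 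Solving for $g$ gives $g=h\,a_{1}^{-1}s\,a_{2}$, so
\[
g\;\in\;h\cdot L,\qquad\text{where}\qquad L\;:=\;\bigl(p_{\xi_{0}}^{-1}(K_{1})\bigr)^{-1}\cdot H_{\xi_{0}}\cdot p_{\xi_{0}}^{-1}(K_{2}).
\]
The set $L$ is compact: $p_{\xi_{0}}^{-1}(K_{j})$ is compact by Lemma \ref{lem:OrbitProjektionIstProper}, $H_{\xi_{0}}$ is compact by our standing admissibility hypothesis, and products and inverses of compact sets in the topological group $H$ are compact.

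For the second statement, the first part applied with $g=h_{i}$ shows $h^{-1}h_{i}\in L$ for every $i\in I_{h}$. The $V$-separatedness of $(h_{i})_{i\in I}$ means that the sets $h_{i}V$ are pairwise disjoint; by left invariance of Haar measure the sets $h^{-1}h_{i}V$ are then pairwise disjoint as well, and they are all contained in the (relatively) compact set $LV$. Hence, by left invariance of Haar measure $\mu_{H}$ (noting $\mu_{H}(V)>0$ because $V$ is a nonempty open set),
\[
|I_{h}|\cdot\mu_{H}(V)\;=\;\sum_{i\in I_{h}}\mu_{H}(h^{-1}h_{i}V)\;\leq\;\mu_{H}(LV)\;<\;\infty,
\]
which yields the uniform bound $|I_{h}|\leq\mu_{H}(LV)/\mu_{H}(V)=:C(K_{1},K_{2},V)$, independent of $h$.

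The main conceptual obstacle, namely establishing that $p_{\xi_{0}}$ is proper, has already been handled in Lemma \ref{lem:OrbitProjektionIstProper}; given that, the present lemma is essentially a counting argument via Haar measure once the first inclusion $g\in hL$ is in place. The only minor care needed is to check that $L$ is genuinely compact (not merely relatively compact), which follows because it is the image under the continuous multiplication map $H^{3}\to H$ of a compact set.
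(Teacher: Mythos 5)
Your proof is correct and follows essentially the same route as the paper: you build the same compact set $L=(p_{\xi_0}^{-1}(K_1))^{-1}H_{\xi_0}\,p_{\xi_0}^{-1}(K_2)$ via properness of the orbit map, derive $g\in hL$ by the same stabilizer computation, and then count via Haar measure and the pairwise disjointness of translated copies of $V$. The only cosmetic difference is that the paper carefully distinguishes $V^{\circ}$ (for the lower measure bound) from $\overline V$ (for the compact upper bound), since a ``unit-neighborhood'' need not be open; your claim ``$V$ is a nonempty open set'' is therefore slightly imprecise, though the bound $\mu_H(V)\geq\mu_H(V^{\circ})>0$ and the inclusion $LV\subset L\overline V$ repair it immediately.
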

\begin{proof}
Choose 
\[
L=L\left(K_{1},K_{2}\right):=\left(p_{\xi_{0}}^{-1}\left(K_{1}\right)\right)^{-1}\cdot H_{\xi_{0}}\cdot p_{\xi_{0}}^{-1}\left(K_{2}\right).
\]
Lemma \ref{lem:OrbitProjektionIstProper} shows that $L\subset H$
is compact.

Let $g,h\in G$ with $h^{-T}K_{1}\cap g^{-T}K_{2}\neq\emptyset$.
This yields suitable $k_{1}=h_{1}^{T}\xi_{0}\in K_{1}\subset\mathcal{O}$
and $k_{2}=h_{2}^{T}\xi_{0}\in K_{2}\subset\mathcal{O}$ satisfying
$h^{-T}h_{1}^{T}\xi_{0}=g^{-T}h_{2}^{T}\xi_{0}$. We conclude 
\[
\left(h_{1}h^{-1}gh_{2}^{-1}\right)^{T}\xi_{0}=\xi_{0}.
\]
In other words, we have $h_{1}h^{-1}gh_{2}^{-1}\in H_{\xi_{0}}$ and
thus 
\[
g\in hh_{1}^{-1}H_{\xi_{0}}h_{2}\subset h\cdot\left(p_{\xi_{0}}^{-1}\left(K_{1}\right)\right)^{-1}\cdot H_{\xi_{0}}\cdot p_{\xi_{0}}^{-1}\left(K_{2}\right)=h\cdot L.
\]

Now let $\left(h_{i}\right)_{i\in I}$ be $V$-separated. For $i\in I_{h}$
we have $h^{-T}K_{1}\cap h_{i}^{-T}K_{2}\neq\emptyset$ and thus $h_{i}\in hL$
as shown above. But this implies $h_{i}V^{\circ}\subset h_{i}V\subset hL\overline{V}$.
Thus, $\left(h_{i}V^{\circ}\right)_{i\in I_{h}}$ is a pairwise disjoint
collection of subsets of $hL\overline{V}$. We thus get, for every
finite subset $J\subset I_{h}$, 
\[
\left|J\right|=\frac{\sum_{i\in J}\mu_{H}\left(h_{i}V^{\circ}\right)}{\mu_{H}\left(V^{\circ}\right)}=\mu_{H}\left(\biguplus_{i\in J}h_{i}V^{\circ}\right)\bigg/\mu_{H}\left(V^{\circ}\right)\leq\frac{\mu_{H}\left(hL\overline{V}\right)}{\mu_{H}\left(V^{\circ}\right)}=\frac{\mu_{H}\left(L\overline{V}\right)}{\mu_{H}\left(V^{\circ}\right)},
\]
where we used the left-invariance of $\mu_{H}$ in the first and last
step. Noting that the right hand side is independent of $h$ completes
the proof.
\end{proof}
As a last preparation, we need the following well-known existence
result for countable, ``well-spread'' families in $H$. It follows
by choosing (using Zorn's Lemma) a $V$-discrete subset of $H$ that
is maximal with respect to inclusion.
\begin{lem}
\label{lem:WellSpreadExistenz}Let $U,V\subset H$ be neighborhoods
of the identity such that $VV^{-1}\subset U$ holds. Then there exists
a family $\left(h_{i}\right)_{i\in I}$ of elements of $H$ such that
we have $G=\bigcup_{i\in I}h_{i}U$ and so that the family of sets
$\left(h_{i}V\right)_{i\in I}$ is pairwise disjoint. Every such family
is necessarily countably infinite.

In particular, there exists a countably infinite well-spread family
$\left(h_{i}\right)_{i\in I}$ in $H$.
\end{lem}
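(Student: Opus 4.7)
The strategy follows the hint: use Zorn's lemma to produce a maximal $V$-discrete family, deduce $U$-density from maximality via the containment $VV^{-1}\subset U$, and extract countability and infiniteness from Haar measure arguments on the second-countable, non-compact Lie group $H$.

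For existence, I would consider the collection of all $V$-discrete subsets of $H$, partially ordered by inclusion. The union of any chain of such subsets is again $V$-discrete, since any two distinct elements of that union lie in a common chain member and hence satisfy $gV\cap g'V=\emptyset$. Zorn's lemma thus yields a maximal $V$-discrete family $(h_{i})_{i\in I}\subset H$. To upgrade maximality to $U$-density, take an arbitrary $h\in H$. Either $h$ coincides with some $h_{i}$ (in which case $h\in h_{i}U$ because $1_{H}\in U$), or maximality forces $(h_{i})_{i\in I}\cup\{h\}$ to fail $V$-discreteness, producing an index $i$ with $hV\cap h_{i}V\neq\emptyset$; the latter means $h\in h_{i}VV^{-1}\subset h_{i}U$. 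Either way, $H=\bigcup_{i\in I}h_{i}U$.

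For countability and infiniteness, $H$ is a Lie group by the closed subgroup theorem (as already invoked in the proof of Lemma \ref{lem:OrbitProjektionIstProper}), hence second countable and $\sigma$-compact. The family $(h_{i}V)_{i\in I}$ then consists of pairwise disjoint open sets of equal positive Haar measure, so $\sigma$-finiteness of $\mu_{H}$ forces $I$ to be countable. Non-compactness of $H$ is a consequence of the standing admissibility hypothesis that $\mathcal{O}=H^{T}\xi_{0}$ is open and of full Lebesgue measure in $\mathbb{R}^{d}$: a compact $H$ would make $\mathcal{O}=p_{\xi_{0}}(H)$ bounded, contradicting full measure. Together with the $U$-density relation $H=\bigcup_{i\in I}h_{i}U$, this rules out finiteness of $I$ under precompactness of $U$, since a finite union of translates of a precompact set cannot cover a non-compact group. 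Hence $I$ is countably infinite.

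For the concluding "in particular" assertion, I would pick any precompact open neighborhood $W$ of $1_{H}$, set $V=W$ and $U=WW^{-1}$ (which is precompact by continuity of the group operations), and apply the first part of the lemma. The resulting family is $W$-discrete (hence separated) and $U$-dense with $U$ relatively compact, so it is well-spread in the sense of Definition \ref{defn:dense_wellspread}. The only delicate point I anticipate is the infiniteness argument, which genuinely needs a precompactness input to exclude trivial configurations such as $U=V=H$ with $|I|=1$; this input is automatic in the well-spread setting and is the hypothesis on which the conclusion $|I|=\aleph_{0}$ ultimately rests.
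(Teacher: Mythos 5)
Your proposal is correct and follows precisely the one-line hint the paper gives for this lemma: a Zorn's Lemma maximality argument on $V$-discrete subsets, with $U$-density recovered from maximality through the containment $VV^{-1}\subset U$. The countability argument via second-countability (or $\sigma$-finiteness of $\mu_H$) and the infiniteness argument via non-compactness of $H$ together with $U$-density are both sound, and the derivation of non-compactness from the standing admissibility hypothesis (a compact $H$ would force $\mathcal{O}=H^{T}\xi_{0}$ to be bounded, hence of finite measure, contradicting full Lebesgue measure in $\mathbb{R}^{d}$) is a clean and legitimate way to supply what the paper leaves implicit.

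Your flagged caveat is a genuine and worthwhile observation: the sentence \emph{``Every such family is necessarily countably infinite''} is, as literally stated, false without a precompactness hypothesis on $U$ (or $V$): taking $U=V=H$ yields $|I|=1$. The claim is correct in every application the paper makes, since those are all in the well-spread setting where $U$ is relatively compact by definition, but the general phrasing is overstated. You are also right to silently correct the obvious typo $G=\bigcup_{i}h_{i}U$ to $H=\bigcup_{i}h_{i}U$. In short: same approach as the paper, correctly executed, with a valid minor correction to the statement itself.
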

We now give the construction of the induced covering. It is worth
noting that there is still some freedom in choosing that covering.
As we will see below, the coorbit space and the decomposition space
will be isomorphic under the Fourier transformation for \emph{every}
such choice.
\begin{thm}
\label{thm:InduzierteUeberdeckungKonstruktion}Let $\left(h_{i}\right)_{i\in I}$
be well-spread in $H$. For every precompact set $Q\subset\mathbb{R}^{d}$
that satisfies $\overline{Q}\subset\mathcal{O}$ and $\mathcal{O}=\bigcup_{i\in I}h_{i}^{-T}Q$
we say that the covering $\mathcal{Q}=\left(h_{i}^{-T}Q\right)_{i\in I}$
of $\mathcal{O}$ is a \textbf{covering of $\mathcal{O}$ induced
by $H$}. Such a set $Q$ exists for every choice of the family $\left(h_{i}\right)_{i\in I}$.
In particular, one can choose $Q=U^{-T}\xi_{0}$, where $U\subset H$
is a precompact set satisfying $H=\bigcup_{i\in I}h_{i}U$.

Every such covering is admissible. Furthermore there is a constant
$C=C\left(\left(h_{i}\right)_{i\in I},Q\right)>0$ such that $\left\Vert h_{i}^{T}h_{j}^{-T}\right\Vert \leq C$
holds for all $i\in I$ and $j\in i^{\ast}$, where the cluster is
formed with respect to $\mathcal{Q}$.

If $Q\subset\mathbb{R}^{d}$ is open and precompact with $\overline{Q}\subset\mathcal{O}$
and if there is some open set $P\subset\mathbb{R}^{d}$ satisfying $\overline{P}\subset Q$
and $\mathcal{O}=\bigcup_{i\in I}h_{i}^{-T}P$ then $\mathcal{Q}$
is a structured admissible covering of $\mathcal{O}$ and in particular
a decomposition covering.\end{thm}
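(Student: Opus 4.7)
The plan is to reduce every assertion either to a direct computation using that $(h_i)_{i\in I}$ is well-spread, or to a single application of Lemma \ref{lem:AdmissibilityVorbereitung}, which is the real workhorse here. First I would verify existence of $Q$ by exhibiting the candidate $Q := U^{-T}\xi_0$, where $U\subset H$ is any precompact set with $H = \bigcup_i h_iU$; such a $U$ exists by well-spreadness of $(h_i)_{i\in I}$. Continuity of the map $h \mapsto h^{-T}\xi_0$ on $H$ shows that $Q$ is precompact with $\overline{Q} \subset \overline{U}^{-T}\xi_0 \subset \mathcal{O}$. Taking inverses in $H = \bigcup_i h_iU$ gives $H = \bigcup_i U^{-1}h_i^{-1}$, and for any $\xi = g^T\xi_0 \in \mathcal{O}$, writing $g = u^{-1}h_i^{-1}$ with $u\in U$ yields
\[
\xi = g^T\xi_0 = h_i^{-T}u^{-T}\xi_0 \in h_i^{-T}Q,
\]
establishing $\mathcal{O} = \bigcup_i h_i^{-T}Q$.

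Next I would settle admissibility and the norm bound for an \emph{arbitrary} induced covering $\mathcal{Q} = (h_i^{-T}Q)_{i\in I}$ in one stroke. Let $K := \overline{Q}$, a compact subset of $\mathcal{O}$. Since the well-spread family $(h_i)_{i\in I}$ is in particular $V$-separated for some relatively compact unit neighborhood $V\subset H$, Lemma \ref{lem:AdmissibilityVorbereitung} applied with $K_1 = K_2 = K$ furnishes a constant $n_0$ and a compact set $L\subset H$ such that
\[
\bigl|\{\,j\in I : h_i^{-T}K\cap h_j^{-T}K\neq\emptyset\,\}\bigr| \leq n_0 \qquad\text{and}\qquad h_j \in h_iL
\]
whenever the corresponding intersection is nonempty. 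Since $Q \subset K$, the cluster $i^{\ast}$ with respect to $\mathcal{Q}$ is contained in the index set on the left, so $|i^{\ast}| \leq n_0$. Moreover, for $j\in i^{\ast}$ we have $h_j^{-1}h_i \in L^{-1}$ (a compact set), so
\[
\|h_i^Th_j^{-T}\| = \|(h_j^{-1}h_i)^T\| \leq \sup_{\ell\in L^{-1}}\|\ell\| =: C < \infty.
\]

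For the structured admissible covering assertion, set $T_i := h_i^{-T}$ and $b_i := 0$, so that $Q_i = T_iQ+b_i$ and $P_i := T_iP+b_i = h_i^{-T}P$. Condition (2) of Definition \ref{def:BorupUndNielsenCovering} reads $\|T_i^{-1}T_j\| = \|h_i^Th_j^{-T}\| \leq C$ for $Q_i\cap Q_j\neq\emptyset$, which was just established. Admissibility of $\mathcal{Q}$ has been shown, and admissibility of $\mathcal{P}$ follows from a second application of Lemma \ref{lem:AdmissibilityVorbereitung} with $K_1 = K_2 = \overline{P}$ (which is compact, being closed in $\mathbb{R}^d$ and contained in the precompact set $Q$), together with the hypothesis $\mathcal{O} = \bigcup_i h_i^{-T}P$. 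Hence $\mathcal{Q}$ is a structured admissible covering, and Theorem \ref{thm:StructuredCoveringsSindDekompositionsCovering} then promotes it to a decomposition covering. The only real obstacle is bookkeeping: one has to select $U$ so that the inversion identity $H = \bigcup_i U^{-1}h_i^{-1}$ matches the form $h_i^{-T}Q$, after which every geometric fact is already packaged inside Lemma \ref{lem:AdmissibilityVorbereitung}.
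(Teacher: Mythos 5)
Your proposal is correct and follows essentially the same route as the paper: exhibit $Q=U^{-T}\xi_0$, then apply Lemma \ref{lem:AdmissibilityVorbereitung} with $K_1=K_2=\overline{Q}$ to get both the cluster bound and the norm estimate $\|h_i^Th_j^{-T}\|\le C$, and finally combine this with the analogous admissibility of $\mathcal{P}$ and Theorem \ref{thm:StructuredCoveringsSindDekompositionsCovering} for the decomposition-covering claim. The only cosmetic difference is that the paper bounds $\|h_j^{-1}h_i\|$ directly over $L^{-1}$ while you pass through the transpose (both using $\|M\|=\|M^T\|$ for the operator norm), and you are in fact slightly more careful than the paper in writing $i^{\ast}\subset I_{h_i}$ as a containment rather than an equality.
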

\begin{proof}
We first show the existence of $Q\subset\mathbb{R}^{d}$ with the
stated properties. By assumption on $\left(h_{i}\right)_{i\in I}$,
there is some precompact set $U\subset H$ such that $H=\bigcup_{i\in I}h_{i}U$.
This means 
\[
\mathcal{O}=H^{T}\xi_{0}=H^{-T}\xi_{0}=\bigcup_{i\in I}h_{i}^{-T}U^{-T}\xi_{0}.
\]
This implies that the choice $Q=U^{-T}\xi_{0}\subset\mathcal{O}$
guarantees the stated properties. Here we note that $\overline{U}^{-T}\xi_{0}\subset\mathcal{O}$
is compact (as a continuous image of the compact set $\overline{U}\subset H$).

Now let $\mathcal{Q}=\left(h_{i}^{-T}Q\right)_{i\in I}$ be a covering
of $\mathcal{O}$ induced by $H$. Let $V\subset H$ be a precompact
unit neighborhood such that $\left(h_{i}\right)_{i\in I}$ is $V$-discrete.
Set $K:=K_{1}:=K_{2}:=\overline{Q}\subset\mathcal{O}$ and choose
the compact set $L=L\left(K_{1},K_{2}\right)=L\left(Q\right)\subset H$
and the constant $C=C\left(K_{1},K_{2},V\right)>0$ as in Lemma \ref{lem:AdmissibilityVorbereitung}.
For $i\in I$ and $j\in i^{\ast}$ we then have $\emptyset\neq Q_{i}\cap Q_{j}\subset h_{i}^{-T}\overline{Q}\cap h_{j}^{-T}\overline{Q}$
so that Lemma \ref{lem:AdmissibilityVorbereitung} implies $h_{j}\in h_{i}L$
and thus $\left\Vert h_{i}^{T}h_{j}^{-T}\right\Vert =\left\Vert h_{j}^{-1}h_{i}\right\Vert \leq\max_{g\in L^{-1}}\left\Vert g\right\Vert $.
Furthermore, we have, in the notation of Lemma \ref{lem:AdmissibilityVorbereitung}:
\[
i^{\ast}=\left\{ j\in I\with h_{j}^{-T}Q\cap h_{i}^{-T}Q\neq\emptyset\right\} =I_{h_{i}}
\]
and thus $\left|i^{\ast}\right|=\left|I_{h_{i}}\right|\leq C$. This
shows that $\mathcal{Q}$ is admissible.

Finally, assume that $Q\subset\mathbb{R}^{d}$ is open and precompact
with $\overline{Q}\subset\mathcal{O}$ and that there is an open
set $P\subset\mathbb{R}^{d}$ satisfying $\overline{P}\subset Q$ and
$\mathcal{O}=\bigcup_{i\in I}h_{i}^{-T}P$. This yields $\mathcal{O}=\bigcup_{i\in I}h_{i}^{-T}P\subset\bigcup_{i\in I}h_{i}^{-T}Q \subset \mathcal{O}$.
Hence the above implies that $\mathcal{Q}$ and $\mathcal{P}=\left(h_{i}^{-T}P\right)_{i\in I}$
are both admissible coverings of $\mathcal{O}$. The estimate $\left\Vert h_{i}^{T}h_{j}^{-T}\right\Vert =\left\Vert \left(h_{i}^{-T}\right)^{-1}h_{j}^{-T}\right\Vert \leq C$
for all $i\in I$ and $j\in I$ with $Q_{i}\cap Q_{j}\neq\emptyset$
shown above then completes the proof of the fact that $\mathcal{Q}$
is a structured admissible covering. Theorem \ref{thm:StructuredCoveringsSindDekompositionsCovering}
then shows that $\mathcal{Q}$ is a decomposition covering.
\end{proof}
We next introduce our notion of decomposition spaces. Note that we
somewhat extend the definition of Borup and Nielsen in \cite{BorupNielsenDecomposition};
they only consider coverings of all of $\mathbb{R}^{d}$, whereas
the decomposition spaces that we consider arise from a covering of
the open dual orbit, which is always a proper subset of $\mathbb{R}^{d}$.
\begin{defn}
\label{def:DecompositionSpace}(cf. \cite[Definitions 2.4 and 3.1]{DecompositionSpaces1}
and \cite[Definition 3]{BorupNielsenDecomposition})

Let $\emptyset\neq U\subset\mathbb{R}^{d}$ be an open subset. Furthermore
assume that $\mathcal{Q}=\left(Q_{i}\right)_{i\in I}$ is a decomposition
covering of $U$ with BAPU $\left(\varphi_{i}\right)_{i\in I}$. Let
$u:I\rightarrow\left(0,\infty\right)$ be a \textbf{$\mathcal{Q}$-moderate
weight}, that is there exists some $C>0$ satisfying $u\left(i\right)\leq C\cdot u\left(j\right)$
for all $i\in I$ and all $j\in i^{\ast}$.

Let $p,q\in\left[1,\infty\right]$. For $f\in\mathcal{D}'\left(U\right)$
we define 
\[
\left\Vert f\right\Vert _{\mathcal{D}\left(\mathcal{Q},L^{p},\ell_{u}^{q}\right)}:=\left\Vert \left(u\left(i\right)\cdot\left\Vert \mathcal{F}^{-1}\left(\varphi_{i}f\right)\right\Vert _{L^{p}\left(\mathbb{R}^{d}\right)}\right)_{i\in I}\right\Vert _{\ell^{q}\left(I\right)}\in\left[0,\infty\right],
\]
where we use the convention that for a family $(c_{i})_{i\in I}$
with $c_{i}\in\left[0,\infty\right]$, the $\ell^{q}$-norm is infinite
as soon as one of the $c_{i}$ equals infinity.

Finally, we define the \textbf{decomposition space with respect to
the covering $\mathcal{Q}$ and the weight $u$ with integrability
exponents $p,q$} as 
\[
\mathcal{D}\left(\mathcal{Q},L^{p},\ell_{u}^{q}\right):=\left\{ f\in\mathcal{D}'\left(U\right)\with\left\Vert f\right\Vert _{\mathcal{D}\left(\mathcal{Q},L^{p},\ell_{u}^{q}\right)}<\infty\right\} .
\]
\end{defn}
\begin{rem*}
A few remarks pertaining this definition are in order:
\begin{enumerate}
\item We first note that $\varphi_{i}f\in\mathcal{D}'\left(U\right)$ is
a distribution with compact support, because of $\varphi_{i}\in\mathcal{D}\left(U\right)$.
By \cite[Example 7.12(a)]{RudinFA}, $\varphi_{i}f$ is thus a tempered
distribution so that $\mathcal{F}^{-1}\left(\varphi_{i}f\right)\in\mathcal{S}'\left(\mathbb{R}^{d}\right)$
makes sense. Furthermore, \cite[Theorem 7.23]{RudinFA} shows that
$\mathcal{F}\left(\varphi_{i}f\right)$ (and hence also $\mathcal{F}^{-1}\left(\varphi_{i}f\right)$)
is a smooth function of polynomial growth. In particular, $\mathcal{F}^{-1}\left(\varphi_{i}f\right)$
is pointwise defined by 
\begin{equation}
\left(\mathcal{F}^{-1}\left(\varphi_{i}f\right)\right)\left(x\right)=\left(\varphi_{i}f\right)\left(e_{x}\right)=f\left(\varphi_{i}e_{x}\right)\label{eq:FourierPunktweise}
\end{equation}
with $e_{x}:\mathbb{R}^{d}\rightarrow\mathbb{C},\xi\mapsto e^{2\pi i\left\langle x,\xi\right\rangle }$.
Thus the expression $\left\Vert \mathcal{F}^{-1}\left(\varphi_{i}f\right)\right\Vert _{L^{p}\left(\mathbb{R}^{d}\right)}\in\left[0,\infty\right]$
makes sense.
\item In the following we will use the notations $u_{i}:=u\left(i\right)$
and  $\left\Vert \left(x_{i}\right)_{i\in I}\right\Vert _{\ell_{u}^{q}}:=\left\Vert \left(u_{i}\cdot x_{i}\right)_{i\in I}\right\Vert _{\ell^{q}}$.
As $\ell^{q}$ is permutation-invariant and because $u$ is $\mathcal{Q}$-moderate,
\cite[Lemma 3.2]{DecompositionSpaces1} shows that $\ell_{u}^{q}$
is a \textbf{$\mathcal{Q}$-regular BK-space} (see \cite[Definition 2.5]{DecompositionSpaces1}).
The proof of \cite[Theorem 2.3(B)]{DecompositionSpaces1} then shows
that $\mathcal{D}\left(\mathcal{Q},L^{p},\ell_{u}^{q}\right)$ is
independent of the particular choice of the BAPU $\left(\varphi_{i}\right)_{i\in I}$
with equivalent norms for each choice.

For the convenience of the reader (and because our definition differs slightly from the one in \cite{DecompositionSpaces1}), we give a sketch of
the argument:
\begin{enumerate}
\item If $\left(\psi_{i}\right)_{i\in I}$ is another BAPU for $\mathcal{Q}$,
we have $\psi_{i}=\psi_{i}\varphi_{i}^{\ast}$ for all $i \in I$. Indeed, for $j\in I\setminus i^{\ast}$
we have $\varphi_{j}\equiv0$ on $Q_{i}$, so that $\sum_{i\in I}\varphi_{i}\equiv1$
on $\mathcal{O}\supset Q_{i}$ forces $\varphi_{i}^{\ast}\equiv1$
on $Q_{i}$. Now $\psi_{i}\equiv0$ on $\mathcal{O} \setminus Q_{i}$ implies $\psi_{i}=\psi_{i}\varphi_{i}^{\ast}$.
\item Using Young's inequality, this implies
\begin{align*}
\left\Vert \mathcal{F}^{-1}\left(\psi_{i}f\right)\right\Vert _{L^{p}\left(\mathbb{R}^{d}\right)} & =\left\Vert \mathcal{F}^{-1}\left(\psi_{i}\varphi_{i}^{\ast}f\right)\right\Vert _{L^{p}\left(\mathbb{R}^{d}\right)}\\
 & =\left\Vert \left(\mathcal{F}^{-1}\psi_{i}\right)\ast\mathcal{F}^{-1}\left(\varphi_{i}^{\ast}f\right)\right\Vert _{L^{p}\left(\mathbb{R}^{d}\right)}\\
 & \leq\left\Vert \mathcal{F}^{-1}\psi_{i}\right\Vert _{L^{1}\left(\mathbb{R}^{d}\right)}\cdot\left\Vert \mathcal{F}^{-1}\left(\varphi_{i}^{\ast}f\right)\right\Vert _{L^{p}\left(\mathbb{R}^{d}\right)}\\
 & \leq C\cdot\sum_{j\in i^{\ast}}\left\Vert \mathcal{F}^{-1}\left(\varphi_{j}f\right)\right\Vert _{L^{p}\left(\mathbb{R}^{d}\right)},
\end{align*}
where we used the property $\sup_{i\in I}\left\Vert \mathcal{F}^{-1}\psi_{i}\right\Vert _{L^{1}\left(\mathbb{R}^{d}\right)} < \infty$
of a $\mathcal{Q}$-BAPU in the last step.
\item The fact that $\ell_{u}^{q}$ is $\mathcal{Q}$-regular means by definition
that the map
\[
    \Gamma:\ell_{u}^{q}(I)\rightarrow\ell_{u}^{q}(I),\left(x_{i}\right)_{i\in I}\mapsto\bigg(\smash{\sum_{j\in i^{\ast}}x_{j}}\bigg)_{i\in I}
\]
is well-defined and bounded. Using the $\mathcal{Q}$-moderateness of $u$
and the fact that $\left|i^{\ast}\right|\leq C$ is uniformly bounded,
this is also easy to see directly.
\item Putting everything together, we arrive at
\begin{align*}
\left\Vert \left(\left\Vert \mathcal{F}^{-1}\left(\psi_{i}f\right)\right\Vert _{L^{p}\left(\mathbb{R}^{d}\right)}\right)_{i}\right\Vert _{\ell_{u}^{q}} & \leq C\cdot\left\Vert \Gamma\left(\left(\left\Vert \mathcal{F}^{-1}\left(\varphi_{i}f\right)\right\Vert _{L^{p}\left(\mathbb{R}^{d}\right)}\right)_{i}\right)\right\Vert _{\ell_{u}^{q}}\\
 & \leq C\cdot\left\Vert \Gamma\right\Vert \cdot\left\Vert \left(\left\Vert \mathcal{F}^{-1}\left(\varphi_{i}f\right)\right\Vert _{L^{p}\left(\mathbb{R}^{d}\right)}\right)\right\Vert _{\ell_{u}^{q}}.
\end{align*}
By symmetry, we also get the reverse inequality.
\end{enumerate}
\item %
It is worth noting that the exact construction used in \cite{DecompositionSpaces1}
would be to take $B=\mathcal{F}L^{p}\left(\mathbb{R}^{d}\right)$
and $A=\mathcal{F}L^{1}\left(\mathbb{R}^{d}\right)$ and then to define
the decomposition space as 
\[
\mathcal{D}\left(\mathcal{Q},L^{p},\ell_{u}^{q}\right)=\left\{ f\in\left(\mathcal{F}L^{1}\left(\mathbb{R}^{d}\right)\cap C_{c}\left(\mathbb{R}^{d}\right)\right)'\with\left\Vert f\right\Vert _{\mathcal{D}\left(\mathcal{Q},L^{p},\ell_{u}^{q}\right)}<\infty\right\} .
\]
Thus we are not exactly in this setting, as we use $\mathcal{D}'\left(U\right)$
as our reservoir instead of $\left(\mathcal{F}L^{1}\left(\mathbb{R}^{d}\right)\cap C_{c}\left(\mathbb{R}^{d}\right)\right)'$.
As seen above, the arguments used in \cite{DecompositionSpaces1}
nevertheless carry over to the present situation.
\item Analogous to the proof of \cite[Theorem 2.2(A)]{DecompositionSpaces1}
we can also show that $\mathcal{D}\left(\mathcal{Q},L^{p},\ell_{u}^{q}\right)$
is a Banach space that embeds continuously into $\mathcal{D}'\left(U\right)$.
For the convenience of the reader we again give a sketch of the proof:

\begin{enumerate}
\item The identity $\sum_{i\in I}\varphi_{i}\equiv1$ on $U$ shows that
$\left(\varphi_{i}\left(\mathbb{C}^{\ast}\right)\right)_{i\in I}$
covers $U$. Because of $\varphi_{i}\left(\mathbb{C}^{\ast}\right)\subset Q_{i}^{\circ}$,
this shows that $\left(Q_{i}^{\circ}\right)_{i\in I}$ is an open
cover of $U$, where $Q_{i}^{\circ}$ denotes the topological interior
of $Q_{i}$.
\item Let $K\subset U$ be compact and note that we have $K\subset\bigcup_{i\in I_{K}}Q_{i}^{\circ}$
for some finite set $I_{K}\subset I$. This easily entails $\varphi_{I_{K}}^{\ast}\equiv1$
on $K$.
\item For $f\in\mathcal{D}\left(\mathcal{Q},L^{p},\ell_{u}^{q}\right)$
and $\gamma\in\mathcal{D}_{K}\left(U\right)$ (i.e. $\gamma\in\mathcal{D}\left(U\right)$
with ${\rm supp}\left(\gamma\right)\subset K$), we thus get
\begin{align}
    \left|f\left(\gamma\right)\right| & =\left|f\bigg(\smash{\sum_{i\in I_{K}^{\ast}}\varphi_{i}\gamma}\bigg)\right|\leq\sum_{i\in I_{K}^{\ast}}\left|\left(\varphi_{i}f\right)\left(\gamma\right)\right|\nonumber \\
 & =\sum_{i\in I_{K}^{\ast}}\left|\left\langle \mathcal{F}^{-1}\left(\varphi_{i}f\right),\widehat{\gamma}\right\rangle _{\mathcal{S}',\mathcal{S}}\right|\nonumber \\
 & \leq\left\Vert \widehat{\gamma}\right\Vert _{L^{p'}\left(\mathbb{R}^{d}\right)}\cdot\sum_{i\in I_{K}^{\ast}}\left[\frac{1}{u_{i}}\cdot u_{i}\left\Vert \mathcal{F}^{-1}\left(\varphi_{i}f\right)\right\Vert _{L^{p}\left(\mathbb{R}^{d}\right)}\right]\nonumber \\
 & \leq\left\Vert \widehat{\gamma}\right\Vert _{L^{p'}\left(\mathbb{R}^{d}\right)}\cdot\bigg(\smash{\sum_{i\in I_{K}^{\ast}}\frac{1}{u_{i}}}\bigg)\cdot\left\Vert f\right\Vert _{\mathcal{D}\left(\mathcal{Q},L^{p},\ell_{u}^{q}\right)},\label{eq:DecompositionBettetInDistributionenEin}
\end{align}
where $p'\in\left[1,\infty\right]$ is conjugate to $p$.
\item The estimate \eqref{eq:DecompositionBettetInDistributionenEin} easily
yields $f_{n}\left(\gamma\right)\xrightarrow[n\rightarrow\infty]{}f\left(\gamma\right)$
for $f_{n}\xrightarrow[n\rightarrow\infty]{\mathcal{D}\left(\mathcal{Q},L^{p},\ell_{u}^{q}\right)}f$
and all $\gamma\in\mathcal{D}\left(U\right)$, i.e. $f_{n}\xrightarrow[n\rightarrow\infty]{}f$
in the weak-$\ast$-topology on $\mathcal{D}'\left(U\right)$.
\item If $\left(f_{n}\right)_{n\in\mathbb{N}}$ is Cauchy in $\mathcal{D}\left(\mathcal{Q},L^{p},\ell_{u}^{q}\right)$,
equation \eqref{eq:DecompositionBettetInDistributionenEin} shows
that $\left(f_{n}\left(\gamma\right)\right)_{n\in\mathbb{N}}$ is
Cauchy and hence convergent to some $f\left(\gamma\right)\in\mathbb{C}$
for every $\gamma\in\mathcal{D}\left(U\right)$. Now \cite[Theorem 6.17]{RudinFA}
implies that this yields a distribution $f\in\mathcal{D}'\left(U\right)$.
Using equation \eqref{eq:FourierPunktweise}, we derive (with $e_{x} : \mathbb{R}^{d} \rightarrow \mathbb{C}, \xi \mapsto e^{2\pi i \left\langle x, \xi \right\rangle}$)
\[
\qquad\qquad  \left(\mathcal{F}^{-1}\left(\varphi_{i}f\right)\right)\left(x\right)=f\left(\varphi_{i}e_{x}\right)=\lim_{n\rightarrow\infty}f_{n}\left(\varphi_{i}e_{x}\right)=\lim_{n\rightarrow\infty}\left(\mathcal{F}^{-1}\left(\varphi_{i}f_{n}\right)\right)\left(x\right).
\]
It is easy to see that $\left(\mathcal{F}^{-1}\left(\varphi_{i}f_{n}\right)\right)_{n\in\mathbb{N}}$
is Cauchy in $L^{p}\left(\mathbb{R}^{d}\right)$ for all $i\in I$. Together,
this yields 
\[
c_{i}^{\left(n\right)}:=\left\Vert \mathcal{F}^{-1}\left(\varphi_{i}\left(f-f_{n}\right)\right)\right\Vert _{L^{p}\left(\mathbb{R}^{d}\right)}\xrightarrow[n\rightarrow\infty]{}0.
\]

\item The (reversed) triangle inequality yields
\[
\left|c_{i}^{\left(n\right)}-c_{i}^{\left(m\right)}\right|\leq\left\Vert \mathcal{F}^{-1}\left(\varphi_{i}\left(f_{n}-f_{m}\right)\right)\right\Vert _{L^{p}\left(\mathbb{R}^{d}\right)}
\]
and thus
\begin{align*}
\left\Vert \left(c_{i}^{\left(n\right)}-c_{i}^{\left(m\right)}\right)_{i}\right\Vert _{\ell_{u}^{q}} & \leq\left\Vert \left(\left\Vert \mathcal{F}^{-1}\left(\varphi_{i}\left(f_{n}-f_{m}\right)\right)\right\Vert _{L^{p}\left(\mathbb{R}^{d}\right)}\right)_{i\in I}\right\Vert _{\ell_{u}^{q}}\\
 & =\left\Vert f_{n}-f_{m}\right\Vert _{\mathcal{D}\left(\mathcal{Q},L^{p},\ell_{u}^{q}\right)}\xrightarrow[n,m\rightarrow\infty]{}0,
\end{align*}
so that $\left(\left(c_{i}^{\left(n\right)}\right)_{i}\right)_{n}$
is Cauchy in $\ell_{u}^{q}$. As seen above, we have $c_{i}^{\left(n\right)}\xrightarrow[n\rightarrow\infty]{}0$
for all $i\in I$. Together, this implies $\left(c_{i}^{\left(n\right)}\right)_{i}\xrightarrow[n\rightarrow\infty]{\ell_{u}^{q}}0$,
which means nothing but $\left\Vert f_{n}-f\right\Vert _{\mathcal{D}\left(\mathcal{Q},L^{p},\ell_{u}^{q}\right)}\xrightarrow[n\rightarrow\infty]{}0$.
This shows that $\mathcal{D}\left(\mathcal{Q},L^{p},\ell_{u}^{q}\right)$
is complete.
\end{enumerate}
\item The completeness of $\mathcal{D}\left(\mathcal{Q},L^{p},\ell_{u}^{q}\right)$
is in contrast to \cite{BorupNielsenDecomposition}, where the authors
only consider the case $U=\mathbb{R}^{d}$ and then define the decomposition
space as 
\[
\mathcal{D}_{\mathcal{S}'}\left(\mathcal{Q},L^{p},\ell_{u}^{q}\right):=\left\{ f\in\mathcal{S}'\left(\mathbb{R}^{d}\right)\with\left\Vert f\right\Vert _{\mathcal{D}\left(\mathcal{Q},L^{p},\ell_{u}^{q}\right)}<\infty\right\} .
\]
With this definition, the decomposition space is in general \emph{not}
complete, as the following example shows. 
\end{enumerate}
\end{rem*}
\begin{example*}
In the following, we provide a specific example showing that $\mathcal{D}_{\mathcal{S}'}\left(\mathcal{Q},L^{p},\ell_{u}^{q}\right)$
as defined in the previous remark is in general \emph{not} complete.
Let $I:=\mathbb{Z}$, $T_{i}:=\text{id}_{\mathbb{R}}$ and $b_{i}:=i$
for $i\in\mathbb{Z}$. Furthermore, let $Q:=\left(-\frac{3}{4},\frac{3}{4}\right)$
and $P:=\left(-\frac{5}{8},\frac{5}{8}\right)$, as well as $Q_{i}:=T_{i}Q+b_{i}=\left(i-\frac{3}{4},i+\frac{3}{4}\right)$.
It is then easy to see that $\bigcup_{i\in I}\left(T_{i}P+b_{i}\right)=\mathbb{R}$
and that $x\in Q_{i}\cap Q_{j}\neq\emptyset$ implies 
\[
i-\frac{3}{4}<x<j+\frac{3}{4}
\]
and hence $i-j<\frac{6}{4}<2$. Because of $i-j\in\mathbb{Z}$ we
conclude $i-j\leq1$. By symmetry we get $\left|i-j\right|\leq1$
and thus $i^{\ast}\subset\left\{ i-1,i,i+1\right\} $. This shows
that $\mathcal{Q}=\left(Q_{i}\right)_{i\in I}$ is a structured admissible
covering of $\mathbb{R}$. Now consider the weight $u_{i}:=10^{-i}$
for $i\in\mathbb{Z}$ and note that because of the estimate 
\[
\frac{u_{i}}{u_{j}}=10^{j-i}\leq10^{\left|j-i\right|}\leq10
\]
which is valid for all $i\in I$ and $j\in i^{\ast}\subset\left\{ i-1,i,i+1\right\} $,
the weight $u$ is $\mathcal{Q}$-moderate.

Theorem \ref{thm:StructuredCoveringsSindDekompositionsCovering} guarantees
the existence of a BAPU $\left(\varphi_{i}\right)_{i\in I}$ subordinate
to $\mathcal{Q}$. Note that for $i\in I$ we have 
\[
\bigcup_{j\in I\setminus\left\{ i\right\} }Q_{j}\subset\left(-\infty,\left(i-1\right)+\frac{3}{4}\right)\cup\left(\left(i+1\right)-\frac{3}{4},\infty\right)=\left(-\infty,i-\frac{1}{4}\right)\cup\left(i+\frac{1}{4},\infty\right).
\]
This implies, together with $\varphi_{j}\left(x\right)=0$ for $x\in\mathbb{R}\setminus Q_{j}$
and $\sum_{i\in I}\varphi_{i}\left(x\right)=1$ for all $x\in\mathbb{R}$
that we have $\varphi_{i}\left(x\right)=1$ for $x\in\left[i-\frac{1}{4},i+\frac{1}{4}\right]$
for all $i\in I$.

Now choose a nonnegative function $\psi\in\mathcal{D}\left(\left(-\frac{1}{4},\frac{1}{4}\right)\right)\setminus\left\{ 0\right\} $
and define $f_{n}:=\sum_{j=1}^{n}4^{n}\cdot L_{n}\psi$ for $n\in\mathbb{N}$.
Because of 
\[
\text{supp}\left(L_{n}\psi\right)\subset\left(n-\frac{1}{4},n+\frac{1}{4}\right)\subset\left(\bigcup_{j\in I\setminus\left\{ n\right\} }Q_{j}\right)^{c}
\]
it is then easy to see that 
\[
\varphi_{i}\cdot L_{n}\psi=\begin{cases}
0, & i\neq n,\\
L_{i}\psi, & i=n
\end{cases}
\]
holds for $i,n\in\mathbb{Z}$. For $n\geq m\geq m_{0}$ we thus get
\begin{eqnarray*}
\left\Vert f_{n}-f_{m}\right\Vert _{\mathcal{D}\left(\mathcal{Q},L^{1},\ell_{u}^{1}\right)} & = & \sum_{i\in\mathbb{Z}}10^{-i}\left\Vert \mathcal{F}^{-1}\left(\varphi_{i}\cdot\sum_{j=m+1}^{n}4^{j}\cdot L_{j}\psi\right)\right\Vert _{L^{1}\left(\mathbb{R}^{d}\right)}\\
 & = & \sum_{i=m+1}^{n}10^{-i}4^{i}\cdot\left\Vert \mathcal{F}^{-1}\left(L_{i}\psi\right)\right\Vert _{L^{1}\left(\mathbb{R}^{d}\right)}\\
 & \leq & \left\Vert \mathcal{F}^{-1}\psi\right\Vert _{L^{1}\left(\mathbb{R}^{d}\right)}\cdot\sum_{i=m_{0}+1}^{\infty}\left(\frac{4}{10}\right)^{i}\xrightarrow[m_{0}\rightarrow\infty]{}0,
\end{eqnarray*}
so that $\left(f_{n}\right)_{n\in\mathbb{N}}$ is a Cauchy sequence
in $\mathcal{D}_{\mathcal{S}'}\left(\mathcal{Q},L^{1},\ell_{u}^{1}\right)$.

If there was some $f\in\mathcal{D}_{\mathcal{S}'}\left(\mathcal{Q},L^{1},\ell_{u}^{1}\right)\subset\mathcal{S}'\left(\mathbb{R}^{d}\right)$
satisfying $f_{n}\xrightarrow[n\rightarrow\infty]{\mathcal{D}_{\mathcal{S}'}\left(\mathcal{Q},L^{1},\ell_{u}^{1}\right)}f$,
the continuous embeddings 
\[
\mathcal{D}_{\mathcal{S}'}\left(\mathcal{Q},L^{1},\ell_{u}^{1}\right)\hookrightarrow\mathcal{D}\left(\mathcal{Q},L^{1},\ell_{u}^{1}\right)\hookrightarrow\mathcal{D}'\left(\mathbb{R}^{d}\right)
\]
would imply 
\[
\left\langle f,g\right\rangle _{\mathcal{D}',\mathcal{D}}=\lim_{n\rightarrow\infty}\left\langle f_{n},g\right\rangle _{\mathcal{D}',\mathcal{D}}\qquad\text{for all }g\in\mathcal{D}\left(\mathbb{R}^{d}\right).
\]
Because of the definition of the topology on $\mathcal{S}\left(\mathbb{R}^{d}\right)$,
by \cite[Proposition 5.15]{FollandRA} and because of $f\in\mathcal{S}'\left(\mathbb{R}^{d}\right)$
there are suitable $N\in\mathbb{N}$ and $C>0$ such that 
\[
\left|\left\langle f,g\right\rangle _{\mathcal{S}',\mathcal{S}}\right|\leq C\cdot\sup_{\substack{\alpha\in\mathbb{N}_{0}^{d}\\
\left|\alpha\right|\leq N
}
}\sup_{x\in\mathbb{R}^{d}}\left(1+\left|x\right|\right)^{N}\cdot\left|\left(\partial^{\alpha}g\right)\left(x\right)\right|
\]
holds for all $g\in\mathcal{S}\left(\mathbb{R}^{d}\right)$.

For $n\in\mathbb{N}$ and $g:=T_{n}\psi$ we have $\text{supp}\left(g\right)\subset\left(n-\frac{1}{4},n+\frac{1}{4}\right)\subset\left[0,n+1\right]$
and thus 
\[
\sup_{\substack{\alpha\in\mathbb{N}_{0}^{d}\\
\left|\alpha\right|\leq N
}
}\sup_{x\in\mathbb{R}^{d}}\left(1+\left|x\right|\right)^{N}\cdot\left|\left(\partial^{\alpha}g\right)\left(x\right)\right|\leq\left(n+2\right)^{N}\cdot\sup_{\substack{\alpha\in\mathbb{N}_{0}^{d}\\
\left|\alpha\right|\leq N
}
}\sup_{x\in\mathbb{R}^{d}}\left|\left(\partial^{\alpha}\psi\right)\left(x\right)\right|=\left(n+2\right)^{N}\cdot C_{\psi,N}
\]
for some constant $C_{\psi,N}\in\left(0,\infty\right)$. But because
of $\text{supp}\left(L_{n}\psi\right)\cap\text{supp}\left(L_{i}\psi\right)=\emptyset$
for $i,n\in\mathbb{Z}$ with $i\neq n$ we have, for $m\geq n$, the
identity 
\[
\left\langle f_{m},g\right\rangle _{\mathcal{S}',\mathcal{S}}=\sum_{j=1}^{m}4^{j}\left\langle L_{j}\psi,L_{n}\psi\right\rangle _{\mathcal{S}',\mathcal{S}}=4^{n}\cdot\left\langle \psi,\psi\right\rangle _{\mathcal{S}',\mathcal{S}}
\]
and thus 
\begin{align*}
4^{n}\cdot\left\Vert \psi\right\Vert _{L^{2}\left(\mathbb{R}^{d}\right)}^{2} & =  \lim_{m\rightarrow\infty}\left|\left\langle f_{m},g\right\rangle _{\mathcal{S}',\mathcal{S}}\right|=\left|\left\langle f,g\right\rangle _{\mathcal{S}',\mathcal{S}}\right|\\
 & \leq  C\cdot\sup_{\substack{\alpha\in\mathbb{N}_{0}^{d}\\
\left|\alpha\right|\leq N
}
}\sup_{x\in\mathbb{R}^{d}}\left(1+\left|x\right|\right)^{N}\cdot\left|\left(\partial^{\alpha}g\right)\left(x\right)\right|\\
 & \leq  CC_{\psi,N}\cdot\left(n+2\right)^{N}
\end{align*}
for all $n\in\mathbb{N}$, a contradiction.

This shows that there is no $f\in\mathcal{D}_{\mathcal{S}'}\left(\mathcal{Q},L^{1},\ell_{u}^{1}\right)$
with $\left\Vert f_{n}-f\right\Vert _{\mathcal{D}\left(\mathcal{Q},L^{1},\ell_{u}^{1}\right)}\xrightarrow[n\rightarrow\infty]{}0$,
so that $\mathcal{D}_{\mathcal{S}'}\left(\mathcal{Q},L^{1},\ell_{u}^{1}\right)$
is \emph{not} complete.
\end{example*}
In the next lemma, we indicate the way in which we will choose the
$\mathcal{Q}$-moderate weight $u:I\rightarrow\left(0,\infty\right)$.
\begin{lem}
\label{lem:GewichtsDiskretisierung}Let $\emptyset\neq U\subset\mathbb{R}^{d}$
be open and let $\mathcal{Q}=\left(Q_{i}\right)_{i\in I}$ be a cover
of $U$. Finally, let $u:U\rightarrow\left(0,\infty\right)$ be \textbf{$\mathcal{Q}$-moderate}
in the sense that there is some constant $C>0$ such that 
\[
\frac{u\left(x\right)}{u\left(y\right)}\leq C\qquad\text{ holds for all }i\in I\text{ and all }x,y\in Q_{i}.
\]
We then say that $u':I\rightarrow\left(0,\infty\right)$ is \textbf{a
discretization of $u$} if for every $i\in I$ there is some $x_{i}\in Q_{i}$
so that $u'\left(i\right)=u\left(x_{i}\right)$ holds. In that case, $u'$
is also $\mathcal{Q}$-moderate%
\footnote{Note that $u'$ is a weight on the discrete index set $I$, so that
$\mathcal{Q}$-moderateness of $u'$ means that there is a constant
$C>0$ such that we have $u_{i}\leq C\cdot u_{j}$ for all $i\in I$
and $j\in i^{\ast}$ (cf. Definition \ref{def:DecompositionSpace}).%
}.

Furthermore, any two discretizations $u',u''$ of $u$ are equivalent
in the sense that the estimate $C^{-1}\cdot u_{i}''\leq u_{i}'\leq C\cdot u_{i}''$
holds for all $i\in I$. In particular we have $\ell_{u'}^{q}=\ell_{u''}^{q}$
with equivalent norms and thus also $\mathcal{D}\left(\mathcal{Q},L^{p},\ell_{u'}^{q}\right)=\mathcal{D}\left(\mathcal{Q},L^{p},\ell_{u''}^{q}\right)$
(if $\mathcal{Q}$ is a decomposition covering of $U$).\end{lem}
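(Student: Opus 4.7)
The plan is to prove the three claims in the lemma in the order they are stated, each using essentially one application of the $\mathcal{Q}$-moderateness hypothesis on $u$.

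First I would verify the $\mathcal{Q}$-moderateness of $u'$ on the index set $I$. Fix $i \in I$ and $j \in i^{\ast}$, so that $Q_i \cap Q_j \neq \emptyset$; pick any $z \in Q_i \cap Q_j$. By definition, $u'_i = u(x_i)$ and $u'_j = u(x_j)$ for some $x_i \in Q_i$ and $x_j \in Q_j$. Since $x_i, z \in Q_i$, the $\mathcal{Q}$-moderateness of $u$ gives $u(x_i) \leq C \cdot u(z)$, and since $z, x_j \in Q_j$ we similarly get $u(z) \leq C \cdot u(x_j)$. Chaining these inequalities yields $u'_i \leq C^2 \cdot u'_j$, which is precisely the discrete notion of $\mathcal{Q}$-moderateness (with constant $C^2$).

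Next I would prove equivalence of any two discretizations $u', u''$. If $u'_i = u(x_i)$ and $u''_i = u(y_i)$ with $x_i, y_i \in Q_i$, then a single application of the $\mathcal{Q}$-moderateness of $u$ on the set $Q_i$ gives $u(x_i)/u(y_i) \leq C$ and, by symmetry, $u(y_i)/u(x_i) \leq C$. This is exactly the claimed estimate $C^{-1} u''_i \leq u'_i \leq C \cdot u''_i$ for all $i \in I$.

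Finally, from the pointwise equivalence of $u'$ and $u''$ on $I$, the identity
\[
\left\Vert (x_i)_{i \in I} \right\Vert_{\ell^q_{u'}} = \left\Vert (u'_i \cdot x_i)_{i \in I} \right\Vert_{\ell^q}
\]
yields $\ell^q_{u'}(I) = \ell^q_{u''}(I)$ as sets with equivalent norms (with constant $C$). Plugging this into the defining expression for $\left\Vert f \right\Vert_{\mathcal{D}(\mathcal{Q}, L^p, \ell^q_u)}$ in Definition \ref{def:DecompositionSpace}, we get equivalent norms on the decomposition spaces, hence $\mathcal{D}(\mathcal{Q}, L^p, \ell^q_{u'}) = \mathcal{D}(\mathcal{Q}, L^p, \ell^q_{u''})$ as sets, proving the final claim.

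There is no real obstacle here; the statement is essentially a routine bookkeeping exercise, and the only subtle point is noticing that to compare $u'_i$ with $u'_j$ for $j \in i^{\ast}$ one has to pass through a common point $z \in Q_i \cap Q_j$, which forces the moderateness constant to be squared.
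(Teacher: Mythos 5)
Your proof is correct and matches the paper's argument in all essentials: you pass through a common point of $Q_i \cap Q_j$ to get the $C^2$ constant for $\mathcal{Q}$-moderateness of $u'$, and a single application of moderateness within $Q_i$ gives the equivalence of discretizations. The paper treats the final passage from pointwise equivalence of the weights to equality of the $\ell^q_u$-spaces and decomposition spaces as immediate, which you spell out slightly more explicitly, but there is no substantive difference.
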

\begin{rem*}
Employing the independence of $\ell_{u'}^{q}$ of the chosen discretization,
we write $\mathcal{D}\left(\mathcal{Q},L^{p},\ell_{u}^{q}\right)$
for $\mathcal{D}\left(\mathcal{Q},L^{p},\ell_{u'}^{q}\right)$ and
$\ell_{u}^{q}\left(I\right)$ for $\ell_{u'}^{q}\left(I\right)$ for
every discretization $u'$ of $u$. The chosen discretization $u'$ of $u$
will often be denoted by $u$ again.

It is important to note that the notation $\ell_{u}^{q}\left(I\right)$
is a slight abuse of notation, as the discretization $u'$ of $u$
(heavily) depends upon the chosen covering $\mathcal{Q}$. Hence,
different coverings with the same index set can lead to very different
spaces $\ell_{u}^{q}\left(I\right)$.

It is thus important to remember that for a weight $u:U\rightarrow\left(0,\infty\right)$
the notation $\ell_{u}^{q}\left(I\right)$ should (and will) only
be used as long it is clearly understood which covering is used to
form the discretization.\end{rem*}
\begin{proof}[Proof of Lemma \ref{lem:GewichtsDiskretisierung}]
Let $i\in I$ and $j\in i^{\ast}$. Thus there is some $x\in Q_{i}\cap Q_{j}\neq\emptyset$.
The $\mathcal{Q}$-moderateness of $u$ yields 
\[
\frac{u_{i}'}{u_{j}'}=\frac{u\left(x_{i}\right)}{u\left(x_{j}\right)}\;\overset{x_{i},x\in Q_{i}}{\leq}\;\frac{C\cdot u\left(x\right)}{u\left(x_{j}\right)}\;\overset{x,x_{j}\in Q_{j}}{\leq}\; C^{2}
\]
which shows that $u'$ is $\mathcal{Q}$-moderate.

If $u',u''$ are both discretizations of $u$, let $i\in I$, choose
$x_{i},x_{i}'\in Q_{i}$ satisfying $u_{i}'=u\left(x_{i}\right)$
and $u_{i}''=u\left(x_{i}'\right)$ and derive
\[
u_{i}'=u\left(x_{i}\right)\;\overset{x_{i},x_{i}'\in Q_{i}}{\leq}\; C\cdot u\left(x_{i}'\right)=C\cdot u_{i}''.
\]
The reverse estimate follows by symmetry.
\end{proof}
Finally, we transplant a weight $v:H\rightarrow\left(0,\infty\right)$
onto the dual orbit $\mathcal{O}$ by choice of a cross-section. The
resulting function on $\mathcal{O}$ will be called a \textbf{transplant
of $v$ from $H$ onto $\mathcal{O}$}. The main observation of the
following lemma is that for any two such transplants $u_{1},u_{2}$
of a moderate weight $v$, the quotient $u_{1}/u_{2}$ is bounded
from above and away from zero, i.e. the two transplants are equivalent.
\begin{lem}
\label{lem:GewichtsTransplantation}Let $v:H\rightarrow\left(0,\infty\right)$
be $v_{0}$-moderate for some locally bounded, submultiplicative weight
$v_{0}:H\rightarrow\left(0,\infty\right)$.

For each $\xi\in\mathcal{O}$ choose some $h_{\xi}\in H$ satisfying
$h_{\xi}^{T}\xi_{0}=\xi$ and define 
\[
u:\mathcal{O}\rightarrow\left(0,\infty\right),\xi\mapsto v\left(h_{\xi}\right).
\]
Then $u$ is a $\mathcal{Q}$-moderate function for every covering
$\mathcal{Q}$ of $\mathcal{O}$ induced by $H$.

Furthermore, any two choices $h_{\xi},h_{\xi}'\in H$ satisfying $h_{\xi}^{T}\xi_{0}=\xi=\left(\smash{h_{\xi}'}\right)^{T}\xi_{0}$
yield equivalent weights $u,u'$ in the sense that $C^{-1}\cdot u'\leq u\leq C\cdot u'$
holds for some constant $C\in\left(0,\infty\right)$. The same is
true for any two choices of $\xi_{0}$.\end{lem}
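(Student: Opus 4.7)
The plan is to reduce every claim to two ingredients already in hand: compactness of the isotropy group $H_{\xi_{0}}$ and properness of the orbit map $p_{\xi_{0}}$ (Lemma~\ref{lem:OrbitProjektionIstProper}). The central preliminary observation is that whenever $g_{1},g_{2}\in H$ satisfy $g_{1}^{T}\xi_{0}=g_{2}^{T}\xi_{0}$, a direct transpose calculation gives $(g_{1}g_{2}^{-1})^{T}\xi_{0}=g_{2}^{-T}g_{1}^{T}\xi_{0}=g_{2}^{-T}g_{2}^{T}\xi_{0}=\xi_{0}$, so $g_{1}g_{2}^{-1}\in H_{\xi_{0}}$ and hence $g_{1}=(g_{1}g_{2}^{-1})\cdot g_{2}$ with the first factor in the compact set $H_{\xi_{0}}$. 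Applying $v_{0}$-moderateness in the form $v(ab)\leq v_{0}(a)\,v(b)\,v_{0}(1_{H})$ and bounding $v_{0}$ on the compact set $H_{\xi_{0}}$ by its local maximum, one obtains $v(g_{1})\leq C\cdot v(g_{2})$ with $C$ depending only on $v_{0}$ and $H_{\xi_{0}}$, and symmetrically. This already yields the equivalence of $u$ and $u'$ for any two choices $h_{\xi},h_{\xi}'$ at a fixed $\xi$, which is the first part of the second claim.

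For the $\mathcal{Q}$-moderateness of $u$, fix an induced covering $\mathcal{Q}=(h_{i}^{-T}Q)_{i\in I}$, pick $i\in I$, and let $x,y\in Q_{i}$. The key calculation is $(h_{x}h_{i})^{T}\xi_{0}=h_{i}^{T}(h_{x}^{T}\xi_{0})=h_{i}^{T}x\in\overline{Q}$, and the analogous identity holds for $y$. Hence both $h_{x}h_{i}$ and $h_{y}h_{i}$ lie in the single compact set $K:=p_{\xi_{0}}^{-1}(\overline{Q})$ (compact by Lemma~\ref{lem:OrbitProjektionIstProper}), so that $h_{x}h_{y}^{-1}=(h_{x}h_{i})(h_{y}h_{i})^{-1}\in KK^{-1}$, which is compact and independent of $i$, $x$ and $y$. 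Writing $h_{x}=(h_{x}h_{y}^{-1})\cdot h_{y}$, the $v_{0}$-moderateness inequality together with the finite bound $M:=\sup_{g\in KK^{-1}}v_{0}(g)<\infty$ yields $v(h_{x})\leq M\cdot v_{0}(1_{H})\cdot v(h_{y})$; the reverse inequality follows by symmetry, giving $u(x)/u(y)\leq M\,v_{0}(1_{H})$ uniformly in $i$ and $x,y\in Q_{i}$.

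For the invariance under a change of basepoint, write $\xi_{0}'=h_{0}^{T}\xi_{0}$ for some fixed $h_{0}\in H$ (possible because $\xi_{0},\xi_{0}'\in\mathcal{O}=H^{T}\xi_{0}$). Given any choice $h_{\xi}'$ with $(h_{\xi}')^{T}\xi_{0}'=\xi$, the product $h_{0}h_{\xi}'$ satisfies $(h_{0}h_{\xi}')^{T}\xi_{0}=(h_{\xi}')^{T}h_{0}^{T}\xi_{0}=(h_{\xi}')^{T}\xi_{0}'=\xi$ and is therefore a legal choice with respect to $\xi_{0}$; by the first paragraph it suffices to compare $v(h_{\xi}')$ with $v(h_{0}h_{\xi}')$, and $v_{0}$-moderateness bounds this ratio in both directions by the fixed finite constants $v_{0}(h_{0})\,v_{0}(1_{H})$ and $v_{0}(h_{0}^{-1})\,v_{0}(1_{H})$. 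The only genuinely nontrivial idea in the whole proof is the observation that the covering-dependent factors $h_{i}$ are absorbed uniformly into the single compact set $K$ by right-multiplication; once this is in place, everything else is routine bookkeeping with the $v_{0}$-moderateness inequality and local boundedness of $v_{0}$ on compacta.
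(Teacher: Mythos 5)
Your proof is correct and follows essentially the same route as the paper: for $\mathcal{Q}$-moderateness you use exactly the paper's trick of right-multiplying by $h_i$ to land both $h_{x}h_{i}$ and $h_{y}h_{i}$ in the compact set $K=p_{\xi_{0}}^{-1}(\overline{Q})$ and then bounding $v_{0}$ on $KK^{-1}$, and for the independence claims you use compactness of the isotropy group $H_{\xi_{0}}$ together with $v_{0}$-moderateness, just as the paper does. The only difference is organizational (you treat the $h_{\xi}$-independence first and spell out the $\xi_{0}$-independence which the paper omits as ``similar''), which is harmless.
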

\begin{rem}
\label{rem:SpezielleDiskretisierung}The induced covering is of the
form $\mathcal{Q}=\left(Q_{i}\right)_{i\in I}=\left(h_{i}^{-T}Q\right)_{i\in I}$
for some well-spread family $\left(h_{i}\right)_{i\in I}$ and a suitable
set $Q\subset\mathcal{O}$. As long as we have $\xi_{0}\in Q$, we
can choose $h_{h_{i}^{-T}\xi_{0}}=h_{i}^{-1}$ and discretize $u$
by $u_{i}'=u\left(h_{i}^{-T}\xi_{0}\right)=v\left(h_{h_{i}^{-T}\xi_{0}}\right)=v\left(h_{i}^{-1}\right)$,
where we used $h_{i}^{-T}\xi_{0}\in Q_{i}$.

It is worth noting that $\xi_{0}\in\mathcal{O}$ can be selected arbitraryly,
so that it is always possible to choose $\xi_{0}\in Q$.\end{rem}
\begin{proof}
Let $\mathcal{Q}=\left(h_{i}^{-T}Q\right)_{i\in I}$ be a covering
of $\mathcal{O}$ induced by $H$. This means that $\left(h_{i}\right)_{i\in I}$
is well-spread in $H$, that $Q\subset\mathbb{R}^{d}$ is precompact
with $\overline{Q}\subset\mathcal{O}$ and that $\mathcal{Q}$ is
a covering of $\mathcal{O}$. Let $i\in I$ and $x,y\in Q_{i}=h_{i}^{-T}Q$
be arbitrary. Then we have 
\[
\left(h_{x}h_{i}\right)^{T}\xi_{0}=h_{i}^{T}h_{x}^{T}\xi_{0}=h_{i}^{T}x\in Q\subset\overline{Q}\qquad\text{ and similarly }\qquad\left(h_{y}h_{i}\right)^{T}\xi_{0}\in\overline{Q}.
\]
This means $h_{x}h_{i}\in p_{\xi_{0}}^{-1}\left(\overline{Q}\right)=:K$
and $h_{y}h_{i}\in K$. Note that $K \subset H$ is compact by Lemma \ref{lem:OrbitProjektionIstProper}.

Using this, we can estimate 
\begin{align*}
u\left(x\right)=v\left(h_{x}\right)  & =  v\left(h_{x}h_{i}h_{i}^{-1}h_{y}^{-1}h_{y}1_{H}\right)\\
 & \leq  v_{0}\left(h_{x}h_{i}h_{i}^{-1}h_{y}^{-1}\right)\cdot v\left(h_{y}\right)\cdot v_{0}\left(1_{H}\right)\\
 & =  v_{0}\left(1_{H}\right)\cdot v_{0}\left(h_{x}h_{i}\cdot\left(h_{y}h_{i}\right)^{-1}\right)\cdot v\left(h_{y}\right)\\
 & \leq  \left[v_{0}\left(1_{H}\right)\cdot\sup_{h\in KK^{-1}}v_{0}\left(h\right)\right]\cdot u\left(y\right)=C\cdot u\left(y\right),
\end{align*}
where the value $C=v_{0}\left(1_{H}\right)\cdot\sup_{h\in KK^{-1}}v_{0}\left(h\right)$
is independent of $x,y\in Q_{i}$ and of $i\in I$ and finite because
$v_{0}$ is locally bounded. Thus, $u$ is $\mathcal{Q}$-moderate.

In order to show the independence of the choice of $h_{\xi}$, let
$\xi\in\mathcal{O}$ and choose $h_{\xi},h_{\xi}'\in H$ such that
$h_{\xi}^{T}\xi_{0}=\xi=\left(\smash{h_{\xi}'}\right)^{T}\xi_{0}$.
Then $h_{\xi}\cdot\left(\smash{h_{\xi}'}\right)^{-1}\in H_{\xi_{0}}$
which implies 
\[
    v\left(h_{\xi}\right) = v(h_{\xi} (h_{\xi}')^{-1} \cdot h_{\xi}' \cdot 1_{H}) \leq v\left(h_{\xi}'\right)\cdot\left[\smash{v_{0}\left(1_{H}\right)\cdot\sup_{h\in H_{\xi_{0}}}v_{0}\left(h\right)}\vphantom{\sup_{h\in K K^{-1}}}\right],
\]
where the expression in brackets is an absolute constant which is
finite by local boundedness of $v_{0}$ and compactness of $H_{\xi_{0}}$.

The proof of independence of $\xi_{0}$ runs along similar lines,
and is omitted.
\end{proof}
Finally, we want to show that all induced coverings (with respect
to the same well-spread family $\left(h_{i}\right)_{i\in I}$) yield
the same decomposition spaces, at least as long as the weight $u$
is obtained by transplanting a weight on $H$ onto $\mathcal{O}$.
This will be an easy consequence of the following more general lemma.
\begin{lem}
\label{lem:EinfacheAequivalenzBeiGleichenIndexmengen}Let $\emptyset\neq U\subset\mathbb{R}^{d}$
be an open set and assume that $\mathcal{Q}=\left(Q_{i}\right)_{i\in I}$
and $\mathcal{Q}'=\left(Q_{i}'\right)_{i\in I}$ are two admissible
coverings of $U$ that are indexed by the \emph{same} set $I$. Furthermore,
assume that $\mathcal{Q}$ is a decomposition covering of $U$ that
satisfies $Q_{i}\subset Q_{i}'$ for all $i\in I$.

Then $\mathcal{Q}'$ is also a decomposition covering of $U$. More
precisely, any $\mathcal{Q}$-BAPU $\left(\varphi_{i}\right)_{i\in I}$
is also a BAPU for $\mathcal{Q}'$. Finally, if $u:U\rightarrow\left(0,\infty\right)$
is $\mathcal{Q}'$-moderate, it is also $\mathcal{Q}$-moderate and
for $p,q\in\left[1,\infty\right]$ we have 
\[
\mathcal{D}\left(\mathcal{Q},L^{p},\ell_{u}^{q}\right)=\mathcal{D}\left(\mathcal{Q}',L^{p},\ell_{u}^{q}\right)
\]
with equivalent norms.\end{lem}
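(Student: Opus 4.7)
The plan is to observe that essentially nothing needs to be done beyond checking definitions: once we use the \emph{same} BAPU in both decomposition-space norms, the only discrepancy comes from the discretization of $u$, which is controlled by the $\mathcal{Q}'$-moderateness assumption.

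First, I would verify that any $\mathcal{Q}$-BAPU $(\varphi_i)_{i \in I}$ is automatically a BAPU for $\mathcal{Q}'$. Conditions (1), (2) and (4) of Definition~\ref{def:BAPUDefinition} are independent of the covering and thus transfer verbatim. Condition (3) reads $\varphi_i \equiv 0$ on $U \setminus Q_i'$; since $Q_i \subset Q_i'$ implies $U \setminus Q_i' \subset U \setminus Q_i$, this follows from $\varphi_i \equiv 0$ on $U \setminus Q_i$. The remaining requirements for $\mathcal{Q}'$ to be a decomposition covering are either assumed ($\mathcal{Q}'$ is admissible by hypothesis) or trivial ($Q_i' \supset Q_i \neq \emptyset$).

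Second, I would deduce the $\mathcal{Q}$-moderateness of $u$ on $U$: for any $i \in I$ and $x,y \in Q_i$ we also have $x,y \in Q_i'$, so the $\mathcal{Q}'$-moderateness constant bounds $u(x)/u(y)$. Now fix discretizations $u_i = u(x_i)$ with $x_i \in Q_i$ and $u_i' = u(y_i)$ with $y_i \in Q_i'$. Both $x_i$ and $y_i$ lie in $Q_i'$, so $\mathcal{Q}'$-moderateness of $u$ yields a constant $C>0$ (independent of $i$) with
\[
C^{-1} \, u_i' \;\leq\; u_i \;\leq\; C \, u_i' \qquad \text{for all } i \in I.
\]
Consequently $\ell_u^q(I)$ computed with the $\mathcal{Q}$-discretization and with the $\mathcal{Q}'$-discretization coincide with equivalent norms.

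Third, I would invoke the BAPU-independence of the decomposition-space norm (established in the remark following Definition~\ref{def:DecompositionSpace}) to compute both $\|f\|_{\mathcal{D}(\mathcal{Q},L^p,\ell_u^q)}$ and $\|f\|_{\mathcal{D}(\mathcal{Q}',L^p,\ell_u^q)}$ using the \emph{same} BAPU $(\varphi_i)_{i \in I}$. Since the reservoir $\mathcal{D}'(U)$ is the same for both coverings, the two norms read
\[
\bigl\|\bigl(u_i \, \|\mathcal{F}^{-1}(\varphi_i f)\|_{L^p}\bigr)_{i \in I}\bigr\|_{\ell^q} \quad\text{and}\quad \bigl\|\bigl(u_i' \, \|\mathcal{F}^{-1}(\varphi_i f)\|_{L^p}\bigr)_{i \in I}\bigr\|_{\ell^q},
\]
respectively, and these are equivalent by the weight estimate above. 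Hence $\mathcal{D}(\mathcal{Q},L^p,\ell_u^q) = \mathcal{D}(\mathcal{Q}',L^p,\ell_u^q)$ with equivalent norms.

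There is no genuine obstacle here; the only ``care'' required is to keep the two discretizations of $u$ notationally distinct and to remember that the BAPU-independence result from the remark after Definition~\ref{def:DecompositionSpace} is what makes it legitimate to plug the $\mathcal{Q}$-BAPU into the defining norm of $\mathcal{D}(\mathcal{Q}',L^p,\ell_u^q)$.
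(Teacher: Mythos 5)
Your proof is correct and follows essentially the same route as the paper: transfer the BAPU via $U \setminus Q_i' \subset U \setminus Q_i$, deduce $\mathcal{Q}$-moderateness from $Q_i \subset Q_i'$, and then compare the norms using the same BAPU. The only cosmetic difference is that the paper picks a single discretization $u_i = u(x_i)$ with $x_i \in Q_i$, notes it is simultaneously a valid $\mathcal{Q}'$-discretization, and thereby obtains literal equality of the norms for that choice, whereas you keep two a priori different discretizations and bound one against the other; both immediately reduce to the already-established independence of the decomposition-space norm on the choice of BAPU and discretization.
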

\begin{proof}
The only property of a BAPU $\left(\varphi_{i}\right)_{i\in I}$ that
is specific to the covering $\mathcal{Q}$ (or $\mathcal{Q}'$) is
the requirement $\varphi_{i}\left(x\right)=0$ for all $x\in U\setminus Q_{i}$.
Because of $U\setminus Q_{i}'\subset U\setminus Q_{i}$, it is clear
that this condition is also fulfilled for the covering $\mathcal{Q}'$
instead of $\mathcal{Q}$.

If $u:U\rightarrow\left(0,\infty\right)$ is $\mathcal{Q}'$-moderate,
there is a constant $C>0$ so that $\frac{u\left(x\right)}{u\left(y\right)}\leq C$
holds for all $i\in I$ and all $x,y\in Q_{i}'$. Because of $Q_{i}\subset Q_{i}'$
the same estimate also holds for all $i\in I$ and all $x,y\in Q_{i}$.

In order to show the equality $\mathcal{D}\left(\mathcal{Q},L^{p},\ell_{u}^{q}\right)=\mathcal{D}\left(\mathcal{Q}',L^{p},\ell_{u}^{q}\right)$
note that for a discretization $u':I\rightarrow\left(0,\infty\right)$
with respect to $\mathcal{Q}$, $u'$ is also a discretization for
$\mathcal{Q}'$, as for $i\in I$ there is some $x_{i}\in Q_{i}\subset Q_{i}'$
that satisfies $u_{i}'=u\left(x_{i}\right)$. As seen above, any $\mathcal{Q}$-BAPU
$\left(\varphi_{i}\right)_{i\in I}$ is also a $\mathcal{Q}'$-BAPU.
With these choices we see 
\[
\left\Vert f\right\Vert _{\mathcal{D}\left(\mathcal{Q},L^{p},\ell_{u}^{q}\right)}=\left\Vert \left(u_{i}'\cdot\left\Vert \mathcal{F}^{-1}\left(\varphi_{i}f\right)\right\Vert _{L^{p}\left(\mathbb{R}^{d}\right)}\right)_{i\in I}\right\Vert _{\ell^{q}\left(I\right)}=\left\Vert f\right\Vert _{\mathcal{D}\left(\mathcal{Q}',L^{p},\ell_{u}^{q}\right)}
\]
for all $f\in\mathcal{D}'\left(U\right)$. This shows $\mathcal{D}\left(\mathcal{Q},L^{p},\ell_{u}^{q}\right)=\mathcal{D}\left(\mathcal{Q}',L^{p},\ell_{u}^{q}\right)$
and because any choice of discretization of the weight $u$ (with
respect to $\mathcal{Q}$ or $\mathcal{Q}'$) and of the BAPUs for
$\mathcal{Q}$ or $\mathcal{Q}'$ yield equivalent norms on $\mathcal{D}\left(\mathcal{Q},L^{p},\ell_{u}^{q}\right)$
or $\mathcal{D}\left(\mathcal{Q}',L^{p},\ell_{u}^{q}\right)$, the
claim follows (for any such choice).
\end{proof}
We can now conclude that two different induced decomposition coverings -- with respect to the same well-spread family $\left(h_{i}\right)_{i\in I}$ --
yield identical decomposition spaces. The isomorphism between $\mathcal{D}\left(\mathcal{Q},L^{p},\ell_{u}^{q}\right)$
and $\text{Co}\left(L_{v}^{p,q}\right)$ that we will prove below
(see Theorems \ref{thm:FourierStetigVonCoorbitNachDecomposition}
and \ref{thm:InverseFourierTransformationStetigkeit}) will of course
show that the same is true even if different well-spread families
$\left(h_{i}\right)_{i\in I}$ are used to obtain the two decomposition
coverings $\mathcal{Q},\mathcal{Q}'$. Even so, this does not make
the following result redundant, as it will allow us to switch from
the covering $\left(h_{i}^{-T}Q\right)_{i\in I}$ to a larger covering
$\left(h_{i}^{-T}Q'\right)_{i\in I}$ in the proof of Theorem \ref{thm:FourierStetigVonCoorbitNachDecomposition}.
\begin{cor}
\label{cor:InduzierteUeberdeckLiefernGleicheRaeumeBeiGleicherFamilieH}Let
$\mathcal{Q}=\left(h_{i}^{-T}Q\right)_{i\in I}$ and $\mathcal{Q}'=\left(h_{i}^{-T}Q'\right)_{i\in I}$
be two (possibly different) decomposition coverings of $\mathcal{O}$
induced by $H$. Then we have 
\[
\mathcal{D}\left(\mathcal{Q},L^{p},\ell_{u}^{q}\right)=\mathcal{D}\left(\mathcal{Q}',L^{p},\ell_{u}^{q}\right)
\]
with equivalent norms for every weight $u:\mathcal{O}\rightarrow\left(0,\infty\right)$
obtained by transplanting a $v_{0}$-moderate weight $v:H\rightarrow\left(0,\infty\right)$
onto $\mathcal{O}$, where $v_{0}:H\rightarrow\left(0,\infty\right)$
is submultiplicative and locally bounded.\end{cor}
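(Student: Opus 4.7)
The plan is to reduce this to Lemma \ref{lem:EinfacheAequivalenzBeiGleichenIndexmengen} by introducing a common ``enlargement'' of the two coverings. Concretely, I would define
\[
Q'' := Q \cup Q' \qquad \text{and} \qquad \mathcal{Q}'' := \left(h_{i}^{-T} Q''\right)_{i \in I},
\]
and then show that $\mathcal{Q}''$ is itself a covering of $\mathcal{O}$ induced by $H$ (in the sense of Theorem \ref{thm:InduzierteUeberdeckungKonstruktion}), containing both $\mathcal{Q}$ and $\mathcal{Q}'$ pointwise.

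First I would verify that $Q''$ satisfies the hypotheses of Theorem \ref{thm:InduzierteUeberdeckungKonstruktion}: it is precompact as a finite union of precompact sets, and $\overline{Q''} = \overline{Q} \cup \overline{Q'} \subset \mathcal{O}$. Moreover $\mathcal{O} = \bigcup_{i \in I} h_{i}^{-T} Q \subset \bigcup_{i \in I} h_{i}^{-T} Q''$, so $\mathcal{Q}''$ is a covering of $\mathcal{O}$ induced by the same well-spread family $\left(h_{i}\right)_{i \in I}$. By Theorem \ref{thm:InduzierteUeberdeckungKonstruktion}, $\mathcal{Q}''$ is then an admissible covering, and by Lemma \ref{lem:GewichtsTransplantation}, the transplanted weight $u$ is $\mathcal{Q}''$-moderate.

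Now the two containments $h_{i}^{-T} Q \subset h_{i}^{-T} Q''$ and $h_{i}^{-T} Q' \subset h_{i}^{-T} Q''$ hold for all $i \in I$, and $\mathcal{Q}, \mathcal{Q}'$ are decomposition coverings by assumption. Thus Lemma \ref{lem:EinfacheAequivalenzBeiGleichenIndexmengen} (applied once with $\mathcal{Q}$ against $\mathcal{Q}''$, and once with $\mathcal{Q}'$ against $\mathcal{Q}''$, in each case with the same weight $u$) yields
\[
\mathcal{D}\left(\mathcal{Q}, L^{p}, \ell_{u}^{q}\right) = \mathcal{D}\left(\mathcal{Q}'', L^{p}, \ell_{u}^{q}\right) = \mathcal{D}\left(\mathcal{Q}', L^{p}, \ell_{u}^{q}\right)
\]
with equivalent norms, which is the desired conclusion.

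There is no serious obstacle here; the only point requiring a moment of care is that the lemma demands the smaller covering to be a decomposition covering (not just admissible), which is why we apply it in the direction $\mathcal{Q} \subset \mathcal{Q}''$ rather than trying to compare $\mathcal{Q}$ and $\mathcal{Q}'$ directly -- in general neither of $Q, Q'$ need be contained in the other. Using the common enlargement $Q'' = Q \cup Q'$ bypasses this issue cleanly, and all remaining verifications are immediate from the results already established in this section.
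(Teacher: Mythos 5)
Your proof is correct and takes essentially the same route as the paper: introduce the common enlargement $Q'' := Q \cup Q'$, verify via Theorem \ref{thm:InduzierteUeberdeckungKonstruktion} and Lemma \ref{lem:GewichtsTransplantation} that $\mathcal{Q}''$ is an admissible induced covering for which $u$ is moderate, and then apply Lemma \ref{lem:EinfacheAequivalenzBeiGleichenIndexmengen} twice with $\mathcal{Q}''$ as the larger covering. Your closing remark correctly pinpoints the one subtle point — that the lemma requires the \emph{smaller} covering to be a decomposition covering, which forces the detour through $\mathcal{Q}''$ — matching the paper's reasoning precisely.
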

\begin{proof}
The assumptions guarantee that $\left(h_{i}\right)_{i\in I}$ is well-spread
in $H$ and that $\overline{Q},\overline{Q'}\subset\mathcal{O}$ are
compact sets that satisfy $\bigcup_{i\in I}h_{i}^{-T}Q=\mathcal{O}=\bigcup_{i\in I}h_{i}^{-T}Q'$.
Thus, $Q'':=Q\cup Q'\subset\mathcal{O}$ also satisfies $\mathcal{O}=\bigcup_{i\in I}h_{i}^{-T}Q''$
and $\overline{Q''}\subset\mathcal{O}$ is compact.

Thus, $\mathcal{Q},\mathcal{Q}'$ and $\mathcal{Q}''$ are coverings
of $\mathcal{O}$ induced by $H$. Theorem \ref{thm:InduzierteUeberdeckungKonstruktion}
then shows that these are admissible coverings and Lemma \ref{lem:GewichtsTransplantation}
shows that $u$ is moderate with respect to any of these coverings.
Because $\mathcal{Q}$ is a decomposition covering, the same is true
for $\mathcal{Q}''$ by Lemma \ref{lem:EinfacheAequivalenzBeiGleichenIndexmengen}.
The inclusion 
\[
Q_{i}\cup Q_{i}'=h_{i}^{-T}Q\cup h_{i}^{-T}Q'\subset h_{i}^{-T}Q''=Q_{i}''
\]
which is valid for all $i\in I$ and Lemma \ref{lem:EinfacheAequivalenzBeiGleichenIndexmengen}
then imply 
\[
\mathcal{D}\left(\mathcal{Q},L^{p},\ell_{u}^{q}\right)=\mathcal{D}\left(\mathcal{Q}'',L^{p},\ell_{u}^{q}\right)=\mathcal{D}\left(\mathcal{Q}',L^{p},\ell_{u}^{q}\right)
\]
with equivalent norms.
\end{proof}

\section{Construction of a specific BAPU for induced coverings}

\label{sec:BAPU}In this section we construct a specific BAPU for
the covering of the dual orbit induced by $H$. This BAPU will allow
us to prove the continuity of $\mathcal{F}:\text{Co}\left(L_{v}^{p,q}\right)\rightarrow\mathcal{D}\left(\mathcal{Q},L^{p},\ell_{u}^{q}\right)$,
where $\mathcal{Q}$ is an induced decomposition covering of the dual
orbit. The idea of the construction is to take a Schwartz function
$\psi\in\mathcal{S}\left(\mathbb{R}^{d}\right)$ such that $\widehat{\psi}\in\mathcal{D}\left(\mathcal{O}\right)$
is compactly supported in $\mathcal{O}$ and then define 
\begin{equation}
\varphi_{U}\left(\xi\right):=\int_{U}\left|\widehat{\psi}\left(h^{T}\xi\right)\right|^{2}\,\text{d}h\qquad\text{for }\xi\in\mathcal{O}\label{eq:BAPUIdee}
\end{equation}
for $U\subset H$ precompact and measurable. We will then show that
$\varphi_{U}\in\mathcal{D}\left(\mathcal{O}\right)$ is smooth with
\[
\text{supp}\left(\varphi_{U}\right)\subset\overline{U}^{-T}\text{supp}\left(\smash{\widehat{\psi}}\right)
\]
and that $\left(\varphi_{U_{i}}\right)_{i\in I}$ is a (multiple of
a) partition of unity on $\mathcal{O}$ if $\left(U_{i}\right)_{i\in I}$
is a partition of $H$.

We now show that the construction indicated in equation (\ref{eq:BAPUIdee})
indeed yields a test function $\varphi_{U}\in\mathcal{D}\left(\mathcal{O}\right)$.
Note that we could also use ``differentiation under the integral
sign'' instead of Lemma \ref{lem:GlatteVerkettung} in the following
proof. But as we need Lemma \ref{lem:GlatteVerkettung} nonetheless,
we prefer the following argument. 
\begin{lem}
\label{lem:BAPUKonstruktion}Assume $\psi\in\mathcal{S}\left(\mathbb{R}^{d}\right)$
with $\widehat{\psi}\in\mathcal{D}\left(\mathcal{O}\right)$. Let
$U\subset H$ be precompact and measurable. Then 
\[
\varphi_{U}:\mathbb{R}^{d}\rightarrow\left[0,\infty\right),\xi\mapsto\int_{U}\left|\widehat{\psi}\left(h^{T}\xi\right)\right|^{2}\,\text{d}h
\]
is well-defined with $\varphi_{U}\in\mathcal{D}\left(\mathcal{O}\right)$.

More precisely, we have $\varphi_{U}\equiv0$ on $\mathbb{R}^{d}\setminus\left(U^{-T}\cdot\widehat{\psi}\left(\mathbb{C}^{\ast}\right)\right)$
and thus 
\[
\text{supp}\left(\varphi_{U}\right)\subset\overline{U}^{-T}\cdot\text{supp}\left(\smash{\widehat{\psi}}\right)\subset H^{T}\mathcal{O}=\mathcal{O}.
\]
\end{lem}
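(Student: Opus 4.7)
The plan is to verify in turn the three assertions contained in the lemma: well-definedness of the integral, the support inclusion, and smoothness of $\varphi_U$.

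First, I would observe that $\widehat{\psi}\in\mathcal{D}(\mathcal{O})$ is bounded on $\mathbb{R}^{d}$, so $|\widehat{\psi}(h^{T}\xi)|^{2}\leq\|\widehat{\psi}\|_{\infty}^{2}$ uniformly in $(h,\xi)$. Since $U\subset H$ is precompact, $U$ has finite left Haar measure, and the integral defining $\varphi_{U}(\xi)$ therefore converges for every $\xi\in\mathbb{R}^{d}$. Moreover, $\varphi_{U}(\xi)\geq 0$ is immediate.

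For the support statement, I would argue by contraposition. If $\varphi_{U}(\xi)\neq 0$, then by continuity of the integrand there must be at least one $h\in U$ with $\widehat{\psi}(h^{T}\xi)\neq 0$, i.e.\ $h^{T}\xi\in\widehat{\psi}(\mathbb{C}^{\ast})$, so $\xi=h^{-T}(h^{T}\xi)\in U^{-T}\cdot\widehat{\psi}(\mathbb{C}^{\ast})$. This yields $\varphi_{U}\equiv 0$ on $\mathbb{R}^{d}\setminus U^{-T}\widehat{\psi}(\mathbb{C}^{\ast})$. By continuity of $(h,\eta)\mapsto h^{-T}\eta$ and compactness of $\overline{U}$ and $\text{supp}(\widehat{\psi})$, the set $\overline{U}^{-T}\cdot\text{supp}(\widehat{\psi})$ is compact, hence closed, and contains $U^{-T}\widehat{\psi}(\mathbb{C}^{\ast})$. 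Passing to closures gives $\text{supp}(\varphi_{U})\subset\overline{U}^{-T}\cdot\text{supp}(\widehat{\psi})$. Since $\text{supp}(\widehat{\psi})\subset\mathcal{O}$ and $\mathcal{O}=H^{T}\xi_{0}$ is invariant under the dual action of $H$ (note $H^{-T}=\{h^{T}:h\in H\}=H^{T}$), we conclude $\overline{U}^{-T}\cdot\text{supp}(\widehat{\psi})\subset H^{T}\mathcal{O}=\mathcal{O}$, as claimed.

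For smoothness, the cleanest route is to apply Lemma \ref{lem:GlatteVerkettung} with $\gamma(h,\xi):=h^{T}\xi$ (whose hypotheses are checked in the remark following that lemma). This yields that $\Phi:H\rightarrow\mathcal{D}(\mathbb{R}^{d})$, $h\mapsto D_{h}\widehat{\psi}$, is well-defined and continuous. Combining this with the continuity of complex conjugation and of pointwise multiplication on $\mathcal{D}(\mathbb{R}^{d})$, the map $h\mapsto|D_{h}\widehat{\psi}|^{2}$ is continuous $H\to\mathcal{D}(\mathbb{R}^{d})$; moreover, by the support inclusion already established, its restriction to $\overline{U}$ takes values in the Fr\'echet space $\mathcal{D}_{K_{0}}(\mathbb{R}^{d})$ for the fixed compact set $K_{0}:=\overline{U}^{-T}\cdot\text{supp}(\widehat{\psi})$. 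A Bochner integral over the precompact set $U$ in $\mathcal{D}_{K_{0}}(\mathbb{R}^{d})$ therefore exists, and pairing with the (continuous) point-evaluation functional $\delta_{\xi}$ on $\mathcal{D}_{K_{0}}(\mathbb{R}^{d})$ recovers $\varphi_{U}(\xi)$. Hence $\varphi_{U}\in\mathcal{D}_{K_{0}}(\mathbb{R}^{d})\subset\mathcal{D}(\mathcal{O})$.

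The main obstacle is organizing this final step cleanly: one has to note the topological subtlety that continuity into $\mathcal{D}(\mathbb{R}^{d})$ factors through the Fr\'echet subspace $\mathcal{D}_{K_{0}}(\mathbb{R}^{d})$, where Bochner integration is applicable. A fully elementary alternative—which I would present as a fallback if desired—is to differentiate under the integral sign: the joint smoothness of $(h,\xi)\mapsto|\widehat{\psi}(h^{T}\xi)|^{2}$ together with the uniform boundedness of all $\xi$-derivatives on $\overline{U}\times K$ for arbitrary compact $K\subset\mathbb{R}^{d}$ (which follows from compactness of $\overline{U}$ and the fact that $\widehat{\psi}$ has all derivatives bounded) justifies pulling $\partial^{\alpha}$ under the integral and gives smoothness directly.
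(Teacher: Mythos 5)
Your proof is correct and follows the paper's strategy closely: both invoke Lemma \ref{lem:GlatteVerkettung} to obtain continuity of a test-function-valued map, both construct $\varphi_{U}$ as a vector-valued (Bochner) integral, and both recover the pointwise formula via continuous evaluation functionals. Two small technical choices differ, and in each case the paper's choice is marginally cleaner. First, you apply Lemma \ref{lem:GlatteVerkettung} to $\widehat{\psi}$ to get $h\mapsto D_{h}\widehat{\psi}$ continuous, and then pass to $h\mapsto\bigl|D_{h}\widehat{\psi}\bigr|^{2}$, which requires a separate (mildly delicate, though true) argument that conjugation and multiplication are continuous on the relevant $\mathcal{D}_{K}$; the paper instead applies Lemma \ref{lem:GlatteVerkettung} directly to $\varphi:=\bigl|\widehat{\psi}\bigr|^{2}\in\mathcal{D}(\mathcal{O})$, which sidesteps this step entirely since the lemma applies to any test function. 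Second, you integrate in the Fr\'echet space $\mathcal{D}_{K_{0}}(\mathbb{R}^{d})$ and correctly flag the subtlety this introduces; the paper avoids Fr\'echet-space Bochner integration altogether by post-composing with the continuous inclusion $\iota_{k}\colon\mathcal{D}(\mathbb{R}^{d})\hookrightarrow C_{b}^{k}(\mathbb{R}^{d})$ and integrating in the \emph{Banach} space $C_{b}^{k}(\mathbb{R}^{d})$ for each $k$ separately, concluding smoothness from $\varphi_{U}\in\bigcap_{k}C_{b}^{k}(\mathbb{R}^{d})$. Your fallback of differentiating under the integral sign is also mentioned, and dismissed for stylistic reasons, in the remark preceding the lemma in the paper. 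The support argument you give is essentially identical to the paper's.
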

\begin{proof}
Let $\varphi:=\left|\smash{\widehat{\psi}}\right|^{2}\in\mathcal{D}\left(\mathcal{O}\right)\subset\mathcal{D}\left(\mathbb{R}^{d}\right)$.
Lemma \ref{lem:GlatteVerkettung} shows that 
\[
\Phi:\text{GL}\left(\mathbb{R}^{d}\right)\rightarrow\mathcal{D}\left(\mathbb{R}^{d}\right),h\mapsto\varphi\left(h^{T}\cdot\right)=\left|\widehat{\psi}\left(h^{T}\cdot\right)\right|^{2}
\]
is well-defined and continuous, so that $\Phi$ is in particular continuous
on the compact set $\overline{U}\subset H$.

For $k\in\mathbb{N}_{0}$ the inclusion $\iota_{k}:\mathcal{D}\left(\mathbb{R}^{d}\right)\hookrightarrow C_{b}^{k}\left(\mathbb{R}^{d}\right)$
with 
\[
C_{b}^{k}\left(\mathbb{R}^{d}\right):=\left\{ f\in C^{k}\left(\mathbb{R}^{d}\right)\with\left\Vert f\right\Vert _{C_{b}^{k}}:=\sum_{\left|\alpha\right|\leq k}\left\Vert \partial^{\alpha}f\right\Vert _{\sup}<\infty\right\} 
\]
is continuous, so that the Bochner integral of the function $\iota_{k}\circ\Phi$,
\[
\psi_{k}:=\int_{U}\left(\iota_{k}\circ\Phi\right)\left(h\right)\,\text{d}h\in C_{b}^{k}\left(\mathbb{R}^{d}\right)
\]
is well-defined, because of $\mu_{H}\left(U\right)\leq\mu_{H}\left(\overline{U}\right)<\infty$,
where $\mu_{H}$ is the Haar-measure on $H$.

As the evaluation mapping $\alpha_{\xi}:C_{b}^{k}\left(\mathbb{R}^{d}\right)\rightarrow\mathbb{C},f\mapsto f\left(\xi\right)$
is continuous for every $\xi\in\mathbb{R}^{d}$, we easily see $\psi_{k}\left(\xi\right)=\varphi_{U}\left(\xi\right)$
for each $\xi\in\mathbb{R}^{d}$, so that we conclude $\varphi_{U}=\psi_{k}\in C_{b}^{k}\left(\mathbb{R}^{d}\right)$
for all $k\in\mathbb{N}_{0}$ which shows that $\varphi_{U}$ is smooth.

Finally, if we have $0\neq\varphi_{U}\left(\xi\right)=\int_{U}\left|\widehat{\psi}\left(h^{T}\xi\right)\right|^{2}\,\text{d}h$,
there is some $h\in U$ with $h^{T}\xi\in\widehat{\psi}\left(\mathbb{C}^{\ast}\right)$,
i.e. 
\[
\xi\in h^{-T}\cdot\widehat{\psi}\left(\mathbb{C}^{\ast}\right)\subset U^{-T}\cdot\widehat{\psi}\left(\mathbb{C}^{\ast}\right).\qedhere
\]

\end{proof}
We now calculate the (inverse) Fourier transform of the function $\varphi_{U}$
as defined in the preceding lemma. The formula that we derive will
be important for the proof of the continuity of the Fourier transform
$\mathcal{F}:\text{Co}\left(L_{v}^{p,q}\right)\rightarrow\mathcal{D}\left(\mathcal{Q},L^{p},\ell_{u}^{q}\right)$.
\begin{lem}
\label{lem:BAPUFourierTransformation}Let $\psi\in\mathcal{S}\left(\mathbb{R}^{d}\right)$
with $\widehat{\psi}\in\mathcal{D}\left(\mathcal{O}\right)$. For
$\gamma:=\mathcal{F}^{-1}\left(\smash{\left|\smash{\widehat{\psi}}\right|^{2}}\right)\in\mathcal{S}\left(\mathbb{R}^{d}\right)$
and some precompact, measurable $U\subset H$ we have, with $\varphi_{U}$
defined as in Lemma \ref{lem:BAPUKonstruktion}, 
\begin{equation}
\left(\mathcal{F}^{-1}\varphi_{U}\right)\left(x\right)=\int_{U}\frac{\gamma\left(h^{-1}x\right)}{\left|\det\left(h\right)\right|}\,\text{d}h\qquad\text{for all }x\in\mathbb{R}^{d}.\label{eq:BAPUFourier}
\end{equation}
Furthermore, the estimate 
\[
\left\Vert \mathcal{F}^{-1}\varphi_{U}\right\Vert _{L^{1}\left(\mathbb{R}^{d}\right)}\leq\mu_{H}\left(U\right)\cdot\left\Vert \gamma\right\Vert _{L^{1}\left(\mathbb{R}^{d}\right)}<\infty
\]
holds, where $\mu_{H}$ denotes the chosen (left) Haar-measure on
$H$.\end{lem}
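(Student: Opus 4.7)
The plan is to use the operator notation from the paper to rewrite the integrand as a dilation, compute the interaction of $\mathcal{F}^{-1}$ with this dilation via a linear change of variables, and then interchange integrals using Fubini/Tonelli. Concretely, observe that $|\widehat{\psi}(h^{T}\xi)|^{2} = (D_{h}|\widehat{\psi}|^{2})(\xi)$ for every $h\in H$, so that
\[
\varphi_{U}(\xi) = \int_{U} (D_{h}|\widehat{\psi}|^{2})(\xi)\,\text{d}h .
\]
The first key ingredient is the pointwise formula
\[
\bigl(\mathcal{F}^{-1}(D_{h}g)\bigr)(x) = |\det(h)|^{-1}\cdot(\mathcal{F}^{-1}g)(h^{-1}x) \qquad\text{for } g\in L^{1}(\mathbb{R}^{d}),
\]
which I would establish by the substitution $\eta = h^{T}\xi$ directly in the defining integral of $\mathcal{F}^{-1}$. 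Applied with $g=|\widehat{\psi}|^{2}$, this gives $(\mathcal{F}^{-1}(D_{h}|\widehat{\psi}|^{2}))(x) = |\det(h)|^{-1}\gamma(h^{-1}x)$, which is precisely the pointwise integrand on the right-hand side of \eqref{eq:BAPUFourier}.

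To obtain \eqref{eq:BAPUFourier}, I would interchange the $h$-integral and the inverse Fourier integral. Since $\varphi_{U}\in\mathcal{D}(\mathcal{O})$ by Lemma \ref{lem:BAPUKonstruktion}, it lies in $L^{1}(\mathbb{R}^{d})$, and the pointwise formula for $\mathcal{F}^{-1}\varphi_{U}$ is given by the classical integral. Fubini's theorem applies once absolute integrability of $(h,\xi)\mapsto |\widehat{\psi}(h^{T}\xi)|^{2}$ over $U\times\mathbb{R}^{d}$ is verified: the inner $\xi$-integral equals $|\det(h)|^{-1}\|\widehat{\psi}\|_{L^{2}}^{2}$, and $h\mapsto|\det(h)|^{-1}$ is continuous and hence bounded on the compact set $\overline{U}$, while $\mu_{H}(U)<\infty$ by precompactness. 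Swapping the order of integration then yields \eqref{eq:BAPUFourier}.

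For the $L^{1}$-bound I would apply Tonelli's theorem to $|\mathcal{F}^{-1}\varphi_{U}|$, bounded by the modulus of the inner integrand, to get
\[
\left\Vert \mathcal{F}^{-1}\varphi_{U}\right\Vert _{L^{1}(\mathbb{R}^{d})} \leq \int_{U}\frac{1}{|\det(h)|}\int_{\mathbb{R}^{d}}|\gamma(h^{-1}x)|\,\text{d}x\,\text{d}h,
\]
and then perform the linear substitution $y=h^{-1}x$ in the inner integral. The Jacobian factor $|\det(h)|$ cancels the $|\det(h)|^{-1}$ outside, leaving $\int_{U}\|\gamma\|_{L^{1}(\mathbb{R}^{d})}\,\text{d}h = \mu_{H}(U)\cdot\|\gamma\|_{L^{1}(\mathbb{R}^{d})}$. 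Finiteness of $\|\gamma\|_{L^{1}}$ is immediate from $\gamma\in\mathcal{S}(\mathbb{R}^{d})$, and $\mu_{H}(U)<\infty$ by precompactness of $U$.

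There is no genuine technical obstacle here; the only subtle point is the justification of Fubini for the pointwise identity, which reduces to the elementary observation that $|\det(\cdot)|^{\pm 1}$ is bounded on $\overline{U}$ because $\det$ is continuous and nowhere zero on $H\subset\mathrm{GL}(\mathbb{R}^{d})$. Everything else is a routine application of the change-of-variables formula combined with the compatibility $\mathcal{F}^{-1}\circ D_{h} = |\det(h)|^{-1}\cdot(\,\cdot\,)(h^{-1}\cdot)\circ\mathcal{F}^{-1}$ that was already implicit in the formula \eqref{eq:QausiRegulaereDarstellungAufFourierSeite}.
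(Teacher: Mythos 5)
Your proposal is correct and follows essentially the same route as the paper: both proofs hinge on the change of variables $\eta=h^{T}\xi$ combined with Fubini/Tonelli, once absolute integrability over $U\times\mathbb{R}^{d}$ is secured. The only difference is cosmetic: the paper invokes Lemma \ref{lem:GlatteVerkettung} to get continuity of $h\mapsto|\widehat{\psi}(h^{T}\cdot)|^{2}$ in $L^{1}(\mathbb{R}^{d})$ and then bounds by a supremum over the compact set $\overline{U}$, whereas you compute the inner $\xi$-integral explicitly as $|\det(h)|^{-1}\|\widehat{\psi}\|_{L^{2}}^{2}$ and bound $|\det(\cdot)|^{-1}$ on $\overline{U}$ directly, which is a touch more elementary but leads to the same conclusion.
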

\begin{proof}
Let $\varphi:=\left|\smash{\widehat{\psi}}\right|^{2}\in\mathcal{D}\left(\mathcal{O}\right)$.
Then Lemma \ref{lem:GlatteVerkettung} shows that 
\[
\Phi:\text{GL}\left(\mathbb{R}^{d}\right)\rightarrow\mathcal{D}\left(\mathbb{R}^{d}\right)\hookrightarrow\mathcal{S}\left(\mathbb{R}^{d}\right)\hookrightarrow L^{1}\left(\mathbb{R}^{d}\right),h\mapsto \varphi\left(h^{T} \cdot \right)=\left|\widehat{\psi}\left(h^{T} \cdot \right)\right|^{2}
\]
is well-defined and continuous. Hence 
\begin{align*}
\int_{U}\int_{\mathbb{R}^{d}}\left|\left|\widehat{\psi}\left(h^{T}\xi\right)\right|^{2}\cdot e^{2\pi i\left\langle x,\xi\right\rangle }\right|\,\text{d}\xi\,\text{d}h & =  \int_{U}\left\Vert \Phi\left(h\right)\right\Vert _{L^{1}\left(\mathbb{R}^{d}\right)}\,\text{d}h\\
 & \leq  \mu_{H}\left(\overline{U}\right)\cdot\sup_{h\in\overline{U}}\left\Vert \Phi\left(h\right)\right\Vert _{L^{1}\left(\mathbb{R}^{d}\right)}<\infty,
\end{align*}
where we used that $\overline{U}$ is compact. Fubini's theorem, the
change of variables formula and Fourier inversion now yield 
\begin{eqnarray*}
\left(\mathcal{F}^{-1}\varphi_{U}\right)\left(x\right) & = & \int_{\mathbb{R}^{d}}\int_{U}\left|\widehat{\psi}\left(h^{T}\xi\right)\right|^{2}\,\text{d}h\, e^{2\pi i\left\langle x,\xi\right\rangle }\,\text{d}\xi\\
 & \overset{\text{Fubini}}{=} & \int_{U}\frac{1}{\left|\det\left(h\right)\right|}\cdot\int_{\mathbb{R}^{d}}\left|\widehat{\psi}\left(h^{T}\xi\right)\right|^{2}\cdot e^{2\pi i\left\langle h^{-1}x,h^{T}\xi\right\rangle }\cdot\left|\det\left(h^{T}\right)\right|\,\text{d}\xi\,\text{d}h\\
 & \overset{\varrho=h^{T}\xi}{=} & \int_{U}\frac{1}{\left|\det\left(h\right)\right|}\cdot\int_{\mathbb{R}^{d}}\widehat{\gamma}\left(\varrho\right)\cdot e^{2\pi i\left\langle h^{-1}x,\varrho\right\rangle }\,\text{d}\varrho\,\text{d}h\\
 & = & \int_{U}\frac{\gamma\left(h^{-1}x\right)}{\left|\det\left(h\right)\right|}\,\text{d}h.
\end{eqnarray*}
A second application of Fubini's theorem and the change of variables
formula finally yields 
\begin{eqnarray*}
\quad\qquad\qquad\qquad\qquad\left\Vert \mathcal{F}^{-1}\varphi_{U}\right\Vert _{1} & = & \int_{\mathbb{R}^{d}}\left|\int_{U}\frac{\gamma\left(h^{-1}x\right)}{\left|\det\left(h\right)\right|}\,\text{d}h\right|\,\text{d}x\\
 & \overset{\text{Fubini}}{\leq} & \int_{U}\int_{\mathbb{R}^{d}}\left|\gamma\left(h^{-1}x\right)\right|\cdot\left|\det\left(h^{-1}\right)\right|\,\text{d}x\,\text{d}h\\
 & = & \int_{U}\int_{\mathbb{R}^{d}}\left|\gamma\left(\varrho\right)\right|\,\text{d}\varrho\,\text{d}h\\
 & \leq & \mu_{H}\left(\overline{U}\right)\cdot\left\Vert \gamma\right\Vert _{L^{1}\left(\mathbb{R}^{d}\right)}<\infty.\qquad\qquad\qquad\qquad\qquad\quad\qedhere
\end{eqnarray*}

\end{proof}
We now want to show that $\left(\varphi_{U_{i}}\right)_{i\in I}$
yields a partition of unity on $\mathcal{O}$. In order to do so,
we first need the following technical lemma. Its proof is straightforward,
and therefore omitted.
\begin{lem}
\label{lem:ModulationStetigAufC0}The map 
\[
\Theta:\mathbb{R}^{d}\times C_{0}\left(\mathbb{R}^{d}\right)\rightarrow C_{0}\left(\mathbb{R}^{d}\right),\left(\omega,g\right)\mapsto M_{\omega}g
\]
is (jointly) continuous.

Here, $C_{0}\left(\mathbb{R}^{d}\right)$ is the space of (complex
valued) continuous functions vanishing at infinity endowed with the
sup-norm.
\end{lem}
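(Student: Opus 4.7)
The plan is to reduce joint continuity to two separately manageable pieces via the standard triangle inequality trick. Given $(\omega_{n},g_{n})\to(\omega,g)$ in $\mathbb{R}^{d}\times C_{0}(\mathbb{R}^{d})$, I would estimate
\[
\left\Vert M_{\omega_{n}}g_{n}-M_{\omega}g\right\Vert _{\sup}\leq\left\Vert M_{\omega_{n}}(g_{n}-g)\right\Vert _{\sup}+\left\Vert (M_{\omega_{n}}-M_{\omega})g\right\Vert _{\sup}.
\]
The first term is harmless: since $|M_{\omega}f|=|f|$ pointwise, it equals $\|g_{n}-g\|_{\sup}\to0$. So the substance of the argument is the second term, which asks that the orbit map $\omega\mapsto M_{\omega}g$ be continuous for each fixed $g\in C_{0}(\mathbb{R}^{d})$.

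For that piece, the key observation is that the pointwise difference $|e^{2\pi i\langle y,\omega_{n}\rangle}-e^{2\pi i\langle y,\omega\rangle}|$ tends to $0$ only uniformly on compact sets in $y$, so I need the decay of $g$ to handle the complement. Given $\varepsilon>0$, I would first pick $R>0$ with $|g(y)|<\varepsilon/4$ for $|y|>R$ (using $g\in C_{0}(\mathbb{R}^{d})$). On $\{|y|\leq R\}$ the elementary bound $|e^{i\theta}-1|\leq|\theta|$ yields
\[
\left|(M_{\omega_{n}}g)(y)-(M_{\omega}g)(y)\right|\leq2\pi R\cdot|\omega_{n}-\omega|\cdot\left\Vert g\right\Vert _{\sup},
\]
whereas on $\{|y|>R\}$ the trivial bound $|e^{i\alpha}-e^{i\beta}|\leq2$ gives a contribution below $\varepsilon/2$. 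Choosing $n$ large enough to make the first region's bound less than $\varepsilon/2$ completes the argument for this term.

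Combining the two pieces yields $\|M_{\omega_{n}}g_{n}-M_{\omega}g\|_{\sup}\to0$, proving joint continuity. One should also briefly verify that $M_{\omega}g\in C_{0}(\mathbb{R}^{d})$ in the first place, but this is immediate since $|M_{\omega}g|=|g|$ and $M_{\omega}g$ is continuous as a product of continuous functions. There is no real obstacle here; the only subtle point is remembering that pointwise convergence of the phase factors is not enough, so the vanishing-at-infinity hypothesis on $g$ must actively be used to reduce to a compact region where uniform continuity of $(y,\omega)\mapsto e^{2\pi i\langle y,\omega\rangle}$ applies. This is presumably what the authors had in mind when calling the proof straightforward.
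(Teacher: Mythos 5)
Your proof is correct. The paper actually omits the proof of this lemma entirely (it is stated and immediately followed by the next paragraph), so there is no argument of the authors' to compare against — but your argument is exactly the standard one: split off the $\|g_n-g\|_{\sup}$ term using that $M_\omega$ is an isometry, then establish strong continuity of the orbit map $\omega\mapsto M_\omega g$ by cutting $\mathbb{R}^d$ at radius $R$ chosen from the decay of $g$, using the trivial bound $|e^{i\alpha}-e^{i\beta}|\le 2$ outside and the Lipschitz bound $|e^{i\theta}-1|\le|\theta|$ inside. All the estimates check out, and you correctly identify the one non-trivial point, namely that $C_0$-decay (rather than mere boundedness) is what makes the argument close.
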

We are now almost ready to show that $\left(\varphi_{U_{i}}\right)_{i\in I}$
indeed yields a (multiple of a) partition of unity on $\mathcal{O}$
for each (precompact, measurable) partition $\left(U_{i}\right)_{i\in I}$
of $H$. The only thing missing is the so-called \textbf{wavelet inversion
formula}. For the validity of this formula, we again use our assumptions
on $H$, which imply (as noted above) that $\pi$ is an irreducible,
square-integrable representation on $L^{2}\left(\mathbb{R}^{d}\right)$.
Then \cite[Theorem 3]{DufloMoore} states (in our notation) the following:
\begin{thm}
\label{thm:DufloMoore}(\cite[Theorem 3]{DufloMoore}) There is a
self-adjoint, positive operator ${K:{\rm dom}\left(K\right)\rightarrow L^{2}\left(\mathbb{R}^{d}\right)}$
(with $\text{dom}\left(K\right)\leq L^{2}\left(\mathbb{R}^{d}\right)$)
satisfying the following conditions:
\begin{enumerate}
\item For $\psi\in L^{2}\left(\mathbb{R}^{d}\right)$ the following are
equivalent:

\begin{enumerate}
\item $W_{\psi}\psi\in L^{2}\left(\mathbb{R}^{d}\right)$, 
\item $W_{\psi}f\in L^{2}\left(\mathbb{R}^{d}\right)$ for some $f\in L^{2}\left(\mathbb{R}^{d}\right)\setminus\left\{ 0\right\} $, 
\item $\psi\in\text{dom}\left(K^{-1/2}\right)$. 
\end{enumerate}
\item For $\varphi,\psi\in\text{dom}\left(K^{-1/2}\right)$ (i.e. with $W_{\psi}\psi,W_{\varphi}\varphi\in L^{2}\left(\mathbb{R}^{d}\right)$)
and arbitrary $f,g\in L^{2}\left(\mathbb{R}^{d}\right)$ we have 
\begin{equation}
\left\langle W_{\psi}f,W_{\varphi}g\right\rangle _{L^{2}\left(G\right)}=\left\langle K^{-1/2}\varphi,K^{-1/2}\psi\right\rangle _{L^{2}\left(\mathbb{R}^{d}\right)}\cdot\left\langle f,g\right\rangle _{L^{2}\left(\mathbb{R}^{d}\right)}.\label{eq:DufloMooreGleichung}
\end{equation}

\end{enumerate}
\end{thm}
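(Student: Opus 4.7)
My plan is to prove both parts of the theorem simultaneously by constructing the operator $K$ via a Schur-lemma argument together with a form-representation step, using irreducibility of $\pi$ as the decisive ingredient. The starting point is the covariance identity
\[
W_{\psi}\left(\pi(g)f\right)=\lambda_{G}(g)\left(W_{\psi}f\right)\qquad\text{for all }g\in G\text{ and }f,\psi\in L^{2}(\mathbb{R}^{d}),
\]
where $\lambda_{G}$ is the left regular representation on $L^{2}(G)$; this is immediate from left-invariance of Haar measure and the definition of $W_{\psi}$. Writing $\mathcal{A}:=\{\psi\in L^{2}(\mathbb{R}^{d}):W_{\psi}\psi\in L^{2}(G)\}$, I would first show that $W_{\psi}:L^{2}(\mathbb{R}^{d})\to L^{2}(G)$ is bounded whenever $\psi\in\mathcal{A}$: the covariance identity gives $\|W_{\psi}(\pi(g)\psi)\|_{L^{2}(G)}=\|W_{\psi}\psi\|_{L^{2}(G)}$ for every $g\in G$, so $W_{\psi}$ is bounded on the linear span of $\{\pi(g)\psi:g\in G\}$, which is dense in $L^{2}(\mathbb{R}^{d})$ by irreducibility, and a closed-graph argument extends the bound to all of $L^{2}(\mathbb{R}^{d})$.

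For $\psi,\varphi\in\mathcal{A}$, the bounded operator $W_{\varphi}^{*}W_{\psi}$ commutes with every $\pi(g)$ by the covariance identity and its adjoint, so Schur's lemma forces $W_{\varphi}^{*}W_{\psi}=\overline{c(\psi,\varphi)}\cdot\mathrm{id}_{L^{2}(\mathbb{R}^{d})}$ for a unique scalar $c(\psi,\varphi)$. This immediately yields the orthogonality relation
\[
\langle W_{\psi}f,W_{\varphi}g\rangle_{L^{2}(G)}=c(\psi,\varphi)\cdot\langle f,g\rangle_{L^{2}(\mathbb{R}^{d})}\qquad(f,g\in L^{2}(\mathbb{R}^{d})),
\]
and taking $f=g$ shows $c(\psi,\psi)\geq0$; sesquilinearity of $(\psi,\varphi)\mapsto c(\psi,\varphi)$ on $\mathcal{A}\times\mathcal{A}$ (conjugate-linear in $\psi$, linear in $\varphi$) is then routine from the sesquilinearity of $W$ in its subscript.

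To realize $c$ as $c(\psi,\varphi)=\langle K^{-1/2}\varphi,K^{-1/2}\psi\rangle$, I would introduce the positive quadratic form $Q(\psi):=c(\psi,\psi)$ with form domain $\mathcal{A}$, verify that $Q$ is closed on $L^{2}(\mathbb{R}^{d})$, and apply the standard bijection between closed positive forms and positive self-adjoint operators to obtain $T\geq0$ with $\mathrm{dom}(T^{1/2})=\mathcal{A}$ and $Q(\psi)=\|T^{1/2}\psi\|^{2}$. Setting $K:=T^{-1}$ via spectral calculus then produces the operator of the theorem. The equivalence of the three conditions falls out: (a)$\Rightarrow$(b) is trivial; (b)$\Rightarrow$(a) follows because for a single nonzero $f$ with $W_{\psi}f\in L^{2}(G)$, the covariance identity gives $\|W_{\psi}(\pi(g)f)\|_{L^{2}(G)}=\|W_{\psi}f\|_{L^{2}(G)}$ for every $g\in G$, whence $W_{\psi}$ extends to a bounded operator on $L^{2}(\mathbb{R}^{d})$ (again by irreducibility) and in particular $W_{\psi}\psi\in L^{2}(G)$; and (a)$\Leftrightarrow$(c) is precisely $Q(\psi)<\infty\Leftrightarrow\psi\in\mathrm{dom}(K^{-1/2})$.

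The main obstacle will be verifying closedness of $Q$ on $\mathcal{A}$. One needs density of $\mathcal{A}$ in $L^{2}(\mathbb{R}^{d})$, which follows from irreducibility once $\mathcal{A}\neq\{0\}$ (guaranteed in our setting by Theorem \ref{thm:ZulaessigkeitVonbandbeschraenktenFunktionen}) together with the observation that $\pi(g)\mathcal{A}\subset\mathcal{A}$, and one needs lower semicontinuity of $\psi\mapsto c(\psi,\psi)$ along $L^{2}$-convergent sequences, which can be established via Fatou's lemma applied pointwise to the integrands of $\|W_{\psi_{n}}f\|_{L^{2}(G)}^{2}$ for a fixed $f\neq0$. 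The subtle point is that $K$ is generally unbounded and its domain is a proper subspace of $L^{2}(\mathbb{R}^{d})$, so one must treat $K^{-1/2}$ carefully via spectral calculus throughout and cannot manipulate it as if it were an ordinary bounded operator.
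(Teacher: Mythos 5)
The paper does not supply its own proof of this statement: Theorem \ref{thm:DufloMoore} is quoted verbatim from \cite[Theorem 3]{DufloMoore} and treated as a black box. I therefore assess your proposal on its own merits.

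Your overall strategy---covariance $\Rightarrow$ Schur $\Rightarrow$ sesquilinear form $\Rightarrow$ form--operator correspondence---is indeed the standard route to Duflo--Moore, but the logical order is off in a way that creates a genuine gap. The problematic step is your assertion that, because $\left\Vert W_{\psi}\left(\pi(g)\psi\right)\right\Vert _{L^{2}(G)}=\left\Vert W_{\psi}\psi\right\Vert _{L^{2}(G)}$ for all $g$, the operator $W_{\psi}$ is ``bounded on the linear span of $\left\{ \pi(g)\psi\with g\in G\right\} $''. Constant norm on an orbit is a statement about individual vectors, not about linear combinations. For $f=\sum_{j}c_{j}\pi(g_{j})\psi$ the covariance identity gives $W_{\psi}f=\sum_{j}c_{j}\lambda_{G}(g_{j})W_{\psi}\psi$, and comparing $\left\Vert f\right\Vert ^{2}=\sum_{j,k}c_{j}\overline{c_{k}}\left\langle \pi(g_{j})\psi,\pi(g_{k})\psi\right\rangle $ with $\left\Vert W_{\psi}f\right\Vert ^{2}=\sum_{j,k}c_{j}\overline{c_{k}}\left\langle \lambda_{G}(g_{j})W_{\psi}\psi,\lambda_{G}(g_{k})W_{\psi}\psi\right\rangle $ requires precisely the proportionality of these two Gram matrices, i.e.\ the orthogonality relation you set out to prove. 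So the argument is circular. The same gap reappears in your treatment of the implication (b)$\Rightarrow$(a). Once boundedness is missing, the later invocation of Schur's lemma for ``the bounded operator $W_{\varphi}^{*}W_{\psi}$'' has no foundation.

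The correct order inverts yours: first establish that $W_{\psi}$, with natural domain $D_{\psi}:=\left\{ f\in L^{2}(\mathbb{R}^{d})\with W_{\psi}f\in L^{2}(G)\right\} $, is a \emph{closed}, densely defined operator intertwining $\pi$ with $\lambda_{G}$ (density of $D_{\psi}$ comes from $\pi$-invariance of $D_{\psi}$ and irreducibility; closedness from the pointwise Cauchy--Schwarz estimate $\left|\left(W_{\psi}f_{n}\right)(x)-\left(W_{\psi}f\right)(x)\right|\le\left\Vert f_{n}-f\right\Vert \left\Vert \psi\right\Vert $). Then $W_{\psi}^{*}W_{\psi}$ is positive and self-adjoint by von Neumann's theorem and commutes with $\pi$. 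At \emph{this} stage one applies Schur's lemma for unbounded commuting operators (e.g.\ by observing that the bounded resolvent $\left(1+W_{\psi}^{*}W_{\psi}\right)^{-1}$ commutes with $\pi$ and is therefore scalar), which yields $W_{\psi}^{*}W_{\psi}=c_{\psi}\cdot\mathrm{id}$; boundedness of $W_{\psi}$ is a \emph{consequence}, not a prerequisite, of this step. Only after that is the mixed $W_{\varphi}^{*}W_{\psi}$ a product of two bounded operators, and the rest of your polarization and form-theoretic construction of $K$ goes through (the lower-semicontinuity / Fatou argument for closedness of $Q$, and the injectivity of $T$ permitting $K:=T^{-1}$, are sound). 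One small book-keeping slip worth fixing: if $W_{\varphi}^{*}W_{\psi}=\overline{c(\psi,\varphi)}\cdot\mathrm{id}$, then $\left\langle W_{\psi}f,W_{\varphi}g\right\rangle =\overline{c(\psi,\varphi)}\left\langle f,g\right\rangle $, not $c(\psi,\varphi)\left\langle f,g\right\rangle $; the sesquilinearity types you declare ($c$ conjugate-linear in $\psi$, linear in $\varphi$) are the ones consistent with the right-hand side $\left\langle K^{-1/2}\varphi,K^{-1/2}\psi\right\rangle $ in the theorem, so this is a notational inconsistency rather than a conceptual one, but it should be resolved.
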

For $\psi\in L^{2}\left(\mathbb{R}^{d}\right)\setminus\left\{ 0\right\} $
with $W_{\psi}\psi\in L^{2}\left(\mathbb{R}^{d}\right)$ and 
\[
C_{\psi}:=\left\langle K^{-1/2}\psi,K^{-1/2}\psi\right\rangle _{L^{2}\left(\mathbb{R}^{d}\right)}=\frac{\left\Vert W_{\psi}\psi\right\Vert _{L^{2}\left(G\right)}^{2}}{\left\Vert \psi\right\Vert _{L^{2}\left(\mathbb{R}^{d}\right)}^{2}}>0
\]
this entails the wavelet inversion formula 
\begin{equation}
f=\frac{1}{C_{\psi}}\cdot\int_{G}\left(W_{\psi}f\right)\left(x\right)\cdot\pi\left(x\right)\psi\,\text{d}x \qquad \text{ in the weak sense for all }f\in L^{2}\left(\mathbb{R}^{d}\right).\label{eq:WaveletInversionsFormel}
\end{equation}
The following lemma relates the constant $C_{\psi}$ to a continuous
partition of unity on the Fourier transform side. The discretization
of this partition of unity (essentially by cutting up the integration
domain $H$ into chunks of comparable sizes) will provide the BAPU
$(\varphi_{U_{i}})$.
\begin{lem}
\label{lem:BAPUIstZerlegungDerEins}For $\psi\in\mathcal{S}\left(\mathbb{R}^{d}\right)\setminus\left\{ 0\right\} $
with $\widehat{\psi}\in\mathcal{D}\left(\mathcal{O}\right)$ we have
$W_{\psi}\psi\in L^{2}\left(G\right)$. Furthermore, we have 
\begin{equation}
\frac{1}{C_{\psi}}\cdot\int_{H}\left|\widehat{\psi}\left(h^{T}\xi\right)\right|^{2}\,\text{d}h=1\qquad\text{ for every }\xi\in\mathcal{O}.\label{eq:SpezialKonstruktionLiefertZerlegungDerEins}
\end{equation}
\end{lem}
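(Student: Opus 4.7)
The plan is to compute $\|W_\psi\psi\|_{L^2(G)}^2$ in two ways: once directly via Plancherel on the Fourier-side formula (\ref{eq:WavletTransformationDarstellung}), which produces the integral in (\ref{eq:SpezialKonstruktionLiefertZerlegungDerEins}) as an inner factor, and once via the Duflo--Moore relation, which evaluates to $C_\psi\cdot\|\psi\|_2^2$ by definition. Equating these and exploiting $H$-invariance of the inner integral will force it to be the constant $C_\psi$.

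First I would verify $W_\psi\psi\in L^{2}(G)$. Using formula (\ref{eq:WavletTransformationDarstellung}) and Plancherel in the $x$-variable, one gets
\[
\|W_\psi\psi\|_{L^2(G)}^2=\int_H\int_{\mathbb R^d}|\widehat\psi(\xi)|^{2}\,|\widehat\psi(h^{T}\xi)|^{2}\,d\xi\,dh.
\]
By Lemma~\ref{lem:AdmissibilityVorbereitung} applied with $K_{1}=K_{2}=\operatorname{supp}(\widehat\psi)$, the inner integral vanishes for $h$ outside a fixed compact set $L\subset H$, and on $L$ it is bounded by $\|\widehat\psi\|_\infty^{2}\|\widehat\psi\|_2^{2}$; thus the double integral is finite.

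Next define $F(\xi):=\int_H|\widehat\psi(h^T\xi)|^{2}\,dh$ for $\xi\in\mathcal O$. Writing $\xi=g^{T}\xi_{0}$, the integrand vanishes unless $gh\in p_{\xi_{0}}^{-1}(\operatorname{supp}(\widehat\psi))$, which is compact by Lemma~\ref{lem:OrbitProjektionIstProper}; so $F(\xi)<\infty$ for every $\xi\in\mathcal O$. The main observation is that $F$ is $H^{T}$-invariant: for $g\in H$, the change of variable $h\mapsto g^{-1}h$ together with left-invariance of the Haar measure on $H$ gives
\[
F(g^{T}\xi)=\int_{H}|\widehat\psi((gh)^{T}\xi)|^{2}\,dh=\int_{H}|\widehat\psi(h^{T}\xi)|^{2}\,dh=F(\xi).
\]
Since $\mathcal O=H^{T}\xi_{0}$ is a single orbit, $F$ is constant on $\mathcal O$; call this constant $c$.

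Now apply Fubini to the double integral above:
\[
\|W_\psi\psi\|_{L^2(G)}^2=\int_{\mathbb R^d}|\widehat\psi(\xi)|^{2}\,F(\xi)\,d\xi=c\int_{\mathcal O}|\widehat\psi(\xi)|^{2}\,d\xi=c\,\|\widehat\psi\|_{2}^{2}=c\,\|\psi\|_{2}^{2},
\]
where we used $\operatorname{supp}(\widehat\psi)\subset\mathcal O$ and Plancherel. Dividing by $\|\psi\|_{2}^{2}>0$ and using the definition of $C_\psi$ stated immediately before the lemma yields $c=C_\psi$, which is precisely (\ref{eq:SpezialKonstruktionLiefertZerlegungDerEins}).

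The only delicate step is the finiteness of $F$ and of the $L^{2}(G)$-norm; both are handled uniformly by invoking Lemma~\ref{lem:OrbitProjektionIstProper} (equivalently Lemma~\ref{lem:AdmissibilityVorbereitung}), so no real obstacle arises beyond carefully tracking the support. Everything else is a Fubini/Plancherel calculation combined with left-invariance of $dh$.
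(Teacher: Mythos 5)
Your proof is correct and takes a genuinely different route from the paper. The paper's own proof is very short: it gets $W_\psi\psi\in L^2(G)$ by combining $W_\psi\psi\in L^1(G)$ (from Theorem~\ref{thm:ZulaessigkeitVonbandbeschraenktenFunktionen}) with $W_\psi\psi\in L^\infty(G)$ (Cauchy--Schwarz), and then simply cites Lemma~9 of \cite{FuehrWaveletFramesAndAdmissibility} for the Calderón identity $C_\psi=\int_H|\widehat\psi(h^T\xi)|^2\,dh$. You instead give a self-contained argument: you use the Fourier-side formula~\eqref{eq:WavletTransformationDarstellung} and Plancherel to write $\|W_\psi\psi\|_{L^2(G)}^2=\int_H\int_{\mathbb R^d}|\widehat\psi(\xi)|^2|\widehat\psi(h^T\xi)|^2\,d\xi\,dh$, control its finiteness via the compactness coming from Lemma~\ref{lem:AdmissibilityVorbereitung} (or equivalently Lemma~\ref{lem:OrbitProjektionIstProper}), observe that $F(\xi)=\int_H|\widehat\psi(h^T\xi)|^2\,dh$ is $H^T$-invariant by left-invariance of Haar measure and hence constant $c$ on the open orbit $\mathcal O$, and finally identify $c=C_\psi$ by comparing the Fubini evaluation $\|W_\psi\psi\|_{L^2(G)}^2=c\,\|\psi\|_2^2$ with the Duflo--Moore relation $\|W_\psi\psi\|_{L^2(G)}^2=C_\psi\,\|\psi\|_2^2$ recorded just before the lemma. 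The tradeoff is clear: the paper keeps the proof to two lines by invoking an external lemma, whereas your argument is longer but re-derives the Calderón admissibility condition from scratch within the paper's own framework, making visible exactly where left-invariance and transitivity of the dual action are used. Both are valid; yours is more transparent and self-contained, the paper's is more economical.
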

\begin{proof}
First note that by Theorem \ref{thm:ZulaessigkeitVonbandbeschraenktenFunktionen},
$W_{\psi}\psi\in L^{1}(G)$. In addition, $W_{\psi}\psi\in L^{\infty}(G)$
by the Cauchy-Schwarz inequality, whence finally $W_{\psi}\psi\in L^{2}(G)$.
Now \cite[Lemma 9]{FuehrWaveletFramesAndAdmissibility} yields 
\[
C_{\psi}=\int_{H}\left|\widehat{\psi}\left(h^{T}\xi\right)\right|^{2}\,\text{d}h~.\qedhere
\]

\end{proof}
With these preparations it is now easy to show that $\left(\varphi_{U_{i}}\right)_{i\in I}$
indeed yields (a multiple of) a BAPU if the sets $\left(U_{i}\right)_{i\in I}$
form a suitable partition of $H$.
\begin{thm}
\label{thm:SpezialBAPUZusammenfassung}Let $\left(h_{i}\right)_{i\in I}$
be well-spread in $H$ with $H=\bigcup_{i\in I}h_{i}U$ for some precompact,
measurable $U\subset H$. Furthermore, let $\psi\in\mathcal{S}\left(\mathbb{R}^{d}\right)\setminus\left\{ 0\right\} $
satisfy $\widehat{\psi}\in\mathcal{D}\left(\mathcal{O}\right)$.

Let $\left(i_{n}\right)_{n\in\mathbb{N}}$ be an enumeration of $I$
(note that $I$ is countably infinite by Lemma \ref{lem:WellSpreadExistenz})
and define $U_{i_{n}}:=h_{i_{n}}U\setminus\bigcup_{m=1}^{n-1}h_{i_{m}}U$
for $n\in\mathbb{N}$. Then $\left(U_{i}\right)_{i\in I}$ is a measurable
partition of $H$ satisfying $U_{i}\subset h_{i}U$ for all $i\in I$.

Define $Q:=U^{-T}\left(\widehat{\psi}^{-1}\left(\mathbb{C}^{\ast}\right)\right)\subset\mathcal{O}$.
Then $Q\subset\mathcal{O}$ is open and precompact satisfying $\overline{Q}\subset\mathcal{O}$
and $\mathcal{O}=\bigcup_{i\in I}h_{i}^{-T}Q$, so that $\mathcal{Q}=\left(h_{i}^{-T}Q\right)_{i\in I}$
is a covering of $\mathcal{O}$ induced by $H$. Finally, $\left(\varphi_{i}\right)_{i\in I}:=\left(\frac{1}{C_{\psi}}\varphi_{U_{i}}\right)_{i\in I}$
defines a BAPU that is subordinate to this covering.\end{thm}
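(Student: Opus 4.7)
The plan is to verify, in sequence, the assertions of the theorem: (i) $(U_{i})_{i\in I}$ is a measurable partition of $H$ with $U_{i}\subset h_{i}U$; (ii) $Q$ is open and precompact with $\overline{Q}\subset\mathcal{O}$, and $\mathcal{Q}=(h_{i}^{-T}Q)_{i\in I}$ covers $\mathcal{O}$ (so $\mathcal{Q}$ is an induced covering); and (iii) $(\varphi_{i})_{i\in I}=(C_{\psi}^{-1}\varphi_{U_{i}})_{i\in I}$ satisfies the four conditions of Definition~\ref{def:BAPUDefinition}. Assertion (i) is routine bookkeeping: the $U_{i_{n}}$ are measurable as finite differences of measurable sets; pairwise disjointness and $U_{i_{n}}\subset h_{i_{n}}U$ hold by construction; and $\bigcup_{n}U_{i_{n}}=\bigcup_{n}h_{i_{n}}U=H$ by $U$-density of $(h_{i})_{i\in I}$.

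For (ii), $Q=\bigcup_{h\in U}h^{-T}(\widehat{\psi}^{-1}(\mathbb{C}^{\ast}))$ is a union of open subsets of $\mathbb{R}^{d}$, hence open. The inclusion $Q\subset\overline{U}^{-T}\cdot\text{supp}(\widehat{\psi})$ exhibits $Q$ as a subset of the continuous image of the compact set $\overline{U}\times\text{supp}(\widehat{\psi})$ under $(h,\xi)\mapsto h^{-T}\xi$, and this image lies in $\mathcal{O}=H^{T}\xi_{0}=H^{-T}\xi_{0}$ by invariance; hence $\overline{Q}$ is compact and contained in $\mathcal{O}$. To show $\mathcal{O}=\bigcup_{i}h_{i}^{-T}Q$, fix any $\eta_{0}\in\widehat{\psi}^{-1}(\mathbb{C}^{\ast})\subset\mathcal{O}$. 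Given $\xi\in\mathcal{O}$, transitivity of the $H^{-T}$-action on $\mathcal{O}$ produces $g\in H$ with $\xi=g^{-T}\eta_{0}$; writing $g=h_{i}u$ with $u\in U$ and using the identity $(h_{i}u)^{-T}=h_{i}^{-T}u^{-T}$ gives $\xi=h_{i}^{-T}(u^{-T}\eta_{0})\in h_{i}^{-T}Q$. Combined with (i), this establishes that $\mathcal{Q}$ is an induced covering of $\mathcal{O}$.

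It remains to verify the BAPU conditions for $(\varphi_{i})_{i\in I}$. Membership $\varphi_{i}\in\mathcal{D}(\mathcal{O})$ is Lemma~\ref{lem:BAPUKonstruktion}. For the partition-of-unity identity, apply Tonelli's theorem to the nonnegative integrand over the disjoint decomposition $H=\bigsqcup_{i\in I}U_{i}$ and invoke Lemma~\ref{lem:BAPUIstZerlegungDerEins} to conclude
\[
\sum_{i\in I}\varphi_{i}(\xi)=\frac{1}{C_{\psi}}\int_{H}|\widehat{\psi}(h^{T}\xi)|^{2}\,\text{d}h=1\qquad\text{for all }\xi\in\mathcal{O}.
\]
For the support condition, Lemma~\ref{lem:BAPUKonstruktion} gives $\varphi_{U_{i}}\equiv 0$ off $U_{i}^{-T}\cdot\widehat{\psi}^{-1}(\mathbb{C}^{\ast})$, and the chain $U_{i}^{-T}\cdot\widehat{\psi}^{-1}(\mathbb{C}^{\ast})\subset h_{i}^{-T}U^{-T}\cdot\widehat{\psi}^{-1}(\mathbb{C}^{\ast})=h_{i}^{-T}Q$---where the first inclusion uses $U_{i}\subset h_{i}U$ and $(h_{i}u)^{-T}=h_{i}^{-T}u^{-T}$---yields $\varphi_{i}\equiv 0$ on $\mathcal{O}\setminus h_{i}^{-T}Q$. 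Finally, the uniform bound $\|\mathcal{F}^{-1}\varphi_{i}\|_{L^{1}(\mathbb{R}^{d})}\leq C_{\psi}^{-1}\mu_{H}(U)\cdot\|\mathcal{F}^{-1}(|\widehat{\psi}|^{2})\|_{L^{1}(\mathbb{R}^{d})}$ comes directly from Lemma~\ref{lem:BAPUFourierTransformation} combined with $\mu_{H}(U_{i})\leq\mu_{H}(h_{i}U)=\mu_{H}(U)$ by left-invariance of Haar measure.

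There is no deep obstacle: once the identities $(h_{i}u)^{-T}=h_{i}^{-T}u^{-T}$ and $H^{T}=H^{-T}$ are used consistently to shuffle transposed inverses past one another, every step is either elementary set theory or a direct appeal to one of the three preparatory Lemmata~\ref{lem:BAPUKonstruktion}, \ref{lem:BAPUFourierTransformation}, and~\ref{lem:BAPUIstZerlegungDerEins}. The main point requiring care is the bookkeeping of the two different $H$-actions on $\mathbb{R}^{d}$ appearing in the argument---namely $h\cdot\xi=h^{T}\xi$ (in the definition of $\mathcal{O}$ and in the integrand defining $\varphi_{U}$) and $h\cdot\xi=h^{-T}\xi$ (in the definition of $Q$ and of the $Q_{i}$)---together with the correct order of factors, so that the support inclusion for $\varphi_{U_{i}}$ lands in $h_{i}^{-T}Q$ rather than in some rearrangement thereof.
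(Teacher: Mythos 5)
Your proof is correct and follows essentially the same route as the paper's: the same reduction to Lemmata~\ref{lem:BAPUKonstruktion}, \ref{lem:BAPUFourierTransformation}, and \ref{lem:BAPUIstZerlegungDerEins}, the same use of $(h_iu)^{-T}=h_i^{-T}u^{-T}$ and $H^T=H^{-T}$ to establish the covering and support conditions, and the same $\sigma$-additivity argument over the disjoint partition $(U_i)_i$ for the partition-of-unity identity. Your covering argument is phrased pointwise (pick $\xi\in\mathcal{O}$, factor $g=h_iu$) while the paper phrases it as one global set inclusion starting from a fixed $\xi_1=h^T\xi_0$ in the support of $\widehat\psi$, but these are the same computation.
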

\begin{proof}
It is easy to see that $\left(U_{i}\right)_{i\in I}$ forms a measurable
partition of $H$. Note that 
\[
Q=\bigcup_{h\in U}h^{-T}\left(\widehat{\psi}^{-1}\left(\mathbb{C}^{\ast}\right)\right)\subset\mathcal{O}
\]
is open as a union of open sets. Furthermore, we have $\overline{Q}\subset\overline{U}^{-T}\text{supp}\left(\smash{\widehat{\psi}}\right)\subset\mathcal{O}$,
so that $\overline{Q}\subset\mathcal{O}$ is compact. Because of $\psi\not\equiv0$
we also have $\widehat{\psi}\not\equiv0$, so that there exists some
$\xi_{1}\in\widehat{\psi}^{-1}\left(\mathbb{C}^{\ast}\right)\subset\mathcal{O}$.
We have $\xi_{1}=h^{T}\xi_{0}$ for some $h\in H$. This implies
\[
\bigcup_{i\in I}h_{i}^{-T}Q\supset\bigcup_{i\in I}\left(h_{i}U\right)^{-T}h^{T}\xi_{0}=H^{T}\xi_{0}=\mathcal{O}.
\]
By Lemma \ref{lem:BAPUKonstruktion} we have $\varphi_{U_{i}}\in\mathcal{D}\left(\mathcal{O}\right)$
with $\varphi_{U_{i}}\left(x\right)=0$ for 
\[
x\in\mathcal{O}\setminus\left(U_{i}^{-T}\left(\widehat{\psi}^{-1}\left(\mathbb{C}^{\ast}\right)\right)\right)\supset\mathcal{O}\setminus\left(\left(h_{i}U\right)^{-T}\left(\widehat{\psi}^{-1}\left(\mathbb{C}^{\ast}\right)\right)\right)=\mathcal{O}\setminus\left(h_{i}^{-T}Q\right).
\]
Furthermore, Lemma \ref{lem:BAPUFourierTransformation} yields 
\begin{eqnarray*}
\left\Vert \mathcal{F}^{-1}\varphi_{i}\right\Vert _{L^{1}\left(\mathbb{R}^{d}\right)} & = & \frac{1}{C_{\psi}}\left\Vert \mathcal{F}^{-1}\varphi_{U_{i}}\right\Vert _{L^{1}\left(\mathbb{R}^{d}\right)}\leq\mu_{H}\left(U_{i}\right)\cdot\frac{\left\Vert \gamma\right\Vert _{L^{1}\left(\mathbb{R}^{d}\right)}}{C_{\psi}}\\
 & \leq & \mu_{H}\left(h_{i}\overline{U}\right)\cdot\frac{\left\Vert \gamma\right\Vert _{L^{1}\left(\mathbb{R}^{d}\right)}}{C_{\psi}}=\mu_{H}\left(\overline{U}\right)\cdot\frac{\left\Vert \gamma\right\Vert _{L^{1}\left(\mathbb{R}^{d}\right)}}{C_{\psi}}=:C
\end{eqnarray*}
for $\gamma:=\mathcal{F}^{-1}\left(\smash{\left|\smash{\widehat{\psi}}\right|^{2}}\right)\in\mathcal{S}\left(\mathbb{R}^{d}\right)$.

Finally, Lemma \ref{lem:BAPUIstZerlegungDerEins} and the definition
of $\varphi_{U}$ (equation (\ref{eq:BAPUIdee})) show that for $\xi\in\mathcal{O}$
we have 
\begin{eqnarray*}
\sum_{i\in I}\varphi_{i}\left(\xi\right) & = & \frac{1}{C_{\psi}}\cdot\sum_{n=1}^{\infty}\varphi_{U_{i_{n}}}\left(\xi\right)\\
 & = & \frac{1}{C_{\psi}}\cdot\sum_{n=1}^{\infty}\int_{U_{i_{n}}}\left|\widehat{\psi}\left(h^{T}\xi\right)\right|^{2}\,\text{d}h\\
 & = & \frac{1}{C_{\psi}}\cdot\int_{\biguplus_{n\in\mathbb{N}}U_{i_{n}}}\left|\widehat{\psi}\left(h^{T}\xi\right)\right|^{2}\,\text{d}h\\
 & \overset{\text{Lemma }\ref{lem:BAPUIstZerlegungDerEins}}{=} & 1,
\end{eqnarray*}
because $\left(U_{i_{n}}\right)_{n\in\mathbb{N}}$ is a partition
of $H$. Thus, $\left(\varphi_{i}\right)_{i\in I}$ is recognized
as a $\mathcal{Q}$-BAPU.
\end{proof}

\section{Continuity of the Fourier transform from Coorbit spaces into Decomposition
spaces}

\label{sec:CoorbitAsDecomposition}In this section we will show that
the Fourier transform on $\text{Co}\left(L_{v}^{p,q}\right)$ as defined
in Corollary \ref{cor:FourierTrafoAufFeichtingerReservoir} is well-defined
and continuous as a map into the decomposition space $\mathcal{D}\left(\mathcal{Q},L^{p},\ell_{u}^{q}\right)$,
where $\mathcal{Q}$ is a covering of $\mathcal{O}$ induced by $H$
and $u$ is the transplant of a suitable weight on $H$.

We will first show this for $f\in\text{Co}\left(L_{v}^{p,q}\right)\cap\mathcal{S}\left(\mathbb{R}^{d}\right)$
and then use a density result (namely the \textbf{Atomic Decomposition}
in $\text{Co}\left(L_{v}^{p,q}\right)$, cf. \cite[Theorem 6.1]{FeichtingerCoorbit1})
to establish the result in the general case.

We start by explicitly computing the localizations 
\[
\mathcal{F}^{-1}\left(\varphi_{V}\cdot\widehat{f}\right)
\]
for an arbitrary measurable, precompact set $V\subset H$ and $f\in\mathcal{S}\left(\mathbb{R}^{d}\right)$
in terms of the wavelet transform $W_{\psi}f$. As the norm on $\text{Co}\left(L_{v}^{p,q}\right)$
is defined in terms of $W_{\psi}f$, this is the essential step in
our proof.

In the ensuing calculations, we will use the following elementary
result: 
\begin{lem}
\label{lem:FaltungDilatation}Let $f,g\in L^{1}\left(\mathbb{R}^{d}\right)$.
For $h\in\text{GL}\left(\mathbb{R}^{d}\right)$ and $x\in\mathbb{R}^{d}$
we have 
\[
\left(D_{h}\left(f\ast g\right)\right)\left(x\right)=\left|\det\left(h\right)\right|\cdot\left(\left(D_{h}f\right)\ast\left(D_{h}g\right)\right)\left(x\right),
\]
whenever either side of the equation is defined.
\end{lem}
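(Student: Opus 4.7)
The plan is to prove this elementary identity by unfolding the definitions and applying a single linear change of variables. First I would compute the right-hand side directly:
\[
((D_h f)*(D_h g))(x) = \int_{\mathbb{R}^d} (D_h f)(y)\cdot (D_h g)(x-y)\,\mathrm{d}y = \int_{\mathbb{R}^d} f(h^T y)\cdot g(h^T(x-y))\,\mathrm{d}y.
\]
Then I would perform the substitution $z = h^T y$, which has Jacobian $|\det(h^T)|=|\det(h)|$, so $\mathrm{d}y = |\det(h)|^{-1}\,\mathrm{d}z$. This yields
\[
((D_h f)*(D_h g))(x) = |\det(h)|^{-1}\int_{\mathbb{R}^d} f(z)\cdot g(h^T x - z)\,\mathrm{d}z = |\det(h)|^{-1}\cdot (f*g)(h^T x),
\]
and the right-hand factor is precisely $|\det(h)|^{-1}\cdot (D_h(f*g))(x)$. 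Rearranging gives the claimed identity.

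For the ``whenever either side is defined'' clause, the change-of-variables formula applies equally well to $|f|$ and $|g|$, so the integral defining $(f*g)(h^T x)$ converges absolutely if and only if the integral defining $((D_h f)*(D_h g))(x)$ does; hence the equality of the (possibly pointwise undefined) convolutions holds at exactly the same points $x$. The hypothesis $f,g\in L^1(\mathbb{R}^d)$ ensures both convolutions are defined for almost every $x$, and the identity then holds for almost every $x$, with the stated equality holding pointwise on the common set of definition. No real obstacle is expected; the only point to watch is keeping track of the transpose in $D_h$ (so that the Jacobian is $|\det(h)|$, matching the factor in the statement).
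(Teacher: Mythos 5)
Your proof is correct, and it is the straightforward change-of-variables computation that anyone would use here; the paper itself omits the proof, calling the lemma an ``elementary result.'' Your remark about the transpose in $D_h$ producing the Jacobian $|\det(h^T)| = |\det(h)|$ is exactly the point to keep straight, and your observation that the same substitution applied to $|f|$ and $|g|$ shows the two sides are defined at the same points is a clean way to handle the ``whenever either side is defined'' clause.
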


\begin{lem}
\label{lem:LokalisierungenAusgedruecktDurchWaveletTransformation}Let
$f,\psi\in\mathcal{S}\left(\mathbb{R}^{d}\right)$ where $\widehat{\psi}$
has compact support in $\mathcal{O}$. Furthermore assume that $V\subset H$
is precompact and measurable. For $x\in\mathbb{R}^{d}$ we have 
\[
\left(\mathcal{F}^{-1}\left(\varphi_{V}\cdot\widehat{f}\right)\right)\left(x\right)=\int_{V}\left|\det\left(h\right)\right|^{-3/2}\cdot\left(\left(W_{\psi}f\right)\left(\cdot,h\right)\ast D_{h^{-T}}\psi\right)\left(x\right)\,{\rm d}h,
\]
with $\varphi_{V}$ defined in equation \eqref{eq:BAPUIdee}.\end{lem}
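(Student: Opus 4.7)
The plan is to verify the identity by applying the Fourier transform to both sides: since both sides will turn out to be Schwartz functions, it suffices to compare their Fourier transforms. First I would note that for each fixed $h\in V$, formula~\eqref{eq:WavletTransformationDarstellung} gives $(W_{\psi}f)(\cdot,h) = |\det(h)|^{1/2}\cdot\mathcal{F}^{-1}(\widehat{f}\cdot D_{h}\overline{\widehat{\psi}})$. Since $\widehat{f}\in\mathcal{S}(\mathbb{R}^{d})$ and $D_{h}\overline{\widehat{\psi}}\in\mathcal{D}(\mathcal{O})$, the product lies in $\mathcal{D}(\mathcal{O})$, so $(W_{\psi}f)(\cdot,h)$ is Schwartz. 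Likewise $D_{h^{-T}}\psi\in\mathcal{S}(\mathbb{R}^{d})$, so the convolution $(W_{\psi}f)(\cdot,h)\ast D_{h^{-T}}\psi$ is well-defined and Schwartz, and the convolution theorem applies.

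The main computation then has two ingredients. On the one hand, taking Fourier transforms in \eqref{eq:WavletTransformationDarstellung} yields
\[
\mathcal{F}\bigl[(W_{\psi}f)(\cdot,h)\bigr](\xi) = |\det(h)|^{1/2}\cdot\widehat{f}(\xi)\cdot\overline{\widehat{\psi}(h^{T}\xi)}.
\]
On the other hand, a direct change of variables (essentially the same computation as for \eqref{eq:QausiRegulaereDarstellungAufFourierSeite}) gives $\mathcal{F}(D_{h^{-T}}\psi)(\xi) = |\det(h)|\cdot\widehat{\psi}(h^{T}\xi)$. Combining these two via the convolution theorem and the identity $\overline{\widehat{\psi}(h^{T}\xi)}\cdot\widehat{\psi}(h^{T}\xi) = |\widehat{\psi}(h^{T}\xi)|^{2}$, I obtain, for every $\xi\in\mathbb{R}^{d}$,
\[
\mathcal{F}\bigl[|\det(h)|^{-3/2}\cdot(W_{\psi}f)(\cdot,h)\ast D_{h^{-T}}\psi\bigr](\xi) = \widehat{f}(\xi)\cdot|\widehat{\psi}(h^{T}\xi)|^{2}.
\]

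Finally, I integrate this identity over $h\in V$. By the definition \eqref{eq:BAPUIdee} of $\varphi_{V}$, the right-hand side integrates to $\widehat{f}(\xi)\cdot\varphi_{V}(\xi)$. The main (but routine) technical point is justifying the interchange of the Fourier transform with integration over $V$; this follows from Fubini applied to the double integral, using that $V$ is precompact in $H$, that the map $h\mapsto\widehat{f}\cdot|\widehat{\psi}(h^{T}\cdot)|^{2}$ takes values in $\mathcal{D}(\mathcal{O})$ with supports contained in the fixed compact set $\overline{V}^{-T}\cdot\mathrm{supp}(\widehat{\psi})$ (cf.\ Lemma~\ref{lem:BAPUKonstruktion}), and that $\widehat{f}$ is Schwartz so the resulting integrand is dominated uniformly in $h\in\overline{V}$. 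Applying $\mathcal{F}^{-1}$ to both sides of the resulting identity and exchanging $\mathcal{F}^{-1}$ with the integral over $V$ by the same Fubini argument yields the claim. I expect this Fubini interchange to be the only minor obstacle; everything else reduces to the standard interplay between convolution, dilation, and the Fourier transform already implicit in \eqref{eq:WavletTransformationDarstellung}.
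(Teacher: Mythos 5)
Your proof is correct, and it takes a genuinely different route from the paper's. You verify the identity on the Fourier side: you compute $\mathcal{F}\bigl[(W_{\psi}f)(\cdot,h)\bigr]$ from \eqref{eq:WavletTransformationDarstellung}, $\mathcal{F}(D_{h^{-T}}\psi)$ by a change of variables, combine them via the convolution theorem to obtain $\widehat{f}\cdot|\widehat{\psi}(h^{T}\cdot)|^{2}$ for each fixed $h$, integrate over $V$ to recover $\widehat{f}\cdot\varphi_{V}$, and finally invert. The paper instead works entirely on the spatial side: it sets $\gamma=\mathcal{F}^{-1}\bigl(|\widehat{\psi}|^{2}\bigr)=\psi\ast\psi^{\ast}$, rewrites $f\ast D_{h^{-T}}\gamma$ via Lemma~\ref{lem:FaltungDilatation} as $|\det(h)|^{-1/2}\,(W_{\psi}f)(\cdot,h)\ast D_{h^{-T}}\psi$, and then starts directly from the already-established representation $(\mathcal{F}^{-1}\varphi_{V})(x)=\int_{V}\gamma(h^{-1}x)/|\det(h)|\,\mathrm{d}h$ of Lemma~\ref{lem:BAPUFourierTransformation}, so that only one Fubini interchange is needed and no Fourier inversion step appears. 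Both arguments are sound and of comparable length; yours makes the algebraic cancellation $|\det(h)|^{-3/2}\cdot|\det(h)|^{1/2}\cdot|\det(h)|=1$ transparent and leans on the convolution theorem, while the paper's stays in physical space and reuses the kernel $\gamma$ that it has already set up, which is the form more directly amenable to the $L^{p}$ estimates in the next lemma. Your Fubini justification (compact supports uniform in $h\in\overline{V}$, Schwartz decay of $\widehat{f}$, finite Haar measure of $\overline{V}$) is adequate; just note that the final pointwise (rather than merely a.e.) equality follows because both sides are continuous functions of $x$.
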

\begin{proof}
    Choose $\gamma:=\mathcal{F}^{-1}\left(\smash{\left|\smash{\widehat{\psi}}\right|^{2}}\right)\in\mathcal{S}\left(\mathbb{R}^{d}\right)$
as in Lemma \ref{lem:BAPUFourierTransformation}. By the convolution
theorem we have 
\[
\gamma=\mathcal{F}^{-1}\left(\widehat{\psi}\cdot\overline{\widehat{\psi}}\right)=\left(\mathcal{F}^{-1}\widehat{\psi}\right)\ast\left(\mathcal{F}^{-1}\overline{\widehat{\psi}}\right)=\psi\ast\psi^{\ast}
\]
with $\psi^{\ast}:\mathbb{R}^{d}\rightarrow\mathbb{C},x\mapsto\overline{\psi\left(-x\right)}$.

Using Lemma \ref{lem:FaltungDilatation} together with $\left(f\ast D_{h^{-T}}\psi^{\ast}\right)\left(x\right)=\left|\det\left(h\right)\right|^{1/2}\cdot\left(W_{\psi}f\right)\left(x,h\right)$
and basic properties of convolution products, we obtain
\[
f\ast D_{h^{-T}}\gamma=\left|\det\left(h\right)\right|^{-1/2}\cdot\left(\left(W_{\psi}f\right)\left(\cdot,h\right)\right)\ast\left(D_{h^{-T}}\psi\right).
\]
 Now Lemma \ref{lem:BAPUFourierTransformation} yields the representation
\[
\left(\mathcal{F}^{-1}\varphi_{V}\right)\left(x\right)=\int_{V}\frac{\gamma\left(h^{-1}x\right)}{\left|\det\left(h\right)\right|}\,\text{d}h\qquad\text{for }x\in\mathbb{R}^{d}
\]
for the inverse Fourier transform of $\varphi_{V}$. Putting everything
together and using the convolution theorem and Fubini's theorem, we
see
\begin{align*}
\left(\mathcal{F}^{-1}\left(\varphi_{V}\cdot\widehat{f}\right)\right)\left(x\right) & =\left(\left(\mathcal{F}^{-1}\varphi_{V}\right)\ast\mathcal{F}^{-1}\widehat{f}\right)\left(x\right)\\
 & =\int_{\mathbb{R}^{d}}f\left(y\right)\cdot\left(\mathcal{F}^{-1}\varphi_{V}\right)\left(x-y\right)\,{\rm d}y\\
 & =\int_{V}\int_{\mathbb{R}^{d}}f\left(y\right)\cdot\frac{\gamma\left(h^{-1}\left(x-y\right)\right)}{\left|\det\left(h\right)\right|}\,{\rm d}y\,{\rm d}h\\
 & =\int_{V}\left|\det\left(h\right)\right|^{-1}\left(f\ast D_{h^{-T}}\gamma\right)\left(x\right)\,{\rm d}h\\
 & =\int_{V}\left|\det\left(h\right)\right|^{-3/2}\left(\left(W_{\psi}f\right)\left(\cdot,h\right)\ast D_{h^{-T}}\psi\right)\left(x\right)\,{\rm d}h.
\end{align*}
Here we used Fubini's theorem, as justified by
\begin{align*}
 & \phantom{\leq \,\,\,}\int_{V}\left|\det\left(h\right)\right|^{-1}\cdot\int_{\mathbb{R}^{d}}\left|f\left(y\right)\right|\cdot\left|\gamma\left(h^{-1}\left(x-y\right)\right)\right|\,\text{d}y\,\text{d}h\\
 & \leq\int_{V}\left|\det\left(h\right)\right|^{-1}\cdot\left\Vert f\right\Vert _{L^{1}\left(\mathbb{R}^{d}\right)}\cdot\left\Vert \gamma\right\Vert _{\sup}\,\text{d}h\\
 & \leq\mu_{H}\left(\overline{V}\right)\cdot\left\Vert f\right\Vert _{L^{1}\left(\mathbb{R}^{d}\right)}\cdot\left\Vert \gamma\right\Vert _{\sup}\cdot\max_{h\in\overline{V}^{-1}}\left|\det\left(h\right)\right|<\infty~.\qedhere
\end{align*}

\end{proof}
With this representation of the localized ``pieces'' of $\widehat{f}$,
we are now ready to prove the continuity of $\mathcal{F}:\mathcal{S}\left(\mathbb{R}^{d}\right)\cap\text{Co}\left(L_{v}^{p,q}\right)\rightarrow\mathcal{D}\left(\mathcal{Q},L^{p},\ell_{u}^{q}\right)$,
where $u$ is a transplant of $h\mapsto\left|\det\left(h^{-1}\right)\right|^{\frac{1}{2}-\frac{1}{q}}\cdot v\left(h^{-1}\right)$. 
\begin{lem}
\label{lem:FouriertransformationStetigAufCoorbitGeschnittenSchwartz}Let
$p,q\in\left[1,\infty\right]$. Then 
\[
v':H\rightarrow\left(0,\infty\right),h\mapsto\left|\det\left(h^{-1}\right)\right|^{\frac{1}{2}-\frac{1}{q}}\cdot v\left(h^{-1}\right)
\]
is moderate with respect to the measurable, locally bounded, submultiplicative
weight 
\[
v_{0}':H\rightarrow\left(0,\infty\right),h\mapsto\left|\det\left(h^{-1}\right)\right|^{\frac{1}{2}-\frac{1}{q}}\cdot v_{0}\left(h^{-1}\right).
\]

Choose an arbitrary $\psi\in\mathcal{S}\left(\mathbb{R}^{d}\right)\setminus\left\{ 0\right\} $
with $\widehat{\psi}\in\mathcal{D}\left(\mathcal{O}\right)$ and let
$\left(h_{i}\right)_{i\in I}$ be well-spread in $H$ with $H=\bigcup_{i\in I}h_{i}U$
for some precompact unit-neighborhood $U\subset H$.

Let $Q:=U^{-T}\left(\widehat{\psi}^{-1}\left(\mathbb{C}^{\ast}\right)\right)$
and $\mathcal{Q}=\left(h_{i}^{-T}Q\right)_{i\in I}$ be the corresponding
induced covering of $\mathcal{O}$ (cf. Theorem \ref{thm:InduzierteUeberdeckungKonstruktion}).
Let $u:\mathcal{O}\rightarrow\left(0,\infty\right)$ be a transplant
of $v'$ onto $\mathcal{O}$. 

Then there is a constant $C>0$ satisfying 
\[
\left\Vert \widehat{f}\right\Vert _{\mathcal{D}\left(\mathcal{Q},L^{p},\ell_{u}^{q}\right)}\leq C\cdot\left\Vert f\right\Vert _{\text{Co}\left(L_{v}^{p,q}\right)}<\infty\text{ for all }f\in\mathcal{S}\left(\mathbb{R}^{d}\right)\cap\text{Co}\left(L_{v}^{p,q}\right).
\]
\end{lem}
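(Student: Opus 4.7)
The moderateness of $v'$ with respect to $v_0'$ is routine: one checks submultiplicativity of $v_0'$ directly (using multiplicativity of $|\det|$ on $H$ and submultiplicativity of $v_0$ together with the fact that $h \mapsto h^{-1}$ reverses order), and the $v_0'$-moderateness of $v'$ follows from the $v_0$-moderateness of $v$ by the same mechanism. The real content is the norm estimate, which I would approach as follows. By Theorem \ref{thm:SpezialBAPUZusammenfassung}, I can choose a specific BAPU $(\varphi_i)_{i \in I} = (C_{\psi}^{-1} \varphi_{U_i})_{i \in I}$ subordinate to the induced covering $\mathcal{Q}$, where $(U_i)_{i \in I}$ is a measurable partition of $H$ with $U_i \subset h_i U$. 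Since $f \in \mathcal{S}(\mathbb{R}^d)$, Lemma \ref{lem:LokalisierungenAusgedruecktDurchWaveletTransformation} gives the pointwise formula
\[
\mathcal{F}^{-1}(\varphi_i \widehat{f})(x) = \frac{1}{C_\psi} \int_{U_i} |\det(h)|^{-3/2} \bigl( (W_\psi f)(\cdot,h) \ast D_{h^{-T}} \psi \bigr)(x)\, {\rm d}h,
\]
which is precisely what allows the coorbit norm to enter the decomposition estimate.

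To bound $\|\mathcal{F}^{-1}(\varphi_i \widehat{f})\|_{L^p}$, I would apply Minkowski's integral inequality to pull the $L^p$-norm inside the $dh$-integral, then use Young's convolution inequality $\|g \ast k\|_{L^p} \leq \|g\|_{L^p} \|k\|_{L^1}$, together with the change-of-variables identity $\|D_{h^{-T}}\psi\|_{L^1} = |\det(h)| \cdot \|\psi\|_{L^1}$. This yields
\[
\|\mathcal{F}^{-1}(\varphi_i \widehat{f})\|_{L^p} \leq \frac{\|\psi\|_{L^1}}{C_\psi} \int_{U_i} |\det(h)|^{-1/2} \|(W_\psi f)(\cdot,h)\|_{L^p}\, {\rm d}h.
\]
Next, I would choose $\xi_0 \in Q$ (which can be arranged by picking $\xi_0 \in \widehat{\psi}^{-1}(\mathbb{C}^*) \subset \mathcal{O}$ and ensuring $1_H \in U$; by Lemma \ref{lem:GewichtsTransplantation} the choice of $\xi_0$ only affects $u$ up to equivalence) and discretize $u$ as $u_i = v'(h_i^{-1}) = |\det(h_i)|^{1/2 - 1/q} v(h_i)$. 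Lemma \ref{lem:PseudoGewichtVerhaeltSichGut} applied to the precompact set $U$ gives uniform constants controlling $v(h)/v(h_i)$ and $|\det(h)|/|\det(h_i)|$ for $h \in U_i \subset h_i U$, so the discrete prefactors can be absorbed into the integrand, giving
\[
u_i \|\mathcal{F}^{-1}(\varphi_i \widehat{f})\|_{L^p} \leq C \int_{U_i} |\det(h)|^{-1/q} v(h) \|(W_\psi f)(\cdot,h)\|_{L^p}\, {\rm d}h.
\]

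For $1 \leq q < \infty$, I would apply Hölder's inequality on each $U_i$ with the bound $\mu_H(U_i) \leq \mu_H(\overline{U})$, then sum over $i$ using that the $U_i$ partition $H$, obtaining
\[
\sum_i \bigl(u_i \|\mathcal{F}^{-1}(\varphi_i \widehat{f})\|_{L^p}\bigr)^q \leq C^q \mu_H(\overline{U})^{q-1} \int_H \frac{v(h)^q}{|\det(h)|} \|(W_\psi f)(\cdot,h)\|_{L^p}^q\, {\rm d}h,
\]
and the right-hand integral is precisely $\|W_\psi f\|_{L_v^{p,q}}^q = \|f\|_{\text{Co}(L_v^{p,q})}^q$. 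The case $q = \infty$ (where $1/2 - 1/q = 1/2$ and $1/q = 0$) is handled analogously but more simply: the integrand on $U_i$ is bounded a.e.\ by $\|W_\psi f\|_{L_v^{p,\infty}}$, and integration against $\mu_H(U_i) \leq \mu_H(\overline{U})$ followed by $\sup_i$ gives the result. The main obstacle is not any deep technical difficulty but rather the precise bookkeeping of determinant exponents: the power $\tfrac{1}{2} - \tfrac{1}{q}$ appearing in $v'$ is dictated by the combination of the $|\det(h)|^{-3/2}$ factor in the localization formula, the $|\det(h)|$ factor from $\|D_{h^{-T}}\psi\|_{L^1}$, and the $|\det(h)|^{-1}$ factor hidden in the Haar measure on $G$; these must cancel to reproduce exactly the weighting $v(h)^q / |\det(h)|$ of the $L_v^{p,q}(G)$-norm, and any error would break the entire argument.
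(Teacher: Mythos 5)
Your proposal is correct and follows essentially the same strategy as the paper's proof: choose the special BAPU $(C_\psi^{-1}\varphi_{U_i})_{i\in I}$ from Theorem \ref{thm:SpezialBAPUZusammenfassung}, invoke the localization formula of Lemma \ref{lem:LokalisierungenAusgedruecktDurchWaveletTransformation}, apply Minkowski's integral inequality and Young's inequality to reduce to a weighted $L^1$-average of $h\mapsto\|(W_\psi f)(\cdot,h)\|_{L^p}$ over $U_i\subset h_iU$, absorb $u_i$ into the integrand via moderateness of $v$ and boundedness of $|\det|$ on $\overline{U}$, and conclude by Hölder/Jensen on each $U_i$ (these are the same estimate here) combined with the fact that $(U_i)_i$ partitions $H$. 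The only cosmetic deviations are that the paper phrases the power-$q$ step as Jensen's inequality applied to $\fint_{U_i}$ rather than Hölder, and absorbs the prefactors via explicit constants $C_3,C_4,C_5$ from the $v_0$-moderateness rather than citing Lemma \ref{lem:PseudoGewichtVerhaeltSichGut}; these are notational, not substantive, differences.
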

\begin{rem*}
In the above setting, for a suitable choice of $u$, one possible
discretization of $u$ with respect to $\mathcal{Q}$ is given by
\[
u_{i}=\left|\det\left(h_{i}\right)\right|^{\frac{1}{2}-\frac{1}{q}}\cdot v\left(h_{i}\right)
\]
and any (different) choice yields a weight on $I$ that is equivalent
to $\left(u_{i}\right)_{i\in I}$.\end{rem*}
\begin{proof}
Lemma \ref{lem:SubmultiplikativAbschlussEigenschaften} shows that
$v_{0}^{\vee}:H\rightarrow\left(0,\infty\right),h\mapsto v_{0}\left(h^{-1}\right)$
and hence also $v_{0}'$ are submultiplicative. It is easy to see
that $v^{\vee}$ is moderate with respect to $v_{0}^{\vee}$. This
implies that $v'$ is moderate with respect to $v_{0}'$. It is clear
that with $v_{0}$ also $v_{0}'$ is locally bounded and measurable.

Choose $\left(\varphi_{i}\right)_{i\in I}=\left(\frac{1}{C_{\psi}}\varphi_{U_{i}}\right)_{i\in I}$
as in Theorem \ref{thm:SpezialBAPUZusammenfassung}. By that theorem,
$\left(\varphi_{i}\right)_{i\in I}$ is a $\mathcal{Q}$-BAPU.

Let $f\in\mathcal{S}\left(\mathbb{R}^{d}\right)\cap\text{Co}\left(L_{v}^{p,q}\right)$.
We use Lemma \ref{lem:LokalisierungenAusgedruecktDurchWaveletTransformation},
Minkowski's inequality for integrals (cf. \cite[Theorem 6.19]{FollandRA})
and Young's inequality to calculate, for arbitrary precompact and
measurable $V\subset H$: 
\begin{eqnarray}
\left\Vert \mathcal{F}^{-1}\left(\varphi_{V}\cdot\widehat{f}\right)\right\Vert _{L^{p}\left(\mathbb{R}^{d}\right)}\!\!\!\! & \overset{\text{Lemma }\ref{lem:LokalisierungenAusgedruecktDurchWaveletTransformation}}{\leq} & \int_{V}\left|\det\left(h\right)\right|^{-3/2}\left\Vert \left(W_{\psi}f\right)\left(\cdot,h\right)\ast D_{h^{-T}}\psi\right\Vert _{L^{p}\left(\mathbb{R}^{d}\right)}\,\text{d}h\nonumber \\
 & \overset{\text{Young}}{\leq} & \int_{V}\left|\det\left(h\right)\right|^{-3/2}\left\Vert D_{h^{-T}}\psi\right\Vert _{L^{1}\left(\mathbb{R}^{d}\right)}\cdot\left\Vert \left(W_{\psi}f\right)\left(\cdot,h\right)\right\Vert _{L^{p}\left(\mathbb{R}^{d}\right)}\,\text{d}h\nonumber \\
 & = & \left\Vert \psi\right\Vert _{L^{1}\left(\mathbb{R}^{d}\right)}\cdot\int_{V}\left|\det\left(h\right)\right|^{-1/2}\left\Vert \left(W_{\psi}f\right)\left(\cdot,h\right)\right\Vert _{L^{p}\left(\mathbb{R}^{d}\right)}\,\text{d}h\label{eq:LokalisierteFourierTrafoGeminkowskit}\\
 & = & \left\Vert \psi\right\Vert _{L^{1}\left(\mathbb{R}^{d}\right)}\cdot\mu_{H}\left(V\right)\cdot\fint_{V}\left|\det\left(h\right)\right|^{-1/2}\left\Vert \left(W_{\psi}f\right)\left(\cdot,h\right)\right\Vert _{L^{p}\left(\mathbb{R}^{d}\right)}\,\text{d}h,\nonumber 
\end{eqnarray}
where we assumed $\mu_{H}\left(V\right)>0$ in the last step.

For $i\in I$ choose $V=U_{i}\subset h_{i}U$ (cf. Theorem \ref{thm:SpezialBAPUZusammenfassung})
and assume $\mu_{H}\left(U_{i}\right)>0$. In the case $q\in\left[1,\infty\right)$,
Jensen's inequality yields (by convexity of $\mathbb{R}\rightarrow\mathbb{R}_{+},x\mapsto\left|x\right|^{q}$)
the estimate
\begin{align}
 & \phantom{\leq \,\,\,}\left\Vert \mathcal{F}^{-1}\left(\varphi_{U_{i}}\cdot\widehat{f}\right)\right\Vert _{L^{p}\left(\mathbb{R}^{d}\right)}^{q}\nonumber \\
 & \leq\left(\left\Vert \psi\right\Vert _{L^{1}\left(\mathbb{R}^{d}\right)}\cdot\mu_{H}\left(U_{i}\right)\right)^{q}\cdot\fint_{U_{i}}\left(\left|\det\left(h\right)\right|^{-1/2}\cdot\left\Vert \left(W_{\psi}f\right)\left(\cdot,h\right)\right\Vert _{L^{p}\left(\mathbb{R}^{d}\right)}\right)^{q}\,\text{d}h\nonumber \\
 & \leq\left\Vert \psi\right\Vert _{L^{1}\left(\mathbb{R}^{d}\right)}^{q}\cdot\left(\mu_{H}\left(\overline{U}\right)\right)^{q-1}\cdot\int_{U_{i}}\left(\left|\det\left(h\right)\right|^{\frac{1}{q}-\frac{1}{2}}\cdot\left\Vert \left(W_{\psi}f\right)\left(\cdot,h\right)\right\Vert _{L^{p}\left(\mathbb{R}^{d}\right)}\right)^{q}\,\frac{\text{d}h}{\left|\det\left(h\right)\right|},\qquad\label{eq:LokalisierungGejensent}
\end{align}
where we used $\mu_{H}\left(U_{i}\right)\leq\mu_{H}\left(h_{i}U\right)\leq\mu_{H}\left(\overline{U}\right)$
and $q-1\geq0$ in the last step. Note that the above estimate is
trivial in the case $\mu_{H}\left(U_{i}\right)=0$.

For $h\in U_{i}\subset h_{i}\overline{U}$ we now have $h=h_{i}u$
for some $u\in\overline{U}$. With $C_{1}:=\min_{k\in\overline{U}}\left|\det\left(k\right)\right|$
and $C_{2}:=\max_{k\in\overline{U}}\left|\det\left(k\right)\right|$,
we thus get 
\[
\frac{\left|\det\left(h\right)\right|}{\left|\det\left(h_{i}\right)\right|}=\left|\det\left(u\right)\right|\in\left[C_{1},C_{2}\right].
\]
As the map $\mathbb{R}_{+}\rightarrow\mathbb{R}_{+},x\mapsto x^{\frac{1}{q}-\frac{1}{2}}$
is monotonic (increasing for $q\leq2$ and decreasing for $q\geq2$),
we derive 
\[
\frac{\left|\det\left(h\right)\right|^{\frac{1}{q}-\frac{1}{2}}}{\left|\det\left(h_{i}\right)\right|^{\frac{1}{q}-\frac{1}{2}}}\in\left[\min\left\{ C_{1}^{\frac{1}{q}-\frac{1}{2}},C_{2}^{\frac{1}{q}-\frac{1}{2}}\right\} ,\max\left\{ C_{1}^{\frac{1}{q}-\frac{1}{2}},C_{2}^{\frac{1}{q}-\frac{1}{2}}\right\} \right]=:\left[C_{3},C_{4}\right].
\]
Now let $C_{5}:=v_{0}\left(1_{H}\right)\cdot\sup_{k\in\overline{U}^{-1}}v_{0}\left(k\right)$.
Then we have 
\[
v\left(h_{i}\right)=v\left(1_{H}hu^{-1}\right)\leq v_{0}\left(1_{H}\right)\cdot v\left(h\right)\cdot v_{0}\left(u^{-1}\right)\leq C_{5}\cdot v\left(h\right)
\]
for all $h=h_{i}u\in U_{i}$.

Putting all this together and setting $C_{6}:=\left\Vert \psi\right\Vert _{L^{1}\left(\mathbb{R}^{d}\right)}\cdot\left(\mu_{H}\left(\overline{U}\right)\right)^{1-\frac{1}{q}}/C_{\psi}$,
we arrive at 
\begin{eqnarray*}
 &  & \sum_{i\in I}\left(\left|\det\left(h_{i}\right)\right|^{\frac{1}{2}-\frac{1}{q}}\cdot v\left(h_{i}\right)\cdot\left\Vert \mathcal{F}^{-1}\left(\varphi_{i}\cdot\widehat{f}\right)\right\Vert _{L^{p}\left(\mathbb{R}^{d}\right)}\right)^{q}\\
 & \overset{\text{Eq. }\eqref{eq:LokalisierungGejensent}}{\underset{\varphi_{i}=\varphi_{U_{i}}/C_{\psi}}{\leq}} & C_{6}^{q}\cdot\sum_{i\in I}\left[\left(\left|\det\left(h_{i}\right)\right|^{\frac{1}{2}-\frac{1}{q}}v\left(h_{i}\right)\right)^{q}\cdot\int_{U_{i}}\left(\left|\det\left(h\right)\right|^{\frac{1}{q}-\frac{1}{2}}\left\Vert \left(W_{\psi}f\right)\left(\cdot,h\right)\right\Vert _{L^{p}\left(\mathbb{R}^{d}\right)}\right)^{q}\frac{\text{d}h}{\left|\det\left(h\right)\right|}\right]\\
 & \leq & C_{6}^{q}\cdot\sum_{i\in I}\left[\int_{U_{i}}\left(\frac{\left|\det\left(h\right)\right|^{\frac{1}{q}-\frac{1}{2}}}{\left|\det\left(h_{i}\right)\right|^{\frac{1}{q}-\frac{1}{2}}}\cdot v\left(h_{i}\right)\cdot\left\Vert \left(W_{\psi}f\right)\left(\cdot,h\right)\right\Vert _{L^{p}\left(\mathbb{R}^{d}\right)}\right)^{q}\,\frac{\text{d}h}{\left|\det\left(h\right)\right|}\right]\\
 & \leq & C_{4}^{q}C_{5}^{q}C_{6}^{q}\cdot\sum_{i\in I}\left[\int_{U_{i}}\left(v\left(h\right)\cdot\left\Vert \left(W_{\psi}f\right)\left(\cdot,h\right)\right\Vert _{L^{p}\left(\mathbb{R}^{d}\right)}\right)^{q}\,\frac{\text{d}h}{\left|\det\left(h\right)\right|}\right]\\
 & = & C_{4}^{q}C_{5}^{q}C_{6}^{q}\cdot\int_{H}\left(v\left(h\right)\cdot\left\Vert \left(W_{\psi}f\right)\left(\cdot,h\right)\right\Vert _{L^{p}\left(\mathbb{R}^{d}\right)}\right)^{q}\,\frac{\text{d}h}{\left|\det\left(h\right)\right|}\\
 & = & C_{4}^{q}C_{5}^{q}C_{6}^{q}\cdot\left\Vert W_{\psi}f\right\Vert _{L_{v}^{p,q}\left(G\right)}^{q}<\infty.
\end{eqnarray*}
This settles the case $q<\infty$. In the remaining case $q=\infty$,
we use equation (\ref{eq:LokalisierteFourierTrafoGeminkowskit}) to
estimate 
\begin{eqnarray*}
 &  & \sup_{i\in I}\left|\det\left(h_{i}\right)\right|^{\frac{1}{2}-\frac{1}{q}}\cdot v\left(h_{i}\right)\cdot\left\Vert \mathcal{F}^{-1}\left(\varphi_{i}\cdot\widehat{f}\right)\right\Vert _{L^{p}\left(\mathbb{R}^{d}\right)}\\
 & \leq & C_{\psi}^{-1}\left\Vert \psi\right\Vert _{L^{1}\left(\mathbb{R}^{d}\right)}\cdot\sup_{i\in I}\int_{U_{i}}\left(\frac{\left|\det\left(h\right)\right|}{\left|\det\left(h_{i}\right)\right|}\right)^{-1/2}\cdot v\left(h_{i}\right)\cdot\left\Vert \left(W_{\psi}f\right)\left(\cdot,h\right)\right\Vert _{L^{p}\left(\mathbb{R}^{d}\right)}\,\text{d}h\\
 & \leq & C_{\psi}^{-1}C_{1}^{-1/2}C_{5}\cdot\left\Vert \psi\right\Vert _{L^{1}\left(\mathbb{R}^{d}\right)}\cdot\sup_{i\in I}\int_{U_{i}}v\left(h\right)\cdot\left\Vert \left(W_{\psi}f\right)\left(\cdot,h\right)\right\Vert _{L^{p}\left(\mathbb{R}^{d}\right)}\,\text{d}h\\
 & \overset{q=\infty}{\leq} & C_{\psi}^{-1}C_{1}^{-1/2}C_{5}\cdot\left\Vert \psi\right\Vert _{L^{1}\left(\mathbb{R}^{d}\right)}\cdot\mu_{H}\left(U_{i}\right)\cdot\left\Vert W_{\psi}f\right\Vert _{L_{v}^{p,q}}\\
 & \leq & C_{\psi}^{-1}C_{1}^{-1/2}C_{5}\cdot\left\Vert \psi\right\Vert _{L^{1}\left(\mathbb{R}^{d}\right)}\cdot\mu_{H}\left(\overline{U}\right)\cdot\left\Vert W_{\psi}f\right\Vert _{L_{v}^{p,q}}<\infty,
\end{eqnarray*}
where we again used $\mu_{H}\left(U_{i}\right)\leq\mu_{H}\left(h_{i}U\right)\leq\mu_{H}\left(\overline{U}\right)$.

Now note that $u_{i}=u\left(h_{i}^{-T}\xi_{0}\right)=v'\left(h_{i}^{-1}\right)=\left|\det\left(h_{i}\right)\right|^{\frac{1}{2}-\frac{1}{q}}\cdot v\left(h_{i}\right)$
is a valid discretization of a suitable transplant of $v'$ onto $H$
(cf. remark \ref{rem:SpezielleDiskretisierung}). Thus, the above
estimates show 
\[
\left\Vert \widehat{f}\right\Vert _{\mathcal{D}\left(\mathcal{Q},L^{p},\ell_{u}^{q}\right)}\leq\begin{cases}
C_{4}C_{5}C_{6}\cdot\left\Vert W_{\psi}f\right\Vert _{L_{v}^{p,q}\left(G\right)}=C_{4}C_{5}C_{6}\cdot\left\Vert f\right\Vert _{\text{Co}\left(L_{v}^{p,q}\right)}, & q<\infty,\\
C_{\psi}^{-1}C_{1}^{-1/2}C_{5}\cdot\left\Vert \psi\right\Vert _{L^{1}\left(\mathbb{R}^{d}\right)}\cdot\mu_{H}\left(\overline{U}\right)\cdot\left\Vert f\right\Vert _{\text{Co}\left(L_{v}^{p,q}\right)}, & q=\infty,
\end{cases}
\]
where all constants $C_{1},\dots,C_{6}$ are independent of $f$.
By Lemma \ref{lem:GewichtsTransplantation} and Lemma \ref{lem:GewichtsDiskretisierung},
any two discretizations of transplants of $v'$ yield equivalent norms
on $\mathcal{D}\left(\mathcal{Q},L^{p},\ell_{u}^{q}\right)$. Thus,
the proof is complete.
\end{proof}
In order to establish the general result, we first show that the decomposition
space $\mathcal{D}\left(\mathcal{Q},L^{p},\ell_{u}^{q}\right)$ satisfies
a form of the Fatou property.
\begin{lem}
\label{lem:FatouFuerDecompositionRaeume}Let $\emptyset\neq U\subset\mathbb{R}^{d}$
be open and assume that $\mathcal{Q}=\left(Q_{i}\right)_{i\in I}$
is a decomposition covering of $U$. Let $u:U\rightarrow\left(0,\infty\right)$
be $\mathcal{Q}$-moderate and let $p,q\in\left[1,\infty\right]$
be arbitrary. Assume that $\left(f_{n}\right)_{n\in\mathbb{N}}$ is
a sequence in $\mathcal{D}\left(\mathcal{Q},L^{p},\ell_{u}^{q}\right)$
that satisfies $f_{n}\xrightarrow[n\rightarrow\infty]{}f\in\mathcal{D}'\left(U\right)$
where convergence is to be understood in the weak-$\ast$-sense, i.e.
pointwise on $\mathcal{D}\left(U\right)$.

Finally assume that $\liminf_{n\rightarrow\infty}\left\Vert f_{n}\right\Vert _{\mathcal{D}\left(\mathcal{Q},L^{p},\ell_{u}^{q}\right)}$
is finite. Then $f\in\mathcal{D}\left(\mathcal{Q},L^{p},\ell_{u}^{q}\right)$
holds with 
\[
\left\Vert f\right\Vert _{\mathcal{D}\left(\mathcal{Q},L^{p},\ell_{u}^{q}\right)}\leq\liminf_{n\rightarrow\infty}\left\Vert f_{n}\right\Vert _{\mathcal{D}\left(\mathcal{Q},L^{p},\ell_{u}^{q}\right)}.
\]
\end{lem}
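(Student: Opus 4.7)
The proof proceeds by a two-step Fatou argument, first applied in the local $L^p$-component and then in the global $\ell_u^q$-component, exploiting the pointwise formula $(\mathcal{F}^{-1}(\varphi_i g))(x) = g(\varphi_i e_x)$ with $e_x(\xi) = e^{2\pi i\langle x,\xi\rangle}$ that was already recorded in equation \eqref{eq:FourierPunktweise}. The key observation is that for every $i \in I$ and every $x \in \mathbb{R}^d$, the function $\varphi_i e_x$ is smooth with support in $\mathrm{supp}(\varphi_i) \subset U$, hence $\varphi_i e_x \in \mathcal{D}(U)$. Consequently, the weak-$\ast$ convergence $f_n \to f$ in $\mathcal{D}'(U)$ yields the \emph{pointwise} convergence
\[
\bigl(\mathcal{F}^{-1}(\varphi_i f_n)\bigr)(x) = f_n(\varphi_i e_x) \xrightarrow[n \to \infty]{} f(\varphi_i e_x) = \bigl(\mathcal{F}^{-1}(\varphi_i f)\bigr)(x)
\]
for every $x \in \mathbb{R}^d$ and every $i \in I$.

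Setting $c_i^{(n)} := \|\mathcal{F}^{-1}(\varphi_i f_n)\|_{L^p(\mathbb{R}^d)}$ and $c_i := \|\mathcal{F}^{-1}(\varphi_i f)\|_{L^p(\mathbb{R}^d)} \in [0,\infty]$, the classical Fatou lemma (for $p \in [1,\infty)$) applied to $|\mathcal{F}^{-1}(\varphi_i f_n)|^p$ gives $c_i \leq \liminf_{n\to\infty} c_i^{(n)}$; for $p = \infty$ the same inequality follows trivially from pointwise convergence, since $|\mathcal{F}^{-1}(\varphi_i f)(x)| = \lim_n |\mathcal{F}^{-1}(\varphi_i f_n)(x)| \leq \liminf_n c_i^{(n)}$ for every $x$. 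In particular, the $L^p$-norms $c_i$ are well defined and finite for every $i \in I$ (provided the right hand side is finite, which will be arranged below).

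Next, a second Fatou argument — now with respect to counting measure on $I$ and the weight $u$ — transfers this inequality to the $\ell_u^q$-norm: the estimate $u_i c_i \leq \liminf_n u_i c_i^{(n)}$ together with Fatou's lemma for $\ell^q(I)$ (i.e., the solidity and lower semicontinuity of $\|\cdot\|_{\ell^q}$ with respect to pointwise limits, which for $q < \infty$ is the standard Fatou lemma for counting measure and for $q = \infty$ follows from $\sup_i \liminf_n a_i^{(n)} \leq \liminf_n \sup_i a_i^{(n)}$) yields
\[
\|f\|_{\mathcal{D}(\mathcal{Q},L^p,\ell_u^q)} = \|(u_i c_i)_{i\in I}\|_{\ell^q} \leq \bigl\|\bigl(\textstyle\liminf_n u_i c_i^{(n)}\bigr)_{i\in I}\bigr\|_{\ell^q} \leq \liminf_{n\to\infty} \|(u_i c_i^{(n)})_{i\in I}\|_{\ell^q} = \liminf_{n\to\infty} \|f_n\|_{\mathcal{D}(\mathcal{Q},L^p,\ell_u^q)}.
\]
Since the right hand side is finite by hypothesis, this simultaneously shows $f \in \mathcal{D}(\mathcal{Q},L^p,\ell_u^q)$ and gives the desired norm bound. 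No step presents real difficulty; the only point requiring mild care is the interchange of $\liminf$ and supremum in the case $q = \infty$, which is handled by the standard one-line inequality mentioned above.
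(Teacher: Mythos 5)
Your proof is correct and follows essentially the same route as the paper's: both arguments first deduce, via the pointwise formula $(\mathcal{F}^{-1}(\varphi_i f))(x) = f(\varphi_i e_x)$ and weak-$\ast$ convergence, that $\mathcal{F}^{-1}(\varphi_i f_n) \to \mathcal{F}^{-1}(\varphi_i f)$ pointwise, and then apply the Fatou property twice — once in $L^p(\mathbb{R}^d)$ and once in $\ell^q(I)$. Your explicit handling of the cases $p=\infty$ and $q=\infty$ spells out what the paper subsumes under "the Fatou property of $L^p$" and "$\ell^q$", but the argument is otherwise identical.
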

\begin{proof}
Let $\left(\varphi_{i}\right)_{i\in I}$ be a $\mathcal{Q}$-BAPU.
For $i\in I$ and $x\in\mathbb{R}^{d}$, \cite[Theorem 7.23]{RudinFA}
applied to $\mathcal{F}^{-1}$ instead of $\mathcal{F}$ yields, with
$e_{x}:\mathbb{R}^{d}\rightarrow\mathbb{C},\xi\mapsto e^{2\pi i\left\langle x,\xi\right\rangle }$,
\begin{eqnarray*}
\left(\mathcal{F}^{-1}\left(\varphi_{i}f\right)\right)\left(x\right) & = & \left(\varphi_{i}f\right)\left(e_{x}\right)=f\left(\varphi_{i}e_{x}\right)\\
 & \overset{\varphi_{i}e_{x}\in\mathcal{D}\left(U\right)}{=} & \lim_{n\rightarrow\infty}f_{n}\left(\varphi_{i}e_{x}\right)=\lim_{n\rightarrow\infty}\left(\varphi_{i}f_{n}\right)\left(e_{x}\right)\\
 & = & \lim_{n\rightarrow\infty}\left(\mathcal{F}^{-1}\left(\varphi_{i}f_{n}\right)\right)\left(x\right),
\end{eqnarray*}
where we used the fact that $\varphi_{i}f$ and $\varphi_{i}f_{n}$
are distributions with compact support so that \cite[Theorem 7.23]{RudinFA}
is applicable.

Using the Fatou property of $L^{p}\left(\mathbb{R}^{d}\right)$, we
see $\mathcal{F}^{-1}\left(\varphi_{i}f\right)\in L^{p}\left(\mathbb{R}^{d}\right)$
for all $i\in I$ with 
\[
\left\Vert \mathcal{F}^{-1}\left(\varphi_{i}f\right)\right\Vert _{L^{p}\left(\mathbb{R}^{d}\right)}\leq\liminf_{n\rightarrow\infty}\left\Vert \mathcal{F}^{-1}\left(\varphi_{i}f_{n}\right)\right\Vert _{L^{p}\left(\mathbb{R}^{d}\right)}\leq\liminf_{n\rightarrow\infty}\frac{1}{u_{i}}\left\Vert f_{n}\right\Vert _{\mathcal{D}\left(\mathcal{Q},L^{p},\ell_{u}^{q}\right)}<\infty,
\]
where $\left(u_{i}\right)_{i\in I}$ is the chosen discretization
of $u$.

As $\ell^{q}\left(I\right)$ also enjoys the Fatou property and is
solid, we finally derive 
\begin{eqnarray*}
\left\Vert f\right\Vert _{\mathcal{D}\left(\mathcal{Q},L^{p},\ell_{u}^{q}\right)} & = & \left\Vert \left(u_{i}\cdot\left\Vert \mathcal{F}^{-1}\left(\varphi_{i}f\right)\right\Vert _{L^{p}\left(\mathbb{R}^{d}\right)}\right)\right\Vert _{\ell^{q}\left(I\right)}\\
 & \leq & \left\Vert \left(u_{i}\cdot\liminf_{n\rightarrow\infty}\left\Vert \mathcal{F}^{-1}\left(\varphi_{i}f_{n}\right)\right\Vert _{L^{p}\left(\mathbb{R}^{d}\right)}\right)\right\Vert _{\ell^{q}\left(I\right)}\\
 & \leq & \liminf_{n\rightarrow\infty}\left\Vert \left(u_{i}\cdot\left\Vert \mathcal{F}^{-1}\left(\varphi_{i}f_{n}\right)\right\Vert _{L^{p}\left(\mathbb{R}^{d}\right)}\right)\right\Vert _{\ell^{q}\left(I\right)}\\
 & = & \liminf_{n\rightarrow\infty}\left\Vert f_{n}\right\Vert _{\mathcal{D}\left(\mathcal{Q},L^{p},\ell_{u}^{q}\right)}<\infty.
\end{eqnarray*}
In particular, we have $f\in\mathcal{D}\left(\mathcal{Q},L^{p},\ell_{u}^{q}\right)$.
\end{proof}
We now prove the first half of our claimed isomorphism between $\text{Co}\left(L_{v}^{p,q}\right)$
and $\mathcal{D}\left(\mathcal{Q},L^{p},\ell_{u}^{q}\right)$, namely
the continuity of the Fourier transform from $\text{Co}\left(L_{v}^{p,q}\right)$
to $\mathcal{D}\left(\mathcal{Q},L^{p},\ell_{u}^{q}\right)$. The
proof uses the density (in a suitable topology) of $\mathcal{S}\left(\mathbb{R}^{d}\right)\cap\text{Co}\left(L_{v}^{p,q}\right)$
in $\text{Co}\left(L_{v}^{p,q}\right)$ together with Lemma \ref{lem:FouriertransformationStetigAufCoorbitGeschnittenSchwartz},
where we use Lemma \ref{lem:FatouFuerDecompositionRaeume} to pass
to the limit.
\begin{thm}
\label{thm:FourierStetigVonCoorbitNachDecomposition}Let $v:H\rightarrow\left(0,\infty\right)$
be measurable and moderate with respect to the measurable, locally
bounded, submultiplicative weight $v_{0}:H\rightarrow\left(0,\infty\right)$.
Let $p,q\in\left[1,\infty\right]$, choose $v'$ as in Lemma \ref{lem:FouriertransformationStetigAufCoorbitGeschnittenSchwartz}
and let $u:\mathcal{O}\rightarrow\left(0,\infty\right)$ be a transplant
of $v'$ onto $\mathcal{O}$. Finally, assume that $\mathcal{Q}$
is an arbitrary decomposition covering of $\mathcal{O}$ induced by
$H$.

Then the Fourier transform 
\[
\mathcal{F}:\text{Co}\left(L_{v}^{p,q}\right)\rightarrow\mathcal{D}\left(\mathcal{Q},L^{p},\ell_{u}^{q}\right),f\mapsto\mathcal{F}f
\]
with 
\[
\mathcal{F}f:\mathcal{D}\left(\mathcal{O}\right)\rightarrow\mathbb{C},g\mapsto f\left(\mathcal{F}^{-1}\overline{g}\right)\text{ for }f\in\text{Co}\left(L_{v}^{p,q}\right)
\]
defined as in Corollary \ref{cor:FourierTrafoAufFeichtingerReservoir}
is a well-defined, continuous linear map.\end{thm}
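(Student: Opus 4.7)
The plan is to reduce the statement to Lemma \ref{lem:FouriertransformationStetigAufCoorbitGeschnittenSchwartz} via the comparison result of Corollary \ref{cor:InduzierteUeberdeckLiefernGleicheRaeumeBeiGleicherFamilieH}, and then to extend from Schwartz vectors to all of $\text{Co}(L_{v}^{p,q})$ by combining the atomic decomposition of coorbit theory with the Fatou-type property of Lemma \ref{lem:FatouFuerDecompositionRaeume}.

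First, I would write the given induced covering as $\mathcal{Q}=(h_{i}^{-T}Q)_{i\in I}$ for some well-spread family $(h_{i})_{i\in I}$ in $H$ and some precompact $Q\subset\mathcal{O}$ with $\overline{Q}\subset\mathcal{O}$. By Definition \ref{defn:dense_wellspread}, $(h_{i})_{i\in I}$ is $U$-dense for some precompact unit neighborhood $U\subset H$. Pick any $\psi\in\mathcal{S}(\mathbb{R}^{d})\setminus\{0\}$ with $\widehat{\psi}\in\mathcal{D}(\mathcal{O})$ (which exists because $\mathcal{D}(\mathcal{O})\neq\{0\}$) and set $Q_{0}:=U^{-T}(\widehat{\psi}^{-1}(\mathbb{C}^{\ast}))$. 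Theorem \ref{thm:SpezialBAPUZusammenfassung} then provides the decomposition covering $\mathcal{Q}_{0}:=(h_{i}^{-T}Q_{0})_{i\in I}$, which is induced by $H$ and built from the \emph{same} well-spread family as $\mathcal{Q}$. Corollary \ref{cor:InduzierteUeberdeckLiefernGleicheRaeumeBeiGleicherFamilieH} therefore yields
\[
\mathcal{D}(\mathcal{Q},L^{p},\ell_{u}^{q})=\mathcal{D}(\mathcal{Q}_{0},L^{p},\ell_{u}^{q})
\]
with equivalent norms (the choice of transplant is harmless by Lemma \ref{lem:GewichtsTransplantation}), so it suffices to prove continuity with $\mathcal{Q}_{0}$ on the right.

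On the Schwartz subspace this is exactly Lemma \ref{lem:FouriertransformationStetigAufCoorbitGeschnittenSchwartz} applied to the triple $(\psi,(h_{i}),U)$ just constructed: there is a constant $C>0$ with
\[
\|\widehat{f}\|_{\mathcal{D}(\mathcal{Q}_{0},L^{p},\ell_{u}^{q})}\leq C\,\|f\|_{\text{Co}(L_{v}^{p,q})}\qquad\text{for all }f\in\mathcal{S}(\mathbb{R}^{d})\cap\text{Co}(L_{v}^{p,q}).
\]
To transfer this bound to arbitrary $f\in\text{Co}(L_{v}^{p,q})$ I would invoke the atomic decomposition of Feichtinger and Gröchenig (\cite[Theorem 6.1]{FeichtingerCoorbit1}) with the atom $\psi$, which qualifies as a ``better vector'' by Theorem \ref{thm:ZulaessigkeitVonbandbeschraenktenFunktionen}. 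This produces sampling points $(g_{j})_{j\in J}\subset G$ and scalars $(c_{j})_{j\in J}$ depending on $f$ such that the finite partial sums $f_{n}:=\sum_{j\in F_{n}}c_{j}\,\pi(g_{j})\psi$ satisfy $\sup_{n}\|f_{n}\|_{\text{Co}(L_{v}^{p,q})}\leq C'\|f\|_{\text{Co}(L_{v}^{p,q})}$ and $f_{n}\to f$ at least in the weak-$\ast$ topology of $(\mathcal{H}_{w}^{1})^{\neg}$. Each $f_{n}$ lies in $\mathcal{S}(\mathbb{R}^{d})\cap\text{Co}(L_{v}^{p,q})$ because, by \eqref{eq:QausiRegulaereDarstellungAufFourierSeite}, every $\pi(g_{j})\psi$ is a Schwartz function whose Fourier transform has compact support contained in $\mathcal{O}$.

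By Corollary \ref{cor:FourierTrafoAufFeichtingerReservoir}, the map $\mathcal{F}:(\mathcal{H}_{w}^{1})^{\neg}\to\mathcal{D}'(\mathcal{O})$ is weak-$\ast$ continuous, so $\mathcal{F}f_{n}\to\mathcal{F}f$ in $\mathcal{D}'(\mathcal{O})$. Combining the Schwartz estimate above with Lemma \ref{lem:FatouFuerDecompositionRaeume} then gives
\[
\|\mathcal{F}f\|_{\mathcal{D}(\mathcal{Q}_{0},L^{p},\ell_{u}^{q})}\leq\liminf_{n\to\infty}\|\widehat{f_{n}}\|_{\mathcal{D}(\mathcal{Q}_{0},L^{p},\ell_{u}^{q})}\leq C\liminf_{n\to\infty}\|f_{n}\|_{\text{Co}(L_{v}^{p,q})}\leq CC'\,\|f\|_{\text{Co}(L_{v}^{p,q})},
\]
yielding simultaneously $\mathcal{F}f\in\mathcal{D}(\mathcal{Q}_{0},L^{p},\ell_{u}^{q})=\mathcal{D}(\mathcal{Q},L^{p},\ell_{u}^{q})$ and the claimed continuity bound. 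The main obstacle is the endpoint regime $p=\infty$ or $q=\infty$, where norm density of Schwartz vectors in $\text{Co}(L_{v}^{p,q})$ typically fails; the crucial observation is that Lemma \ref{lem:FatouFuerDecompositionRaeume} was tailor-made so that weak-$\ast$ convergence paired with uniform coorbit-norm bounds on the atomic partial sums still suffices to push the Schwartz estimate to the limit.
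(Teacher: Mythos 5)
Your proposal is correct and follows essentially the same route as the paper: reduce to the concrete covering $\mathcal{Q}_0$ via Corollary \ref{cor:InduzierteUeberdeckLiefernGleicheRaeumeBeiGleicherFamilieH}, apply Lemma \ref{lem:FouriertransformationStetigAufCoorbitGeschnittenSchwartz} to the Schwartz partial sums produced by the atomic decomposition (with the uniform bound coming from $f_n = S(Af\cdot\chi_{F_n})$ and solidity of $(L_v^{p,q})_d$), and pass to the limit with the weak-$\ast$ continuity of $\mathcal{F}$ from Corollary \ref{cor:FourierTrafoAufFeichtingerReservoir} together with the Fatou property from Lemma \ref{lem:FatouFuerDecompositionRaeume}. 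Your closing remark about the endpoints $p=\infty$ or $q=\infty$ correctly identifies why the Fatou lemma, rather than norm density of Schwartz vectors, is the right tool here.
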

\begin{proof}
By definition of an induced covering, there is a well-spread family
$\left(h_{i}\right)_{i\in I}$ in $H$ and a precompact subset $Q\subset\mathbb{R}^{d}$
that satisfies $\mathcal{O}=\bigcup_{i\in I}h_{i}^{-T}Q$ and $\overline{Q}\subset\mathcal{O}$
as well as $\mathcal{Q}=\left(h_{i}^{-T}Q\right)_{i\in I}$. As $\left(h_{i}\right)_{i\in I}$
is well-spread, there exists a precompact set $U\subset H$ that satisfies
$H=\bigcup_{i\in I}h_{i}U$. Choose an arbitrary $\psi\in\mathcal{S}\left(\mathbb{R}^{d}\right)\setminus\left\{ 0\right\} $
with $\widehat{\psi}\in\mathcal{D}\left(\mathcal{O}\right)$. Define
$Q':=U^{-T}\left(\widehat{\psi}^{-1}\left(\mathbb{C}^{\ast}\right)\right)$
and let $\mathcal{Q}'=\left(h_{i}^{-T}Q'\right)_{i\in I}$ be the
corresponding induced covering of $\mathcal{O}$ (cf. Theorem \ref{thm:SpezialBAPUZusammenfassung}).
By Corollary \ref{cor:InduzierteUeberdeckLiefernGleicheRaeumeBeiGleicherFamilieH}
we have $\mathcal{D}\left(\mathcal{Q},L^{p},\ell_{u}^{q}\right)=\mathcal{D}\left(\mathcal{Q}',L^{p},\ell_{u}^{q}\right)$
with equivalent norms, so that it suffices to consider $\mathcal{Q}'$
instead of $\mathcal{Q}$.

Let $w:H\rightarrow\left(0,\infty\right)$ be defined as in Lemma
\ref{lem:CoorbitVoraussetzungen}. Theorem \ref{thm:ZulaessigkeitVonbandbeschraenktenFunktionen}
shows that $\psi\in\mathcal{B}_{w}$ is a ``better vector'', so
that by the Atomic Decomposition Theorem \cite[Theorem 6.1]{FeichtingerCoorbit1},
there is some unit neighborhood $V\subset G$, such that for every
$V$-dense and relatively separated family $X=\left(x_{j}\right)_{j\in J}$
in $G$ the following are true: 
\begin{enumerate}
\item There is a bounded linear \textbf{analysis operator} $A:\text{Co}\left(L_{v}^{p,q}\right)\rightarrow\left(L_{v}^{p,q}\right)_{d}\left(X\right)$
such that for every $f\in\text{Co}\left(L_{v}^{p,q}\right)$ we have
\begin{equation}
f=\sum_{j\in J}\left[\left(Af\right)_{j}\cdot\pi\left(x_{j}\right)\psi\right]\label{eq:AnalyseOperator}
\end{equation}
with convergence (at least) in the weak-$\ast$-topology on $\left(\mathcal{H}_{w}^{1}\right)^{\neg}$. 
\item The \textbf{synthesis operator} 
\[
S:\left(L_{v}^{p,q}\right)_{d}\left(X\right)\rightarrow\text{Co}\left(L_{v}^{p,q}\right),\left(\lambda_{j}\right)_{j\in J}\mapsto\sum_{j\in J}\left[\lambda_{j}\cdot\pi\left(x_{j}\right)\psi\right]
\]
is well-defined and bounded. 
\end{enumerate}

Here, the space $\left(L_{v}^{p,q}\right)_{d}$ is the so-called \textbf{associated
discrete BK-space} to $L_{v}^{p,q}$ (cf. \cite[Definition 3.4]{FeichtingerCoorbit1}).
The only property of this space that we need is that it is a solid
sequence space, i.e. if $\left(\lambda_{j}\right)_{j\in J}$ and $\left(\gamma_{j}\right)_{j\in J}$
are sequences so that $\left|\lambda_{j}\right|\leq\left|\gamma_{j}\right|$
holds for all $j\in J$ and with $\left(\gamma_{j}\right)_{j\in J}\in\left(L_{v}^{p,q}\right)_{d}$,
then we have $\left(\lambda_{j}\right)_{j\in J}\in\left(L_{v}^{p,q}\right)_{d}$
with a corresponding norm estimate $\left\Vert \smash{\left(\lambda_{j}\right)_{j\in J}}\right\Vert _{L_{v}^{p,q}}\leq\left\Vert \smash{\left(\gamma_{j}\right)_{j\in J}}\right\Vert _{L_{v}^{p,q}}$.

Let $W\subset V$ be a compact unit neighborhood that satisfies $WW^{-1}\subset V$.
By Lemma \ref{lem:WellSpreadExistenz} it follows that there is a
countably infinite family $\left(g_{j}\right)_{j\in J}$ in $G$ that
is $V$-dense and $W$-separated. This family is a fortiori relatively
separated, so that the above results apply to $\left(x_{j}\right)_{j\in J}=\left(g_{j}\right)_{j\in J}$.

Let $\left(j_{n}\right)_{n\in\mathbb{N}}$ be an enumeration of $J$
and let $f\in\text{Co}\left(L_{v}^{p,q}\right)$. For $n\in\mathbb{N}$
define 
\[
f_{n}:=\sum_{\ell=1}^{n}\left[\left(Af\right)_{j_{\ell}}\cdot\pi\left(g_{j_{\ell}}\right)\psi\right]=S\left(Af\cdot\chi_{\left\{ j_{1},\dots,j_{n}\right\} }\right)\in\text{Co}\left(L_{v}^{p,q}\right).
\]
Now note that for $g_{j}=\left(x_{j},k_{j}\right)$ we have $\pi\left(g_{j}\right)\psi=\left|\det\left(k_{j}\right)\right|^{-1/2}\cdot L_{x_{j}}D_{k_{j}^{-T}}\psi\in\mathcal{S}\left(\mathbb{R}^{d}\right)$
and thus $f_{n}\in\mathcal{S}\left(\mathbb{R}^{d}\right)\cap\text{Co}\left(L_{v}^{p,q}\right)$.

The convergence in equation \eqref{eq:AnalyseOperator} in the weak-$\ast$-topology
on $\left(\mathcal{H}_{w}^{1}\right)^{\neg}$ and Corollary \ref{cor:FourierTrafoAufFeichtingerReservoir}
show $\mathcal{F}f_{n}\xrightarrow[n\rightarrow\infty]{}\mathcal{F}f$
with convergence in the weak-$\ast$-topology on $\mathcal{D}'\left(\mathcal{O}\right)$.
Finally note that $\mathcal{F}f_{n}$ coincides with the ``ordinary''
Fourier transform $\widehat{f_{n}}$ of $f_{n}\in\mathcal{S}\left(\mathbb{R}^{d}\right)\subset L^{2}\left(\mathbb{R}^{d}\right)$
by Remark \ref{rem:SpezialFourierTrafoIsFortsetzungVonNormaler}.
Hence, we get 
\begin{eqnarray*}
\left\Vert \mathcal{F}f_{n}\right\Vert _{\mathcal{D}\left(\mathcal{Q},L^{p},\ell_{u}^{q}\right)} & = & \left\Vert \widehat{f_{n}}\right\Vert _{\mathcal{D}\left(\mathcal{Q},L^{p},\ell_{u}^{q}\right)}\\
& \overset{\text{Lemma } \ref{lem:FouriertransformationStetigAufCoorbitGeschnittenSchwartz}}{\leq} & C\cdot\left\Vert f_{n}\right\Vert _{\text{Co}\left(L_{v}^{p,q}\right)}\\
 & = & C\cdot\left\Vert S\left(Af\cdot\chi_{\left\{ j_{1},\dots,j_{n}\right\} }\right)\right\Vert _{\text{Co}\left(L_{v}^{p,q}\right)}\\
 & \leq & C\cdot\vertiii S\cdot\left\Vert Af\cdot\chi_{\left\{ j_{1},\dots,j_{n}\right\} }\right\Vert _{\left(L_{v}^{p,q}\right)_{d}}\\
 & \overset{\text{solidity}}{\leq} & C\cdot\vertiii S\cdot\left\Vert Af\right\Vert _{\left(L_{v}^{p,q}\right)_{d}}\\
 & \leq & C\cdot\vertiii S\cdot\vertiii A\cdot\left\Vert f\right\Vert _{\text{Co}\left(L_{v}^{p,q}\right)}<\infty,
\end{eqnarray*}
where the constant $C$ is taken from Lemma \ref{lem:FouriertransformationStetigAufCoorbitGeschnittenSchwartz}
and is thus independent of $f$. Application of Lemma \ref{lem:FatouFuerDecompositionRaeume}
finishes the proof.\qedhere

\end{proof}

\section{Continuity of the inverse Fourier transform from Decomposition spaces
into Coorbit spaces}

\label{sec:InverseFourierTrafoStetig}In this section we show that
the inverse Fourier transform $\mathcal{F}^{-1}:\mathcal{D}\left(\mathcal{Q},L^{p},\ell_{u}^{q}\right)\rightarrow\text{Co}\left(L_{v}^{p,q}\right)$
is well-defined and continuous. This poses the problem that an element
$f$ of $\mathcal{D}\left(\mathcal{Q},L^{p},\ell_{u}^{q}\right)$
is a distribution $f\in\mathcal{D}'\left(\mathcal{O}\right)$ on $\mathcal{O}$
and not (necessarily) a tempered distribution (cf. the remark following
Definition \ref{def:DecompositionSpace}). Thus, it is not immediately
clear how to define the inverse Fourier transform $\mathcal{F}^{-1}f$
of $f$.

In order to solve this problem, we use the map
\[
\Theta:\left(\mathcal{H}_{w}^{1}\right)^{\neg}\rightarrow\left(\mathcal{F}\left(\mathcal{D}\left(\mathcal{O}\right)\right)\right)',f\mapsto\left(\varphi\mapsto f\left(\overline{\varphi}\right)\right)
\]
introduced in Corollary \ref{cor:FourierTrafoAufRaumAbgewaelzt} to
identify the coorbit space $\text{Co}\left(L_{v}^{p,q}\right)$ with
the alternative coorbit space
\[
\widetilde{\text{Co}}_{\psi}\left(L_{v}^{p,q}\right):=\left\{ f\in\left(\mathcal{F}\left(\mathcal{D}\left(\mathcal{O}\right)\right)\right)'\with W_{\psi}f\in L_{v}^{p,q}\left(G\right)\right\} .
\]
Here, we define the wavelet transform $W_{\psi}f$ for $f\in\left(\mathcal{F}\left(\mathcal{D}\left(\mathcal{O}\right)\right)\right)'$
and $\psi\in\mathcal{S}\left(\mathbb{R}^{d}\right)$ with $\widehat{\psi}\in\mathcal{D}\left(\mathcal{O}\right)$
by 
\begin{equation}
W_{\psi}f:G\rightarrow\mathbb{C},\left(x,h\right)\mapsto W_{\psi}f\left(x,h\right):=f\left(\overline{\pi\left(x,h\right)\psi}\right).\label{eq:WaveletTrafoAufSpezialReservoir}
\end{equation}
On the space $\widetilde{\text{Co}}_{\psi}\left(L_{v}^{p,q}\right)$, the
definition of the inverse fourier transform is then straightforward.

The following theorem makes the claimed identification of $\text{Co}\left(L_{v}^{p,q}\right)$
with $\widetilde{\text{Co}}_{\psi}\left(L_{v}^{p,q}\right)$ explicit:
\begin{thm}
\label{thm:IsomorphismusZuSpezialCoorbitRaum}Let $\psi\in\mathcal{S}\left(\mathbb{R}^{d}\right)\setminus\left\{ 0\right\} $
with $\widehat{\psi}\in\mathcal{D}\left(\mathcal{O}\right)$ be arbitrary.
Then the restriction of the map $\Theta$ to $\text{Co}\left(L_{v}^{p,q}\right)$
induces an isometric isomorphism
\[
\Theta:\text{Co}\left(L_{v}^{p,q}\right)\rightarrow\widetilde{\text{Co}}_{\psi}\left(L_{v}^{p,q}\right)
\]
as long as $\psi$ is used as an analyzing vector for $\text{Co}\left(L_{v}^{p,q}\right)$,
i.e. $\left\Vert f\right\Vert _{\text{Co}\left(L_{v}^{p,q}\right)}=\left\Vert W_{\psi}f\right\Vert _{L_{v}^{p,q}}$.
Here, $\widetilde{\text{Co}}_{\psi}\left(L_{v}^{p,q}\right)$ is endowed
with the norm $\left\Vert f\right\Vert _{\widetilde{\text{Co}}_{\psi}\left(L_{v}^{p,q}\right)}:=\left\Vert W_{\psi}f\right\Vert _{L_{v}^{p,q}}$. 

In particular, $\left\Vert \cdot\right\Vert _{\widetilde{\text{Co}}_{\psi}\left(L_{v}^{p,q}\right)}$
defines a norm on $\widetilde{\text{Co}}_{\psi}\left(L_{v}^{p,q}\right)$,
so that $W_{\psi}:\widetilde{\text{Co}}_{\psi}\left(L_{v}^{p,q}\right)\rightarrow L_{v}^{p,q}\left(G\right)$
is injective.

Furthermore, the above implies that the space $\widetilde{\text{Co}}_{\psi}\left(L_{v}^{p,q}\right)$
is independent of $\psi$ with equivalent norms for different choices.
We will thus write $\widetilde{\text{Co}}\left(L_{v}^{p,q}\right)$
in the future.
\end{thm}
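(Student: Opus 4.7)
My plan is to show that $\Theta|_{\text{Co}(L_v^{p,q})}$ is an isometric embedding into $\widetilde{\text{Co}}_\psi(L_v^{p,q})$ and then to establish surjectivity via the Feichtinger--Gröchenig correspondence principle. The embedding part reduces to a direct computation. For every $(x,h) \in G$, equation \eqref{eq:QausiRegulaereDarstellungAufFourierSeite} yields $\mathcal{F}(\pi(x,h)\psi) = |\det h|^{1/2} M_{-x} D_h \widehat{\psi}$, whose support lies inside $h^{-T}\text{supp}(\widehat{\psi}) \subset \mathcal{O}$; hence $\pi(x,h)\psi \in \mathcal{F}^{-1}(\mathcal{D}(\mathcal{O}))$, and invariance of $\mathcal{D}(\mathcal{O})$ under conjugation (combined with $\overline{\mathcal{F}^{-1}\varphi} = \mathcal{F}\overline{\varphi}$) gives $\overline{\pi(x,h)\psi} \in \mathcal{F}(\mathcal{D}(\mathcal{O}))$. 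For $f \in \text{Co}(L_v^{p,q}) \subset (\mathcal{H}_w^1)^{\neg}$, the definitions of $\Theta$ and of $W_\psi$ on $(\mathcal{F}(\mathcal{D}(\mathcal{O})))'$ (equation \eqref{eq:WaveletTrafoAufSpezialReservoir}) give
\[
    W_\psi(\Theta f)(x,h) = (\Theta f)(\overline{\pi(x,h)\psi}) = f(\pi(x,h)\psi) = W_\psi f(x,h),
\]
so that $\|\Theta f\|_{\widetilde{\text{Co}}_\psi} = \|W_\psi f\|_{L_v^{p,q}} = \|f\|_{\text{Co}(L_v^{p,q})}$. Since $\Theta$ is already known to be injective on $(\mathcal{H}_w^1)^{\neg}$ by Corollary~\ref{cor:FourierTrafoAufRaumAbgewaelzt}, this makes $\Theta|_{\text{Co}}$ an isometric embedding.

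Surjectivity is the main obstacle. The plan is to invoke the correspondence principle of \cite{FeichtingerCoorbit1}, according to which the wavelet transform identifies $\text{Co}(L_v^{p,q})$ with the closed subspace of $L_v^{p,q}(G)$ consisting of those $F$ satisfying the reproducing identity $F = F \ast K_\psi$, where $K_\psi := C_\psi^{-1} \cdot W_\psi\psi$. Given $g \in \widetilde{\text{Co}}_\psi(L_v^{p,q})$, I would set $F := W_\psi g \in L_v^{p,q}(G)$ and verify the reproducing identity as follows. Fix $(y,k) \in G$; then $\pi(y,k)\psi \in \mathcal{F}^{-1}(\mathcal{D}(\mathcal{O})) \subset L^2(\mathbb{R}^d)$, so the weak wavelet inversion \eqref{eq:WaveletInversionsFormel} reads
\[
    \pi(y,k)\psi = C_\psi^{-1} \int_G W_\psi(\pi(y,k)\psi)(x,h) \cdot \pi(x,h)\psi \, d(x,h),
\]
and applying the continuous map $\phi \mapsto g(\overline{\phi})$ on $\mathcal{F}^{-1}(\mathcal{D}(\mathcal{O}))$, combined with the covariance identity $W_\psi(\pi(y,k)\psi)(x,h) = \overline{W_\psi\psi((y,k)^{-1}(x,h))}$ and the symmetry $\overline{W_\psi\psi(z^{-1})} = W_\psi\psi(z)$, should yield $F(y,k) = (F \ast K_\psi)(y,k)$. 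The delicate step is justifying that $g$ may be interchanged with the defining integral; I would handle this by exploiting Lemma~\ref{lem:FunktionenwertigeStetigkeitUndTraegerVonWaveletTrafo}, which confines the Fourier support of the integrand to a fixed compact subset of $\mathcal{O}$ uniformly in $(x,h)$ as $(y,k)$ ranges over the integration domain, so that the integral actually converges as a Bochner integral in $\mathcal{F}^{-1}(\mathcal{D}_K(\mathcal{O}))$ for a suitable compact $K \subset \mathcal{O}$, on which $g \circ \mathcal{F}$ is continuous.

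Once $F = F \ast K_\psi$ is established, the correspondence principle yields a unique $f \in \text{Co}(L_v^{p,q})$ with $W_\psi f = F = W_\psi g$. Combining this with the first paragraph, $W_\psi(\Theta f) = W_\psi f = W_\psi g$, and since both $\Theta f$ and $g$ are elements of $(\mathcal{F}(\mathcal{D}(\mathcal{O})))'$ whose values on $\overline{\pi(x,h)\psi}$ coincide for all $(x,h) \in G$, they must agree on the $\mathbb{C}$-span of these test vectors. This span is dense in $\mathcal{F}(\mathcal{D}(\mathcal{O}))$ (a consequence of the atomic decomposition of $\mathcal{H}_w^1$ transferred via $\mathcal{F}$ and conjugation), forcing $\Theta f = g$ and thus surjectivity. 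The remaining claims then follow automatically: the isometry shows that $\|\cdot\|_{\widetilde{\text{Co}}_\psi}$ is a genuine norm (equivalently that $W_\psi$ is injective on $\widetilde{\text{Co}}_\psi$), because $W_\psi g = 0$ forces $\|g\|_{\widetilde{\text{Co}}_\psi} = 0$, whence $g = \Theta(\Theta^{-1}g) = \Theta(0) = 0$; and independence of $\widetilde{\text{Co}}_\psi$ on $\psi$ (with equivalent norms) is inherited from the corresponding well-known property of $\text{Co}(L_v^{p,q})$ established in \cite[Theorem 4.2]{FeichtingerCoorbit1}.
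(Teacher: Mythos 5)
Your first two steps — the identity $W_\psi(\Theta f) = W_\psi f$ and the injectivity of $\Theta$ via Corollary \ref{cor:FourierTrafoAufRaumAbgewaelzt} — coincide with the paper's argument. The difference begins with the surjectivity proof, and there your plan has two genuine gaps.

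First, the step ``Once $F = F\ast K_\psi$ is established, the correspondence principle yields $f\in \mathrm{Co}(L_v^{p,q})$'' is not a black box you can invoke for arbitrary $F\in L_v^{p,q}$ with $F = F\ast K_\psi$. What \cite[Theorem 4.1(iv)]{FeichtingerCoorbit1} (the relevant result, and the one the paper actually cites) requires is $F\in L^\infty_{1/w}(G)$. The paper supplies this via a Wiener-amalgam argument: from $F = F\ast E$ with $E := C_\psi^{-1}W_\psi\psi\in W^R(L^\infty, L_w^1)$ one deduces $K_U'F \le |F|\ast (K_{U^{-1}}E)^\vee$, hence $F\in W(L^\infty, L_v^{p,q})\hookrightarrow L^\infty_{1/w}(G)$. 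This chain is not a cosmetic detail; without it the invocation of the correspondence principle is unjustified.

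Second, and more seriously, your concluding density argument is unsubstantiated and risks circularity. You propose to pass from $W_\psi(\Theta f) = W_\psi g$ to $\Theta f = g$ by arguing that the span of $\{\overline{\pi(x,h)\psi}\}$ is dense in $\mathcal{F}(\mathcal{D}(\mathcal{O}))$; the only evidence offered is a vague appeal to ``atomic decomposition of $\mathcal{H}^1_w$ transferred via $\mathcal{F}$ and conjugation''. This is essentially equivalent to the injectivity of $W_\psi$ on $\widetilde{\mathrm{Co}}_\psi(L_v^{p,q})$, which you then list as a corollary of the surjectivity you derived from it — a circularity. The paper avoids this by establishing the injectivity of $W_\psi$ on $\widetilde{\mathrm{Co}}_\psi(L_v^{p,q})$ \emph{before} the surjectivity argument, directly from the representation formula \eqref{eq:PsiDarstellungAlsTestfunktion}: if $W_\psi f \equiv 0$, then for any $\varphi\in\mathcal{F}(\mathcal{D}(\mathcal{O}))$ the integral representation of $f(\varphi)$ has identically vanishing integrand, so $f = 0$. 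With this in hand, the conclusion $\Theta f = g$ follows at once. Your outline of the reproduction formula itself (Bochner-integral interchange, compact Fourier supports) is close to the proof of Lemma \ref{lem:SpezialWaveletTrafoErfuelltReproduktionsFormel}, though you would need to integrate in a Banach space such as $C_{K}^\ell(\mathcal{O})$ rather than in the Fréchet space $\mathcal{F}^{-1}(\mathcal{D}_K(\mathcal{O}))$, which requires the Hahn--Banach extension of $g\circ\mathcal{F}$ to $C_K^\ell(\mathcal{O})$ for a suitable $\ell$.
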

The hard part is the claimed surjectivity of the restricted map $\Theta$.
In order to prove it, an important step is to show that the extended
wavelet transform $W_{\psi}f$ defined in equation \eqref{eq:WaveletTrafoAufSpezialReservoir} is
a continuous function that satisfies the reproduction
formula
\[
W_{\psi}f =W_{\psi}f \ast\frac{W_{\psi}\psi}{C_{\psi}}.
\]
After establishing these properties, we will give the proof of Theorem
\ref{thm:IsomorphismusZuSpezialCoorbitRaum}. First of all, we will
need the following technical Lemma regarding the continuity of modulation
on certain spaces. The proof is a straightforward application of the
Leibniz rule together with the fact that all derivatives of the complex
exponentials $x\mapsto\exp(i\langle x,\xi\rangle)$ are bounded on
compact sets and is therefore omitted.
\begin{lem}
\label{lem:ModulationStetigAufCkMitKompaktemTraeger}Let $\emptyset\neq U\subset\mathbb{R}^{d}$
be an open set and let $K\subset U$ be compact and $k\in\mathbb{N}_{0}$.
Then the map 
\[
\Phi:C_{K}^{k}\left(U\right)\times\mathbb{R}^{d}\rightarrow C_{K}^{k}\left(U\right),\left(f,\xi\right)\mapsto\left(x\mapsto e^{i\left\langle x,\xi\right\rangle }\cdot f\left(x\right)\right)
\]
is well-defined and continuous. Here, $C_{K}^{k}\left(U\right)$ is
the space 
\[
C_{K}^{k}\left(U\right):=\left\{ f\in C^{k}\left(U\right)\with\text{supp}\left(f\right)\subset K\right\} 
\]
with norm 
\[
\left\Vert f\right\Vert _{C_{K}^{k}\left(U\right)}:=\max_{\substack{\alpha\in\mathbb{N}_{0}^{d}\\
\left|\alpha\right|\leq k
}
}\sup_{x\in U}\left|\left(\partial^{\alpha}f\right)\left(x\right)\right|.
\]

\end{lem}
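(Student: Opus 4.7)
The plan is to verify well-definedness first and then establish joint continuity via a direct application of the Leibniz rule.

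For well-definedness, fix $(f,\xi) \in C_K^k(U) \times \mathbb{R}^d$ and observe that the function $g_\xi(x) := e^{i\langle x,\xi\rangle}$ is smooth with $|g_\xi| \equiv 1$, so $\Phi(f,\xi) = g_\xi \cdot f$ lies in $C^k(U)$ and has $\text{supp}(\Phi(f,\xi)) = \text{supp}(f) \subset K$; hence $\Phi(f,\xi) \in C_K^k(U)$.

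For continuity at an arbitrary $(f,\xi)$, I would take a sequence $(f_n,\xi_n) \to (f,\xi)$ in the product topology and split
\[
\Phi(f_n,\xi_n) - \Phi(f,\xi) = \Phi(f_n - f, \xi_n) + \bigl(\Phi(f,\xi_n) - \Phi(f,\xi)\bigr),
\]
handling the two terms separately. For the first, the Leibniz rule yields, for every multi-index $\alpha$ with $|\alpha| \leq k$ and every $x \in U$,
\[
\partial^\alpha\bigl[\Phi(f_n - f, \xi_n)\bigr](x) = \sum_{\beta \leq \alpha} \binom{\alpha}{\beta} (i\xi_n)^\beta \, e^{i\langle x,\xi_n\rangle} \, \partial^{\alpha-\beta}(f_n - f)(x).
\]
Since $|e^{i\langle x,\xi_n\rangle}| = 1$ and the sequence $(\xi_n)_{n}$ is bounded by some $M > 0$, this produces an estimate
\[
\|\Phi(f_n - f, \xi_n)\|_{C_K^k(U)} \leq C(M,k) \cdot \|f_n - f\|_{C_K^k(U)} \xrightarrow[n\to\infty]{} 0.
\]

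For the second summand, a similar Leibniz expansion gives
\[
\partial^\alpha\bigl[\Phi(f,\xi_n) - \Phi(f,\xi)\bigr](x) = \sum_{\beta \leq \alpha} \binom{\alpha}{\beta} \bigl[(i\xi_n)^\beta e^{i\langle x,\xi_n\rangle} - (i\xi)^\beta e^{i\langle x,\xi\rangle}\bigr] \, \partial^{\alpha-\beta} f(x),
\]
and I would estimate the bracketed factor by
\[
\bigl|(i\xi_n)^\beta e^{i\langle x,\xi_n\rangle} - (i\xi)^\beta e^{i\langle x,\xi\rangle}\bigr| \leq |\xi_n^\beta - \xi^\beta| + |\xi|^{|\beta|} \cdot \bigl|e^{i\langle x,\xi_n\rangle} - e^{i\langle x,\xi\rangle}\bigr|.
\]
The first summand is independent of $x$ and tends to zero, while the second vanishes uniformly in $x \in K$ by uniform continuity of the complex exponential on the compact set $K$. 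Multiplying by the finite constant $\|f\|_{C_K^k(U)}$ yields the desired convergence in $C_K^k(U)$. No step presents a genuine obstacle; the entire argument is bookkeeping, the only points requiring care being the uniform boundedness of $(\xi_n)$ (to control the first term) and the compactness of $K$ (to convert pointwise convergence of the exponentials into uniform convergence for the second term).
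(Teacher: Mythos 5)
Your argument is correct and follows exactly the route the paper indicates (the paper omits the proof but explicitly says it is a straightforward application of the Leibniz rule together with uniform bounds on the derivatives of $x\mapsto e^{i\langle x,\xi\rangle}$). The only place you are slightly informal is the phrase ``uniform continuity of the complex exponential on the compact set $K$''; what you in fact need is the elementary estimate $\left|e^{i\langle x,\xi_n\rangle}-e^{i\langle x,\xi\rangle}\right|=\left|e^{i\langle x,\xi_n-\xi\rangle}-1\right|\leq\left|\langle x,\xi_n-\xi\rangle\right|\leq\left(\sup_{y\in K}|y|\right)\cdot\left|\xi_n-\xi\right|$, which gives the uniform-in-$x\in K$ convergence directly.
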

Using this result on the continuity of modulation, we can now show
that the extended wavelet transform as defined in equation \eqref{eq:WaveletTrafoAufSpezialReservoir}
actually defines a continuous function that satisfies the expected
reproduction formula. 
\begin{lem}
\label{lem:SpezialWaveletTrafoErfuelltReproduktionsFormel}Let $\psi\in\mathcal{S}\left(\mathbb{R}^{d}\right)$
with $\widehat{\psi}\in\mathcal{D}\left(\mathcal{O}\right)$. Then we have 
\[
\text{supp}\left(\mathcal{F}^{-1}\overline{\pi\left(x,h\right)\psi}\right)\subset h^{-T}\text{supp}\left(\smash{\widehat{\psi}}\right)\subset\mathcal{O},
\]
which implies $\mathcal{F}^{-1}\overline{\pi\left(x,h\right)\psi}\in\mathcal{D}\left(\mathcal{O}\right)$,
i.e. $\overline{\pi\left(x,h\right)\psi}\in\mathcal{F}\left(\mathcal{D}\left(\mathcal{O}\right)\right)$
for all $\left(x,h\right)\in G$. This shows that $W_{\psi}f$ as
defined in equation \eqref{eq:WaveletTrafoAufSpezialReservoir} is
well-defined.

Furthermore, for $\psi\neq0$ and $\varphi\in\mathcal{S}\left(\mathbb{R}^{d}\right)$
with $\widehat{\varphi}\in\mathcal{D}\left(\mathcal{O}\right)$ we
have 
\begin{equation}
\overline{\pi\left(\alpha\right)\varphi}=\frac{1}{C_{\psi}}\cdot\int_{G}\left\langle \pi\left(\beta\right)\psi,\pi\left(\alpha\right)\varphi\right\rangle _{L^{2}\left(\mathbb{R}^{d}\right)}\cdot\overline{\pi\left(\beta\right)\psi}\,\text{d}\beta,\label{eq:PsiDarstellungAlsTestfunktion}
\end{equation}
for all $\alpha\in G$, where the integral is to be understood in
the weak sense in $\mathcal{F}\left(\mathcal{D}\left(\mathcal{O}\right)\right)$.

Finally, for $f\in\left(\mathcal{F}\left(\mathcal{D}\left(\mathcal{O}\right)\right)\right)'$
and $\psi\neq0$, the wavelet transform $W_{\psi}f$ defined
in equation \eqref{eq:WaveletTrafoAufSpezialReservoir} is a continuous
function that satisfies the reproduction formula 
\begin{equation}
W_{\psi}f=W_{\psi}f\ast\frac{W_{\psi}\psi}{C_{\psi}}.\label{eq:ReproduktionsFormelFuerSpezialWaveletTrafo}
\end{equation}
\end{lem}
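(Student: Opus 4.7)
The plan is to establish the three assertions of the lemma in order, bootstrapping the $L^{2}$ wavelet inversion formula \eqref{eq:WaveletInversionsFormel} via the structural lemmata already proved. For the support claim I would combine equation \eqref{eq:QausiRegulaereDarstellungAufFourierSeite} with the elementary identity $\mathcal{F}^{-1}\overline{u}=\overline{\mathcal{F}u}$ to obtain
\[
    \mathcal{F}^{-1}\overline{\pi(x,h)\psi} \;=\; \left|\det h\right|^{1/2}\cdot M_{x}D_{h}\overline{\widehat{\psi}}.
\]
Since $\overline{\widehat{\psi}}\in\mathcal{D}(\mathcal{O})$ and modulation does not affect supports, the right-hand side lies in $\mathcal{D}(\mathcal{O})$ with support contained in $h^{-T}\mathrm{supp}(\widehat{\psi})\subset\mathcal{O}$. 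Thus $\overline{\pi(x,h)\psi}\in\mathcal{F}(\mathcal{D}(\mathcal{O}))$, and the right-hand side of \eqref{eq:WaveletTrafoAufSpezialReservoir} makes sense.

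To prove \eqref{eq:PsiDarstellungAlsTestfunktion}, I would first establish the identity \emph{pointwise in} $\xi\in\mathbb{R}^{d}$. Set $g:=\pi(\alpha)\varphi$; then $g\in\mathcal{S}(\mathbb{R}^{d})$ with $\widehat{g}\in\mathcal{D}(\mathcal{O})$ (by equation \eqref{eq:QausiRegulaereDarstellungAufFourierSeite}). Lemma \ref{lem:FunktionenwertigeStetigkeitUndTraegerVonWaveletTrafo} implies that $h\mapsto(W_{\psi}g)(\cdot,h)$ is a continuous, compactly supported $\mathcal{S}(\mathbb{R}^{d})$-valued map, which forces $W_{\psi}g\in L^{1}(G)$. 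Since $(\pi(\beta)\psi)(\xi)$ is bounded uniformly in $\xi$ for $\beta=(x,h)$ with $h$ in any fixed compact set, the integral $\int_{G}W_{\psi}g(\beta)(\pi(\beta)\psi)(\xi)\,\mathrm{d}\beta$ converges absolutely; its value equals $C_{\psi}\, g(\xi)$ for almost every $\xi$ by \eqref{eq:WaveletInversionsFormel}, and hence for every $\xi$ by joint continuity in $\xi$. Taking complex conjugates and using $\overline{W_{\psi}g(\beta)}=\langle\pi(\beta)\psi,\pi(\alpha)\varphi\rangle_{L^{2}}$ yields the pointwise form of \eqref{eq:PsiDarstellungAlsTestfunktion}.

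To lift the pointwise identity to the claimed weak identity in $\mathcal{F}(\mathcal{D}(\mathcal{O}))$, I would fix $F\in(\mathcal{F}(\mathcal{D}(\mathcal{O})))'$ and let $T:=F\circ\mathcal{F}\in\mathcal{D}'(\mathcal{O})$. The formula of the first step gives $F(\overline{\pi(\beta)\psi})=|\det h|^{1/2}T(M_{x}D_{h}\overline{\widehat{\psi}})$ for $\beta=(x,h)$; since the supports of $D_{h}\overline{\widehat{\psi}}$ stay inside a fixed compact subset of $\mathcal{O}$ as $h$ varies over a compact $K\subset H$, the distribution $T$ acts there with a common finite order, and differentiating $M_{x}$ only introduces polynomial factors in $x$. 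This produces the bound $|F(\overline{\pi(\beta)\psi})|\leq C(1+|x|)^{N}|\det h|^{1/2}$ on $\mathbb{R}^{d}\times K$. Combined with the compact-support-in-$h$/Schwartz-in-$x$ structure of $W_{\psi}g$ from Lemma \ref{lem:FunktionenwertigeStetigkeitUndTraegerVonWaveletTrafo}, it follows that the scalar integrand in the weak formulation lies in $L^{1}(G)$. Interpreting the right-hand side of \eqref{eq:PsiDarstellungAlsTestfunktion} as a Bochner integral into the Fr\'echet space $\mathcal{F}(\mathcal{D}_{K_{0}}(\mathcal{O}))$ for $K_{0}:=K^{-T}\mathrm{supp}(\widehat{\psi})$ (where $K$ contains the effective $h$-support of the integrand), continuity of $F$ on this subspace permits the interchange $F\circ\int=\int F$; combined with the pointwise identity this gives \eqref{eq:PsiDarstellungAlsTestfunktion}.

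Continuity of $W_{\psi}f$ is now immediate: Lemma \ref{lem:GlatteVerkettung} (applied with $\gamma(h,\xi)=h^{T}\xi$) shows that $h\mapsto D_{h}\overline{\widehat{\psi}}$ is continuous into $\mathcal{D}(\mathbb{R}^{d})$, and Lemma \ref{lem:ModulationStetigAufCkMitKompaktemTraeger} handles the modulation $M_{x}$, so $(x,h)\mapsto|\det h|^{1/2}M_{x}D_{h}\overline{\widehat{\psi}}$ is continuous from $G$ into $\mathcal{D}(\mathcal{O})$, and hence $(x,h)\mapsto\overline{\pi(x,h)\psi}$ is continuous into $\mathcal{F}(\mathcal{D}(\mathcal{O}))$; composing with the continuous functional $f$ gives the continuity of $W_{\psi}f$. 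The reproduction formula \eqref{eq:ReproduktionsFormelFuerSpezialWaveletTrafo} is obtained by applying the weak identity \eqref{eq:PsiDarstellungAlsTestfunktion} with $\varphi=\psi$ and $F=f$: the left-hand side becomes $f(\overline{\pi(\alpha)\psi})=W_{\psi}f(\alpha)$, while the unitarity identity $\langle\pi(\beta)\psi,\pi(\alpha)\psi\rangle_{L^{2}}=W_{\psi}\psi(\beta^{-1}\alpha)$ rewrites the right-hand side as the group convolution $\frac{1}{C_{\psi}}(W_{\psi}f\ast W_{\psi}\psi)(\alpha)$. The main obstacle will be the upgrade carried out in the third paragraph, specifically the Bochner integrability into the correct Fr\'echet subspace: this relies on combining the support/decay estimates for $W_{\psi}g$ with the order-and-growth estimate for $F$ obtained through the auxiliary distribution $T=F\circ\mathcal{F}$, and carefully controlling the $h$-dependence of the supports.
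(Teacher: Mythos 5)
Your proposal follows essentially the same route as the paper's proof: the same explicit formula $\mathcal{F}^{-1}\overline{\pi(x,h)\psi} = |\det h|^{1/2}\,(\overline{\widehat\psi}\circ h^{T})\,e_{x}$ for the support statement, the same use of the $L^{2}$ inversion formula \eqref{eq:WaveletInversionsFormel} to identify the Bochner integral with $\overline{\pi(\alpha)\varphi}$, the same support-restriction via Lemma \ref{lem:AdmissibilityVorbereitung}/\ref{lem:FunktionenwertigeStetigkeitUndTraegerVonWaveletTrafo}, and the same continuity argument via Lemmata \ref{lem:GlatteVerkettung} and \ref{lem:ModulationStetigAufCkMitKompaktemTraeger}. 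The one noteworthy difference is in the Bochner-integration step: you propose to integrate in the Fr\'echet space $\mathcal{F}(\mathcal{D}_{K_{0}}(\mathcal{O}))$ and to rely on continuity of $F$ on that subspace, whereas the paper invokes the local structure theorem for distributions (\cite[Theorem 6.8]{RudinFA}) together with Hahn--Banach to fix a finite order $\ell$ in advance, extends $f\circ\mathcal{F}$ to the \emph{Banach} space $C^{\ell}_{K_{3}}(\mathcal{O})$, and carries out the Bochner integral there. Your Fr\'echet-valued variant can be made to work (the estimates in the paper in fact give integrability of every $C^{\ell}$-seminorm, not just one), but the paper's reduction to a single Banach space is cleaner since the interchange of a bounded functional with a Bochner integral is then completely standard. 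One small thing to watch in your pointwise step: you should be explicit that $R(\xi):=\frac{1}{C_{\psi}}\int_{G}W_{\psi}g(\beta)\,(\pi(\beta)\psi)(\xi)\,\mathrm{d}\beta$ and $g$ first agree as $L^{2}$-elements by pairing with $L^{2}$ functions and Fubini (the weak inversion formula is an $L^{2}$ statement, not a pointwise one), and only then that both are continuous, hence equal everywhere; and that the absolute convergence uses the compactness of the $h$-support of $W_{\psi}g$ together with $W_{\psi}g\in L^{1}_{w}(G)$ for suitable $w$ from Theorem \ref{thm:ZulaessigkeitVonbandbeschraenktenFunktionen}, since the Haar measure $\mathrm{d}x\,\mathrm{d}h/|\det h|$ and the factor $|\det h|^{-1/2}$ both need to be absorbed.
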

\begin{proof}
We first note the general identity $\mathcal{F}^{-1}\overline{f}=\overline{\widehat{f}}$
which is valid for arbitrary $f\in L^{1}\left(\mathbb{R}^{d}\right)$.
This yields 
\begin{eqnarray}
\mathcal{F}^{-1}\overline{\pi\left(x,h\right)\psi} & = & \overline{\mathcal{F}\left(\pi\left(x,h\right)\psi\right)}\nonumber \\
 & \overset{\text{Eq. }\eqref{eq:QausiRegulaereDarstellungAufFourierSeite}}{=} & \left|\det\left(h\right)\right|^{1/2}\cdot\overline{M_{-x}D_{h}\widehat{\psi}}\nonumber \\
 & = & \left|\det\left(h\right)\right|^{1/2}\cdot\left(\overline{\widehat{\psi}}\circ h^{T}\right)\cdot e_{x}\label{eq:KonjugierteInverseFourierTrafo}
\end{eqnarray}
with $e_{x}:\mathbb{R}^{d}\rightarrow\mathbb{C},\xi\mapsto e^{2\pi i\left\langle x,\xi\right\rangle }$.
We conclude 
\[
\text{supp}\left(\mathcal{F}^{-1}\overline{\pi\left(x,h\right)\psi}\right)=\text{supp}\left(\overline{\widehat{\psi}}\circ h^{T}\right)=h^{-T}\left(\text{supp}\left(\smash{\widehat{\psi}}\right)\right)\subset h^{-T}\mathcal{O}=\mathcal{O},
\]
as claimed. As $\left(\overline{\widehat{\psi}}\circ h^{T}\right)\cdot e_{x}$
is smooth, we get $\mathcal{F}^{-1}\overline{\pi\left(x,h\right)\psi}\in\mathcal{D}\left(\mathcal{O}\right)$.

Fix $\alpha=\left(x,h\right)\in G$. For $\beta=\left(y,g\right)\in G$
with $\left\langle \pi\left(\beta\right)\psi,\pi\left(\alpha\right)\varphi\right\rangle _{L^{2}}\neq0$,
the Plancherel theorem yields 
\begin{eqnarray*}
0\neq\left\langle \pi\left(\beta\right)\psi,\pi\left(\alpha\right)\varphi\right\rangle _{L^{2}} & = & \left\langle \widehat{\pi\left(\beta\right)\psi},\widehat{\pi\left(\alpha\right)\varphi}\right\rangle _{L^{2}}\\
 & \overset{\text{Eq. }\eqref{eq:QausiRegulaereDarstellungAufFourierSeite}}{=} & \left|\det\left(gh\right)\right|^{1/2}\cdot\left\langle M_{-y}D_{g}\widehat{\psi},M_{-x}D_{h}\widehat{\varphi}\right\rangle _{L^{2}},
\end{eqnarray*}
which implies 
\[
\emptyset\neq\text{supp}\left(D_{g}\widehat{\psi}\right)\cap\text{supp}\left(D_{h}\widehat{\varphi}\right)=h^{-T}\text{supp}\left(\widehat{\varphi}\right)\cap g^{-T}\text{supp}\left(\smash{\widehat{\psi}}\right).
\]
For $K_{1}:=\text{supp}\left(\widehat{\varphi}\right)$, $K_{2}:=\text{supp}\left(\smash{\widehat{\psi}}\right)\subset\mathcal{O}$
and $L=L\left(K_{1},K_{2}\right)\subset H$ compact as in Lemma \ref{lem:AdmissibilityVorbereitung},
the same lemma yields $g\in hL$ and thus 
\[
\text{supp}\left(\mathcal{F}^{-1}\overline{\pi\left(\beta\right)\psi}\right)\subset g^{-T}\text{supp}\left(\smash{\widehat{\psi}}\right)\subset h^{-T}L^{-T}K_{2}=:K_{3}\subset\mathcal{O}.
\]
Note that $K_{3}=K_{3}\left(h\right)$ depends upon $h$ but that
$\alpha=\left(x,h\right)\in G$ is fixed.

This shows that for $\ell\in\mathbb{N}_{0}$ the map 
\begin{eqnarray*}
\Phi_{\ell}: & G\rightarrow C_{K_{3}}^{\ell}\left(\mathcal{O}\right), & \beta=\left(y,g\right)\mapsto \,\,\,\,\,\,\, \left\langle \pi\left(\beta\right)\psi,\pi\left(\alpha\right)\varphi\right\rangle _{L^{2}}\cdot\mathcal{F}^{-1}\overline{\pi\left(\beta\right)\psi}\\
 &  & \phantom{\beta=\left(y\right)}\!\overset{\text{Eq. }\eqref{eq:KonjugierteInverseFourierTrafo}}{=}\left\langle \pi\left(\beta\right)\psi,\pi\left(\alpha\right)\varphi\right\rangle _{L^{2}}\cdot\left|\det\left(g\right)\right|^{1/2}\cdot e_{y}\cdot\left(\overline{\widehat{\psi}}\circ g^{T}\right)
\end{eqnarray*}
is well-defined with $\Phi\left(y,g\right)=0$ for $g\notin hL$.
The strong continuity of $\pi$ and the Lemmata \ref{lem:ModulationStetigAufCkMitKompaktemTraeger}
and \ref{lem:GlatteVerkettung} (with the ensuing remark) show that
$\Phi_{\ell}$ is actually continuous. This implies that $\Phi_{\ell}$
is measurable and that $\Phi_{\ell}\left(G\right)\subset C_{K_{3}}^{\ell}\left(\mathcal{O}\right)$
is $\sigma$-compact and hence separable.

The continuity of $\Gamma:H\rightarrow\mathcal{D}\left(\mathbb{R}^{d}\right),g\mapsto D_{g}\overline{\widehat{\psi}}$,
which was shown in Lemma \ref{lem:GlatteVerkettung} and the ensuing
remark imply finiteness of the constant 
\[
C_{\rho}:=\max_{\gamma\leq\rho}\max_{g\in hL}\left\Vert \partial^{\gamma}\left(\overline{\widehat{\psi}}\circ g^{T}\right)\right\Vert _{\sup}=\max_{\gamma\leq\rho}\max_{g\in hL}\left\Vert \partial^{\gamma}\left(\Gamma\left(g\right)\right)\right\Vert _{\sup}
\]
for all $\rho\in\mathbb{N}_{0}^{d}$. Using the Leibniz rule, we
arrive at 
\begin{align*}
\left\Vert \partial^{\rho}\left(e_{y}\cdot\left(\overline{\widehat{\psi}}\circ g^{T}\right)\right)\right\Vert _{\sup} & =  \left\Vert \sum_{\gamma\leq\rho}{\rho \choose \gamma}\cdot\partial^{\gamma}e_{y}\cdot\partial^{\rho-\gamma}\left(\overline{\widehat{\psi}}\circ g^{T}\right)\right\Vert _{\sup}\\
 & \leq  \sum_{\gamma\leq\rho}{\rho \choose \gamma}\left\Vert \left(2\pi iy\right)^{\gamma}e_{y}\right\Vert _{\sup}\cdot C_{\rho}\\
 & \leq  C_{\rho}\cdot\sum_{\gamma\leq\rho}{\rho \choose \gamma}\left|2\pi y\right|^{\left|\gamma\right|}\leq C_{\rho}'\cdot\left(1+\left|y\right|\right)^{\left|\rho\right|}
\end{align*}
for all $g\in hL$ and some constant $C_{\rho}'>0$. Define $C:=\max_{g\in hL}\left|\det\left(g\right)\right|^{1/2}$
and $\zeta:=\pi\left(\alpha\right)\varphi$ and note 
\[
\left|\left\langle \pi\left(\beta\right)\psi,\pi\left(\alpha\right)\varphi\right\rangle _{L^{2}}\right|=\left|\left\langle \zeta,\pi\left(\beta\right)\psi\right\rangle _{L^{2}}\right|=\left|\left(W_{\psi}\zeta\right)\left(\beta\right)\right|.
\]
Together with $\Phi_{\ell}\left(y,g\right)=0$ for $g\notin hL$,
this proves the estimate 
\begin{eqnarray*}
\left\Vert \partial^{\rho}\Phi_{\ell}\left(\beta\right)\right\Vert _{\text{sup}} & = & \left|\left(W_{\psi}\zeta\right)\left(\beta\right)\right|\cdot\left|\det\left(g\right)\right|^{1/2}\cdot\left\Vert \partial^{\rho}\left(e_{y}\cdot\left(\overline{\widehat{\psi}}\circ g^{T}\right)\right)\right\Vert _{\sup}\\
 & \leq & C\cdot C_{\rho}'\cdot\left(1+\left|y\right|\right)^{\left|\rho\right|}\cdot\left|\left(W_{\psi}\zeta\right)\left(\beta\right)\right|
\end{eqnarray*}
for all $\rho\in\mathbb{N}_{0}^{d}$ and $\beta=\left(y,g\right)\in G$.
In summary, we showed 
\[
    \left\Vert \Phi_{\ell}\left(y,g\right)\right\Vert _{C_{K_{3}}^{\ell}}\leq C \max_{\left|\rho\right| \leq \ell}C_{\rho}'\cdot\left(1+\left|y\right|\right)^{\ell}\cdot\left|\left(W_{\psi}\zeta\right)\left(\beta\right)\right|.
\]
But equation \eqref{eq:QausiRegulaereDarstellungAufFourierSeite}
yields $\widehat{\zeta}=\left|\det\left(h\right)\right|^{1/2}\cdot M_{-x}D_{h}\widehat{\varphi}$,
showing that $\text{supp}\left(\smash{\widehat{\zeta}}\right)\subset h^{-T}\text{supp}\left(\widehat{\varphi}\right)\subset\mathcal{O}$
is compact, so that Theorem \ref{thm:ZulaessigkeitVonbandbeschraenktenFunktionen}
(with $w_{0}\equiv1$ and $N=\ell$) yields $W_{\psi}\zeta\in L_{\left(y,g\right)\mapsto\left(1+\left|y\right|\right)^{\ell}}^{1}\left(G\right)$.
This shows that $\Phi_{\ell}$ is Bochner integrable, so that the
integral 
\[
\varphi_{\ell}:=\frac{1}{C_{\psi}}\cdot\int_{G}\Phi_{\ell}\left(y,g\right)\,\text{d}\left(y,g\right)\in C_{K_{3}}^{\ell}\left(\mathcal{O}\right)\hookrightarrow\bigcap_{p\in\left[1,\infty\right]}L^{p}\left(\mathbb{R}^{d}\right)
\]
is well-defined.

For $f\in L^{2}\left(\mathbb{R}^{d}\right)$, the map $C_{K_{3}}^{\ell}\left(\mathcal{O}\right)\rightarrow\mathbb{C},g\mapsto\left\langle g,f\right\rangle _{L^{2}}$
is a bounded linear functional. Using the left invariance of the Haar
measure, the weak inversion formula (\ref{eq:WaveletInversionsFormel})
and Plancherel's theorem, we calculate 
\begin{eqnarray*}
\left\langle \varphi_{\ell},f\right\rangle _{L^{2}} & = & \frac{1}{C_{\psi}}\cdot\int_{G}\left\langle \left\langle \pi\left(\beta\right)\psi,\pi\left(\alpha\right)\varphi\right\rangle _{L^{2}}\cdot\mathcal{F}^{-1}\overline{\pi\left(\beta\right)\psi},f\right\rangle _{L^{2}}\,\text{d}\beta\\
 & = & \frac{1}{C_{\psi}}\cdot\int_{G}\left\langle \pi\left(\alpha^{-1}\beta\right)\psi,\varphi\right\rangle _{L^{2}}\cdot\left\langle \overline{\pi\left(\beta\right)\psi},\widehat{f}\right\rangle _{L^{2}}\,\text{d}\beta\\
 & = & \frac{1}{C_{\psi}}\cdot\overline{\int_{G}\left\langle \varphi,\pi\left(\alpha^{-1}\beta\right)\psi\right\rangle _{L^{2}}\cdot\left\langle \pi\left(\alpha^{-1}\beta\right)\psi,\pi\left(\alpha^{-1}\right)\overline{\widehat{f}}\right\rangle _{L^{2}}\,\text{d}\beta}\\
 & \overset{\gamma=\alpha^{-1}\beta}{=} & \frac{1}{C_{\psi}}\cdot\overline{\int_{G}\left(W_{\psi}\varphi\right)\left(\gamma\right)\cdot\left\langle \pi\left(\gamma\right)\psi,\pi\left(\alpha^{-1}\right)\overline{\widehat{f}}\right\rangle _{L^{2}}\,\text{d}\gamma}\\
 & \overset{\text{Eq. }\eqref{eq:WaveletInversionsFormel}}{=} & \overline{\left\langle \varphi,\pi\left(\alpha^{-1}\right)\overline{\widehat{f}}\right\rangle _{L^{2}}}=\left\langle \overline{\widehat{f}},\pi\left(\alpha\right)\varphi\right\rangle _{L^{2}}\\
 & = & \left\langle \overline{\pi\left(\alpha\right)\varphi},\widehat{f}\right\rangle _{L^{2}}=\left\langle \mathcal{F}^{-1}\overline{\pi\left(\alpha\right)\varphi},f\right\rangle _{L^{2}}.
\end{eqnarray*}
As this holds for every $f\in L^{2}\left(\mathbb{R}^{d}\right)$,
we get $\mathcal{F}^{-1}\overline{\pi\left(\alpha\right)\varphi}=\varphi_{\ell}$
almost everywhere and then everywhere, as both sides are continuous
functions. In particular, $\varphi_{\ell}\in\mathcal{D}\left(\mathcal{O}\right)$.

Now choose $f\in\left(\mathcal{F}\left(\mathcal{D}\left(\mathcal{O}\right)\right)\right)'$.
By \cite[Theorem 6.8]{RudinFA} and Hahn-Banach, $f\circ\mathcal{F}|_{C_{K_{3}}^{\ell}\left(\mathcal{O}\right)\cap\mathcal{D}\left(\mathcal{O}\right)}$
admits a continuous extension $\widetilde{f}$ to $C_{K_{3}}^{\ell}\left(\mathcal{O}\right)$
for a suitable $\ell\in\mathbb{N}_{0}$. We thus get
\begin{align*}
f\left(\overline{\pi\left(\alpha\right)\varphi}\right) & =\left(f\circ\mathcal{F}\right)\left(\mathcal{F}^{-1}\overline{\pi\left(\alpha\right)\varphi}\right)\\
 & =\left(f\circ\mathcal{F}\right)\left(\varphi_{\ell}\right)=\widetilde{f}\left(\varphi_{\ell}\right)\\
 & =\frac{1}{C_{\psi}}\int_{G}\widetilde{f}\left(\Phi_{\ell}\left(\beta\right)\right)\,\text{d}\beta\\
 & =\frac{1}{C_{\psi}}\int_{G}\left\langle \pi\left(\beta\right)\psi,\pi\left(\alpha\right)\varphi\right\rangle _{L^{2}}\cdot\left(f\circ\mathcal{F}\right)\left(\mathcal{F}^{-1}\overline{\pi\left(\beta\right)\psi}\right)\,\text{d}\beta\\
 & =\frac{1}{C_{\psi}}\int_{G}\left\langle \pi\left(\beta\right)\psi,\pi\left(\alpha\right)\varphi\right\rangle _{L^{2}}\cdot f\left(\overline{\pi\left(\beta\right)\psi}\right)\,\text{d}\beta,
\end{align*}
which proves equation (\ref{eq:PsiDarstellungAlsTestfunktion}), as
$f\in\left(\mathcal{F}\left(\mathcal{D}\left(\mathcal{O}\right)\right)\right)'$
was arbitrary. Additionally, the choice $\varphi=\psi$ shows
\begin{align*}
\left(W_{\psi}f\right)\left(\alpha\right)\overset{\text{Eq. }\eqref{eq:WaveletTrafoAufSpezialReservoir}}{=}f\left(\overline{\pi\left(\alpha\right)\psi}\right) & =\frac{1}{C_{\psi}}\int_{G}\left\langle \pi\left(\beta\right)\psi,\pi\left(\alpha\right)\psi\right\rangle _{L^{2}}\cdot f\left(\overline{\pi\left(\beta\right)\psi}\right)\,\text{d}\beta\\
 & =\frac{1}{C_{\psi}}\int_{G}\left(W_{\psi}f\right)\left(\beta\right)\cdot\left(W_{\psi}\psi\right)\left(\beta^{-1}\alpha\right)\,\text{d}\beta\\
 & =\left(\left(W_{\psi}f\right)\ast\frac{W_{\psi}\psi}{C_{\psi}}\right)\left(\alpha\right)
\end{align*}
which is nothing else than equation (\ref{eq:ReproduktionsFormelFuerSpezialWaveletTrafo}).

The only thing missing is continuity of $W_{\psi}f$. For this, let
$h_{0}\in H$ be arbitrary and choose a compact neighborhood $K_{4}\subset H$
of $h_{0}$. For $\alpha=\left(x,h\right)\in\mathbb{R}^{d}\times K_{4}$
we then have $\text{supp}\left(\mathcal{F}^{-1}\overline{\pi\left(x,h\right)\psi}\right)\subset h^{-T}K_{2}\subset K_{4}^{-T}K_{2}=:K_{5}$.
For $f\in\left(\mathcal{F}\left(\mathcal{D}\left(\mathcal{O}\right)\right)\right)'$
we can choose as above a continuous extension $\widetilde{f}$ of
$f\circ\mathcal{F}|_{\mathcal{D}\left(\mathcal{O}\right)\cap C_{K_{5}}^{\ell}\left(\mathcal{O}\right)}$
to $C_{K_{5}}^{\ell}\left(\mathcal{O}\right)$ for some $\ell\in\mathbb{N}_{0}$.
We then have 
\begin{eqnarray*}
\left(W_{\psi}f\right)\left(x,h\right) & = & \left(f\circ\mathcal{F}\right)\left(\mathcal{F}^{-1}\overline{\pi\left(x,h\right)\psi}\right)\\
 & \overset{\text{Eq. }\eqref{eq:KonjugierteInverseFourierTrafo}}{=} & \left|\det\left(h\right)\right|^{1/2}\cdot\left(f\circ\mathcal{F}\right)\left(\left(\overline{\widehat{\psi}}\circ h^{T}\right)\cdot e_{x}\right)\\
 & = & \left|\det\left(h\right)\right|^{1/2}\cdot\widetilde{f}\left(\left(\overline{\widehat{\psi}}\circ h^{T}\right)\cdot e_{x}\right)
\end{eqnarray*}
for all $\left(x,h\right)\in\mathbb{R}^{d}\times K_{4}$. Noting that
the Lemmata \ref{lem:ModulationStetigAufCkMitKompaktemTraeger} and
\ref{lem:GlatteVerkettung} (with the ensuing remark) show that the
right-hand side defines a continuous function in $\left(x,h\right)$
completes the proof.
\end{proof}
Using this rather technical result, we can now give the proof of Theorem
\ref{thm:IsomorphismusZuSpezialCoorbitRaum}, i.e. of the identification of $\text{Co}\left( L_{v}^{p,q} \right)$ with $\widetilde{\text{Co}}_{\psi}\left( L_{v}^{p,q} \right)$.
\begin{proof}[Proof of Theorem \ref{thm:IsomorphismusZuSpezialCoorbitRaum}]
Let $w:H\rightarrow\left(0,\infty\right)$ be the control weight
for $L_{v}^{p,q}\left(G\right)$ as defined in Lemma \ref{lem:CoorbitVoraussetzungen};
we will interpret this to be a weight on $G$ by $w\left(x,h\right)=w\left(h\right)$
for $\left(x,h\right)\in G$.

For $f\in\text{Co}\left(L_{v}^{p,q}\right)\subset\left(\mathcal{H}_{w}^{1}\right)^{\neg}$
we have
\begin{equation}
    \left(W_{\psi}\left(\Theta f\right)\right)\left(x,h\right) \overset{\text{Eq. } (\ref{eq:WaveletTrafoAufSpezialReservoir})}{=}\left(\Theta f\right)\left(\overline{\pi\left(x,h\right)\psi}\right)=f\left(\pi\left(x,h\right)\psi\right)=\left(W_{\psi}f\right)\left(x,h\right).\label{eq:WaveletTrafoVertauschtMitTheta}
\end{equation}
This shows that $\Theta$ is well-defined and isometric. But note
that we do not yet know that $\left\Vert \cdot\right\Vert _{\widetilde{\text{Co}}_{\psi}\left(L_{v}^{p,q}\right)}$
is a norm. Nevertheless, $\Theta$ is injective, because it is the
restriction of an injective map (cf. Corollary \ref{cor:FourierTrafoAufRaumAbgewaelzt}).

In the proof of the surjectivity of $\Theta$ below, we will need
the fact that $W_{\psi}$ is injective on $\widetilde{\text{Co}}_{\psi}\left(L_{v}^{p,q}\right)$,
which we now prove. Together with the continuity of $W_{\psi}f$ for
$f\in\left(\mathcal{F}\left(\mathcal{D}\left(\mathcal{O}\right)\right)\right)'$
(cf. Lemma \ref{lem:SpezialWaveletTrafoErfuelltReproduktionsFormel}),
this will also show that $\left\Vert \cdot\right\Vert _{\widetilde{\text{Co}}_{\psi}\left(L_{v}^{p,q}\right)}$
is a norm. Choose $f\in\widetilde{\text{Co}}_{\psi}\left(L_{v}^{p,q}\right)$
with $W_{\psi}f\equiv0$ and let $\varphi\in\mathcal{F}\left(\mathcal{D}\left(\mathcal{O}\right)\right)$
be arbitrary. Note that we have $\widehat{\overline{\varphi}}=\overline{\mathcal{F}^{-1}\varphi}\in\mathcal{D}\left(\mathcal{O}\right)$.
Thus, equation \eqref{eq:PsiDarstellungAlsTestfunktion} (with $\alpha=1_{G}$
and with $\overline{\varphi}$ instead of $\varphi$) shows
\[
f\left(\varphi\right)=f\left(\overline{\pi\left(\alpha\right)\overline{\varphi}}\right)=\frac{1}{C_{\psi}}\cdot\int_{G}\left\langle \pi\left(\beta\right)\psi,\pi\left(\alpha\right)\overline{\varphi}\right\rangle _{L^{2}\left(\mathbb{R}^{d}\right)}\cdot\underbrace{f\left(\overline{\pi\left(\beta\right)\psi}\right)}_{=\left(W_{\psi}f\right)\left(\beta\right)=0}\,\text{d}\beta=0.
\]
As $\varphi\in\mathcal{F}\left(\mathcal{D}\left(\mathcal{O}\right)\right)$
was arbitrary, we conclude $f\equiv0$.

It remains to show that $\Theta$ is surjective.
This will in particular imply that $\widetilde{\text{Co}}_{\psi}(L_{v}^{p,q})$ is independent of the choice of $\psi$, as the same is true of $\text{Co}\left( L_{v}^{p,q} \right)$ and of $\Theta$.
To this end, let $f\in\widetilde{\text{Co}}_{\psi}\left(L_{v}^{p,q}\right)$
be arbitrary.

Let $U\subset G$ be an arbitrary open, precompact neighborhood of
$1_{G}$ and set $E:=\frac{W_{\psi}\psi}{C_{\psi}}$. Note that Theorem
\ref{thm:ZulaessigkeitVonbandbeschraenktenFunktionen} implies $E\in W^{R}\left(L^{\infty},L_{w}^{1}\right)$,
i.e. $K_{U^{-1}}E\in L_{w}^{1}\left(G\right)$, where $K_{U^{-1}}E$
denotes the (right sided) control function of $E$ with respect to
$U^{-1}$ (cf. equation (\ref{eq:RechtsseitigeWienerKontrollFunktion})).

The definition of $\widetilde{\text{Co}}_{\psi}\left(L_{v}^{p,q}\right)$
yields $F:=W_{\psi}f\in L_{v}^{p,q}\left(G\right)$, whereas Lemma
\ref{lem:SpezialWaveletTrafoErfuelltReproduktionsFormel} implies
that $F$ obeys the reproduction formula $F=F\ast E$. Observe that
we have $E\left(z^{-1}\right)=\overline{E\left(z\right)}$ for all
$z\in G$. Using this, we get, for $\alpha\in G$ and $\beta\in U$
the estimate 
\begin{eqnarray*}
\left|F\left(\alpha\beta\right)\right| & = & \left|\left(F\ast E\right)\left(\alpha\beta\right)\right|\\
 & \overset{\left|E\left(z^{-1}\right)\right|=\left|E\left(z\right)\right|}{\leq} & \int_{G}\left|F\left(y\right)\right|\cdot\left|E\left(\beta^{-1}\alpha^{-1}y\right)\right|\,\text{d}y\\
 & \overset{\left(\ast\right)}{\leq} & \int_{G}\left|F\left(y\right)\right|\cdot\left(K_{U^{-1}}E\right)\left(\alpha^{-1}y\right)\,\text{d}y\\
 & = & \left(\left|F\right|\ast\left(K_{U^{-1}}E\right)^{\vee}\right)\left(\alpha\right).
\end{eqnarray*}
In the step marked with $\left(\ast\right)$ we used the fact that
$U^{-1}\alpha^{-1}y\subset G$ is open and that $E$ is continuous,
so that we have 
\[
\left(K_{U^{-1}}E\right)\left(\alpha^{-1}y\right)=\left\Vert \chi_{U^{-1}\alpha^{-1}y}\cdot E\right\Vert _{L^{\infty}\left(G\right)}=\sup_{\gamma\in U^{-1}}\left|E\left(\gamma\alpha^{-1}y\right)\right|.
\]
The above estimate means nothing but
\[\left(K_{U}'F\right)\left(\alpha\right)\leq\left(\left|F\right|\ast\left(K_{U^{-1}}E\right)^{\vee}\right)\left(\alpha\right)\]
where $K_{U}'F$ is the left-sided control function of $F$ with respect
to $U$ (cf. equation \eqref{eq:LinksseitigeWienerKontrollFunktion}).
Note that the continuity of $F$ (cf. Lemma \ref{lem:SpezialWaveletTrafoErfuelltReproduktionsFormel})
implies that $K_{U}'F$ is lower semicontinuous and hence measurable.

As noted above, we have $\left|F\right|\in L_{v}^{p,q}\left(G\right)$
and $K_{U^{-1}}E\in L_{w}^{1}\left(G\right)$. Using Lemma \ref{lem:CoorbitVoraussetzungen},
this implies $\left|F\right|\ast\left(K_{U^{-1}}E\right)^{\vee}\in L_{v}^{p,q}\left(G\right)$.
Thus, the solidity of $L_{v}^{p,q}$ yields $K_{U}'F\in L_{v}^{p,q}\left(G\right)$,
i.e. $F\in W\left(L^{\infty},L_{v}^{p,q}\right)$.

Now \cite[Lemma 3.3]{RauhutWienerAmalgam} shows that we have the
estimate
\[
r\left(x\right):=\vertiii{L_{x^{-1}}}_{W\left(L^{\infty},L_{v}^{p,q}\right)}\leq\vertiii{L_{x^{-1}}}_{L_{v}^{p,q}}\overset{\text{Lemma }\ref{lem:GemischterLebesgueRaumLinksRechtsTranslation}}{\leq}w\left(x^{-1}\right)=w\left(x\right).
\]
Finally, \cite[Lemma 3.2]{RauhutWienerAmalgam} yields $W\left(L^{\infty},L_{v}^{p,q}\right)\hookrightarrow L_{1/r}^{\infty}\left(G\right)\hookrightarrow L_{1/w}^{\infty}\left(G\right)$
and hence $F\in L_{1/w}^{\infty}\left(G\right)$.

By Lemma \ref{lem:SpezialWaveletTrafoErfuelltReproduktionsFormel},
$F$ obeys the reproduction formula $F=F\ast W_{\psi}\psi / C_{\psi}$ so that
\cite[Theorem 4.1(iv)]{FeichtingerCoorbit1} guarantees the existence
of $g\in\left(\mathcal{H}_{w}^{1}\right)^{\neg}$ satisfying $W_{\psi}g=W_{\psi}f\in L_{v}^{p,q}\left(G\right)$.
This immediately entails $g\in\text{Co}\left(L_{v}^{p,q}\left(G\right)\right)$
and thus $\Theta g\in\widetilde{\text{Co}}_{\psi}\left(L_{v}^{p,q}\right)$.

But equation \eqref{eq:WaveletTrafoVertauschtMitTheta} shows $W_{\psi}\left(\smash{\Theta g}\right)=W_{\psi}g=W_{\psi}f$
which implies $f=\Theta g$, because we have seen above that $W_{\psi}$
is injective on $\widetilde{\text{Co}}_{\psi}\left(L_{v}^{p,q}\right)$.
\end{proof}
We now show that the map $\mathcal{F}^{-1}:\mathcal{D}'\left(\mathcal{O}\right)\rightarrow\left(\mathcal{F}\left(\mathcal{D}\left(\mathcal{O}\right)\right)\right)',f\mapsto f\circ\mathcal{F}^{-1}$
restricts to a continuous map $\mathcal{F}^{-1}:\mathcal{D}\left(\mathcal{Q},L^{p},\ell_{u}^{q}\right)\rightarrow \widetilde{\text{Co}}\left(L_{v}^{p,q}\right)$.
\begin{lem}
\label{lem:SpezialWaveletTrafoLiegtInPassendemRaum}Assume that $\mathcal{Q}=\left(h_{i}^{-T}Q\right)_{i\in I}$
is a decomposition covering of $\mathcal{O}$ induced by $H$ and
choose 
\[
u_{i}=\left|\det\left(h_{i}\right)\right|^{\frac{1}{2}-\frac{1}{q}}\cdot v\left(h_{i}\right)\text{ for }i\in I.
\]
Choose $\psi\in\mathcal{S}\left(\mathbb{R}^{d}\right)\setminus\left\{ 0\right\} $
with $\widehat{\psi}\in\mathcal{D}\left(\mathcal{O}\right)$ and let
$p,q\in\left[1,\infty\right]$. Then there is a constant $C>0$ such
that 
\[
\left\Vert W_{\psi}\left(f\circ\mathcal{F}^{-1}\right)\right\Vert _{L_{v}^{p,q}}\leq C\cdot\left\Vert f\right\Vert _{\mathcal{D}\left(\mathcal{Q},L^{p},\ell_{u}^{q}\right)}<\infty
\]
holds for all $f\in\mathcal{D}\left(\mathcal{Q},L^{p},\ell_{u}^{q}\right)$.
Thus, the map
\[
\mathcal{F}^{-1}:\mathcal{D}\left(\mathcal{Q},L^{p},\ell_{u}^{q}\right)\rightarrow\widetilde{\text{Co}}\left(L_{v}^{p,q}\right),f\mapsto f\circ\mathcal{F}^{-1}
\]
is well-defined and bounded.\end{lem}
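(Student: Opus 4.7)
The plan is to use a BAPU $\left(\varphi_{i}\right)_{i\in I}$ subordinate to $\mathcal{Q}=\left(h_{i}^{-T}Q\right)_{i\in I}$ to decompose the action of $f$ into finitely many pieces for each $(x,h)$, represent each piece as a convolution, and then estimate via Young's inequality. I would start by invoking equation \eqref{eq:KonjugierteInverseFourierTrafo}, which shows $\mathcal{F}^{-1}\overline{\pi(x,h)\psi}\in\mathcal{D}(\mathcal{O})$ with support in $h^{-T}K$, where $K:=\mathrm{supp}(\widehat{\psi})$. Applying Lemma \ref{lem:AdmissibilityVorbereitung} with $K_{1}:=K$ and $K_{2}:=\overline{Q}$ produces a compact $L\subset H$ and a constant $N\in\mathbb{N}$ (both independent of $h$) such that only indices $i$ in
\[
I_{h} := \left\{ i \in I \with Q_{i} \cap h^{-T}K \neq \emptyset \right\}
\]
can contribute to the pairing, $I_{h}\subset\left\{ i\in I \with h_{i}\in hL\right\}$, and $|I_{h}|\leq N$.

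Decomposing $1\equiv\sum_{i}\varphi_{i}$ on $\mathcal{O}$ then gives the finite-sum representation
\[
W_{\psi}\left(f\circ\mathcal{F}^{-1}\right)(x,h) = \sum_{i\in I_{h}}\left(\varphi_{i}f\right)\left(\mathcal{F}^{-1}\overline{\pi(x,h)\psi}\right).
\]
The next step is Parseval duality: since $\varphi_{i}f$ is a compactly supported (hence tempered) distribution and $\mathcal{F}^{-1}(\varphi_{i}f)\in L^{p}(\mathbb{R}^{d})$ by hypothesis, the pairing against the Schwartz function $\overline{\pi(x,h)\psi}$ reduces to an absolutely convergent integral, which one rewrites as a convolution by unfolding the translation-dilation structure of $\pi(x,h)\psi$, producing $|\det(h)|^{-1/2}\left(\mathcal{F}^{-1}(\varphi_{i}f)\ast D_{h^{-T}}\psi^{\ast}\right)(x)$ with $\psi^{\ast}(u):=\overline{\psi(-u)}$. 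Young's inequality combined with $\left\Vert D_{h^{-T}}\psi^{\ast}\right\Vert _{L^{1}}=|\det(h)|\cdot\left\Vert \psi\right\Vert _{L^{1}}$ then yields the pointwise-in-$h$ bound
\[
\left\Vert W_{\psi}\left(f\circ\mathcal{F}^{-1}\right)(\cdot,h)\right\Vert _{L^{p}} \leq |\det(h)|^{1/2}\left\Vert \psi\right\Vert _{L^{1}}\sum_{i\in I_{h}}\left\Vert \mathcal{F}^{-1}(\varphi_{i}f)\right\Vert _{L^{p}}.
\]

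Finally I would compute the $L_{v}^{p,q}$-norm in $h$. For $q<\infty$, Jensen's inequality applied to the at most $N$ terms on the right, followed by Fubini, reduces matters to estimating $\int_{\left\{ h\with i\in I_{h}\right\} }v(h)^{q}|\det(h)|^{q/2-1}\,\mathrm{d}h$ for fixed $i$. Since this set is contained in $h_{i}L^{-1}$, a change of variables using left-invariance of Haar measure together with the $v_{0}$-moderateness of $v$ and the boundedness of $|\det|$ above and away from zero on the compact set $L^{-1}$ bound the integral by $C\cdot v(h_{i})^{q}|\det(h_{i})|^{q/2-1}=C\cdot u_{i}^{q}$, so summing over $i$ yields the desired norm estimate. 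The case $q=\infty$ is analogous, replacing Jensen by the pointwise bound $v(h)|\det(h)|^{1/2}\leq C\cdot u_{i}$ on $h_{i}L^{-1}$ and invoking $|I_{h}|\leq N$. The step I expect to be the main obstacle is the Parseval manipulation: one must carefully justify viewing $\varphi_{i}f$ as a tempered distribution, express the pairing against $\overline{\pi(x,h)\psi}$ as an integral of $\mathcal{F}^{-1}(\varphi_{i}f)\in L^{p}$ against a Schwartz function, and unfold the result into the clean convolution form that feeds into Young's inequality.
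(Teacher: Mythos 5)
Your proposal is correct and follows the paper's overall architecture: same BAPU, same localization identity $W_{\psi}(f\circ\mathcal{F}^{-1})(x,h)=\sum_{i\in I_h}(\varphi_i f)(\mathcal{F}^{-1}\overline{\pi(x,h)\psi})$, same convolution representation $|\det(h)|^{-1/2}(\mathcal{F}^{-1}(\varphi_i f)\ast D_{h^{-T}}\psi^*)(x)$, and same Young's inequality bound $\|W_\psi(f\circ\mathcal{F}^{-1})(\cdot,h)\|_{L^p}\leq |\det(h)|^{1/2}\|\psi\|_{L^1}\sum_{i\in I_h}\|\mathcal{F}^{-1}(\varphi_i f)\|_{L^p}$. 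The one genuine divergence is in the final summation step. The paper tiles $H=\bigcup_i h_i U$, constructs an \emph{auxiliary} enlarged covering $\mathcal{Q}'=(h_i^{-T}K_2)_{i\in I}$ with $K_2=\overline{U}^{-T}K\cup\overline{Q}$, shows $I_h\subset i^{*_{\mathcal{Q}'}}$ for $h\in h_i U$, and then invokes admissibility and $u$-moderateness with respect to $\mathcal{Q}'$ together with the symmetry $j\in i^{*_{\mathcal{Q}'}}\Leftrightarrow i\in j^{*_{\mathcal{Q}'}}$ to swap a double sum. You instead apply Jensen, interchange sum and integral by Tonelli, and for each fixed index $i$ directly bound $\int_{\{h\,:\,i\in I_h\}}v(h)^q|\det(h)|^{q/2-1}\,{\rm d}h$ using the containment $\{h : i\in I_h\}\subset h_i L^{-1}$ from Lemma \ref{lem:AdmissibilityVorbereitung}, then the $v_0$-moderateness of $v$ and boundedness of $|\det|$ on the compact set $L^{-1}$ to get $C\cdot u_i^q$. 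This is a clean and legitimate shortcut that avoids introducing $\mathcal{Q}'$ and its cluster bookkeeping entirely; the trade-off is that the paper's auxiliary-covering device is reusable machinery (moderateness of $u$, finite clusters) whereas yours is a more bespoke estimate — but for this single lemma your route is arguably the more economical one.
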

\begin{rem}
\label{rem:InverseFourierTransformationIstNatuerlich}It is worth
noting that the inverse Fourier transform $\mathcal{F}^{-1}:\mathcal{D}\left(\mathcal{Q},L^{p},\ell_{u}^{q}\right)\rightarrow\widetilde{\text{Co}}\left(L_{v}^{p,q}\right)$
defined as above coincides on $L^{2}\left(\mathbb{R}^{d}\right)\cap\mathcal{D}\left(\mathcal{Q},L^{p},\ell_{u}^{q}\right)$
with the ordinary Fourier transform, where $f\in L^{2}\left(\mathbb{R}^{d}\right)$
is considered as an element of $\mathcal{D}'\left(\mathcal{O}\right)$
by $f\left(\varphi\right):=\int f\cdot\varphi\,{\rm d}x=\left\langle f,\overline{\varphi}\right\rangle _{L^{2}}$
for $\varphi\in\mathcal{D}\left(\mathcal{O}\right)$.

For the proof, %
{} simply note that we have
\[
\left(f\circ\mathcal{F}^{-1}\right)\left(\varphi\right)=f\left(\mathcal{F}^{-1}\varphi\right)=\left\langle f,\overline{\mathcal{F}^{-1}\varphi}\right\rangle _{L^{2}}=\left\langle f,\widehat{\overline{\varphi}}\right\rangle _{L^{2}}=\left\langle \mathcal{F}^{-1}f,\overline{\varphi}\right\rangle _{L^{2}}=\left(\mathcal{F}^{-1}f\right)\left(\varphi\right)
\]
for $f\in L^{2}\left(\mathbb{R}^{d}\right)\cap\mathcal{D}\left(\mathcal{Q},L^{p},\ell_{u}^{q}\right)$
and $\varphi\in\mathcal{F}\left(\mathcal{D}\left(\mathcal{O}\right)\right)\subset L^{2}\left(\mathbb{R}^{d}\right)$.\end{rem}
\begin{proof}
Let $\left(\varphi_{i}\right)_{i\in I}$ be a $\mathcal{Q}$-BAPU.
Define $K:=\text{supp}\left(\smash{\widehat{\psi}}\right)\subset\mathcal{O}$.
For $h\in H$ we define 
\[
I_{h}:=\left\{ i\in I\with h^{-T}K\cap h_{i}^{-T}\overline{Q}\neq\emptyset\right\} .
\]
Note that $\overline{Q}\subset\mathcal{O}$ is compact by definition
of an induced covering. Therefore, Lemma \ref{lem:AdmissibilityVorbereitung}
yields a constant $C_{1}=C_{1}\left(\left(h_{i}\right)_{i\in I},\overline{Q},K\right)>0$
with $\left|I_{h}\right|\leq C$ for all $h\in H$. Note that we have
\begin{equation}
\sum_{i\in I_{h}}\varphi_{i}\left(x\right)=1\qquad\text{for all }x\in h^{-T}K,\label{eq:LokalisiertePartitonOfUnity}
\end{equation}
because for $x\in h^{-T}K$ and $i\in I$ with $\varphi_{i}\left(x\right)\neq0$
we have $x\in\varphi_{i}^{-1}\left(\mathbb{C}^{\ast}\right)\subset h_{i}^{-T}Q$
and thus $x\in h^{-T}K\cap h_{i}^{-T}Q$, i.e. $i\in I_{h}$. This
shows $1=\sum_{i\in I}\varphi_{i}\left(x\right)=\sum_{i\in I_{h}}\varphi_{i}\left(x\right)$.

Let $\left(x,h\right)\in G$ be arbitrary. Because of Lemma \ref{lem:SpezialWaveletTrafoErfuelltReproduktionsFormel}
we know $F_{x,h}:=\mathcal{F}^{-1}\overline{\pi\left(x,h\right)\psi}\in\mathcal{D}\left(\mathcal{O}\right)$
with 
\[
\text{supp}\left(F_{x,h}\right)\subset h^{-T}\text{supp}\left(\smash{\widehat{\psi}}\right)=h^{-T}K.
\]
Together with equation (\ref{eq:LokalisiertePartitonOfUnity}) this
yields 
\[
F_{x,h}=\sum_{i\in I_{h}}\varphi_{i}F_{x,h}.
\]
Let $f\in\mathcal{D}\left(\mathcal{Q},L^{p},\ell_{u}^{q}\right)$.
Then the above identity yields the fundamental \textbf{localization
identity}
\begin{eqnarray}
\left(W_{\psi}\left(f\circ\mathcal{F}^{-1}\right)\right)\left(x,h\right) & \overset{\text{Eq. }\eqref{eq:WaveletTrafoAufSpezialReservoir}}{=} & \left(f\circ\mathcal{F}^{-1}\right)\left(\overline{\pi\left(x,h\right)\psi}\right)\nonumber \\
 & = & f\left(F_{x,h}\right)\nonumber \\
 & = & \sum_{i\in I_{h}}f\left(\varphi_{i}F_{x,h}\right)=\sum_{i\in I_{h}}\left(\varphi_{i}f\right)\left(F_{x,h}\right).\label{eq:LokalisierungsIdentitaet}
\end{eqnarray}
Note that $\varphi_{i}f$ is a distribution with compact support
(and hence a tempered distribution) and that we have $\mathcal{F}^{-1}\left(\varphi_{i}f\right)\in L^{p}\left(\mathbb{R}^{d}\right)$
because of $f\in\mathcal{D}\left(\mathcal{Q},L^{p},\ell_{u}^{q}\right)$.
Using this and the definition of the quasi-regular representation,
we calculate 
\begin{eqnarray*}
\left(\varphi_{i}f\right)\left(F_{x,h}\right) & = & \left(\varphi_{i}f\right)\left(\mathcal{F}^{-1}\overline{\pi\left(x,h\right)\psi}\right)=\left(\mathcal{F}^{-1}\left(\varphi_{i}f\right)\right)\left(\overline{\pi\left(x,h\right)\psi}\right)\\
 & \overset{\text{Eq. }\eqref{eq:QuasiRegulaereDarstellung}}{=} & \left|\det\left(h\right)\right|^{-1/2}\cdot\int_{\mathbb{R}^{d}}\left(\mathcal{F}^{-1}\left(\varphi_{i}f\right)\right)\left(y\right)\cdot\overline{\left(D_{h^{-T}}\psi\right)\left(y-x\right)}\,\text{d}y\\
 & = & \left|\det\left(h\right)\right|^{-1/2}\cdot\int_{\mathbb{R}^{d}}\left(\mathcal{F}^{-1}\left(\varphi_{i}f\right)\right)\left(y\right)\cdot\left(D_{h^{-T}}\psi^{\ast}\right)\left(x-y\right)\,\text{d}y\\
 & = & \left|\det\left(h\right)\right|^{-1/2}\cdot\left(\left(\mathcal{F}^{-1}\left(\varphi_{i}f\right)\right)\ast\left(D_{h^{-T}}\psi^{\ast}\right)\right)\left(x\right)
\end{eqnarray*}
with $\psi^{\ast}\left(y\right)=\overline{\psi\left(-y\right)}$ for
$y\in\mathbb{R}^{d}$.

Using Young's inequality, we derive 
\begin{eqnarray*}
\left\Vert x\mapsto\left(\varphi_{i}f\right)\left(F_{x,h}\right)\right\Vert _{L^{p}\left(\mathbb{R}^{d}\right)} & = & \left|\det\left(h\right)\right|^{-1/2}\cdot\left\Vert \left(\mathcal{F}^{-1}\left(\varphi_{i}f\right)\right)\ast\left(D_{h^{-T}}\psi^{\ast}\right)\right\Vert _{L^{p}\left(\mathbb{R}^{d}\right)}\\
 & \leq & \left|\det\left(h\right)\right|^{-1/2}\cdot\left\Vert D_{h^{-T}}\psi^{\ast}\right\Vert _{L^{1}\left(\mathbb{R}^{d}\right)}\cdot\left\Vert \mathcal{F}^{-1}\left(\varphi_{i}f\right)\right\Vert _{L^{p}\left(\mathbb{R}^{d}\right)}\\
 & = & \left|\det\left(h\right)\right|^{1/2}\cdot\left\Vert \psi^{\ast}\right\Vert _{L^{1}\left(\mathbb{R}^{d}\right)}\cdot\left\Vert \mathcal{F}^{-1}\left(\varphi_{i}f\right)\right\Vert _{L^{p}\left(\mathbb{R}^{d}\right)}.
\end{eqnarray*}
Together with the localization identity (\ref{eq:LokalisierungsIdentitaet})
this shows 
\begin{eqnarray}
\left\Vert \left(W_{\psi}\left(f\circ\mathcal{F}^{-1}\right)\right)\left(\cdot,h\right)\right\Vert _{L^{p}\left(\mathbb{R}^{d}\right)} & \leq & \sum_{i\in I_{h}}\left\Vert x\mapsto\left(\varphi_{i}f\right)\left(F_{x,h}\right)\right\Vert _{L^{p}\left(\mathbb{R}^{d}\right)}\nonumber \\
 & \leq & \left|\det\left(h\right)\right|^{1/2}\cdot\left\Vert \psi^{\ast}\right\Vert _{L^{1}\left(\mathbb{R}^{d}\right)}\cdot\sum_{i\in I_{h}}\left\Vert \mathcal{F}^{-1}\left(\varphi_{i}f\right)\right\Vert _{L^{p}\left(\mathbb{R}^{d}\right)}\label{eq:KontrolleVonWaveletTrafoPunktweiseDurchDekompositionsNorm}
\end{eqnarray}
for all $h\in H$.

By definition of an induced covering, $\left(h_{i}\right)_{i\in I}$
is well-spread in $H$, so that there is a precompact, measurable
set $U\subset H$ with $H=\bigcup_{i\in I}h_{i}U$. Choose $K_{2}:=\overline{U}^{-T}K\cup\overline{Q}$
and note that the family $\mathcal{Q}':=\left(Q_{i}'\right)_{i\in I}:=\left(h_{i}^{-T}K_{2}\right)_{i\in I}$
is an induced covering of $\mathcal{O}$. Note that for $i\in I$,
$h\in h_{i}U$ and $j\in I_{h}$ we have 
\[
\emptyset\neq h^{-T}K\cap h_{j}^{-T}\overline{Q}\subset h_{i}^{-T}U^{-T}K\cap h_{j}^{-T}\overline{Q}\subset Q_{i}'\cap Q_{j}'
\]
and thus $j\in i^{\ast_{\mathcal{Q}'}}$, i.e. 
\begin{equation}
I_{h}\subset i^{\ast_{\mathcal{Q}'}}\text{ for all }i\in I\text{ and }h\in h_{i}U,\label{eq:InverseFourierStetigkeitSchnittEnthaltenInCluster}
\end{equation}
where the cluster $i^{\ast_{\mathcal{Q}'}}$ is taken with respect
to $\mathcal{Q}'$. Theorem \ref{thm:InduzierteUeberdeckungKonstruktion}
shows that $\mathcal{Q}'$ is an admissible covering of $\mathcal{O}$,
so that 
\[
C_{1}:=\sup_{i\in I}\left|i^{\ast_{\mathcal{Q}'}}\right|\in\mathbb{N}
\]
is a finite constant. Furthermore, Lemmata \ref{lem:GewichtsDiskretisierung}
and \ref{lem:GewichtsTransplantation} show that $\left(u_{i}\right)_{i\in I}$
is $\mathcal{Q}'$-moderate, so that we have $u_{i}\leq C_{2}\cdot u_{j}$
for all $i\in I$ and $j\in i^{\ast_{\mathcal{Q}'}}$ for some constant
$C_{2}>0$. Finally, let $C_{3}:=\max_{u\in\overline{U}}\left|\det\left(u\right)\right|^{\frac{1}{2}-\frac{1}{q}}$
and $C_{4}:=\sup_{u\in\overline{U}}v_{0}\left(1_{H}\right)v_{0}\left(u\right)$.
Then we have, for $i\in I$ and $h=h_{i}u\in h_{i}U$: 
\[
\left|\det\left(h\right)\right|^{\frac{1}{2}-\frac{1}{q}}\leq C_{3}\cdot\left|\det\left(h_{i}\right)\right|^{\frac{1}{2}-\frac{1}{q}}\quad\text{ and }\quad v\left(h\right)=v\left(1_{H}h_{i}u\right)\leq v_{0}\left(1_{H}\right)v\left(h_{i}\right)v_{0}\left(u\right)\leq C_{4}\cdot v\left(h_{i}\right).
\]

We first show the claim of the lemma in the case $q=\infty$. To this
end, let $h\in H$ be arbitrary. Then we have $h\in h_{i}U$ for some
$i\in I$. We can then estimate 
\begin{eqnarray*}
    &  & \hspace{-0.4cm} v\left(h\right)\cdot\left\Vert \left(W_{\psi}\left(f\circ\mathcal{F}^{-1}\right)\right)\left(\cdot,h\right)\right\Vert _{L^{p}\left(\mathbb{R}^{d}\right)}\\
    & \overset{\text{Eq. }\eqref{eq:KontrolleVonWaveletTrafoPunktweiseDurchDekompositionsNorm}}{\leq} &\hspace{-0.4cm}  v\left(h\right) \cdot \left|\det\left(h\right)\right|^{1/2}\cdot\left\Vert \psi^{\ast}\right\Vert _{L^{1}\left(\mathbb{R}^{d}\right)}\cdot\sum_{j\in I_{h}}\left\Vert \mathcal{F}^{-1}\left(\varphi_{j}f\right)\right\Vert _{L^{p}\left(\mathbb{R}^{d}\right)}\\
    & \overset{\frac{1}{2}-\frac{1}{q}=\frac{1}{2}\text{ and Eq.}\eqref{eq:InverseFourierStetigkeitSchnittEnthaltenInCluster}}{\leq} & \hspace{-0.4cm} C_{3}C_{4} \cdot \left\Vert \psi^{\ast}\right\Vert _{L^{1}\left(\mathbb{R}^{d}\right)}\cdot\sum_{j\in i^{\ast_{\mathcal{Q}'}}}\left[u_{i}\cdot\left\Vert \mathcal{F}^{-1}\left(\varphi_{j}f\right)\right\Vert _{L^{p}\left(\mathbb{R}^{d}\right)}\right]\\
    & \leq & \hspace{-0.4cm} C_{2}C_{3}C_{4} \cdot \left\Vert \psi^{\ast}\right\Vert _{L^{1}\left(\mathbb{R}^{d}\right)}\cdot\sum_{j\in i^{\ast_{\mathcal{Q}'}}}\left[u_{j}\cdot\left\Vert \mathcal{F}^{-1}\left(\varphi_{j}f\right)\right\Vert _{L^{p}\left(\mathbb{R}^{d}\right)}\right]\\
    & \overset{q=\infty}{\leq} & \hspace{-0.4cm} C_{2}C_{3}C_{4} \cdot \left\Vert \psi^{\ast}\right\Vert _{L^{1}\left(\mathbb{R}^{d}\right)}\cdot\left|i^{\ast_{\mathcal{Q}'}}\right|\cdot\left\Vert f\right\Vert _{\mathcal{D}\left(\mathcal{Q},L^{p},\ell_{u}^{q}\right)}\\
    & \leq & \hspace{-0.4cm} C_{1}C_{2}C_{3}C_{4} \cdot \left\Vert \psi^{\ast}\right\Vert _{L^{1}\left(\mathbb{R}^{d}\right)}\cdot\left\Vert f\right\Vert _{\mathcal{D}\left(\mathcal{Q},L^{p},\ell_{u}^{q}\right)}<\infty,
\end{eqnarray*}
where the constants $C_{1},\dots,C_{4}$ and $\left\Vert \psi^{\ast}\right\Vert _{L^{1}\left(\mathbb{R}^{d}\right)}$
are independent of $f$.

In the case $1\leq q<\infty$ we first note that equation (\ref{eq:KontrolleVonWaveletTrafoPunktweiseDurchDekompositionsNorm})
implies, for $i\in I$ and $h\in h_{i}U$, the estimate 
\begin{eqnarray*}
 &  & \left|\det\left(h\right)\right|^{-1}\cdot\left(v\left(h\right)\cdot\left\Vert \left(W_{\psi}\left(f\circ\mathcal{F}^{-1}\right)\right)\left(\cdot,h\right)\right\Vert _{L^{p}\left(\mathbb{R}^{d}\right)}\right)^{q}\\
 & \overset{\text{Eq. }\eqref{eq:KontrolleVonWaveletTrafoPunktweiseDurchDekompositionsNorm}}{\leq} & \left|\det\left(h\right)\right|^{-1}\cdot\left(v\left(h\right)\cdot\left|\det\left(h\right)\right|^{1/2}\cdot\left\Vert \psi^{\ast}\right\Vert _{L^{1}\left(\mathbb{R}^{d}\right)}\cdot\sum_{j\in I_{h}}\left\Vert \mathcal{F}^{-1}\left(\varphi_{j}f\right)\right\Vert _{L^{p}\left(\mathbb{R}^{d}\right)}\right)^{q}\\
 & \overset{\text{Eq. }\eqref{eq:InverseFourierStetigkeitSchnittEnthaltenInCluster}}{\leq} & \left(\left\Vert \psi^{\ast}\right\Vert _{L^{1}\left(\mathbb{R}^{d}\right)}\cdot\left|\det\left(h\right)\right|^{\frac{1}{2}-\frac{1}{q}}\cdot v\left(h\right)\right)^{q}\cdot\left|i^{\ast_{\mathcal{Q}'}}\right|^{q}\cdot\max\left\{ \left\Vert \mathcal{F}^{-1}\left(\varphi_{j}f\right)\right\Vert _{L^{p}\left(\mathbb{R}^{d}\right)}^{q}\with j\in i^{\ast_{\mathcal{Q}'}}\right\} \\
 & \leq & \left(C_{1}C_{3}C_{4}\left\Vert \psi^{\ast}\right\Vert _{L^{1}\left(\mathbb{R}^{d}\right)}\right)^{q}\cdot\sum_{j\in i^{\ast_{\mathcal{Q}'}}}\left[u_{i}^{q}\cdot\left\Vert \mathcal{F}^{-1}\left(\varphi_{j}f\right)\right\Vert _{L^{p}\left(\mathbb{R}^{d}\right)}^{q}\right]\\
 & \leq & \left(C_{1}C_{2}C_{3}C_{4}\left\Vert \psi^{\ast}\right\Vert _{L^{1}\left(\mathbb{R}^{d}\right)}\right)^{q}\cdot\sum_{j\in i^{\ast_{\mathcal{Q}'}}}\left[u_{j}\cdot\left\Vert \mathcal{F}^{-1}\left(\varphi_{j}f\right)\right\Vert _{L^{p}\left(\mathbb{R}^{d}\right)}\right]^{q}\\
 & =: & C_{5}^{q}\cdot\sum_{j\in i^{\ast_{\mathcal{Q}'}}}\left[u_{j}\cdot\left\Vert \mathcal{F}^{-1}\left(\varphi_{j}f\right)\right\Vert _{L^{p}\left(\mathbb{R}^{d}\right)}\right]^{q}.
\end{eqnarray*}
Because of $H=\bigcup_{i\in I}h_{i}U$, this yields 
\begin{eqnarray*}
\left\Vert W_{\psi}\left(f\circ\mathcal{F}^{-1}\right)\right\Vert _{L_{v}^{p,q}}^{q} & = & \int_{H}\left(v\left(h\right)\cdot\left\Vert \left(W_{\psi}\left(f\circ\mathcal{F}^{-1}\right)\right)\left(\cdot,h\right)\right\Vert _{L^{p}\left(\mathbb{R}^{d}\right)}\right)^{q}\frac{\text{d}h}{\left|\det\left(h\right)\right|}\\
 & \leq & \sum_{i\in I}\int_{h_{i}U}\left(v\left(h\right)\cdot\left\Vert \left(W_{\psi}\left(f\circ\mathcal{F}^{-1}\right)\right)\left(\cdot,h\right)\right\Vert _{L^{p}\left(\mathbb{R}^{d}\right)}\right)^{q}\frac{\text{d}h}{\left|\det\left(h\right)\right|}\\
 & \leq & \sum_{i\in I}\left(\mu_{H}\left(h_{i}U\right)C_{5}^{q}\cdot\sum_{j\in i^{\ast_{\mathcal{Q}'}}}\left[u_{j}\cdot\left\Vert \mathcal{F}^{-1}\left(\varphi_{j}f\right)\right\Vert _{L^{p}\left(\mathbb{R}^{d}\right)}\right]^{q}\right)\\
 & \overset{\left(\ast\right)}{\leq} & \mu_{H}\left(\overline{U}\right)\cdot C_{5}^{q}\cdot\sum_{j\in I}\sum_{i\in j^{\ast_{\mathcal{Q}'}}}\left[u_{j}\cdot\left\Vert \mathcal{F}^{-1}\left(\varphi_{j}f\right)\right\Vert _{L^{p}\left(\mathbb{R}^{d}\right)}\right]^{q}\\
 & \overset{\left|j^{\ast_{\mathcal{Q}'}}\right|\leq C_{1}}{\leq} & \mu_{H}\left(\overline{U}\right)\cdot C_{5}^{q}C_{1}\cdot\sum_{j\in I}\left[u_{j}\cdot\left\Vert \mathcal{F}^{-1}\left(\varphi_{j}f\right)\right\Vert _{L^{p}\left(\mathbb{R}^{d}\right)}\right]^{q}\\
 & = & \mu_{H}\left(\overline{U}\right)\cdot C_{5}^{q}C_{1}\cdot\left\Vert f\right\Vert _{\mathcal{D}\left(\mathcal{Q},L^{p},\ell_{u}^{q}\right)}^{q}<\infty
\end{eqnarray*}
which proves the claim in the case $1\leq q<\infty$. In the step
marked with $\left(\ast\right)$, we used the equivalence 
\[
j\in i^{\ast_{\mathcal{Q}'}}\quad\Leftrightarrow\quad Q_{i}'\cap Q_{j}'\neq\emptyset\quad\Leftrightarrow\quad i\in j^{\ast_{\mathcal{Q}'}}
\]
which is valid for all $i,j\in I$.
\end{proof}
It is now easy to show that the map $\Theta^{-1}\circ\mathcal{F}^{-1}:\mathcal{D}\left(\mathcal{Q},L^{p},\ell_{u}^{q}\right)\rightarrow\text{Co}\left(L_{v}^{p,q}\right)$
(with $\Theta$ as in Theorem \ref{thm:IsomorphismusZuSpezialCoorbitRaum}
and $\mathcal{F}^{-1}$ as in Lemma \ref{lem:SpezialWaveletTrafoLiegtInPassendemRaum}
above) is a bounded inverse to the Fourier transform $\mathcal{F}:\text{Co}\left(L_{v}^{p,q}\right)\rightarrow\mathcal{D}\left(\mathcal{Q},L^{p},\ell_{u}^{q}\right)$
as defined in Theorem \ref{thm:FourierStetigVonCoorbitNachDecomposition}.
\begin{thm}
\label{thm:InverseFourierTransformationStetigkeit}Let $p,q\in\left[1,\infty\right]$
and assume that $\mathcal{Q}$ is a decomposition covering of $\mathcal{O}$
induced by $H$. Finally, let $u:\mathcal{O}\rightarrow\left(0,\infty\right)$
be a transplant of $v'$ onto $\mathcal{O}$, where 
\[
v':H\rightarrow\left(0,\infty\right),h\mapsto\left|\det\left(h^{-1}\right)\right|^{\frac{1}{2}-\frac{1}{q}}\cdot v\left(h^{-1}\right)
\]
is defined as in Lemma \ref{lem:FouriertransformationStetigAufCoorbitGeschnittenSchwartz}.

Then $\mathcal{F}:\text{Co}\left(L_{v}^{p,q}\right)\rightarrow\mathcal{D}\left(\mathcal{Q},L^{p},\ell_{u}^{q}\right)$
as defined in Theorem \ref{thm:FourierStetigVonCoorbitNachDecomposition}
is an isomorphism of Banach spaces with bounded inverse
\[
\Theta^{-1}\circ\mathcal{F}^{-1}:\mathcal{D}\left(\mathcal{Q},L^{p},\ell_{u}^{q}\right)\rightarrow\text{Co}\left(L_{v}^{p,q}\right).
\]

\end{thm}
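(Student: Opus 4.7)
The plan is to combine the three ingredients already established: the boundedness of $\mathcal{F}$ from Theorem~\ref{thm:FourierStetigVonCoorbitNachDecomposition}, the boundedness of $\mathcal{F}^{-1}:\mathcal{D}\left(\mathcal{Q},L^{p},\ell_{u}^{q}\right)\to\widetilde{\mathrm{Co}}\left(L_{v}^{p,q}\right)$ from Lemma~\ref{lem:SpezialWaveletTrafoLiegtInPassendemRaum}, and the isometric isomorphism $\Theta:\mathrm{Co}\left(L_{v}^{p,q}\right)\to\widetilde{\mathrm{Co}}\left(L_{v}^{p,q}\right)$ from Theorem~\ref{thm:IsomorphismusZuSpezialCoorbitRaum}. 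The composition $\Theta^{-1}\circ\mathcal{F}^{-1}:\mathcal{D}\left(\mathcal{Q},L^{p},\ell_{u}^{q}\right)\to\mathrm{Co}\left(L_{v}^{p,q}\right)$ is thus automatically well-defined and bounded; the only substantive task left is to verify that it is a two-sided inverse of $\mathcal{F}$.

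The key identity making everything fit together is equation \eqref{eq:FourierTrafoVertauschtMitTheta} from Corollary~\ref{cor:FourierTrafoAufRaumAbgewaelzt}, which asserts $\mathcal{F}f=\left(\Theta f\right)\circ\mathcal{F}$ for every $f\in\left(\mathcal{H}_{w}^{1}\right)^{\neg}$. First I would check that $\left(\Theta^{-1}\circ\mathcal{F}^{-1}\right)\circ\mathcal{F}=\mathrm{id}_{\mathrm{Co}\left(L_{v}^{p,q}\right)}$: for $f\in\mathrm{Co}\left(L_{v}^{p,q}\right)$ and $\psi\in\mathcal{D}\left(\mathcal{O}\right)$, applying the map $\mathcal{F}f\circ\mathcal{F}^{-1}$ to the test function $\mathcal{F}\psi\in\mathcal{F}\left(\mathcal{D}\left(\mathcal{O}\right)\right)$ gives, by \eqref{eq:FourierTrafoVertauschtMitTheta},
\[
    \left(\mathcal{F}f\circ\mathcal{F}^{-1}\right)\left(\mathcal{F}\psi\right)=\left(\mathcal{F}f\right)\left(\psi\right)=\left(\Theta f\right)\left(\mathcal{F}\psi\right),
\]
so that $\mathcal{F}^{-1}\left(\mathcal{F}f\right)=\Theta f$ in $\left(\mathcal{F}\left(\mathcal{D}\left(\mathcal{O}\right)\right)\right)'$, and applying $\Theta^{-1}$ recovers $f$. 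Conversely, for $g\in\mathcal{D}\left(\mathcal{Q},L^{p},\ell_{u}^{q}\right)$, set $f:=\Theta^{-1}\left(g\circ\mathcal{F}^{-1}\right)\in\mathrm{Co}\left(L_{v}^{p,q}\right)$, so that $\Theta f=g\circ\mathcal{F}^{-1}$. Using \eqref{eq:FourierTrafoVertauschtMitTheta} once more, for $\psi\in\mathcal{D}\left(\mathcal{O}\right)$,
\[
    \left(\mathcal{F}f\right)\left(\psi\right)=\left(\Theta f\right)\left(\mathcal{F}\psi\right)=\left(g\circ\mathcal{F}^{-1}\right)\left(\mathcal{F}\psi\right)=g\left(\psi\right),
\]
which means $\mathcal{F}f=g$ as elements of $\mathcal{D}'\left(\mathcal{O}\right)$. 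This establishes $\mathcal{F}\circ\left(\Theta^{-1}\circ\mathcal{F}^{-1}\right)=\mathrm{id}_{\mathcal{D}\left(\mathcal{Q},L^{p},\ell_{u}^{q}\right)}$.

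Since both directions are nothing more than an unpacking of the defining relation \eqref{eq:FourierTrafoVertauschtMitTheta} together with the fact that $\mathcal{F}:\mathcal{D}\left(\mathcal{O}\right)\to\mathcal{F}\left(\mathcal{D}\left(\mathcal{O}\right)\right)$ is a bijection, there is no genuine obstacle at this final stage: all of the analytic work (admissibility of bandlimited Schwartz vectors, the BAPU construction, the localization identity, the reproduction formula on $\left(\mathcal{F}\left(\mathcal{D}\left(\mathcal{O}\right)\right)\right)'$, and the atomic-decomposition-based density argument) has already been carried out in Sections~\ref{sec:AdmissibleVectors}--\ref{sec:InverseFourierTrafoStetig}. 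The hardest ingredients already in hand are Lemma~\ref{lem:SpezialWaveletTrafoLiegtInPassendemRaum} and Theorem~\ref{thm:IsomorphismusZuSpezialCoorbitRaum}; once those are granted, the present theorem is a short diagram chase which I would present in two or three lines, followed by the norm estimate $\left\Vert \Theta^{-1}\circ\mathcal{F}^{-1}\right\Vert \leq\left\Vert \mathcal{F}^{-1}\right\Vert \cdot\left\Vert \Theta^{-1}\right\Vert $ to conclude the boundedness of the inverse.
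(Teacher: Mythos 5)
Your proof is correct, and it rests on exactly the same pillars the paper uses: Theorem~\ref{thm:FourierStetigVonCoorbitNachDecomposition}, Lemma~\ref{lem:SpezialWaveletTrafoLiegtInPassendemRaum}, Theorem~\ref{thm:IsomorphismusZuSpezialCoorbitRaum}, and the commutation identity~\eqref{eq:FourierTrafoVertauschtMitTheta}. The only difference is organizational, and it is a mild improvement: you test the identity $\mathcal{F}^{-1}(\mathcal{F}f)=\Theta f$ (and the companion identity $\mathcal{F}(\Theta^{-1}(\mathcal{F}^{-1}g))=g$) against arbitrary elements of $\mathcal{F}(\mathcal{D}(\mathcal{O}))$, which establishes both inverse relations directly by unpacking~\eqref{eq:FourierTrafoVertauschtMitTheta}. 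The paper instead tests only against the special vectors $\overline{\pi(x,h)\psi}$, obtaining $W_{\psi}(\Theta g)=W_{\psi}(\Theta f)$, and then has to invoke the injectivity of $W_{\psi}$ on $\widetilde{\text{Co}}$ and of $\Theta$ to upgrade this to $g=f$; for the reverse composition it uses a separate injectivity-plus-surjectivity argument rather than a direct computation. Your route thus avoids two appeals to injectivity and is the cleaner diagram chase; the paper's route has the virtue of proceeding entirely through the wavelet transform, which is the object one is ultimately controlling in norm. Both are valid, and in both cases the boundedness of the two directions is inherited from the earlier results exactly as you say.
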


\begin{proof}
Let $f\in\text{Co}\left(L_{v}^{p,q}\right)$ and define $g:=\left(\Theta^{-1}\circ\mathcal{F}^{-1}\right)\left(\mathcal{F}f\right)$.
We will show $\Theta f=\Theta g$. The injectivity of $\Theta$ (cf.
Theorem \ref{thm:IsomorphismusZuSpezialCoorbitRaum}) then implies
$f=g$, i.e. $\left(\Theta^{-1}\circ\mathcal{F}^{-1}\right)\circ\mathcal{F}=\text{id}_{\text{Co}\left(L_{v}^{p,q}\right)}$,
which in particular entails the surjectivity of $\Theta^{-1}\circ\mathcal{F}^{-1}$.

In order to show $\Theta f=\Theta g$, choose an arbitrary $\psi\in\mathcal{S}\left(\mathbb{R}^{d}\right)\setminus\left\{ 0\right\} $
with $\widehat{\psi}\in\mathcal{D}\left(\mathcal{O}\right)$. We have
$\Theta g=\mathcal{F}^{-1}\left(\mathcal{F}f\right)$. Note that we
cannot simpliy ``cancel'' $\mathcal{F}^{-1}$ and $\mathcal{F}$,
as $\mathcal{F}f$ is defined by equation (\ref{eq:FouriertrafoAufFeichtingerReservoir})
and $\mathcal{F}^{-1}\left(\mathcal{F}f\right)$ is defined as in
Lemma \ref{lem:SpezialWaveletTrafoLiegtInPassendemRaum}.

Using these definitions, we derive
\begin{eqnarray*}
\left(W_{\psi}\left(\Theta g\right)\right)\left(x,h\right) & \overset{\text{Eq. }\eqref{eq:WaveletTrafoAufSpezialReservoir}}{=} & \left(\Theta g\right)\left(\overline{\pi\left(x,h\right)\psi}\right)\\
 & = & \left(\mathcal{F}^{-1}\left(\mathcal{F}f\right)\right)\left(\overline{\pi\left(x,h\right)\psi}\right)\\
 & \overset{\text{Def. of }\mathcal{F}^{-1}\text{ in Lemma }\ref{lem:SpezialWaveletTrafoLiegtInPassendemRaum}}{=} & \left(\mathcal{F}f\right)\left(\mathcal{F}^{-1}\overline{\pi\left(x,h\right)\psi}\right)\\
 & \overset{\text{Eq. }\eqref{eq:FouriertrafoAufFeichtingerReservoir}}{=} & f\left(\mathcal{F}^{-1}\overline{\mathcal{F}^{-1}\overline{\pi\left(x,h\right)\psi}}\right)\\
 & = & f\left(\mathcal{F}^{-1}\mathcal{F}\overline{\overline{\pi\left(x,h\right)\psi}}\right)\\
 & = & f\left(\overline{\overline{\pi\left(x,h\right)\psi}}\right)\\
 & = & \left(\Theta f\right)\left(\overline{\pi\left(x,h\right)\psi}\right)\\
 & \overset{\text{Eq. }\eqref{eq:WaveletTrafoAufSpezialReservoir}}{=} & \left(W_{\psi}\left(\Theta f\right)\right)\left(x,h\right),
\end{eqnarray*}
where we used the easily verified identity $\mathcal{F}^{-1}\overline{\varphi}=\overline{\widehat{\varphi}}$
for $\varphi\in\mathcal{S}\left(\mathbb{R}^{d}\right)$. As $W_{\psi}$
is injective on $\widetilde{\text{Co}}\left(L_{v}^{p,q}\right)$ (cf.
Theorem \ref{thm:IsomorphismusZuSpezialCoorbitRaum}), the above identity
shows $\Theta g=\Theta f$.

In the opposite direction, we note that surjectivity of $\mathcal{F}^{-1}:\mathcal{F}\left(\mathcal{D}\left(\mathcal{O}\right)\right)\rightarrow\mathcal{D}\left(\mathcal{O}\right)$
implies that 
\[
\mathcal{F}^{-1}:\mathcal{D}\left(\mathcal{Q},L^{p},\ell_{u}^{q}\right)\leq\mathcal{D}'\left(\mathcal{O}\right)\rightarrow\widetilde{\text{Co}}\left(L_{v}^{p,q}\right)\leq\left(\mathcal{F}\left(\mathcal{D}\left(\mathcal{O}\right)\right)\right)',f\mapsto f\circ\mathcal{F}^{-1}
\]
as defined in Lemma \ref{lem:SpezialWaveletTrafoLiegtInPassendemRaum} is injective. As $\Theta$ and hence also $\Theta^{-1}:\widetilde{\text{Co}}\left(L_{v}^{p,q}\right)\rightarrow\text{Co}\left(L_{v}^{p,q}\right)$
are bijective by Theorem \ref{thm:IsomorphismusZuSpezialCoorbitRaum},
this shows that $\Theta^{-1}\circ\mathcal{F}^{-1}:\mathcal{D}\left(\mathcal{Q},L^{p},\ell_{u}^{q}\right)\rightarrow\text{Co}\left(L_{v}^{p,q}\right)$
is injective. Above, we have already seen that this map is surjective
with
\[
\left(\Theta^{-1}\circ\mathcal{F}^{-1}\right)\circ\mathcal{F}=\text{id}_{\text{Co}\left(L_{v}^{p,q}\right)}.
\]
This shows that $\Theta^{-1}\circ\mathcal{F}^{-1}$ is bijective with
bijective(!) inverse $\mathcal{F}:\text{Co}\left(L_{v}^{p,q}\right)\rightarrow\mathcal{D}\left(\mathcal{Q},L^{p},\ell_{u}^{q}\right)$.
\end{proof}

\section{A sample application: Dilation invariance of certain coorbit spaces}

\label{sec:KonjugationsInvarianz}In this section we discuss the issue
of invariance of coorbit spaces under dilations by matrices that are
not necessarily contained in the group. Here, we will restrict ourselves
to two examples showing that this question can be fairly subtle, with
the answer depending on the dilation group. As was pointed out in
the introduction, the question of comparing coorbit spaces associated
to different dilation groups arises rather naturally in this context,
and the decomposition space view will allow (at least in one case)
a rather speedy answer to it.

We start by spelling out how the wavelet transform of a function $f$ dilated
by some matrix $g\in{\rm GL}(\mathbb{R}^{d})$ over the semidirect
product $\mathbb{R}^{d}\rtimes H$, with $g$ not necessarily contained
in $H$, is related to a wavelet transform of $f$ over the group
$\mathbb{R}^{d}\rtimes g^{-1}Hg$. For this purpose, it is beneficial
to introduce the quasi-regular representation $\sigma$ of the full
affine group $\mathbb{R}^{d}\rtimes{\rm GL}(\mathbb{R}^{d})$ acting
unitarily on ${\rm L}^{2}(\mathbb{R}^{d})$ by
\[
\sigma\left(x,g\right)f=\left|{\rm det}\left(g\right)\right|^{-1/2}\cdot f\left(g^{-1}\left(y-x\right)\right),\qquad(x,g)\in\mathbb{R}^{d}\times{\rm GL}\left(\mathbb{R}^{d}\right),
\]
thus extending the quasi-regular representations of both $\mathbb{R}^{d}\rtimes H$
and $\mathbb{R}^{d}\rtimes g^{-1}Hg$. The proof of the lemma consists
in straightforward computations, and is therefore omitted.
\begin{lem}
\label{lem:dilate_vs_conj}Let $H_{1}$ denote a closed matrix group
fulfilling our standing admissibility assumptions, let $g\in{\rm GL}(\mathbb{R}^{d})$
be arbitrary, and define $H_{2}=g^{-1}H_{1}g$. 
\begin{enumerate}
    \item[(a)] Let $\mathcal{O}_{1}=H_{1}^{T}\xi_{0}$ denote the open dual
orbit associated to $H_{1}$, then the open dual orbit associated
to $H_{2}$ is given by 
\[
\mathcal{O}_{2}=g^{T}\mathcal{O}_{1}=H_{2}^{T}g^{T}\xi_{0}~.
\]

\item[(b)] Assume that $\psi_{1}\in\mathcal{S}\left(\mathbb{R}^{d}\right)$ satisfies
$\widehat{\psi}_{1}\in\mathcal{D}(\mathcal{O}_{1})$. Then $\psi_{2}=\sigma(0,g^{-1})\psi_{1}\in\mathcal{S}\left(\mathbb{R}^{d}\right)$
fulfills $\widehat{\psi}_{2}\in\mathcal{D}(\mathcal{O}_{2})$. 
\item[(c)] Let $H_{i}$ and $\psi_{i}$ be as in the previous parts, and let
$f\in L^{2}(\mathbb{R}^{d})$. Denote by $W_{\psi_{i}}^{i}$ the associated
wavelet transforms. Then we have the relation 
\begin{equation}
\left(W_{\psi_{1}}^{1}\left(\sigma(0,g)f\right)\right)(x,h)=\left(W_{\psi_{2}}^{2}f\right)(g^{-1}x,g^{-1}hg)~.\label{eqn:dilate_vs_conj}
\end{equation}

\item[(d)] Let $v_{1}$ denote a moderate weight function on $H_{1}$, and let
\[
v_{2}:H_{2}\to(0,\infty)~,~h\mapsto v_{1}(ghg^{-1})~.
\]
Then we have, for $f\in{\rm L}^{2}(\mathbb{R}^{d})$, that 
\[
\sigma(0,g)f\in{\rm Co}\left(L_{v_{1}}^{p,q}(\mathbb{R}^{d}\rtimes H_{1})\right)\Longleftrightarrow f\in{\rm Co}\left(L_{v_{2}}^{p,q}(\mathbb{R}^{d}\rtimes H_{2})\right)~.
\]
In particular, ${\rm Co}\left(L_{v_{1}}^{p,q}(\mathbb{R}^{d}\rtimes H_{1})\right)\cap{\rm L}^{2}(\mathbb{R}^{d})$
is invariant under $\sigma(0,g)$ iff 
\[
{\rm Co}\left(L_{v_{1}}^{p,q}(\mathbb{R}^{d}\rtimes H_{1})\right)\cap{\rm L}^{2}(\mathbb{R}^{d})\subset{\rm Co}\left(L_{v_{2}}^{p,q}(\mathbb{R}^{d}\rtimes H_{2})\right)
\]
holds. 
\end{enumerate}
\end{lem}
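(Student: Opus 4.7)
For part (a), the plan is to simply apply the dual action. Since $H_{2}^{T} = g^{T} H_{1}^{T} g^{-T}$, one immediately gets $H_{2}^{T} (g^{T} \xi_{0}) = g^{T} H_{1}^{T} \xi_{0} = g^{T} \mathcal{O}_{1}$, which is open and of full measure because multiplication by $g^{T}$ is a linear homeomorphism preserving Lebesgue null sets. The isotropy of $g^{T}\xi_{0}$ in $H_{2}$ is $g^{-1}(H_{1})_{\xi_{0}} g$, hence compact. Thus the admissibility assumptions transfer from $H_{1}$ to $H_{2}$, and one may take $g^{T} \xi_{0}$ as base point.

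For part (b), starting from $\psi_{2} = \sigma(0,g^{-1})\psi_{1} = |\det g|^{1/2} \psi_{1}(g\,\cdot\,)$ and applying formula \eqref{eq:QausiRegulaereDarstellungAufFourierSeite} yields $\widehat{\psi_{2}} = |\det g|^{-1/2} \widehat{\psi_{1}} \circ g^{-T}$, so $\mathrm{supp}(\widehat{\psi_{2}}) = g^{T} \mathrm{supp}(\widehat{\psi_{1}}) \subset g^{T} \mathcal{O}_{1} = \mathcal{O}_{2}$ by part (a), and $\widehat{\psi_{2}}$ remains smooth with compact support, i.e.\ in $\mathcal{D}(\mathcal{O}_{2})$.

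For part (c), the key identity is the semidirect-product computation $\sigma(0,g^{-1})\sigma(x,h) = \sigma(g^{-1}x, g^{-1}h)$ in the full affine group, combined with $\sigma(y,k)\sigma(0,g^{-1}) = \sigma(y, kg^{-1})$. Setting $y = g^{-1}x$ and $k = g^{-1}hg \in H_{2}$, we obtain $\sigma(0,g^{-1})\sigma(x,h)\psi_{1} = \sigma(g^{-1}x, g^{-1}hg)\psi_{2}$. Plugging this into the definition
\[
    \bigl(W_{\psi_{1}}^{1}(\sigma(0,g)f)\bigr)(x,h) = \langle \sigma(0,g)f, \sigma(x,h)\psi_{1} \rangle = \langle f, \sigma(0,g^{-1})\sigma(x,h)\psi_{1} \rangle
\]
and using unitarity of $\sigma(0,g)$ yields \eqref{eqn:dilate_vs_conj} at once.

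For part (d), the plan is to fix an analyzing vector $\psi_{1} \in \mathcal{S}(\mathbb{R}^{d})$ with $\widehat{\psi_{1}} \in \mathcal{D}(\mathcal{O}_{1})$ (admissible for $\mathrm{Co}(L_{v_{1}}^{p,q}(G_{1}))$ by Theorem \ref{thm:ZulaessigkeitVonbandbeschraenktenFunktionen}) and let $\psi_{2} = \sigma(0,g^{-1})\psi_{1}$, which by part (b) and the same theorem is admissible for $\mathrm{Co}(L_{v_{2}}^{p,q}(G_{2}))$. Then I would compute $\|W_{\psi_{1}}^{1}(\sigma(0,g)f)\|_{L_{v_{1}}^{p,q}(G_{1})}$ by applying \eqref{eqn:dilate_vs_conj} and performing two changes of variables: substituting $x \mapsto gx$ in the inner $L^{p}(\mathbb{R}^{d})$-integral (producing a factor $|\det g|^{1/p}$), and $h \mapsto ghg^{-1}$ in the outer $H_{1}$-integral. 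The main technical point to verify is that the isomorphism $\phi : H_{2} \to H_{1}$, $k \mapsto gkg^{-1}$, transports left Haar measure on $H_{2}$ to a constant multiple of left Haar measure on $H_{1}$ (by uniqueness of Haar measure, since $\phi$ is a topological group isomorphism), and that $|\det(gkg^{-1})| = |\det k|$. Combined with $v_{1}(gkg^{-1}) = v_{2}(k)$, this produces
\[
    \|W_{\psi_{1}}^{1}(\sigma(0,g)f)\|_{L_{v_{1}}^{p,q}(G_{1})} = C_{g,p,q} \cdot \|W_{\psi_{2}}^{2} f\|_{L_{v_{2}}^{p,q}(G_{2})}
\]
for some constant $C_{g,p,q} > 0$. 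The equivalence of coorbit memberships then follows, and the ``in particular'' statement is a direct rephrasing: applying the equivalence to $f \in \mathrm{Co}(L_{v_{1}}^{p,q}(G_{1})) \cap L^{2}$ shows that $\sigma(0,g)f$ lies in $\mathrm{Co}(L_{v_{1}}^{p,q}(G_{1}))$ iff $f \in \mathrm{Co}(L_{v_{2}}^{p,q}(G_{2}))$, so invariance under $\sigma(0,g)$ is equivalent to the claimed inclusion.

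The only step requiring real attention is the Haar-measure bookkeeping in part (d); everything else is bookkeeping of the definitions.
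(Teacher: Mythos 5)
The paper explicitly states that ``the proof consists in straightforward computations, and is therefore omitted,'' so there is no paper proof to compare against; your proposal supplies exactly the computations the authors had in mind, and it is correct in all four parts. In particular you rightly identify that the only point deserving attention in (d) is the Haar-measure transport under $k\mapsto gkg^{-1}$ (uniqueness of left Haar measure gives a constant factor, $|\det(gkg^{-1})|=|\det k|$, and $v_{1}(gkg^{-1})=v_{2}(k)$), which, together with the $|\det g|^{1/p}$ factor from the inner change of variables, yields the norm identity $\left\Vert W_{\psi_{1}}^{1}(\sigma(0,g)f)\right\Vert _{L_{v_{1}}^{p,q}(G_{1})}=C_{g,p,q}\left\Vert W_{\psi_{2}}^{2}f\right\Vert _{L_{v_{2}}^{p,q}(G_{2})}$ and hence the claimed equivalence of coorbit memberships for $f\in L^{2}(\mathbb{R}^{d})$.
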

Note that in general, the question of embeddings between coorbit spaces with respect to different dilation groups
is not even well-posed; this is one reason why the statement in part
(d) is restricted to $L^{2}$-functions. By definition, 
\[
\text{Co}\left(Y\right)=\left\{ f\in\left(\mathcal{H}_{w}^{1}\left(G\right)\right)^{\neg}\with W_{\psi}f\in Y\right\} ,
\]
i.e. the elements $f\in\text{Co}\left(Y\right)$ ``live'' in the
space $\left(\mathcal{H}_{w}^{1}\right)^{\neg}$ of antilinear functionals
on 
\[
\mathcal{H}_{w}^{1}\left(G\right)=\left\{ g\in L^{2}\left(G\right)\with W_{\psi}g\in L_{w}^{1}\left(G\right)\right\} ,
\]
where $w:G\rightarrow\left(0,\infty\right)$ is a suitable \textbf{control-weight}
for the solid BF-space $Y$ and where $\psi\in\mathcal{A}_{w}\setminus\left\{ 0\right\} $
is fixed. Clearly, this definition depends on $G$, and thus on $H$.

Thus, it is not obvious how an element $f\in\text{Co}\left(Y\right)\subset\left(\mathcal{H}_{w}^{1}\left(G\right)\right)^{\neg}$
for some group $G$ can be interpreted as an element $f\in\text{Co}\left(Y'\right)\subset\left(\mathcal{H}_{w'}^{1}\left(G'\right)\right)^{\neg}$
for a different group $G'$ (and different $Y',w'$). But in the setting
of this paper, both groups are of the form $G=\mathbb{R}^{d}\rtimes H$
and $G'=\mathbb{R}^{d}\rtimes H'$ and operate on $L^{2}\left(\mathbb{R}^{d}\right)$
by virtue of the quasi-regular representation. Thus, we adopt the
following conventions: 
\begin{defn}
\label{def:CoorbitEinbettung}Let $p_{1},p_{2},q_{1},q_{2}\in\left[1,\infty\right]$. 
\begin{enumerate}
\item Let $H_{1},H_{2}\leq\text{GL}\left(\mathbb{R}^{d}\right)$ be closed
subgroups that fulfill our standing assumptions and assume also that
$v_{i}:H_{i}\rightarrow\left(0,\infty\right)$ obeys our standing
assumptions for $i=1,2$. Let $G_{i}=\mathbb{R}^{d}\rtimes H_{i}$
for $i=1,2$. We then say that a bounded linear map 
\[
T:\text{Co}\left(L_{v_{1}}^{p_{1},q_{1}}\left(G_{1}\right)\right)\rightarrow\text{Co}\left(L_{v_{2}}^{p_{2},q_{2}}\left(G_{2}\right)\right)
\]
is an \textbf{embedding of coorbit spaces} if $Tf=f$ holds for all
$f\in L^{2}\left(\mathbb{R}^{d}\right)\cap\text{Co}\left(L_{v_{1}}^{p_{1},q_{1}}\left(G_{1}\right)\right)$. 
\item Let $\emptyset\neq U_{1},U_{2}\subset\mathbb{R}^{d}$ be open and
let $\mathcal{Q}^{\left(i\right)}$ be a decomposition covering of
$U_{i}$ for $i=1,2$. Finally, assume that $u_{i}:U_{i}\rightarrow\left(0,\infty\right)$
is $\mathcal{Q}^{\left(i\right)}$-moderate for $i=1,2$. We then
say that a bounded linear map 
\[
S:\mathcal{D}\left(\mathcal{Q},L^{p_{1}},\ell_{u_{1}}^{q_{1}}\right)\rightarrow\mathcal{D}\left(\mathcal{Q},L^{p_{2}},\ell_{u_{2}}^{q_{2}}\right)
\]
is an \textbf{embedding of decomposition spaces} if the identity $Sf=f$
holds for all $f\in L^{2}\left(\mathbb{R}^{d}\right)\cap\mathcal{D}\left(\mathcal{Q},L^{p_{1}},\ell_{u_{1}}^{q_{1}}\right)$.
\end{enumerate}
\end{defn}
\begin{rem*}
It is worth noting that an embedding in the above sense is not required
to be an injective map.
\end{rem*}
The existence of an embedding of coorbit spaces can be characterized
by the existence of embeddings between the associated decomposition
spaces as follows: 
\begin{lem}
\label{lem:CoorbitEinbettungCharakterisierung}Let $H_{1},H_{2}$,
$G_{1},G_{2}$ and $v_{1},v_{2}$ be as in Definition \ref{def:CoorbitEinbettung}
and let $\mathcal{Q}^{\left(j\right)}=\left(\mathcal{Q}_{i}^{\left(j\right)}\right)_{i\in I^{\left(j\right)}}$
be an induced decomposition covering of the dual orbit $\mathcal{O}_{j}\subset\mathbb{R}^{d}$
(with respect to $H_{j}$) for $j=1,2$.

Finally, choose $p_{1},p_{2},q_{1},q_{2}\in\left[1,\infty\right]$,
define 
\[
v_{j}':H_{j}\rightarrow\left(0,\infty\right),h\mapsto\left|\det\left(h^{-1}\right)\right|^{\frac{1}{2}-\frac{1}{q_{j}}}\cdot v_{j}\left(h^{-1}\right)
\]
and let $u^{\left(j\right)}=\left(u_{i}^{\left(j\right)}\right)_{i\in I^{\left(j\right)}}$be
a transplant of $v_{j}'$ to $\mathcal{O}_{j}$ for $j=1,2$.

Then $T:\text{Co}\left(L_{v_{1}}^{p_{1},q_{1}}\left(G_{1}\right)\right)\rightarrow\text{Co}\left(L_{v_{2}}^{p_{2},q_{2}}\left(G_{2}\right)\right)$
is an embedding of coorbit spaces if and only if 
\[
\mathcal{F}\circ T\circ\mathcal{F}^{-1}:\mathcal{D}\left(\mathcal{Q}^{\left(1\right)},L^{p_{1}},\ell_{u^{\left(1\right)}}^{q_{1}}\right)\rightarrow\mathcal{D}\left(\mathcal{Q}^{\left(2\right)},L^{p_{2}},\ell_{u^{\left(2\right)}}^{q_{2}}\right)
\]
is an embedding of decomposition spaces. Here, $\mathcal{F}$ and
$\mathcal{F}^{-1}$ are defined as in Theorems \ref{thm:FourierStetigVonCoorbitNachDecomposition}
and \ref{thm:InverseFourierTransformationStetigkeit}, respectively.\end{lem}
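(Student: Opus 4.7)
The plan is to exploit the Banach space isomorphism $\mathcal{F}:\text{Co}(L_{v_j}^{p_j,q_j}(G_j))\to\mathcal{D}(\mathcal{Q}^{(j)},L^{p_j},\ell_{u^{(j)}}^{q_j})$ with bounded inverse, established by Theorems \ref{thm:FourierStetigVonCoorbitNachDecomposition} and \ref{thm:InverseFourierTransformationStetigkeit} for $j=1,2$. Boundedness is automatic on both sides: $\mathcal{F}\circ T\circ\mathcal{F}^{-1}$ is a composition of three bounded linear maps whenever $T$ is bounded, and conversely $T=\mathcal{F}^{-1}\circ(\mathcal{F}\circ T\circ\mathcal{F}^{-1})\circ\mathcal{F}$. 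The substantive content is therefore the equivalence of the two identity conditions on the $L^{2}$-intersections, and this is where the argument will be focused.

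The crucial tool is compatibility of the coorbit-theoretic Fourier transform with its classical counterpart on $L^{2}(\mathbb{R}^{d})$. For $f\in L^{2}(\mathbb{R}^{d})$ viewed as an antilinear functional on $\mathcal{H}_{w_j}^{1}(G_j)$ via $\phi\mapsto\langle\phi,f\rangle_{\text{anti}}$, Remark \ref{rem:SpezialFourierTrafoIsFortsetzungVonNormaler} shows that the coorbit Fourier transform $\mathcal{F}f\in\mathcal{D}'(\mathcal{O}_j)$ is simply the pairing of test functions on $\mathcal{O}_j$ against the ordinary $\widehat{f}\in L^{2}(\mathbb{R}^{d})$; Remark \ref{rem:InverseFourierTransformationIstNatuerlich} gives the dual statement for $\mathcal{F}^{-1}$ on $L^{2}(\mathbb{R}^{d})\cap\mathcal{D}(\mathcal{Q}^{(j)},L^{p_j},\ell_{u^{(j)}}^{q_j})$. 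Since each $\mathcal{O}_j$ is of full Lebesgue measure and the classical Fourier transform is an $L^{2}$-isometry, combining these remarks with the isomorphism theorems shows that $\mathcal{F}$ restricts to a bijection
\[
L^{2}(\mathbb{R}^{d})\cap\text{Co}(L_{v_j}^{p_j,q_j}(G_j))\;\longrightarrow\; L^{2}(\mathbb{R}^{d})\cap\mathcal{D}(\mathcal{Q}^{(j)},L^{p_j},\ell_{u^{(j)}}^{q_j})
\]
whose inverse is the classical inverse Fourier transform, under the natural identifications of $L^{2}$-functions into the respective reservoirs.

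Granted this compatibility, the equivalence becomes essentially formal. For the forward direction, assume $T$ is an embedding of coorbit spaces. Given any $g\in L^{2}\cap\mathcal{D}_1$, I set $f:=\mathcal{F}^{-1}g$, which lies in $L^{2}\cap\text{Co}_1$ by the bijection above; the hypothesis gives $Tf=f$ in $\text{Co}_2$, and applying $\mathcal{F}$ yields $(\mathcal{F}\circ T\circ\mathcal{F}^{-1})(g)=\mathcal{F}(f)=g$, the last equality since $\mathcal{F}$ and $\mathcal{F}^{-1}$ are mutually inverse. The reverse direction is symmetric: given $f\in L^{2}\cap\text{Co}_1$, set $g:=\mathcal{F}f\in L^{2}\cap\mathcal{D}_1$, apply the hypothesis to obtain $\mathcal{F}(Tf)=g=\mathcal{F}f$, and conclude $Tf=f$ from the injectivity of $\mathcal{F}:\text{Co}_2\to\mathcal{D}_2$.

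The only real obstacle in executing this plan is the careful bookkeeping of identifications: the same $L^{2}$-function is being viewed simultaneously as an antilinear functional on two distinct spaces $\mathcal{H}_{w_j}^{1}(G_j)$ and as a distribution on two distinct full-measure orbits $\mathcal{O}_j$. These identifications are mutually compatible precisely because of the full-measure property of the orbits together with Plancherel's theorem — the content of the two cited remarks — so that every equality in the chain above is literal rather than modulo identification.
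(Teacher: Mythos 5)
Your proposal is correct and follows essentially the same route as the paper: boundedness of $T$ versus boundedness of $\mathcal{F}\circ T\circ\mathcal{F}^{-1}$ follows from the two isomorphism theorems, and the equivalence of the two identity conditions on the $L^2$-intersections is reduced, exactly as in the paper, to the compatibility of the coorbit/decomposition Fourier transforms with the classical Fourier transform on $L^2(\mathbb{R}^d)$ as recorded in Remarks \ref{rem:SpezialFourierTrafoIsFortsetzungVonNormaler} and \ref{rem:InverseFourierTransformationIstNatuerlich}. Your explicit use of the injectivity of $\mathcal{F}:\text{Co}_2\to\mathcal{D}_2$ in the reverse direction is a harmless variant of the paper's ``completely analogous'' remark, and your attention to the fact that the $\mathcal{F}$ appearing in $\mathcal{F}\circ T\circ\mathcal{F}^{-1}$ is the transform associated to $H_2$ (so the cancellation against $\mathcal{F}^{-1}$ associated to $H_1$ only goes through via the classical $L^2$-transform) is exactly the point the cited remarks are invoked for.
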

\begin{proof}
Theorem \ref{thm:InverseFourierTransformationStetigkeit} implies
that $T$ is bounded if and only if $\mathcal{F}\circ T\circ\mathcal{F}^{-1}$
is bounded.

Now let $T$ be an embedding of coorbit spaces and let $f\in\mathcal{D}\left(\mathcal{Q}^{\left(1\right)},L^{p_{1}},\ell_{u^{\left(1\right)}}^{q_{1}}\right)\cap L^{2}\left(\mathbb{R}^{d}\right)$.
The Remarks \ref{rem:SpezialFourierTrafoIsFortsetzungVonNormaler}
and \ref{rem:InverseFourierTransformationIstNatuerlich} show that
$\mathcal{F}^{-1}f$ is the ``ordinary'' inverse Fourier transform
of $f\in L^{2}\left(\mathbb{R}^{d}\right)$ and that $\mathcal{F}\mathcal{F}^{-1}f=f$
holds, because the Fourier transform $\mathcal{F}$ as defined in
Theorem \ref{thm:FourierStetigVonCoorbitNachDecomposition} also coincides
with the standard Fourier transform of $\mathcal{F}^{-1}f\in L^{2}\left(\mathbb{R}^{d}\right)$.
Hence, we get 
\begin{eqnarray*}
\left(\mathcal{F}\circ T\circ\mathcal{F}^{-1}\right)\left(f\right) & \overset{\mathcal{F}^{-1}f\in L^{2}\left(\mathbb{R}^{d}\right)\cap\text{Co}\left(L_{v_{1}}^{p_{1},q_{1}}\left(G_{1}\right)\right)}{=} & \mathcal{F}\mathcal{F}^{-1}f\\
 & = & f,
\end{eqnarray*}
i.e. $\mathcal{F}\circ T\circ\mathcal{F}^{-1}$ is an embedding of
decomposition spaces.

The proof of the converse direction is completely analogous.
\end{proof}
We will now analyze the existence of embeddings between the coorbit
space $\text{Co}\left(L_{v}^{p,q}\left(\mathbb{R}^{2}\rtimes H\right)\right)$
and the coorbit space $\text{Co}\left(L_{g^{-1}vg}^{p,q}\left(\mathbb{R}^{2}\times g^{-1}Hg\right)\right)$
with respect to the conjugated group $g^{-1}Hg$, where $g^{-1}vg$
is defined by 
\[
g^{-1}vg:g^{-1}Hg\rightarrow\left(0,\infty\right),h\mapsto v\left(ghg^{-1}\right).
\]
We will see that both spaces coincide (up to harmless identifications)
for the similitude group, whereas the same is in general not true
for the shearlet group.

Our first example is the \textbf{similitude group} 
\[
H_{1}:=\left\{ \begin{pmatrix}a & b\\
-b & a
\end{pmatrix}\with a^{2}+b^{2}\neq0\right\} =\left\{ r\cdot\begin{pmatrix}\cos\varphi & \sin\varphi\\
-\sin\varphi & \cos\varphi
\end{pmatrix}\with r>0\text{ and }\varphi\in\left[0,2\pi\right]\right\} .
\]
Here, the dual orbit is given by $\mathcal{O}_{1}=\mathbb{R}^{2}\setminus\left\{ 0\right\} $.
In the following, we will be using the well-spread family 
\[
\left(h_{k}\right)_{k\in\mathbb{Z}}:=\left(2^{-k}\cdot\text{id}_{\mathbb{R}^{2}}\right)_{k\in\mathbb{Z}}
\]
and the precompact, open sets 
\begin{align*}
P & :=\left\{ x\in\mathbb{R}^{2}\with\frac{2}{3}<\left|x\right|<\frac{3}{2}\right\} ,\\
Q & :=\left\{ x\in\mathbb{R}^{2}\with\frac{1}{2}<\left|x\right|<2\right\} 
\end{align*}
that satisfy $\overline{P}\subset Q\subset\overline{Q}\subset\mathcal{O}_{1}$
as well as $\mathcal{O}_{1}=\bigcup_{k\in\mathbb{Z}}h_{k}P=\bigcup_{k\in\mathbb{Z}}h_{k}Q$.
Thus, Theorem \ref{thm:InduzierteUeberdeckungKonstruktion} shows
that 
\[
\mathcal{Q}:=\left(Q_{k}\right)_{k\in\mathbb{Z}}:=\left(h_{k}^{-T}Q\right)_{k\in\mathbb{Z}}=\left(2^{k}\cdot Q\right)_{k\in\mathbb{Z}}
\]
is a decomposition covering of $\mathcal{O}_{1}$ induced by $H_{1}$.

Now let $g\in\text{GL}\left(\mathbb{R}^{d}\right)$ be arbitrary.
The conjugate group $H_{1}\left(g\right):=g^{-1}H_{1}g\leq\text{GL}\left(\mathbb{R}^{d}\right)$
then has the same open dual orbit $\mathcal{O}_{1}\left(g\right)=\mathbb{R}^{2}\setminus\left\{ 0\right\} =\mathcal{O}_{1}$
and the family $\left(g^{-1}h_{k}g\right)_{k\in\mathbb{Z}}=\left(h_{k}\right)_{k\in\mathbb{Z}}$
is well-spread in $H_{1}\left(g\right)$. Thus, $\mathcal{Q}$ is
also a decomposition covering of $\mathcal{O}_{1}\left(g\right)=\mathcal{O}_{1}$
induced by $g^{-1}H_{1}g$.

For a weight $v:H_{1}\rightarrow\left(0,\infty\right)$ that is $v_{0}$-moderate
for some locally bounded, submultiplicative, measurable weight $v_{0}:H_{1}\rightarrow\left(0,\infty\right)$
we can then define $g^{-1}vg:=v\circ\Phi_{g}$ and $g^{-1}v_{0}g:=v_{0}\circ\Phi_{g}$
for the isomorphism $\Phi_{g}:H_{1}\left(g\right)\rightarrow H_{1},h\mapsto ghg^{-1}$.
Then $g^{-1}vg$ is $g^{-1}v_{0}g$-moderate and we get 
\[
\mathcal{D}\left(\mathcal{Q},L^{p},\ell_{u}^{q}\right)=\mathcal{D}\left(\mathcal{Q},L^{p},\ell_{g^{-1}ug}^{q}\right)
\]
for all $p,q\in\left[1,\infty\right]$, where we have chosen the discretizations
\[
u_{k}=\left|\det\left(h_{k}\right)\right|^{\frac{1}{2}-\frac{1}{q}}\cdot v\left(h_{k}\right)\text{ for }k\in\mathbb{Z}
\]
and 
\[
\left(g^{-1}ug\right)_{k}=\left|\det\left(h_{k}\right)\right|^{\frac{1}{2}-\frac{1}{q}}\cdot v\left(gh_{k}g^{-1}\right)=u_{k}\text{ for }k\in\mathbb{Z}
\]
of the transplant of 
\[
v':H_{1}\rightarrow\left(0,\infty\right),h\mapsto\left|\det\left(h^{-1}\right)\right|^{\frac{1}{2}-\frac{1}{q}}\cdot v\left(h^{-1}\right)
\]
or of 
\[
\left(g^{-1}vg\right)':g^{-1}H_{1}g\rightarrow\left(0,\infty\right),h\mapsto\left|\det\left(h^{-1}\right)\right|^{\frac{1}{2}-\frac{1}{q}}\cdot\left(g^{-1}vg\right)\left(h^{-1}\right)
\]
onto $\mathcal{O}_{1}$ or onto $\mathcal{O}_{1}\left(g\right)$,
respectively (see remark \ref{rem:SpezielleDiskretisierung} or Lemma
\ref{lem:FouriertransformationStetigAufCoorbitGeschnittenSchwartz}
for the validity of this choice).

The identity map $\text{id}:\mathcal{D}\left(\mathcal{Q},L^{p},\ell_{u}^{q}\right)\rightarrow\mathcal{D}\left(\mathcal{Q},L^{p},\ell_{g^{-1}ug}^{q}\right)$
is thus an embedding of decomposition spaces, so that Lemma \ref{lem:CoorbitEinbettungCharakterisierung}
shows that 
\[
\mathcal{F}^{-1}\circ\mathcal{F}:\text{Co}\left(L_{v}^{p,q}\left(\mathbb{R}^{2}\rtimes H_{1}\right)\right)\rightarrow\text{Co}\left(L_{g^{-1}vg}^{p,q}\left(\mathbb{R}^{2}\rtimes g^{-1}H_{1}g\right)\right)
\]
is an embedding of coorbit spaces. The same holds of course for the
inverse map.

It should be noted that the above embedding reduces to the identity
as long as $\left(\mathcal{H}_{w}^{1}\right)^{\neg}$ is identified
with a subspace of $\mathcal{D}'\left(\mathbb{R}^{2}\setminus\left\{ 0\right\} \right)$
(cf. Corollary \ref{cor:FourierTrafoAufFeichtingerReservoir}). It
is furthermore worth noting that the same argument could be applied
to any admissible group $H$ and any $g\in\text{GL}\left(\mathbb{R}^{d}\right)$
as long as $H$ admits a well-spread family $\left(h_{i}\right)_{i\in I}$
that commutes with $g$ (i.e. $g^{-1}h_{i}g=h_{i}$ holds for all
$i\in I$) and as long as the dual orbits of $g^{-1}Hg$ and $H$
coincide.

As a corollary to these observations, we obtain that the homogeneous
Besov spaces $\dot{B}_{s}^{p,q}(\mathbb{R}^{2})$ are invariant
under arbitrary dilations. We expect that this result is well-known,
although we were not able to locate a convenient source for it. Note
that a proof of this fact using the standard tensor wavelet ONB's
promises to be fairly cumbersome, due to the rather poor compatibility
of those bases with arbitrary dilations.

Our second example is the \textbf{shearlet group} 
\[
H_{2}:=\left\{ \varepsilon\begin{pmatrix}a & b\\
0 & a^{1/2}
\end{pmatrix}\with a\in\left(0,\infty\right),b\in\mathbb{R},\varepsilon\in\left\{ \pm1\right\} \right\} .
\]
Here we will show (using the standard definition of coorbit spaces
instead of the characterization via decomposition spaces) that there
is some $\psi\in\mathcal{S}\left(\mathbb{R}^{d}\right)$ that belongs
to the ``conjugated'' coorbit space $\text{Co}\left(L_{g^{-1}vg}^{1}\left(G_{2}\left(g\right)\right)\right)$
with $G_{2}(g):=\mathbb{R}^{2}\rtimes g^{-1}H_{2}g$ and $v:H_{2}\rightarrow\left(0,\infty\right),h\mapsto\left|\det\left(h\right)\right|^{7/6}$,
where $g:=\left(\begin{smallmatrix}0 & -1\\
1 & 0
\end{smallmatrix}\right)$ is the rotation by $90^{\circ}$ in the counter-clockwise direction,
but not to the shearlet coorbit space $\text{Co}\left(L_{v}^{1}\left(G_{2}\right)\right)$
for $G_{2}=\mathbb{R}^{2}\rtimes H_{2}$. We will then see that the
decomposition space point of view provides useful intuition why this
is true.

We first note that the dual orbit of $H_{2}$ is given by $\mathcal{O}_{2}:=H_{2}^{T}\left(\begin{smallmatrix}1\\
0
\end{smallmatrix}\right)=\mathbb{R}^{\ast}\times\mathbb{R}$. In contrast, the dual orbit of the conjugated group $H_{2}\left(g\right):=g^{-1}H_{2}g$
is 
\[
g^{T}H_{2}^{T}g^{-T}\left(\begin{smallmatrix}0\\
1
\end{smallmatrix}\right)=g^{T}H_{2}^{T}\left(\begin{smallmatrix}-1\\
0
\end{smallmatrix}\right)=g^{T}\left(\mathbb{R}^{\ast}\times\mathbb{R}\right)=g^{T}\mathcal{O}_{2}=\mathbb{R}\times\mathbb{R}^{\ast}.
\]
For the construction of $\psi$, choose an arbitrary $\varphi\in\mathcal{D}\left(B_{1}\left(\begin{smallmatrix}0\\
3
\end{smallmatrix}\right)\right)$ with $\varphi\geq0$ and $\varphi\equiv1$ on $B_{1/2}\left(\begin{smallmatrix}0\\
3
\end{smallmatrix}\right)$ and define $\psi:=\mathcal{F}^{-1}\varphi\in\mathcal{S}\left(\mathbb{R}^{d}\right)$
and $\psi_{0}:=\mathcal{F}^{-1}\left(L_{\left(\begin{smallmatrix}3\\
0
\end{smallmatrix}\right)}\varphi\right)\in\mathcal{S}\left(\mathbb{R}^{d}\right)$. This choice ensures that 
\[
\text{supp}\left(\widehat{\psi_{0}}\right)=\text{supp}\left(L_{\left(\begin{smallmatrix}3\\
0
\end{smallmatrix}\right)}\varphi\right)=\text{supp}\left(\varphi\right)+\left(3,0\right)^{T}\subset B_{1}\left(\smash{\left(3,3\right)^{T}}\right)
\]
is a compact subset of $\mathbb{R}^{\ast}\times\mathbb{R}=\mathcal{O}_{2}$.
Thus, Theorem \ref{thm:ZulaessigkeitVonbandbeschraenktenFunktionen}
shows $\psi_{0}\in\mathcal{B}_{w}$, where $w$ is a control weight
(as in Lemma \ref{lem:CoorbitVoraussetzungen}) for $L_{v}^{1}\left(G_{2}\right)$.

For $x\in\mathbb{R}^{d}$ and $h\in\text{GL}\left(\mathbb{R}^{d}\right)$
we can now calculate 
\begin{eqnarray*}
\left(W_{\psi_{0}}\psi\right)\left(x,h\right) & = & \left\langle \psi,\pi\left(x,h\right)\psi_{0}\right\rangle _{L^{2}}\\
 & \overset{\text{Plancherel}}{=} & \left\langle \widehat{\psi},\mathcal{F}\left(\pi\left(x,h\right)\psi_{0}\right)\right\rangle _{L^{2}}\\
 & \overset{\text{Eq. }\eqref{eq:QausiRegulaereDarstellungAufFourierSeite}}{=} & \left|\det\left(h\right)\right|^{1/2}\cdot\left\langle \widehat{\psi},M_{-x}D_{h}\widehat{\psi_{0}}\right\rangle _{L^{2}}\\
 & \overset{\widehat{\psi}=\varphi}{=} & \left|\det\left(h\right)\right|^{1/2}\cdot\int_{\mathbb{R}^{d}}\varphi\left(y\right)\cdot\overline{e^{2\pi i\left\langle -x,y\right\rangle }\left(L_{\left(\begin{smallmatrix}3\\
0
\end{smallmatrix}\right)}\varphi\right)\left(h^{T}y\right)}\,\text{d}y\\
 & \overset{\varphi\geq0}{=} & \left|\det\left(h\right)\right|^{1/2}\cdot\int_{\mathbb{R}^{d}}\varphi\left(y\right)\cdot\varphi\left(h^{T}y-\left(\begin{smallmatrix}3\\
0
\end{smallmatrix}\right)\right)\cdot e^{2\pi i\left\langle x,y\right\rangle }\,\text{d}y.
\end{eqnarray*}
For $\xi\in\mathbb{R}$ with $\left|\xi\right|\leq1/2$, the Lipschitz
continuity (with Lipschitz constant $L=1$) of the cosine implies
\[
\cos\left(\xi\right)\geq\cos\left(0\right)-\left|\cos\left(0\right)-\cos\left(\xi\right)\right|\geq1-\left|\xi\right|\geq\frac{1}{2}.
\]
For $y\in\text{supp}\left(\varphi\right)\subset B_{1}\left(\begin{smallmatrix}0\\
3
\end{smallmatrix}\right)$ we have $\left|y\right|\leq1+\left|\left(\begin{smallmatrix}0\\
3
\end{smallmatrix}\right)\right|=4$. For $x\in\mathbb{R}^{2}$ with $\left|x\right|\leq\frac{1}{16\pi}$
this implies $\left|2\pi\left\langle x,y\right\rangle \right|\leq\frac{1}{2}$.
Using this and the estimate for the cosine above, we arrive at 
\begin{eqnarray}
 &  & \left|\left(W_{\psi_{0}}\psi\right)\left(x,h\right)\right|\nonumber \\
 & \geq & \left|\det\left(h\right)\right|^{1/2}\cdot\text{Re}\left(\int_{\mathbb{R}^{d}}\varphi\left(y\right)\cdot\varphi\left(h^{T}y-\left(\begin{smallmatrix}3\\
0
\end{smallmatrix}\right)\right)\cdot e^{2\pi i\left\langle x,y\right\rangle }\,\text{d}y\right)\nonumber \\
 & \overset{\varphi\geq0}{=} & \left|\det\left(h\right)\right|^{1/2}\cdot\int_{\mathbb{R}^{d}}\varphi\left(y\right)\cdot\varphi\left(h^{T}y-\left(\begin{smallmatrix}3\\
0
\end{smallmatrix}\right)\right)\cdot\cos\left(2\pi\left\langle x,y\right\rangle \right)\,\text{d}y\nonumber \\
 & \geq & \left|\det\left(h\right)\right|^{1/2}\cdot\frac{1}{2}\int_{\mathbb{R}^{d}}\varphi\left(y\right)\cdot\varphi\left(h^{T}y-\left(\begin{smallmatrix}3\\
0
\end{smallmatrix}\right)\right)\,\text{d}y\nonumber \\
 & \overset{\varphi\equiv1\text{ on }B_{1/2}\left(\begin{smallmatrix}0\\
3
\end{smallmatrix}\right)}{\geq} & \left|\det\left(h\right)\right|^{1/2}\cdot\frac{1}{2}\int_{B_{1/2}\left(\begin{smallmatrix}0\\
3
\end{smallmatrix}\right)}\varphi\left(h^{T}y-\left(\begin{smallmatrix}3\\
0
\end{smallmatrix}\right)\right)\,\text{d}y\nonumber \\
 & \overset{\left(\ast\right)}{\geq} & \frac{\left|\det\left(h\right)\right|^{1/2}}{2}\cdot\lambda\left(B_{1/2}\left(\begin{smallmatrix}0\\
3
\end{smallmatrix}\right)\cap h^{-T}\left(B_{1/2}\left(\begin{smallmatrix}3\\
3
\end{smallmatrix}\right)\right)\right)\label{eq:ShearletNichtKonjugationsInvariantWaveletTrafoAbschaetzung}
\end{eqnarray}
for $\left|x\right|\leq\frac{1}{16\pi}$. Here $\lambda$ denotes
the $2$-dimensional Lebesgue measure.

In the step marked with $\left(\ast\right)$, we used $\varphi\equiv1$
on $B_{1/2}\left(\begin{smallmatrix}0\\
3
\end{smallmatrix}\right)$ and the fact that $y\in h^{-T}\left(B_{1/2}\left(\begin{smallmatrix}3\\
3
\end{smallmatrix}\right)\right)$ implies $h^{T}y-\left(\begin{smallmatrix}3\\
0
\end{smallmatrix}\right)\in B_{1/2}\left(\begin{smallmatrix}3\\
3
\end{smallmatrix}\right)-\left(\begin{smallmatrix}3\\
0
\end{smallmatrix}\right)=B_{1/2}\left(\begin{smallmatrix}0\\
3
\end{smallmatrix}\right)$.

Note that we have $M:=\left(0,\frac{1}{4}\right)\times\left(3-\frac{1}{4},3+\frac{1}{4}\right)\subset B_{1/2}\left(\begin{smallmatrix}0\\
3
\end{smallmatrix}\right)$. Let $\left(\begin{smallmatrix}x\\
y
\end{smallmatrix}\right)\in M$ be arbitrary. For $h_{\alpha,\beta}:=\left(\begin{smallmatrix}\alpha & \beta\\
0 & \alpha^{1/2}
\end{smallmatrix}\right)\in H_{2}$ with $\alpha\in\left(0,\infty\right)$ and $\beta\in\mathbb{R}$
we then have 
\begin{eqnarray}
 &  & \begin{pmatrix}x\\
y
\end{pmatrix}\in h_{\alpha,\beta}^{-T}\left(B_{1/2}\begin{pmatrix}3\\
3
\end{pmatrix}\right)\nonumber \\
 & \Leftrightarrow & \begin{pmatrix}\alpha x\\
\beta x+\alpha^{1/2}y
\end{pmatrix}=h_{\alpha,\beta}^{T}\begin{pmatrix}x\\
y
\end{pmatrix}\in B_{1/2}\begin{pmatrix}3\\
3
\end{pmatrix}\nonumber \\
 & \Leftarrow & \left|x\right|\left|\alpha-\frac{3}{x}\right|<\frac{1}{4}\qquad\text{ and }\qquad\left|x\right|\left|\beta-\left(\frac{3}{x}-\frac{\alpha^{1/2}y}{x}\right)\right|<\frac{1}{4}\nonumber \\
 & \overset{x>0}{\Leftrightarrow} & \alpha\in\left(\frac{11}{4x},\frac{13}{4x}\right)\qquad\text{ and }\qquad\beta\in B_{\frac{1}{4x}}\left(\beta_{\alpha,x,y}\right),\label{eq:ShearletNichtKonjugationsInvariantTraegerSchnittAbschaetzung}
\end{eqnarray}
where we used the abbreviation $\beta_{\alpha,x,y}=\frac{3}{x}-\frac{\alpha^{1/2}y}{x}$.

Recall that a (left) Haar integral on the shearlet group $H_{2}$
is given by 
\[
\int_{H_{2}}f\left(h\right)\,\text{d}h=\int_{\mathbb{R}^{\ast}}\int_{\mathbb{R}}f\left(\text{sgn}\left(\alpha\right)h_{\left|\alpha\right|,\beta}\right)\,\text{d}\beta\,\frac{\text{d}\alpha}{\alpha^{2}}~.
\]
Thus, we finally arrive at 
\begin{eqnarray*}
 &  & \left\Vert \psi\right\Vert _{\text{Co}\left(L_{v}^{1}\right)}=\left\Vert W_{\psi_{0}}\psi\right\Vert _{L_{v}^{1}\left(G_{2}\right)}\\
 & = & \int_{G_{2}}\left|\left(W_{\psi_{0}}\psi\right)\left(x,h\right)\right|\cdot v\left(h\right)\,\text{d}\begin{pmatrix}x\\
h
\end{pmatrix}\\
& \overset{\text{Eq. }\eqref{eq:ShearletNichtKonjugationsInvariantWaveletTrafoAbschaetzung}}{\geq} & \int_{0}^{1/16\pi}\int_{H_{2}}\frac{\left|\det\left(h\right)\right|^{1/2}}{2}\cdot\lambda\left(B_{1/2}\left(\begin{smallmatrix}0\\
3
\end{smallmatrix}\right)\cap h^{-T}\left(B_{1/2}\left(\begin{smallmatrix}3\\
3
\end{smallmatrix}\right)\right)\right)\cdot\left|\det\left(h\right)\right|^{7/6}\,\frac{\text{d}h}{\left|\det\left(h\right)\right|}\,\text{d}x\\
 & \geq & C\cdot\int_{0}^{\infty}\int_{\mathbb{R}}\left|\det\left(h_{\alpha,\beta}\right)\right|^{2/3}\cdot\lambda\left(B_{1/2}\left(\begin{smallmatrix}0\\
3
\end{smallmatrix}\right)\cap h_{\alpha,\beta}^{-T}\left(B_{1/2}\left(\begin{smallmatrix}3\\
3
\end{smallmatrix}\right)\right)\right)\,\text{d}\beta\,\frac{\text{d}\alpha}{\alpha^{2}}\\
 & = & C\cdot\int_{0}^{\infty}\int_{\mathbb{R}}\alpha^{-1}\cdot\lambda\left(B_{1/2}\left(\begin{smallmatrix}0\\
3
\end{smallmatrix}\right)\cap h_{\alpha,\beta}^{-T}\left(B_{1/2}\left(\begin{smallmatrix}3\\
3
\end{smallmatrix}\right)\right)\right)\,\text{d}\beta\,\text{d}\alpha\\
 & \overset{M\subset B_{1/2}\left(\begin{smallmatrix}0\\
3
\end{smallmatrix}\right)}{\underset{\text{Fubini}}{\geq}} & C\cdot\int_{0}^{1/4}\int_{3-\frac{1}{4}}^{3+\frac{1}{4}}\int_{0}^{\infty}\int_{\mathbb{R}}\alpha^{-1}\cdot\chi_{h_{\alpha,\beta}^{-T}\left(B_{1/2}\left(\begin{smallmatrix}3\\
3
\end{smallmatrix}\right)\right)}\begin{pmatrix}x\\
y
\end{pmatrix}\,\text{d}\beta\,\text{d}\alpha\,\text{d}y\,\text{d}x\\
 & \overset{\text{Eq. }\eqref{eq:ShearletNichtKonjugationsInvariantTraegerSchnittAbschaetzung}}{\geq} & C\cdot\int_{0}^{1/4}\int_{3-\frac{1}{4}}^{3+\frac{1}{4}}\int_{\left(\frac{11}{4x},\frac{13}{4x}\right)}\alpha^{-1}\int_{B_{\frac{1}{4x}}\left(\beta_{\alpha,x,y}\right)}\,\text{d}\beta\,\text{d}\alpha\,\text{d}y\,\text{d}x\\
 & = & C\cdot\int_{0}^{1/4}\int_{3-\frac{1}{4}}^{3+\frac{1}{4}}\frac{1}{2x}\cdot\int_{\left(\frac{11}{4x},\frac{13}{4x}\right)}\alpha^{-1}\,\text{d}\alpha\,\text{d}y\,\text{d}x\\
 & = & \frac{C}{4}\cdot\int_{0}^{1/4}\frac{1}{x}\cdot\left[\ln\left(\frac{13}{4x}\right)-\ln\left(\frac{11}{4x}\right)\right]\,\text{d}x\\
 & = & \frac{C}{4}\cdot\ln\left(\frac{13}{11}\right)\cdot\int_{0}^{1/4}\frac{1}{x}\,\text{d}x=\infty,
\end{eqnarray*}
i.e. $\psi\notin\text{Co}\left(L_{v}^{1}\left(G_{2}\right)\right)$.

Regarding the question of membership of $\psi$ in the coorbit space
of the conjugate group $G_{2}\left(g\right)=\mathbb{R}^{2}\rtimes g^{-1}H_{2}g$,
we note that $\text{supp}\left(\smash{\widehat{\psi}}\right)\subset B_{1}\left(\begin{smallmatrix}0\\
3
\end{smallmatrix}\right)$ is a compact subset of the dual orbit $\mathcal{O}_{2}\left(g\right)=\mathbb{R}\times\mathbb{R}^{\ast}$
of $g^{-1}H_{2}g$. By Theorem \ref{thm:ZulaessigkeitVonbandbeschraenktenFunktionen}
this shows $\psi\in\mathcal{B}_{w'}$, where $w'$ is a control weight
(as in Lemma \ref{lem:CoorbitVoraussetzungen}) for $L_{g^{-1}vg}^{1}\left(\mathbb{R}^{d} \rtimes g^{-1}H_{2}g\right)$.
Note that the ,,atomic decomposition`` theorem for coorbit spaces
(cf. \cite[Theorem 6.1]{FeichtingerCoorbit1}) implies that the inclusion
\[
\mathcal{B}_{w'}\subset\text{Co}\left(L_{g^{-1}vg}^{1}\left(G_{2}\left(g\right)\right)\right)
\]
is valid. All in all, this proves that $\psi\in\mathcal{S}\left(\mathbb{R}^{d}\right)\subset L^{2}\left(\mathbb{R}^{d}\right)$
satisfies 
\[
\psi\in\text{Co}\left(L_{g^{-1}vg}^{1}\left(G_{2}\left(g\right)\right)\right)\setminus\text{Co}\left(L_{v}^{1}\left(G_{2}\right)\right).
\]
This shows that the coorbit spaces of the shearlet group are (in general)
not invariant under dilation. It should be noted that the above reasoning
could be adapted to other transformations as well; we claim that the
only orthogonal transformations under which all shearlet coorbit spaces
are invariant are the reflections $(x_{1},x_{2})^{T}\mapsto(-x_{1},x_{2})^{T}$,
$\left(x_{1},x_{2}\right)^{T}\mapsto\left(x_{1},-x_{2}\right)^{T}$
and the rotation $\left(x_{1},x_{2}\right)^{T}\mapsto\left(-x_{1},-x_{2}\right)^{T}$.

Intuitive reasons for this phenomenon (and for the choice of $\psi$)
that are suggested by the decomposition space point of view are the
following: 
\begin{enumerate}
\item $\widehat{\psi}$ does \emph{not} vanish on the ,,blind spot`` $\mathbb{R}^{2}\setminus\mathcal{O}_{2}=\left\{ 0\right\} \times\mathbb{R}$
of the shearlet group. 
\item Choose 
\[
v':H_{2}\rightarrow\left(0,\infty\right),h\mapsto\left|\det\left(h^{-1}\right)\right|^{\frac{1}{2}-\frac{1}{q}}\cdot v\left(h^{-1}\right)=\left|\det\left(h\right)\right|^{1-\frac{1}{2}-\frac{7}{6}}=\left|\det\left(h\right)\right|^{-\frac{2}{3}}
\]
as in Theorem \ref{thm:FourierStetigVonCoorbitNachDecomposition}
and let $\xi_{0}:=\left(\begin{smallmatrix}1\\
0
\end{smallmatrix}\right)\in\mathcal{O}_{2}$.

For $\left(\begin{smallmatrix}x\\
y
\end{smallmatrix}\right)\in\mathcal{O}_{2}$ we then have $h_{\left(x,y\right)}^{T}\xi_{0}=\left(\begin{smallmatrix}x\\
y
\end{smallmatrix}\right)$ for 
\[
h_{\left(x,y\right)}:=\text{sgn}\left(x\right)\cdot\begin{pmatrix}\left|x\right| & \text{sgn}\left(x\right)\cdot y\\
0 & \left|x\right|^{1/2}
\end{pmatrix}\in H_{2}.
\]
This shows that 
\[
u\left(\begin{smallmatrix}x\\
y
\end{smallmatrix}\right):=v'\left(h_{\left(x,y\right)}\right)=\left|\det\left(h_{\left(x,y\right)}\right)\right|^{-\frac{2}{3}}=\left|x\right|^{-1}
\]
is a valid transplant of $v'$ onto $\mathcal{O}_{2}$.

But this (and thus every) transplant of $v'$ onto $\mathcal{O}_{2}$
blows up near the ,,blind spot`` $\mathbb{R}^{2}\setminus\mathcal{O}_{2}=\left\{ 0\right\} \times\mathbb{R}$.

\end{enumerate}
Using these observations one can show $\widehat{\psi}\notin\mathcal{D}\left(\mathcal{Q}_{2},L^{1},\ell_{u}^{1}\right)$
for a suitable decomposition covering $\mathcal{Q}_{2}$ of $\mathcal{O}_{2}$
induced by $H_{2}$, which implies $\psi\notin\text{Co}\left(L_{v}^{1}\left(G_{2}\right)\right)$
by Theorem \ref{thm:FourierStetigVonCoorbitNachDecomposition}.

\section{Outlook}

While the discussion in the previous section is in parts somewhat ad hoc 
and restricted, we believe that it nicely illustrates the use that can be
made of the decomposition space formalism. The systematic study of
embeddings between decomposition spaces, and their application to
the study of wavelet coorbit spaces, is the subject of ongoing research,
and will be treated in more detail in upcoming publications. Another
direction of research that is currently pursued concerns the extension
of the results to include quasi-Banach spaces, in particular with
the aim of treating spaces of the type ${\rm Co}(L_{v}^{p,q})$ with
$p$ and/or $q$ in $(0,1)$.

\section*{Acknowledgements}

We thank Karlheinz Gröchenig and Hans Feichtinger for interesting
discussions and comments. This research was funded by the Excellence
Initiative of the German federal and state governments, and by the German Research Foundation (DFG),
under the contract FU 402/5-1.

\bibliographystyle{plain}
\bibliography{BibliographyV2}

\end{document}